\definecolor{mydarkblue}{rgb}{0,0.08,0.45}
\definecolor{myteal}{RGB}{27,158,119}
\definecolor{myorange}{RGB}{217,95,2}
\definecolor{myred}{RGB}{231,41,138}
\definecolor{mypurple}{RGB}{152,78,163}
\definecolor{myblue}{RGB}{55,126,184}
\definecolor{mygreen}{RGB}{0,100,0}
\newtheorem{definition}{Definition}[section]
\newtheorem{proposition}{Proposition}[section]
\newtheorem{lemma}{Lemma}[section]
\newtheorem{theorem}{Theorem}[section]
\newtheorem{remark}{Remark}[section]
\newtheorem{corollary}{Corollary}[section]
\definecolor{myblue}{HTML}{D2E4FC}
\definecolor{Gray}{gray}{0.92}
\newif\ifshowcomments
\newcommand{\ba}[1]{\textcolor{olive}{[BA: #1]}}
\newcommand{\jp}[1]{\textcolor{blue}{[JP: #1]}}
\newcommand{\jp}[1]{}
\newcommand{\ba}[1]{}
\title{\textcolor{red}{[NB: COMMENTS DISPLAYED!]\\}Homogenization of SGD in high-dimensions: \\ {\Large Exact dynamics and generalization properties}
}
\title{Homogenization of SGD in high-dimensions:\\ {\Large Exact dynamics and generalization properties}}
\date{}
\newtheorem{assumption}{Assumption}
\def\N{\mathbb{N}}
\def\R{\mathbb{R}}
\def\Filt{\mathscr{F}}
\def\aa{{\boldsymbol a}}
\def\bb{{\boldsymbol b}}
\def\hh{{\boldsymbol h}}
\def\xx{{\boldsymbol x}}
\def\XX{{\boldsymbol X}}
\def\YY{{\boldsymbol Y}}
\def\ZZ{{\boldsymbol Z}}
\def\aa{{\boldsymbol a}}
\def\bb{{\boldsymbol b}}
\def\ee{{\boldsymbol e}}
\def\WW{{\boldsymbol W}}
\def\JJ{{\boldsymbol J}}
\def\II{{\text{\textbf{I}}}}
\def\yy{{\boldsymbol y}}
\def\vv{{\boldsymbol v}}
\def\uu{{\boldsymbol u}}
\def\ww{{\boldsymbol w}}
\def\zz{{\boldsymbol z}}
\def\SS{{\boldsymbol S}}
\def\BB{{\boldsymbol B}}
\def\AA{{\boldsymbol A}}
\def\CC{{\boldsymbol C}}
\def\FF{{\boldsymbol F}}
\def\GG{{\boldsymbol G}}
\def\MM{{\boldsymbol M}}
\def\PP{{\boldsymbol P}}
\def\TT{{\boldsymbol T}}
\def\QQ{{\boldsymbol Q}}
\def\UU{{\boldsymbol U}}
\def\VV{{\boldsymbol V}}
\def\xxi{  {\boldsymbol \xi} } 
\def\SSigma{{\boldsymbol \Sigma}}
\def\YY{ {\boldsymbol Y} }
\def\nnu{{\boldsymbol \nu}}
\def\eeta{{\boldsymbol \eta}}
\def\DDelta{{\boldsymbol \Delta}}
\def\bbeta{{\boldsymbol \beta}}
\def\dif{\mathop{}\!\mathrm{d}}
\def\RR{{\widehat{\bm {R}}}}
\def\EE{{\mathbb E}\,}
\DeclareMathOperator*{\argmin}{{arg\,min}}
\DeclareMathOperator{\tr}{tr}
\def\defas{\stackrel{\text{def}}{=}}
\DeclareDocumentCommand{\Prto} {o} {
  \IfNoValueTF {#1}
  {\overset{\Pr}{\longrightarrow}}
  { \xrightarrow[ #1 \to \infty]{\Pr }}
}
\DeclareDocumentCommand{\Asto} {o} {
  \IfNoValueTF {#1}
  {\overset{\text{\rm a.s.}}{\longrightarrow}}
  { \xrightarrow[ #1 \to \infty]{\text{\rm a.s.} }}
}
\DeclareDocumentCommand{\law} {o} {
  \IfNoValueTF {#1}
  {\overset{\text{law}}{=}}
  { \xrightarrow[ #1 \to \infty]{\Pr }}
}
\DeclareMathOperator{\E}{\mathbb{E}}
\newcommand{\Id}{\text{\textbf{I}}}
\DeclareMathOperator{\Exp}{\mathbb{E}}
\DeclareMathOperator{\ntr}{\bar{tr}}
\newcommand{\gcal}{\mathcal{R}}
\newcommand{\gcaltil}{\tilde{\mathcal{R}}}
\newcommand{\gcalm}[1]{\mathcal{R}^{(#1)}}
\newcommand{\gcaltilm}[1]{\tilde{\mathcal{R}}^{(#1)}}
\newcommand{\deq}{\mathrel{\mathop:}=}
\newcommand{\al}[1]{\begin{align}#1\end{align}}
\newcommand{\pa}[1]{\left({#1}\right)}
\newcommand{\h}[1]{\{{#1}\}}
\newcommand{\ha}[1]{\left\{{#1}\right\}}
\newcommand{\abs}[1]{\lvert #1 \rvert}
\newcommand{\absa}[1]{\left\lvert #1 \right\rvert}
\newcommand{\norm}[1]{\lVert #1 \rVert}
\newcommand{\norma}[1]{\left\lVert #1 \right\rVert}
\newcommand{\vertiii}[1]{{\left\vert\kern-0.25ex\left\vert\kern-0.25ex\left\vert #1 
    \right\vert\kern-0.25ex\right\vert\kern-0.25ex\right\vert}}
\newcommand{\vast}{\bBigg@{4}}
\newcommand{\Vast}{\bBigg@{5}}
\begin{document}

\author{%
    Courtney Paquette\footnotemark[2] \thanks{Google Research, Brain Team}
  \and Elliot Paquette\thanks{Department of Mathematics and Statistics, McGill University, Montreal, QC; CP is a CIFAR AI chair, MILA and CP was supported by a Discovery Grant from the
Natural Science and Engineering Research Council (NSERC) of Canada; website \url{https://cypaquette.github.io/} and email \url{courtney.paquette@mcgill.ca}. Research by EP was supported by a Discovery Grant from the
Natural Science and Engineering Research Council (NSERC) of Canada; website \url{https://elliotpaquette.github.io/} and email: \url{elliot.paquette@mcgill.ca}. }% 
  \and Ben Adlam\footnotemark[1]%
  \and Jeffrey Pennington\footnotemark[1]%
  }

% \editor{}

\maketitle
\begin{abstract} We develop a stochastic differential equation, called homogenized SGD, for analyzing the dynamics of stochastic gradient descent (SGD) on a high-dimensional random least squares problem with $\ell^2$-regularization. We show that \textit{homogenized SGD is the high-dimensional equivalence of SGD} -- for any quadratic statistic (e.g., population risk with quadratic loss), the statistic under the iterates of SGD converges to the statistic under homogenized SGD when the number of samples $n$ and number of features $d$ are polynomially related ($d^c < n < d^{1/c}$ for some $c > 0$). By analyzing homogenized SGD, we provide exact non-asymptotic high-dimensional expressions for the generalization performance of SGD in terms of a solution of a Volterra integral equation. Further we provide the exact value of the limiting excess risk in the case of quadratic losses when trained by SGD. The analysis is formulated for data matrices and target vectors that satisfy a family of resolvent conditions, which can roughly be viewed as a weak (non-quantitative) form of delocalization of sample-side singular vectors of the data. Several motivating applications are provided including sample covariance matrices with independent samples and random features with non-generative model targets.
\end{abstract}

\section{Introduction} \label{sec:intro}

A central component of many supervised learning methods is empirical risk minimization (ERM), formulated as
\begin{equation} \label{eq:erm}
\min_{\xx \in \mathbb{R}^d}~f(\xx) \defas \sum_{i=1}^n f_i(\xx)\,, 
\end{equation}
where each $f_i : \mathbb{R}^d \to \mathbb{R}$ represents the loss function due to the $i$-th training sample. %Here the loss functions are typically assumed to be iid random samples from some (unknown) distribution\jp{Do we really mean that the loss functions themselves are iid samples?}.
Modern applications of ERM seek to extract meaningful information from datasets where the number of samples $n$ and the dimensionality of each sample $d$ are both large. Moreover they often employ powerful models with a comparably large (or even larger) latent parameter space. Owing to the difficulty of performing ERM in such \emph{high-dimensional} settings, in practice, optimization is performed using a \emph{multi-pass} algorithm, in which each sample of the dataset is used more than once.
% This contrasts to streaming or single-pass algorithms for which at each iteration a fresh sample is drawn from the distribution.
% Nonetheless, in many settings the minimum of $n$ and $d$ are large, and so we will say that the problem is high-dimensional. We call algorithms for solving \eqref{eq:erm} which use each sample many times multi-pass methods. 
%A popular optimization algorithm for solving the high-dimensional ERM in \eqref{eq:erm} is the stochastic gradient descent (SGD) \citep{robbins1951} and its variants.  In this article, we develop a new mathematical theory for the analysis of multi-pass SGD on the high-dimensional ERM in the case that $f_i$ are quadratics, or more specifically, we study multi-pass SGD applied to the high-dimensional ridge--regression problem.
Stochastic gradient descent (SGD)~\citep{robbins1951} and its variants are among the most widely-used algorithms for solving the high-dimensional ERM problem in~\eqref{eq:erm}. In this article, we develop a new mathematical theory for the analysis of multi-pass SGD in the case that $f_i$ are quadratics, corresponding to the setting of high-dimensional $\ell^2$-regularized least squares.

A prevailing paradigm for analyzing stochatic optimizations methods is by connection to a corresponding stochastic differential equation (SDE) \citep{li2017stochastic, mandt2016variational,jastrzkebski2017three,Kushner,ljung1977,barrett2021implicit}. A central requirement in establishing such connections has been that the learning rate goes to zero so that the trajectory of the objective function over the lifetime of the algorithm converges to the solution of an SDE. Unfortunately, in the small learning rate limit, the only limiting process that results from SGD is actually an ODE \citep{yaida2018fluctuationdissipation}. 
 \begin{wrapfigure}[31]{r}{0.46\textwidth}
%\vspace{-1cm}
 \centering
       \includegraphics[width = 0.95\linewidth]{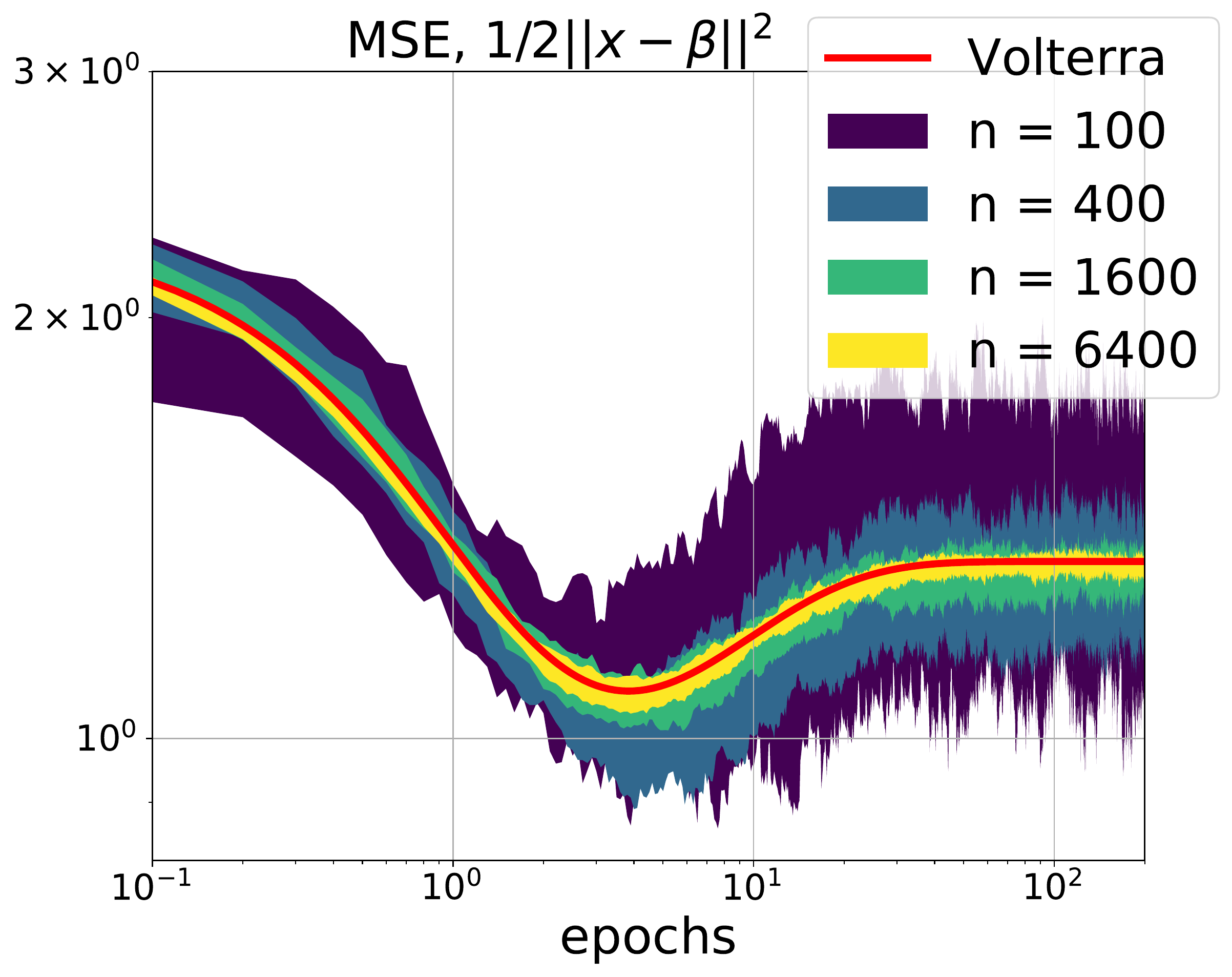}
     \caption{\textbf{Concentration of mean squared error (MSE), $\tfrac{1}{2}\|\xx-\bbeta\|_2^2$, for SGD} on a Gaussian random $\ell^2$-regularized least-squares problem (Section~\ref{sec:formal_setup}) where $\bbeta \sim N(\bm{0}, \II_d)$ is the ground truth signal and a generative model $\bb =  \AA \bbeta + \eeta$ where entries of $\eeta$ iid standard normal with $\|\eeta\|_2^2 = 2.25$, $n = 0.9d$ with $\ell^2$-regularization parameter $\delta = 0.1$, SGD was initialized at $\xx_0 \sim N(\bm{0}, 4 \II_d)$ (independent of $\AA$, $\bbeta$); an $80\%$ confidence interval (shaded region) over $10$ runs for each $n$, a constant learning rate for SGD was applied, $\gamma = 0.8$. Any quadratic statistic, such as the MSE, becomes non-random in the large limit and all runs of SGD converge to a deterministic function $\Omega_t$ (red) solving a Volterra equation \eqref{eqa:PLoss}.
     }
    \label{fig:MSE_Gaussian}
\end{wrapfigure}
Moreover, the learning rate schedules used in practical high-dimensional settings are often much larger than would be amenable to such analysis. These observations highlight the need for an alternative approach that does not require a vanishing learning schedule. 
%to . In this article, pursue a connection between SGD and an SDE that arises not as the result of small learning rate, but rather from the high-dimensionality of the problem.
%
% Much has been written identifying various aspects of SGD that are desirable for such high-dimensional problems, and to some extent, there exist convergence and risk guarantees for SGD which are valid in high-dimensions (literature review?).
%
We define a class of high-dimensional $\ell^2$-regularized least square problems, for which we can prove a quantitative comparison between SGD and an SDE, which we call \emph{homogenized SGD} (c.f.\ \eqref{eqF:HSGD}) introduced in \citep{paquette2021dynamics,paquetteSGD2021}, which improves with the dimensionality of the problem without sending learning rate to zero.  Furthermore, homogenized SGD is exactly solvable, in the sense that its training loss follows a Volterra integral equation, with parameters given by the least-squares problem.  Moreover, using this description, we give an exact expression for the generalization performance of SGD in high dimensions, for arbitrary learning rate schedules.% \jp{perhaps instead: ``for arbitrary learning rate schedules"?}. 

Many common ERM problems fit into our framework and are amenable to exact analysis and we examine two examples in detail. First, we show that high-dimensional linear regression in the $d \propto n$ regime falls into this class. Second, we analyze the random features model, with Gaussian data and Gaussian features, again in the proportionate scaling regime where the number of random features $d$ is proportionate to the size of the dataset $n$. % We will not focus on the statistical implications of this approximation here; for those, see the companion article \citep{Us}, although we give a flavor of the statistical applications here. 
The focus of this paper is on the mathematical aspects of the problem. For the statistical implications of homogenized SGD, we give a few motivating statistical applications here.
% Add this back when we have NeurIPS
% For the statistical implications of homogenized SGD, see our companion article \citep{Us}, although we also give a few motivating statistical applications here.

\subsection{Formal problem setup} \label{sec:formal_setup}
To formalize the analysis, we define the \textit{$\ell^2$-regularized least squares problem}:  %(for empirical risk minimization):
 \begin{equation}\label{eq:rr}
    \argmin_{\xx\in\mathbb{R}^d} \Big\{ f(\xx) \defas  \frac{1}{2}\|\AA \xx - \bb\|^2 + \frac{\delta}{2} \|\xx\|^2 =\sum_{i=1}^n \underbrace{ \frac{1}{2} \left ( (\aa_i \xx - b_i)^2  + \frac{\delta}{n} \|\xx\|^2 \right)}_{\defas f_i(\xx)} \Big\}.
\end{equation}
The fixed parameter $\delta > 0$ controls the regularization strength and it is independent of $n$ and $d$.  We focus on setups where the parameter choices $n$ and $d$ are large, but we do not require that they are proportional.  Instead, we need the following:
\begin{assumption}[Polynomially related]\label{ass:poly}
There is an $\alpha \in (0,1)$ so that
\[
d^{\alpha} \leq n \leq d^{1/\alpha}.
\]
\end{assumption}
\noindent Moreover, our results only gain power when one (and hence both) of these parameters are large.

The data matrix $\AA \in \mathbb{R}^{n \times d}$ and the labels $\bb$ may be deterministic or random; we formulate our theorems for deterministic $\AA$ and $\bb$ in \eqref{eq:rr} satisfying various assumptions, and in the applications of these theorems to statistical settings, we shall show that random $\AA$ and $\bb$ \eqref{eq:rr} satisfy those assumptions. These assumptions are motivated by the empirical risk minimization problem (ERM) and in particular the case where the augmented matrix $[\AA ~|~\bb]$ has rows that are independent and sampled from some common distribution. We also note that the problem \eqref{eq:rr} is homogeneous, in that if we divide all of $\AA$, $\bb$ and $\sqrt{\delta}$ by any desired scalar, we produce an equivalent optimization problem. As such, we may also adopt the following normalization convention without loss of generality.

%For a simple motivating example, in the \emph{Gaussian random ridge regression problem}, each row of $[\AA ~|~ \bb]$ is sampled independently from a multivariate normal distribution, with standardized rows ($\Exp \aa_i = 0$ and $\EE[\|\aa_i\|^2]=1$).  We will not need this standardization assumption per se, but rather we assume the following normalization convention.

\begin{assumption}[Data--target normalization] \label{assumption:Target} 
There is a constant $C>0$ independent of $d$ and $n$ such that
the spectral norm of $\AA$ is bounded by $C$
and
the target vector $\bb \in \mathbb{R}^n$ is normalized so that $\|\bb\|^2 \leq C$.
\end{assumption}
%\noindent We note that the problem \eqref{eq:rr} is homogeneous, in that if we divide all of $\AA$, $\bb$ and $\sqrt{\delta}$ by any desired scalar, we produce an equivalent optimization problem.  Hence, Assumption \ref{assumption:Target} is rather a normalization convention which so far does not limit the generality of the problem.
More importantly, we also assume that the data and targets resemble typical unstructured high-dimensional random matrices. One of the principal qualitative properties of high-dimensional random matrices is the \emph{delocalization of their eigenvectors}, which refers to the statistical similarity of the eigenvectors to uniform random elements from the Euclidean sphere. The precise mathematical description of this assumption is most easily given in terms of resolvent bounds. The resolvent $R(z; \MM)$ of a matrix $\MM \in \mathbb{R}^{d \times d}$ is
\[
  R(z; \MM) =  (z\II_d-\MM)^{-1} \quad \text{for $z \in \mathbb{C}$.}
\]
In terms of the resolvent, we suppose the following:
\begin{assumption}\label{ass: laundry_list}
  Suppose $\Omega$ is the contour enclosing $[0,1+\|\AA\|^2]$
  at distance $1/2$.
  %Let $\Omega'$ be a positively oriented smooth contour enclosing $[0,1]$ of length at most $100\pi$ and contained in the complex disk of radius $3$.  Let $\Omega$ be the dilation of $\Omega'$ by $(1+\|\AA\|_{op}^2)$, i.e.\ $\Omega = 
  %(1+\|\AA\|_{op}^2) \times \Omega'$.
  %Suppose spectral norm of $\AA^T\AA$ is bounded by $1$ with high probability.
  Suppose there is a $\theta \in (0,\tfrac 12)$ for which
  %and an event $\mathcal{G}$ that holds with high
  %probability on which
  \begin{enumerate}
    \item
      \( \displaystyle
	\max_{z \in \Omega} \max_{1 \leq i \leq n} |\ee_i^T R(z; \AA\AA^T) \bb| \leq n^{\theta-1/2}.
      \)
    \item
      \( \displaystyle
	\max_{z \in \Omega} \max_{1 \leq i \neq j \leq n} |\ee_i^T R(z; \AA\AA^T) \ee_j^T| \leq n^{\theta-1/2}.
      \)
    \item
      \( \displaystyle
      \max_{z \in \Omega} \max_{1 \leq i \leq n} |\ee_i^T R(z; \AA\AA^T) \ee_i - \tfrac 1n\tr R(z; \AA\AA^T)| \leq n^{\theta-1/2}.
      \)
  \end{enumerate}
\end{assumption}
\noindent Only the resolvent of $\AA \AA^T$ appears in these assumptions, and so in effect we are only assuming statistical properties on the left singular-vectors of $\AA$.  This assumption reflects the common formulation of ERM in which the rows of $\AA$ are independent, and so the left singular-vectors of $\AA$ are expected to be delocalized (under some mildness assumptions on the distributions of the rows). The first condition, which involves the interaction between $\AA\AA^T$ and $\bb$, can be understood as requiring that $\bb$ is not too strongly aligned with the left singular-vectors of $\AA$. The other two conditions can be viewed as corollaries of delocalization of the left singular-vectors.

\paragraph{Notation.} In this paper, we adhere whenever possible to the following notation. We denote vectors in lowercase boldface $(\xx)$ and matrices in upper boldface $(\AA)$ with processes such as gradient flow in calligraphic script $\bm{\mathscr{X}}$. Unless otherwise specified, the norm $\|\cdot\|$ is taken to be the standard Euclidean norm if it is applied to a vector and the operator 2-norm if it is applied to a matrix. The vector $\ee_i$ is a standard basis vector with a $1$ in the ith coordinate and otherwise $0$ and the matrix $\II_d$ is the $(d \times d)$ identity matrix. For a matrix $\AA$ and a vector $\bb$, we denote constants depending on $\AA$ and $\bb$, $C(\AA, \bb)$, as those bounded by an absolute constant multiplied by $\|\AA\|$ and $\|\bb\|$. For convenience we will also use the subgaussian norm $\|\cdot\|_{\psi_2}$ (see e.g., \citep{vershynin2018high} for more details) which is equivalent up to universal constants to the optimal variance proxy in a Gaussian tail bound for a random variable $X$ i.e.,
\[
\| X \|_{\psi_2} 
\asymp
\inf \{ V  > 0 : \forall~t > 0~\Pr( |X| > t) \leq 2 e^{-t^2/V^2}\}.
\]
We say an event $B$ holds with overwhelming probability (w.o.p.) if, for every fixed $D > 0$, $\Pr(B) \ge 1 -C_D d^{-D}$ for some $C_D$ independent of $d$.  

\subsection{Algorithmic setup}

We solve the ERM problem \eqref{eq:rr} using stochastic gradient descent (SGD) with learning rate $\gamma_k$: for an initial vector $\xx_0 \in \mathbb{R}^d$, we define a sequence of iterates $\{\xx_k\}$ which obey the recurrence,
\begin{equation} \begin{aligned} \label{eq:sgd}
    \xx_{k+1} = \xx_{k} - \gamma_k \nabla f_{i_k}(\xx_k) = \xx_k - \gamma_k \AA^T \ee_{i_k} \ee_{i_k}^T ( \AA \xx_k - \bb) - \tfrac{\gamma_k \delta}{n} \xx_k\,.
\end{aligned} \end{equation}
The rows $\{i_1,i_2,\dots\}$ are chosen uniformly at random, and thus the batch size is one.  Earlier work \citep{paquetteSGD2021} suggests that under similar (albeit more restrictive) assumptions, minibatch SGD with batch-size $\beta = o(n)$ produces the same dynamical behavior as SGD after sampling single-batch SGD at iteration counts $\beta \N$.  Therefore, we content ourselves with the simpler case with batch size equal to one.

As we want to give descriptions of the dynamics of SGD which are consistent across increasing dimensions, we suppose that $\gamma_k$ has a smoothly varying schedule.  Specifically, we suppose:
\begin{assumption}\label{assumption:lr}
There is a continuous bounded function $\gamma : [0, \infty)\to [0,\infty)$ such that $\gamma_k = \gamma(k/n)$ for all $k$.  As such 
\[
\widehat{\gamma} \defas \sup_{t} \gamma(t)  < \infty.
\]
\end{assumption}
\noindent Although the classic Robbins-Monro $\gamma_k = \tfrac1k$ does not technically fit into this framework, for problems in which Assumptions \ref{assumption:Target} and \ref{ass: laundry_list} are in effect (or more generally where some non-trivial fraction of the samples are needed to commence learning), the classic $1/k$ rate is often too slow to produce any practically relevant results. Moreover, from a theoretical point of view, such a rate produces behavior similar to gradient flow (see \eqref{eq:GF}) \citep{}, and it could be viewed as effectively non-stochastic. In our high-dimensional setting, a suitable analogue of the Robbins-Monro schedule that does satisfy our assumptions and yields nontrivial behavior is $\gamma_k = \tfrac{n}{n+k}=\tfrac{1}{1+k/n}$.

As for the initialization $\xx_0$, we need to suppose that it, does not interact too strongly with the \emph{right} singular-vectors of $\AA$.  %\jp{Changed from $\AA \AA^T$}.
In the spirit of Assumption \ref{ass: laundry_list}, it suffices to assume the following:
\begin{assumption}\label{assumption:init}
Let $\Omega$ be the same contour as in Assumption \ref{ass: laundry_list} and let $\theta \in (0,\tfrac 12)$.  Then
      \[
	\displaystyle \max_{z \in \Omega} \max_{1 \leq i \leq d} |\ee_i^T R(z; \AA^T\AA) \xx_0| \leq n^{\theta-1/2}.
      \]
\end{assumption}
\noindent Note that, as a simple but common case, this assumption is surely satisfied for $\xx_0 = \bm{0}$.  In principle, this assumption is general enough to allow for $\xx_0$ which are correlated with $\AA$ in a nontrivial way, but we do not have an application for such an initialization.  For a large class of nonzero initializations independent from $(\AA,\bb)$, this assumption is satisfied, as a corollary of Assumption \ref{ass: laundry_list}:
\begin{lemma}\label{lem:xo}
    Suppose that Assumption \ref{ass: laundry_list} holds with some $\theta_0 \in (0,\tfrac12)$ and that $\xx_0$ is chosen randomly, independent of $(\AA,\bb)$, and with independent coordinates in such a way that for some $C$ independent of $d$ or $n$
    %so that for some $R > 0$ independent of $n$ 
    \[
    \|\Exp \xx_0\|_\infty \leq C/n
    \quad\text{and}\quad
    \max_i\|(\xx_0-\Exp \xx_0)_i\|^2_{\psi_2} \leq Cn^{2\theta_0-1}.
    \]
    For any $\theta >\theta_0$, Assumption \ref{assumption:init} holds with any $\theta > \theta_0$ on an event of probability tending to $1$ as $n \to \infty$.
\end{lemma}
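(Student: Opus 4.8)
The plan is to write $\xx_0=\Exp\xx_0+\zz$ with $\zz\defas\xx_0-\Exp\xx_0$ (so $\zz$ has independent, mean‑zero coordinates with $\|\zz_j\|_{\psi_2}^2\le Cn^{2\theta_0-1}$), fix some $\theta'\in(\theta_0,\theta)$, and show that each of $\max_{z\in\Omega}\max_i|\ee_i^T R(z;\AA^T\AA)\Exp\xx_0|$ and $\max_{z\in\Omega}\max_i|\ee_i^T R(z;\AA^T\AA)\zz|$ is at most $\tfrac12 n^{\theta'-1/2}\le\tfrac12 n^{\theta-1/2}$ with probability tending to $1$. Two elementary facts are used throughout: since $\AA^T\AA$ and $\AA\AA^T$ have spectrum in $[0,\|\AA\|^2]\subseteq[0,1+\|\AA\|^2]$ while $\Omega$ stays at distance $\ge\tfrac12$ from this interval, $\|R(z;\AA^T\AA)\|,\|R(z;\AA\AA^T)\|,|z|^{-1}\le 2$ uniformly on $\Omega$; and the push‑through identity $\AA R(z;\AA^T\AA)=R(z;\AA\AA^T)\AA$ gives $R(z;\AA^T\AA)=\tfrac1z\II_d+\tfrac1z\AA^T R(z;\AA\AA^T)\AA$.

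For the fluctuation term, fix $z$ and write $\ee_i^T R(z;\AA^T\AA)\zz=\sum_{j}[R(z;\AA^T\AA)]_{ij}\zz_j$, a linear combination of independent mean‑zero sub‑Gaussians, hence sub‑Gaussian with variance proxy at most $(\max_j\|\zz_j\|_{\psi_2}^2)\sum_j|[R(z;\AA^T\AA)]_{ij}|^2\le Cn^{2\theta_0-1}\|R(z;\AA^T\AA)\|^2\le 4Cn^{2\theta_0-1}$. I would then discretize $\Omega$ by a net $\mathcal N\subset\Omega$ of $\mathrm{poly}(d)$ points at spacing $d^{-5}$; since $z\mapsto\ee_i^T R(z;\AA^T\AA)\zz$ has derivative $\ee_i^T R(z;\AA^T\AA)^2\zz$ of modulus $\le 4\|\zz\|\le 4\sqrt d\max_j|\zz_j|$, a union bound of the sub‑Gaussian tail over the pairs $(i,z)\in[d]\times\mathcal N$ at level $\kappa n^{\theta_0-1/2}\sqrt{\log d}$ (together with the trivial bound $\max_j|\zz_j|\le n^{\theta_0-1/2}\sqrt{C\log d}$, itself valid with probability tending to $1$) yields $\max_{z\in\Omega}\max_i|\ee_i^T R(z;\AA^T\AA)\zz|\lesssim n^{\theta_0-1/2}\sqrt{\log d}$ with probability tending to $1$. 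Because $\log d\le(1/\alpha)\log n$ by Assumption \ref{ass:poly} and $\theta'>\theta_0$, this is $\le\tfrac12 n^{\theta'-1/2}$ once $n$ is large. Conceptually this is the heart of the lemma: a random vector is automatically delocalized relative to the otherwise uncontrolled right singular vectors of $\AA$.

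For the mean term, the push‑through identity gives $\ee_i^T R(z;\AA^T\AA)\Exp\xx_0=\tfrac1z(\Exp\xx_0)_i+\tfrac1z\,\cc_i^T R(z;\AA\AA^T)(\AA\,\Exp\xx_0)$, where $\cc_i=\AA\ee_i$ is the $i$‑th column of $\AA$. The first summand obeys $|\tfrac1z(\Exp\xx_0)_i|\le 2\|\Exp\xx_0\|_\infty\le 2C/n\le\tfrac14 n^{\theta'-1/2}$ for $n$ large — this is exactly what the $C/n$ scaling of $\|\Exp\xx_0\|_\infty$ buys. For the second summand I would use Assumption \ref{ass: laundry_list}: writing $R(z;\AA\AA^T)=m(z)\II_n+E(z)$ with $m(z)=\tfrac1n\tr R(z;\AA\AA^T)$ ($|m(z)|\le 2$) and $\max_{k,\ell}|E(z)_{k\ell}|\le n^{\theta_0-1/2}$ (from items 2 and 3), split $\cc_i^T R(z;\AA\AA^T)(\AA\,\Exp\xx_0)=m(z)\langle\cc_i,\AA\,\Exp\xx_0\rangle+\cc_i^T E(z)(\AA\,\Exp\xx_0)$ and estimate each piece using $\|\Exp\xx_0\|_\infty\le C/n$, $\|\AA\|\le C$, and the uniform control on the rows of $\AA$ that Assumption \ref{ass: laundry_list} itself supplies — e.g.\ $\max_k(\AA\AA^T)_{kk}\le C^2+O(n^{\theta_0-1/2})$, which follows by integrating the bound in item 3 against $z$ around $\Omega$ via Cauchy's formula, and similar entrywise control — so that the whole mean term is at most $\tfrac12 n^{\theta'-1/2}$. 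The mechanism one has to make rigorous here is that a vector of $\ell^\infty$‑norm $O(1/n)$ cannot be simultaneously well‑aligned with a localized singular direction of $\AA$ and have its resolvent image concentrate on a single coordinate.

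Combining, on the intersection of the (probability‑$\to 1$) events from the previous two paragraphs, $\max_{z\in\Omega}\max_i|\ee_i^T R(z;\AA^T\AA)\xx_0|\le n^{\theta'-1/2}\le n^{\theta-1/2}$, which is Assumption \ref{assumption:init} with parameter $\theta$. I expect the main obstacle to be precisely the mean term: since delocalization of the right singular vectors of $\AA$ is \emph{not} assumed and $d$ may be polynomially larger than $n$, the naive operator‑norm estimate $|\ee_i^T R(z;\AA^T\AA)\Exp\xx_0|\le 2\|\Exp\xx_0\|_2\le 2\sqrt d\,C/n$ is too weak by a factor $\sqrt d$, and one must instead exploit the entrywise resolvent structure of Assumption \ref{ass: laundry_list} together with the precise $\ell^\infty$ normalization of $\Exp\xx_0$. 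The fluctuation term, by contrast, is routine given the operator‑norm bound on the resolvent and sub‑Gaussian concentration.
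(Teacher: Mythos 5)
Your fluctuation argument is essentially the paper's. The paper first invokes Lemma~\ref{lem:fb} to pass to a finite set $D$ with $\Im z\geq \delta$ and then applies the Ward identity $\sum_j |R_{ij}|^2 = (\Im z)^{-1}\Im R_{ii}\leq\delta^{-2}$; you instead note that every $z\in\Omega$ is already at distance $\geq\tfrac12$ from $[0,1+\|\AA\|^2]$, so $\|R(z;\AA^T\AA)\|\leq 2$ and $\sum_j|R_{ij}|^2\leq 4$ directly on $\Omega$, then discretize $\Omega$ by an explicit net. Both routes give the same $O(n^{2\theta_0-1})$ sub-Gaussian variance proxy, the same union bound, and the same absorption of the polylog into the $\theta>\theta_0$ slack. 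Your version is arguably cleaner: it avoids the detour through $D$, which the paper only needs because the Ward identity degenerates on the parts of $\Omega$ that touch the real axis. Either is fine.

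On the mean term you are more explicit than the paper (which dispatches it in a single sentence), and your instinct that one cannot rely on item~1 of Assumption~\ref{ass: laundry_list} (it only concerns $\bb$) is right. But there is a misdiagnosis in the last paragraph of your proposal. The expansion you set up via the push-through identity and the entrywise bounds on $R(z;\AA\AA^T)$ from items~2--3 does not actually beat the na\"ive operator-norm estimate: when you chase the $\|\Exp\xx_0\|_\infty\leq C/n$ normalization through $\cc_i^T R(z;\AA\AA^T)\AA\Exp\xx_0$, you still end up paying a factor $\sqrt{d}$ (e.g.\ $\|\AA\Exp\xx_0\|\leq C^2\sqrt{d}/n$, and the entrywise $n^{\theta_0-1/2}$ control on $R$ does not remove the dimension count in the bilinear form). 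Conversely, the ``too weak'' operator-norm bound $|\ee_i^TR(z;\AA^T\AA)\Exp\xx_0|\leq \|R\|\,\|\Exp\xx_0\|_2\leq 2C\sqrt{d}/n$ is in fact already $\leq n^{\theta-1/2}$ once $d\lesssim n^{1+2\theta}$ --- in particular whenever $d\asymp n$, which is the regime of every application in this paper (Theorems~\ref{thm:Ab}, \ref{thm:AbR}, \ref{thm:random_features} all assume proportionate scaling). So the paper's terse dismissal of the mean term is justified in its intended regime, and the place you expected the hard work to be is actually the easy part; your genuine observation, rather, is that for the full range allowed by Assumption~\ref{ass:poly} (say $d\gg n^{1+2\theta}$) the hypothesis $\|\Exp\xx_0\|_\infty\leq C/n$ alone does not visibly suffice, and the lemma is only used, and only unambiguously true, when $d$ is not too much larger than $n$.
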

\noindent Note that this assumption allows for deterministic $\xx_0$ having maximum norm $\mathcal{O}(1/n)$, as well as iid centered subgaussian vectors of Euclidean norm $\mathcal{O}(1)$.

\subsection{Homogenized SGD} \label{sec:homogenized_SGD_intro_1}
Our main result is a comparison of the dynamical behavior of SGD \eqref{eq:sgd} to another process, \emph{homogenized SGD} (HSGD) applied to the $\ell^2$-regularized least-squares \eqref{eq:rr}.  To formulate HSGD, we will refer frequently to the empirical risk
\begin{equation}\label{eq:ERM}
\mathscr{L}(\xx)\defas\frac{1}{2}\|\AA \xx - \bb\|^2,
\end{equation}
which differs from the $\ell^2$-regularized objective function $f$ in \eqref{eq:rr} in that no regularizer has been added.
\emph{Homogenized SGD} is defined as the strong solution of the stochastic differential equation:
\begin{equation}\label{eqF:HSGD}
\dif \XX_t \defas
-\gamma(t) \nabla f(\XX_t) \dif t
+ \gamma(t) \sqrt{\tfrac{2}{n}\mathscr{L}(\XX_t)\nabla^2\mathscr{L}(\XX_t)}\dif \BB_t,
\end{equation}
where the initial conditions given by $\XX_0 = \xx_0$ and $(\BB_t : t \geq 0)$ a $d$--dimensional standard Brownian motion. The time variable of HSGD is defined so that one unit of time corresponds to $n$ steps of SGD \eqref{eq:sgd}.  Because SGD performs sampling with replacement, after $n$ steps, SGD will not have used each of the datapoints with high probability, but rather a constant fraction (approximately $1-1/e$) of them.

A natural point of comparison to HSGD is \emph{gradient flow}, which is the low-noise limit of \eqref{eqF:HSGD}.  Specifically, we define
\begin{equation}\label{eq:GF}
\dif \bm{\mathscr{X}}_t^{\text{gf}} \defas
-\nabla f(\bm{\mathscr{X}}_t^{\text{gf}}) \dif t,
\quad
\bm{\mathscr{X}}_0^{\text{gf}} = \xx_0\,.
\end{equation}
In the case that the objective function $f$ is the $\ell^2$-regularized least squares problem \eqref{eq:rr}, the gradient flow ODE is explicitly solvable, 
\begin{equation}\label{eq:GFS}
    \bm{\mathscr{X}}_t^{\text{gf}} = e^{-(\AA^T \AA + \delta \II_d)t} \xx_0 + (\AA^T \AA + \delta \II_d)^{-1} \big [ \II_d - e^{-(\AA^T \AA + \delta \II_d)t} \big ] \AA^T \bb.
\end{equation}
We can adjust the gradient flow solution to account for a learning rate $\gamma(t)$. If we let $\Gamma(t) = \int_0^t \gamma(s)\,\dif s$, then $\bm{\mathscr{X}}^{\text{gf}}_{\Gamma(t)}$ solves the ODE,
\(
\dif \bm{\mathscr{X}}_{\Gamma(t)}^{\text{gf}}=
-\gamma(t)\nabla f(\bm{\mathscr{X}}_{\Gamma(t)}^{\text{gf}})\,.
\)
Hence in \eqref{eqF:HSGD}, if we were to set the Brownian noise to $0,$ we would have nothing but gradient flow taken at time $\Gamma(t).$ This observation plays a significant role when comparing to the dynamics of SGD (see Section~\ref{sec:exact_solvability_SGD}).

Diffusion approximations to SGD have a long history.  In the stochastic approximation literature, it appears as a natural counterpart to ODE methods (c.f.\ \cite{Kushner}, \cite{LjungPflug}).  However, these are methods that require the vanishing learning rate (such as $\gamma_k = 1/k$) and moreover, in the setup we have suggested here, the resulting SDE (which only arises in an asymptotic comparison, as is standard with stochastic approximation theory) has a vanishing diffusion term for such an aggressive learning-rate decay --- the asymptotic trajectory of SGD is also approximated by the ODE gradient flow with time change $\Gamma(n) \approx \log n.$

A more natural point of comparison is the \emph{stochastic modified equation} of \citep{mandt2016variational,li2017stochastic,li2018measuring} which has been rigorously compared to the behavior of SGD \citep{li2019stochastic}.   To make a comparison with \citep{li2019stochastic}, we fix the learning rate $\gamma$ and we rescale time to be on the order of epochs.  With these changes, the SME solves,
\begin{equation}\label{eqF:SME}
\dif \bm{\mathscr{M}}_t \defas
-\gamma \nabla f(\bm{\mathscr{M}}_t) \dif t
+ \gamma \sqrt{n \SSigma(\bm{\mathscr{M}_t}})\dif \BB_t,
\quad\text{where}
\quad
\left\{
\begin{aligned}
&\SSigma(\xx) \defas \Exp\bigl(\nabla g_{I}(\xx) \otimes \nabla g_{I}(\xx)\bigr), \\
&g_I(\xx) \defas f_I(\xx) - \Exp f_I(\xx), \\
&I \defas \operatorname{Uniform}\{1,2,\ldots, n\}.
\end{aligned}
\right.
\end{equation}
The diffusion matrix $\SSigma$ of the SME is chosen to exactly match the covariance of the increments of SGD \eqref{eq:sgd}.  When applied to the $\ell^2$-regularized problem \eqref{eq:rr},
this matrix becomes (with $\aa_i$ the $i$-th row of $\AA$)
\[
\SSigma(\xx) 
=
\frac{1}{n}\sum_{i=1}^n (\aa_i\cdot \xx - \bb_i)^2 \aa_i^T \aa_i
-\frac{1}{n^2} \AA^T (\AA \xx - \bb)(\AA \xx - \bb)^T \AA.
\]
HSGD and the SME can be compared by replacing second term by $0$ and the first term by
\[
n\SSigma(\xx) 
\approx \frac{1}{n}
\biggl(
\sum_{i=1}^n (\aa_i\cdot \xx - \bb_i)^2 
\biggr)
\times
\biggl(
\sum_{i=1}^n \aa_i^T \aa_i
\biggr)
= \frac{2}{n} \mathscr{L}(\xx) \nabla^2 \mathscr{L}(\xx),
\]
which is the diffusion coefficient in HSGD.  

The SME has been used for a variety of purposes, such as optimal learning rate scheduling \citep{li2018measuring}, analysis of momentum terms \citep{li2017stochastic}, and prediction of test risk behavior \citep{smith2020on}; however, analysis of the SME is itself difficult, as the diffusion coefficient involves interactions between the functions $f_i$, and, to our knowledge, while the theory developed in \citep{li2019stochastic} provides dimension-independent comparisons, the resulting SME has not been analyzed in any high-dimensional setting.  Furthermore, the mathematical comparison which is proven in \citep{li2019stochastic}, on the time scale in \eqref{eqF:SME},  %\jp{which time scale?},
gives a comparison for time of order $\mathcal{O}(1/n)$ and with an error which is bounded by $\mathcal{O}(\gamma)$.  As such, in a high-dimensional setting, the comparison that exists between SME and SGD only provides a non-vanishing error over a vanishing window of time.

Because the SME naturally matches the drift and diffusion matrix of SGD, the above remarks might lead to a conjecture that in fact no comparison is possible between SGD and SDEs; indeed, in a fixed-dimensional analysis, \citet{yaida2018fluctuationdissipation} showed that there is no small learning-rate limit of SGD that produces nontrivial stochastic behavior. In contrast, we will show that in a high-dimensional limit, this is precisely what occurs (although when univariate statistics of this high-dimensional SDE are taken, almost deterministic behavior is seen).

\subsection{Exact Solvability of Homogenized SGD} \label{sec:exact_solvability_SGD}

In the $\ell^2$-regularized least square problem \eqref{eq:rr}, all nontrivial interactions between the coordinates of $\XX_t$ are mediated through a single scalar, the empirical risk $\mathscr{L}(\XX_t)$. In a high-dimensional setting, this empirical risk concentrates around a deterministic path $\Psi_t$. To define this path, we introduce the integrated learning rate $\Gamma$ and kernel $K$, for any $d \times d$ matrix $\PP$,
\begin{equation}\label{eqa:V}
\Gamma(t) = \int_0^t \gamma(s)\,\dif s,
\quad
\text{and}
\quad
K(t,s ; \PP) = 
\tfrac{1}{n}
\gamma^2(s)
\tr
\biggl(
\PP
(\nabla^2 \mathscr{L})
\exp\bigl( -2(\nabla^2 \mathscr{L} + \delta \II_d )( \Gamma(t) - \Gamma(s))\bigr)
\biggr).
\end{equation}

% \begin{wrapfigure}[21]{r}{0.5\textwidth}
% \vspace{-0.3cm}
%  \centering \includegraphics[width = 0.85\linewidth]{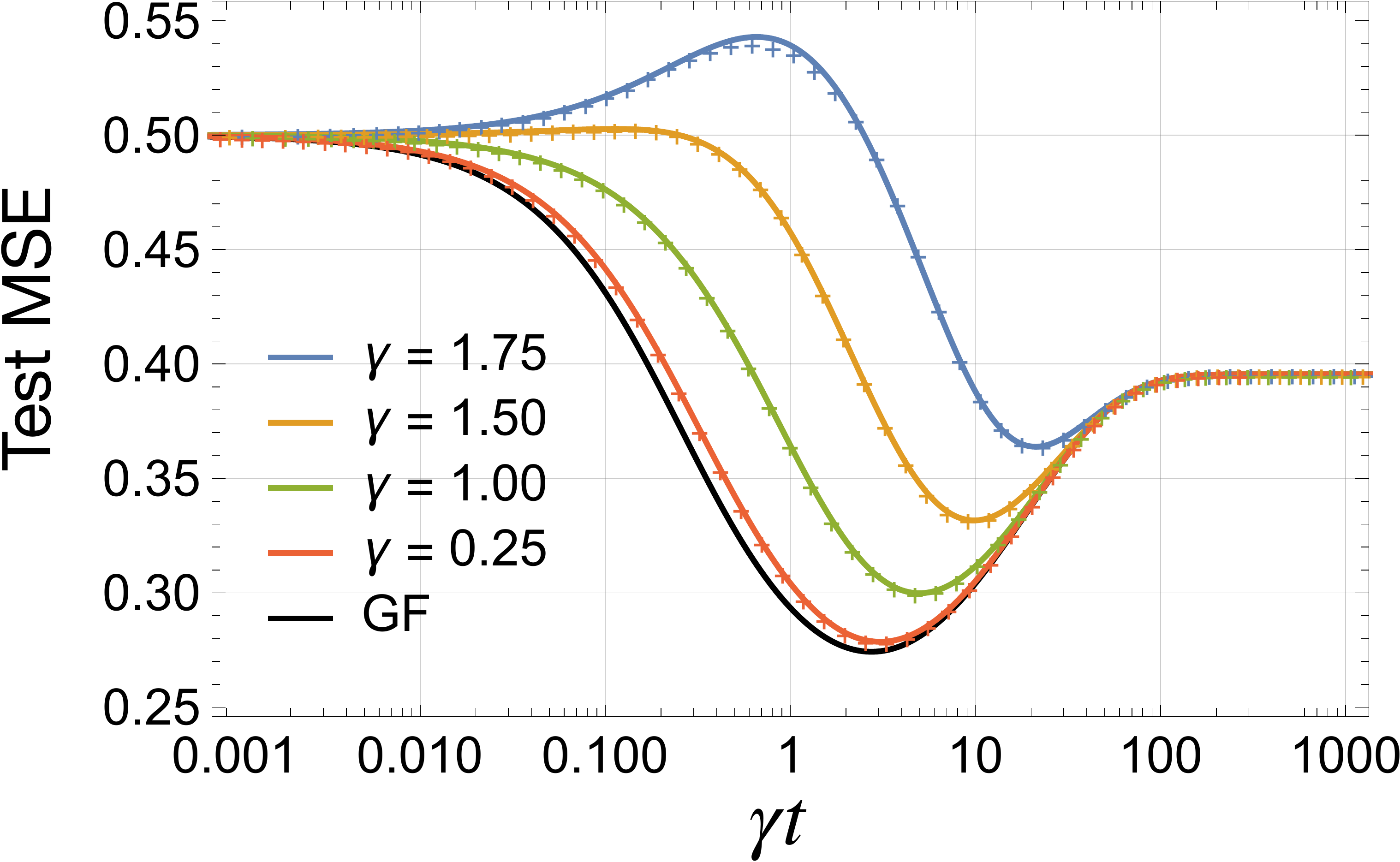}
%  \vspace{-0.25cm}
%      \caption{Time-dependent test error of the random feature model under SGD, given in Thm.~\ref{thm:random_features}, as a function of the integrated learning rate $\Gamma(t) = \gamma t$ for various constant learning rates $\gamma$, with zero regularization ($\delta = 0$), no additive noise ($\eta = 0$), and normalized ReLU activation function. Good agreement is observed between the asymptotic theoretical predictions (solid curves) and finite-size empirical simulations with $n=4800$, $d=8000$, and $n_0=4000$ (markers). For small learning rates, the curves approach that of gradient flow (black line).}
%     \label{fig:RF_gradient_flow}
% \end{wrapfigure}

The path $\Psi_t$ satisfies the Volterra integral equation: %(see Theorem \ref{thm:conc} for a precise formulation):
\begin{equation}\label{eqa:VLoss}
\Psi_t
=
\mathscr{L}\bigl( 
\bm{\mathscr{X}}^{\text{gf}}_{\Gamma(t)}\bigr)
+
\int_0^t 
K(t,s; \nabla^2 \mathscr{L}) 
\Psi_s
\dif s.
\end{equation}
Moreover for any other quadratic, $\mathcal{R} : \R^d \to \R$ independent of the Brownian path, the trajectory $\mathcal{R}(\XX_t)$ concentrates around
\begin{equation}\label{eqa:PLoss}
\Omega_t
=
\mathcal{R}\bigl( 
\bm{\mathscr{X}}^{\text{gf}}_{\Gamma(t)}\bigr)
%+
%\int_0^t 
%A(t,s; \nabla^2 \mathcal{E})
%\dif s +
+\int_0^t 
K(t,s; \nabla^2 \mathcal{R})
\Psi_s
\dif s.
\end{equation}
Note the trajectories of gradient flows can computed explicitly using \eqref{eq:GFS}.

Under relatively weak assumptions, % Assumption \ref{ass:train} and \ref{ass:risk}, 
we can precisely connect homogenized SGD to the solutions of these equations.
These assumptions can roughly be summarized as stating that the functionals $\mathscr{L}$ and $\mathcal{R}$ depend on sufficiently many coordinates, so that some concentration of measure can take place.  The statistics we consider of the SGD/HSGD path are all of the following form:
\begin{definition}\label{def:quad}
A function $\mathcal{R} :\R^d \to \R$ is quadratic 
if it is a degree-$2$ polynomial or equivalently if can be represented by 
\[
\mathcal{R}(\xx) = \tfrac{1}{2}\xx^T \TT \xx + \uu^T \xx + c
\]
for some $d \times d$ matrix $\TT$, vector $\uu \in \R^d$ and scalar $c \in \R$.
For any quadratic, define the $H^2$--norm:
\[
\|\mathcal{R}\|_{H^2}
\defas
\|\nabla^2 \mathcal{R}\|
+
\|\nabla \mathcal{R}(0)\|
+|\mathcal{R}(0)|
=
\|\TT\|
+
\|\uu\|
+|c|.
\]
\end{definition}
\noindent We note that under Assumption \ref{assumption:Target} the empirical risk will have bounded $H^2$--norm.  Then:
\begin{theorem}\label{thm:trainrisk} Suppose $\mathcal{R} :\R^d \to \R$ is a quadratic with $\|\mathcal{R}\|_{H^2} \leq C$ for some $C >0$.  Suppose that $(\AA,\bb)$ satisfies for some $\epsilon > 0$
\begin{equation}\label{eq:trainrisk}
\tr(\AA^T\AA) \leq C n,
\quad
\|\bb\| \leq C,
\quad\text{and}\quad
\|\AA^T\AA\| \leq n d^{-\epsilon} \le C.
\end{equation}
Then for any $T > 0$ and for any $D>0$ there is an $C'>0$ sufficiently large that for all $d>0$
\[
\Pr\biggl[
\sup_{0 \leq t \leq T}\biggl\|
\begin{pmatrix} \mathscr{L}(\XX_t) \\ \mathcal{R}(\XX_t)\end{pmatrix}
-
\begin{pmatrix}\Psi_t \\ \Omega_t \end{pmatrix}
\biggr\| > d^{-\epsilon/2} 
\biggr] \leq C'd^{-D},
\]
where $\Psi_t$ and $\Omega_t$ solve \eqref{eqa:VLoss} and \eqref{eqa:PLoss}.
\end{theorem}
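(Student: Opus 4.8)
The plan is to leverage the fact that, for the quadratic objective \eqref{eq:rr}, the drift of \eqref{eqF:HSGD} is linear and the diffusion coefficient depends on the state only through the scalar $\mathscr{L}(\XX_t)$ and the fixed matrix $\nabla^2\mathscr{L}=\AA^T\AA$. Writing $\MM\defas\AA^T\AA+\delta\II_d=\nabla^2\mathscr{L}+\delta\II_d$, so that $\nabla f(\xx)=\MM\xx-\AA^T\bb$, and multiplying \eqref{eqF:HSGD} by the integrating factor $e^{\MM\Gamma(t)}$ (using that $\MM$, $\nabla^2\mathscr{L}$ and their exponentials commute), variation of constants gives the exact representation
\[
\XX_t=\gf_{\Gamma(t)}+\ZZ_t,\qquad \ZZ_t\defas\int_0^t e^{-\MM(\Gamma(t)-\Gamma(s))}\,\gamma(s)\,\sqrt{\tfrac2n\mathscr{L}(\XX_s)}\,(\nabla^2\mathscr{L})^{1/2}\,\dif\BB_s,
\]
where $\gf_{\Gamma(t)}$ is precisely the deterministic gradient-flow curve \eqref{eq:GFS} run to time $\Gamma(t)$. (Strong existence/uniqueness of $\XX_t$, and finiteness of $\sup_{s\le T}\E[\mathscr{L}(\XX_s)]$, follow as in \citep{paquetteSGD2021} from the at-most-linear growth of the drift and the $\sqrt{\mathscr{L}}$-growth of the diffusion.) Since $\mathcal{R}$ is quadratic its Taylor expansion terminates, giving the exact identity $\mathcal{R}(\XX_t)=\mathcal{R}(\gf_{\Gamma(t)})+\nabla\mathcal{R}(\gf_{\Gamma(t)})^T\ZZ_t+\tfrac12\ZZ_t^T\nabla^2\mathcal{R}\,\ZZ_t$, in which the middle term has mean zero.

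First I would extract the mean. Taking expectations and using It\^o's isometry together with the commutation of $e^{-\MM\cdot}$ with $\nabla^2\mathscr{L}$,
\[
\tfrac12\,\E\!\left[\ZZ_t^T\nabla^2\mathcal{R}\,\ZZ_t\right]=\tfrac12\tr\!\left(\nabla^2\mathcal{R}\;\E[\ZZ_t\ZZ_t^T]\right)=\int_0^t K\!\left(t,s;\nabla^2\mathcal{R}\right)\E\!\left[\mathscr{L}(\XX_s)\right]\dif s,
\]
with $K$ exactly the kernel \eqref{eqa:V}. Specialising $\mathcal{R}=\mathscr{L}$ shows that $t\mapsto\E[\mathscr{L}(\XX_t)]$ solves the linear Volterra equation \eqref{eqa:VLoss}; since that kernel is continuous (hence bounded on $[0,T]^2$) and nonnegative, the equation has a unique continuous solution, so $\E[\mathscr{L}(\XX_t)]=\Psi_t$, and feeding this back gives $\E[\mathcal{R}(\XX_t)]=\Omega_t$. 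A Gronwall estimate for \eqref{eqa:VLoss} also yields $\sup_{t\le T}\Psi_t\le C(\AA,\bb,T,\widehat\gamma)$.

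The hard part will be the concentration. I would first establish an a priori localisation: applying It\^o's formula to $\log(1+\mathscr{L}(\XX_t))$ and using $\|\nabla\mathscr{L}(\xx)\|^2\le 2\|\AA\|^2\mathscr{L}(\xx)$, $\tr((\nabla^2\mathscr{L})^2)\le\|\AA\|^2\tr(\AA^T\AA)\le Cn$, and $|\nabla\mathscr{L}(\xx)\cdot\xx|=O(1+\mathscr{L}(\xx))$, this process has $O(1)$ drift and diffusion coefficient $O(1/n)$, so an exponential-martingale/Gronwall argument gives $\sup_{t\le T}\mathscr{L}(\XX_t)\le K$ with overwhelming probability for a constant $K=K(\AA,\bb,T,\widehat\gamma)$. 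On this event the second moments of the stochastic convolution are controlled by the assumptions \eqref{eq:trainrisk}: $\tr\E[\ZZ_t\ZZ_t^T]=O(1)$ (from $\tr(\AA^T\AA)\le Cn$) while $\|\E[\ZZ_t\ZZ_t^T]\|$ and $\tr((\nabla^2\mathcal{R}\,\E[\ZZ_t\ZZ_t^T])^2)$ are small (from $\|\AA^T\AA\|\le nd^{-\epsilon}$ and the $\delta$-regularised exponential decay inside $K$), so the convolution is ``spread out''. Then, using the decomposition of the first paragraph, I bound the two contributions to $\mathcal{R}(\XX_t)-\Omega_t$ (and to $\mathscr{L}(\XX_t)-\Psi_t$): after the time change $\ZZ_t=e^{-\MM\Gamma(t)}\widetilde\ZZ_t$ the linear term $\nabla\mathcal{R}(\gf_{\Gamma(t)})^T\ZZ_t$ becomes a continuous martingale in $t$ whose quadratic variation is controlled on the localisation event, so Dubins--Schwarz plus a Gaussian maximal inequality bound its supremum; the quadratic term $\tfrac12\ZZ_t^T\nabla^2\mathcal{R}\,\ZZ_t$ I would expand by It\^o's formula, controlling its drift against $\mathscr{L}(\XX_s)$ and its martingale part by Freedman's inequality, the relevant Hanson--Wright-type variance input again coming from \eqref{eq:trainrisk}. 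Since $\ZZ_t$ is coupled to the very quantity $\mathscr{L}(\XX_s)$ being estimated, I close the argument with a stopping-time bootstrap — run all of the above on $[0,\tau]$ where $\tau$ is the first time the error exceeds the target, and combine the martingale bounds with a Volterra/Gronwall inequality for the error to conclude that $\tau>T$ on the relevant overwhelming-probability event — which removes the circularity; finally a Lipschitz-in-$t$ grid-plus-union-bound argument upgrades the pointwise control to the stated supremum bound.

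I expect the genuine obstacle to be exactly this last part: identifying the mean (first two paragraphs) is essentially algebraic, whereas turning the second-moment estimates into uniform-in-time, overwhelming-probability deviation bounds while respecting the self-referential coupling between $\ZZ_t$ and $\mathscr{L}(\XX_t)$ is where the work and the resolvent-type hypotheses \eqref{eq:trainrisk} really enter, through concentration of linear functionals and quadratic forms of the near-Gaussian vector $\ZZ_t$ together with the Gronwall closing.
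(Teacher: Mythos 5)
Your proposal follows essentially the same route as the paper's proof: variation of constants to write $\XX_t = \gf_{\Gamma(t)} + \ZZ_t$, It\^o isometry to derive the Volterra equation for the conditional mean, a logarithmic-supermartingale a priori bound on $\mathscr{L}(\XX_t)$, and a Gronwall-plus-martingale-concentration closing argument, finished with a mesh/union-bound step to upgrade to the supremum over $t$. The one minor imprecision --- asserting that $\nabla\mathcal{R}(\gf_{\Gamma(t)})^T\ZZ_t$ is a martingale in $t$ (the $t$-dependent prefactors $e^{-\MM\Gamma(t)}$ and $\nabla\mathcal{R}(\gf_{\Gamma(t)})$ spoil this) --- is exactly what the paper's Steps~4--5 handle by freezing $t$ to obtain genuine martingales in a separate variable $u$ and then meshing in $t$, which is precisely the grid argument you already invoke at the end.
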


\begin{figure}[t]
    \centering
    \includegraphics[scale =0.18]{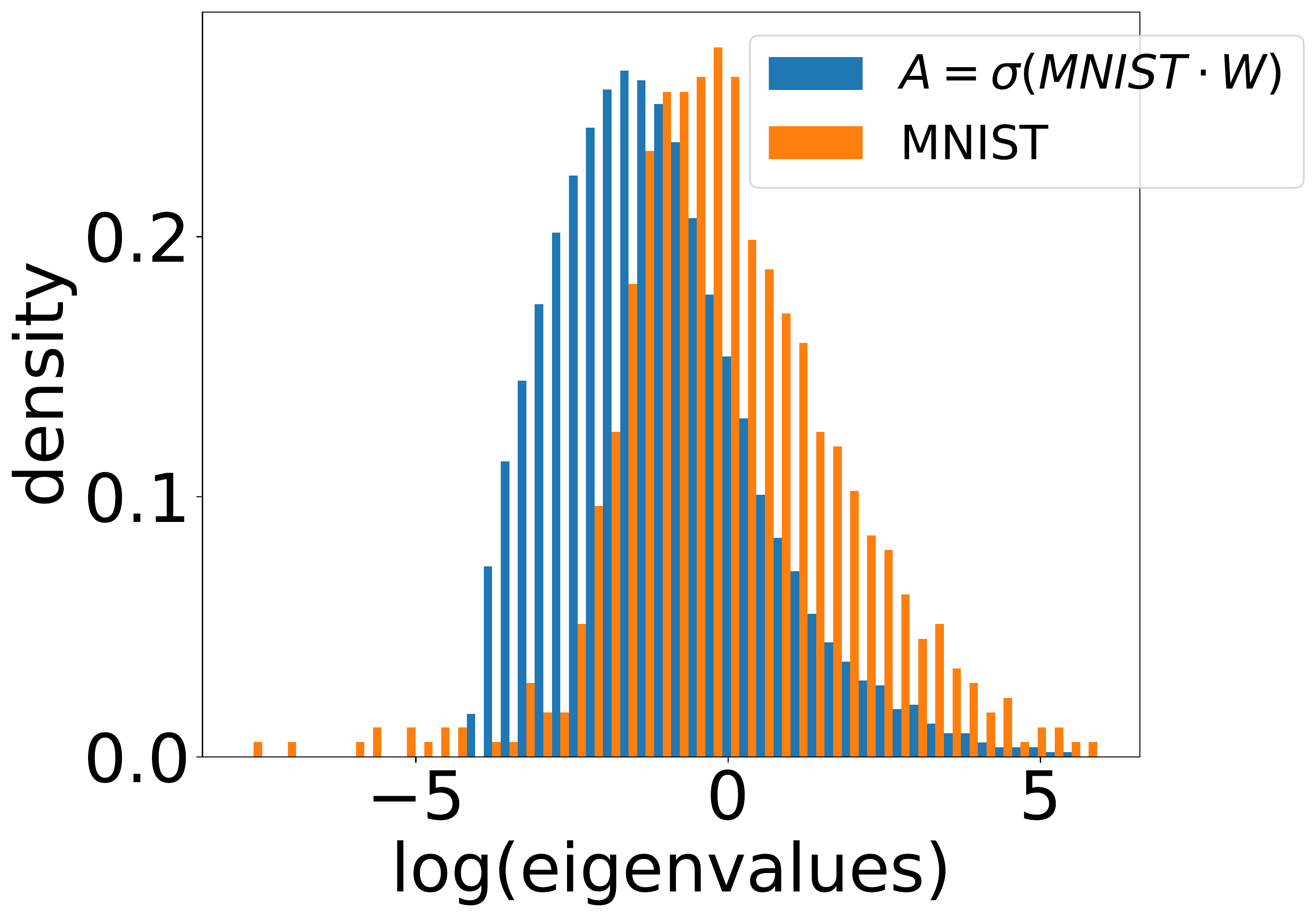} \includegraphics[scale =0.18]{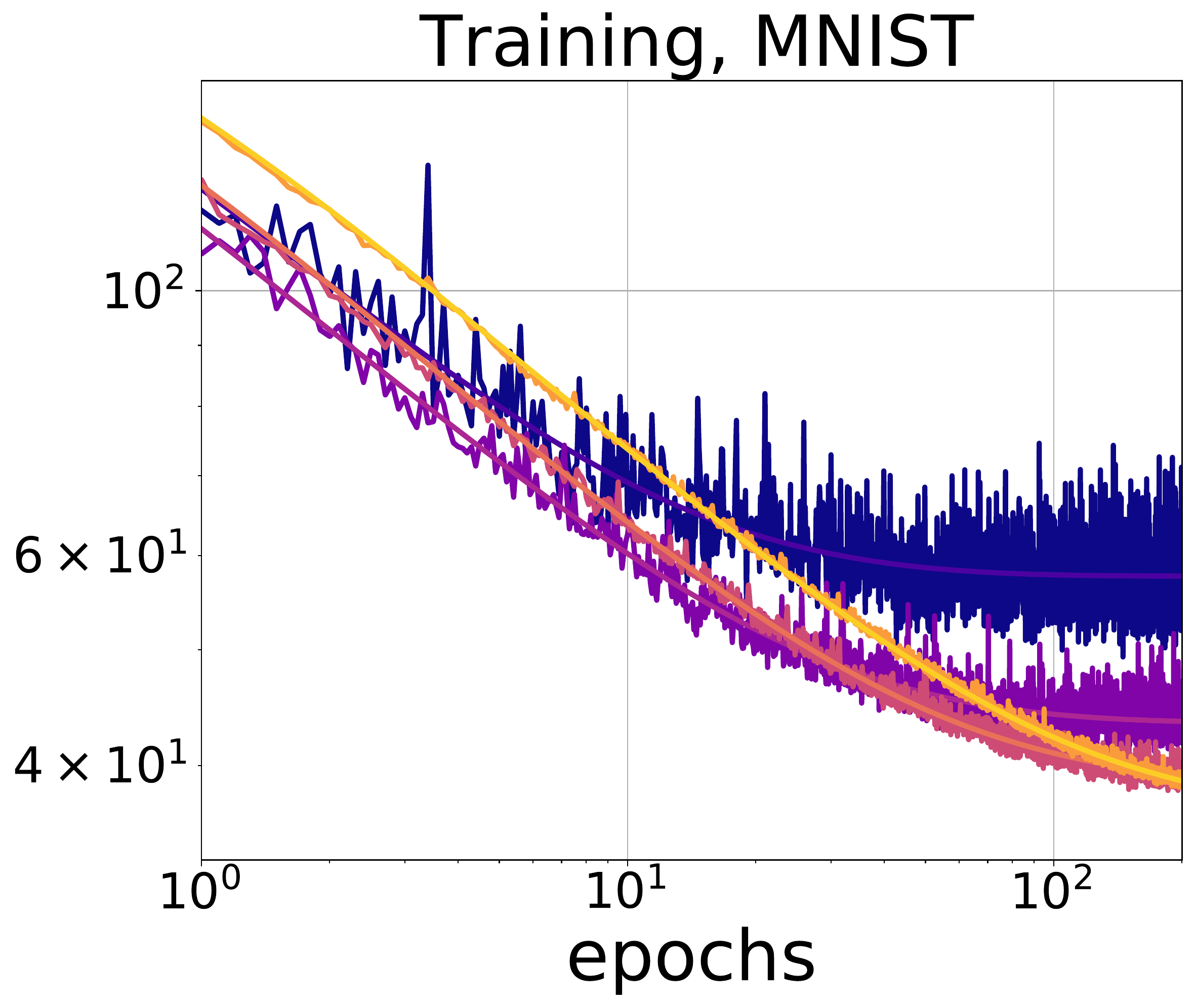} 
    \includegraphics[scale =0.18]{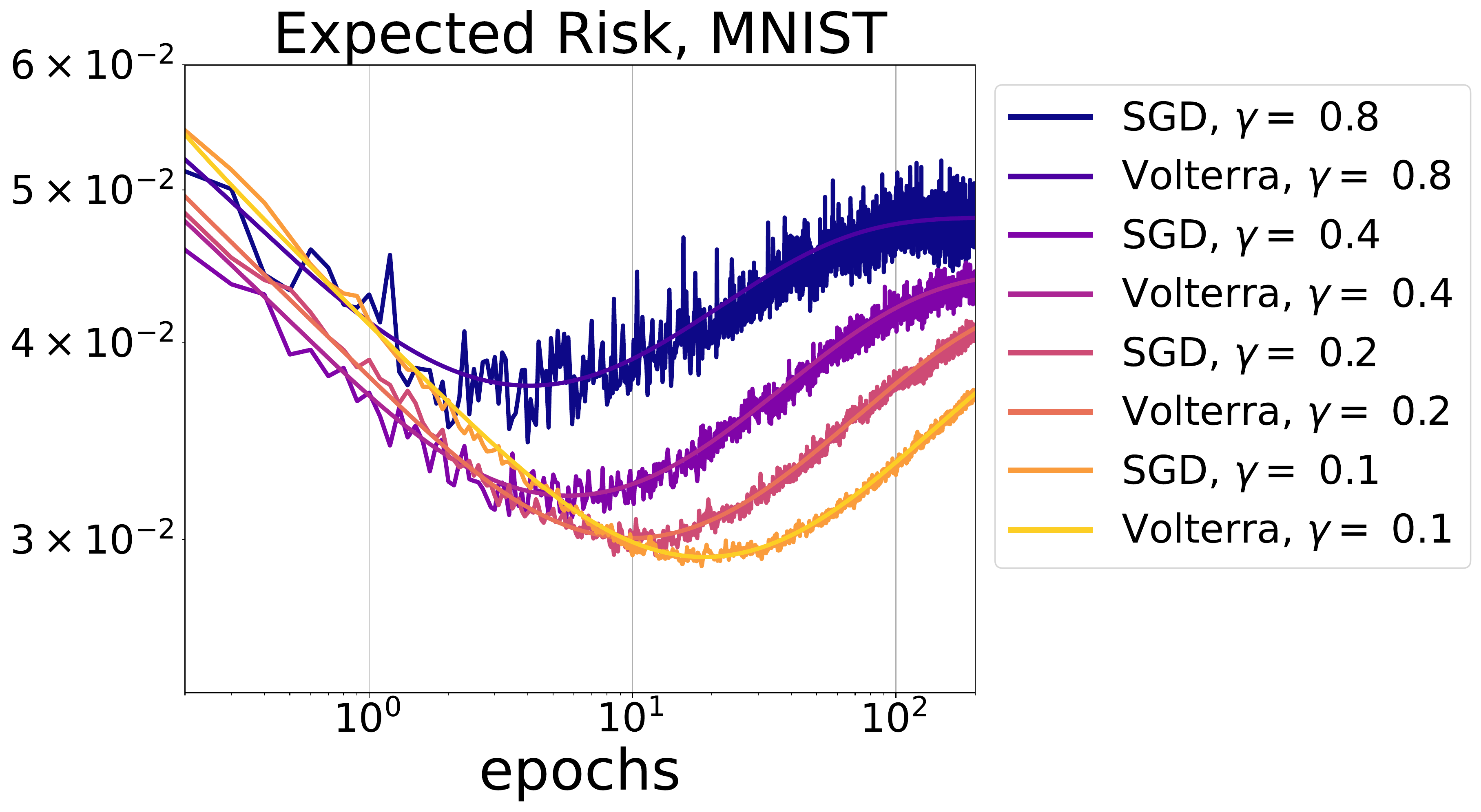}
    \caption{\textbf{SGD vs Theory on MNIST}: MNIST $(60000 \times 28 \times 28)$ images. Random features model on MNIST used with $n=4000$ images, random features $d = 2000$, and $n_0 = 28 \times 28$ was trained with one run of SGD (middle) for various learning rates and regularization parameter $0.01$; entries of the random features $\WW_{ij} \sim N(0,1)$ and a normalized ReLu activation function $\sigma(\cdot) = (\max\{0, \cdot\} - a)/b$ was applied. The Volterra equation matches the dynamics of the training loss (least-squares), $\mathcal{L}$, even with only one run of SGD. The log(eigenvalues) of the covariance of the MNIST dataset and the random features matrix used in the regression displayed (left). The expected risk, $\mathcal{R}(\xx) = \frac{1}{2}\mathbb{E}[(b - \xx^T \sigma(\xx_i \WW))^2]$ where $\xx_i$ is an image from the MNIST test set, follows the predicted behavior $\Omega_t$. Both the predicted $\Psi_t$ and $\Omega_t$ match the performance of SGD in this non-idealized setting.}
    % duplicated to account for space
    \vspace{-0.5cm} \label{fig:MNIST}
\end{figure}

\noindent We give a formal proof of Theorem~\ref{thm:trainrisk} in Section~\ref{sec:concentration_hsgd} (see \citep[Theorem 11]{Us} slightly generalizes the setting).
The functions $\Psi_t$ and $\Omega_t$ can be viewed as the expected behavior of the loss function $\mathscr{L}(\XX_t)$ and any quadratic statistic $\mathcal{R}(\XX_t)$ under homogenized SGD. Theorem~\ref{thm:trainrisk} then shows concentration around the mean. We remark that to solve \eqref{eqa:V} we need as input $\mathscr{L}(\bm{\mathscr{X}}_{\Gamma(t)}^{\text{gf}})$ which be computed using \eqref{eq:GFS}. The solution of $\Psi$ is can then be found by repeatedly convolving the forcing term $\mathscr{L}(\bm{\mathscr{X}}_{\Gamma(t)}^{\text{gf}})$ with the kernel $K$ (provided $\sup_{t\ge0} \sup_{s \ge0} K(t,s; \nabla^2 \mathscr{L})$ is bounded \citep{gripenberg1980volterra}), that is, 
\begin{align*}
    \Psi_t = g(t) + (\mathcal{K} \star g)(t) + (\mathcal{K} \star \mathcal{K} \star g)(t) + \cdots
    \quad\text{where}\quad
    \left\{
    \begin{aligned}
    g(t) &\defas \mathscr{L}(\bm{\mathscr{X}}_{\Gamma(t)}^{\text{gf}}),  \\
    (\mathcal{K} \star h)(t) &\defas \int_0^t K(t,s; \nabla^2 \mathscr{L}) h(s)\,\dif s,\\
    \quad \forall~h &\,\in C([0,\infty)).
    \end{aligned}
    \right.
\end{align*}
Moreover, numerical approximations to \eqref{eqa:V} can be found by taking a large but finite number of convolutions in the expression above.  The boundedness of this solution corresponds precisely to learning rate choices for which SGD is convergent.  

In the case of constant learning rate $\gamma(s) \equiv \gamma$, more can be said. The Volterra equation \eqref{eqa:V} is of convolution--type, and in fact is a special case of the renewal equation \citep{Asmussen} (allowing for \emph{defective} and \emph{excessive} variants). Specifically, the expression in \eqref{eqa:VLoss} simplifies to
\begin{align} \label{eq:constant_gamma_psi}
    \Psi_t = \mathscr{L}(\bm{\mathscr{X}}_{\Gamma(t)}^{\text{gf}}) + \frac{\gamma^2}{n}  \int_0^t \tr \left ( (\AA^T \AA)^2 e^{-2 (\AA^T\AA + \delta \II_d)(t-s)} \right ) \Psi_s \, \dif s. 
\end{align}
In addition to fixed point algorithms, one can also use Laplace transform techniques for deriving analytical expressions for $\Psi_t$ in this case. These solutions to \eqref{eq:constant_gamma_psi} can be analyzed explicitly for convergence guarantees and  rates of convergence, see \citep{paquette2021dynamics,paquetteSGD2021}.
% Add this back when NeurIPS
%and \citep{Us} for an analysis of these equations for general statistical considerations. 
As a simple example writing $K(t,s; \PP) = K(t-s; \PP)$, the convergence of \eqref{eq:constant_gamma_psi} occurs precisely when $\int_0^\infty K(t; \nabla^2 \mathscr{L})\,\dif t \leq 1$.\footnote{See \citep[Chapter V]{Asmussen} for a general discussion.  In the case that the norm is exactly $1$,  this remains true as it is a special case of the Blackwell renewal theorem.  When the norm is larger than $1$, in the event that the empirical risk of gradient flow is bounded away from $0,$ the training loss is divergent.}
% See especially \citep[Theorem 2]{Us} for the mathematical formulation and proof and of the concentration of the paths of $\mathscr{L}(\XX_t)$ and $q(\XX_t)$ around $\Psi_t$ and $\Omega_t$.

Under the assumption that $\gamma(s)$ stabilizes eventually, i.e.\ $\gamma(s) \to \gamma$ as $s\to\infty,$ we may still characterize the eventual behavior of solution.  In particular we can formulate the eventual behavior of $\Omega_t$ as $t \to \infty$.  In the case that $\mathcal{R}$ represents the population risk, then the difference $\Omega_t-\mathcal{R}\bigl( 
\bm{\mathscr{X}}_{\Gamma(t)}^{\text{gf}}\bigr)$ gains the interpretation of the excess risk of SGD over gradient flow.  On taking $t\to\infty$, this is thus the excess risk of the SGD estimator over the ridge regression estimator:
\begin{theorem}\label{thm:eventualrisk} 
If $\gamma(t) \to 0$ but $\Gamma(t) \to \infty$ as $t\to\infty$ (c.f.\ the Robbins-Monro setting), then 
\(
\Omega_t-\mathcal{R}\bigl( 
\bm{\mathscr{X}}_{\Gamma(t)}^{\text{gf}} \bigr)
\xrightarrow[t\to\infty]{ } 
0.\) 
If on the other hand $\gamma(t) \to \widetilde{\gamma} > 0$,
where the limiting learning rate satisfies 
\begin{equation} \label{eq:convergence_threshold}
\widetilde{\gamma} < 2 \big ( \tfrac{1}{n} \tr ( (\AA^T \AA)^2 (\AA^T \AA + \delta \II_d)^{-1} ) \big )^{-1},
\end{equation}
 then with $\Psi_\infty$ given by the limiting empirical risk:
\[
\Psi_\infty 
=
\mathscr{L}\bigl( 
\bm{\mathscr{X}}_{\infty}^{\text{gf}}\bigr)
\times
\biggl(
1
-
\frac{\widetilde{\gamma}}{2n}
\tr
\bigl(
(\nabla^2 \mathscr{L})^2
\bigl( \nabla^2 \mathscr{L} + \delta \II_d \bigr)^{-1}
\bigr)
\biggr)^{-1},
\]
the limiting excess risk of SGD over ridge regression is given by
\[
\Omega_t-\mathcal{R}\bigl( 
\bm{\mathscr{X}}_{\Gamma(t)}^{\text{gf}}\bigr)
\xrightarrow[t\to\infty]{ } \frac{\widetilde{\gamma}}{2n}  \Psi_\infty \times 
\tr
\biggl(
(\nabla^2 \mathcal{R}) (\nabla^2 \mathscr{L})
\bigl( \nabla^2 \mathscr{L} + \delta \II_d \bigr)^{-1}
\biggr).
\]
\end{theorem}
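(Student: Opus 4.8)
The plan is to analyze the long-time asymptotics of the Volterra equations \eqref{eqa:VLoss} and \eqref{eqa:PLoss} directly, holding the instance $(\AA,\bb,d,\delta)$ fixed and letting $t\to\infty$. Two ingredients are used throughout. First, since $\nabla^2\mathscr{L}=\AA^T\AA\succeq0$, for any symmetric $\PP$ and $\tau\ge0$ one has $|\tfrac1n\tr(\PP\,\nabla^2\mathscr{L}\,e^{-2(\nabla^2\mathscr{L}+\delta\II_d)\tau})|\le C\|\PP\|e^{-2\delta\tau}$ with $C=\tfrac1n\tr(\AA^T\AA)$ bounded under Assumption~\ref{assumption:Target}; substituting $u=\Gamma(t)-\Gamma(s)$ (so $\dif u=\gamma(s)\,\dif s$, $\gamma^2(s)\le\widehat\gamma\gamma(s)$) this yields $\int_0^t|K(t,s;\PP)|\,\dif s\le C\|\PP\|\widehat\gamma/(2\delta)$ for all $t$. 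Second, since $\Gamma(t)\to\infty$ in both regimes, \eqref{eq:GFS} gives $\bm{\mathscr{X}}^{\text{gf}}_{\Gamma(t)}\to\bm{\mathscr{X}}^{\text{gf}}_\infty\defas(\AA^T\AA+\delta\II_d)^{-1}\AA^T\bb$, hence $\mathscr{L}(\bm{\mathscr{X}}^{\text{gf}}_{\Gamma(t)})\to\mathscr{L}(\bm{\mathscr{X}}^{\text{gf}}_\infty)$ and $\mathcal{R}(\bm{\mathscr{X}}^{\text{gf}}_{\Gamma(t)})\to\mathcal{R}(\bm{\mathscr{X}}^{\text{gf}}_\infty)$. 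Write $\rho\defas\tfrac1n\tr((\AA^T\AA)^2(\AA^T\AA+\delta\II_d)^{-1})$, so that \eqref{eq:convergence_threshold} reads $\widetilde\gamma\rho/2<1$.

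For the Robbins--Monro regime ($\gamma(t)\to0$, $\Gamma(t)\to\infty$), set $\varepsilon_S\defas\sup_{s\ge S}\gamma(s)\to0$ and split every kernel integral as $\int_0^t=\int_0^S+\int_S^t$. On $[0,S]$ the factor $e^{-2(\nabla^2\mathscr{L}+\delta\II_d)(\Gamma(t)-\Gamma(s))}$ is dominated by $e^{-2\delta(\Gamma(t)-\Gamma(S))}\to0$ for fixed $S$; on $[S,t]$, bounding $\gamma^2(s)\le\varepsilon_S\gamma(s)$ and using the substitution above gives $\int_S^t K(t,s;\nabla^2\mathscr{L})\,\dif s\le\tfrac{\varepsilon_S}{2}\rho$ and $\int_S^t|K(t,s;\nabla^2\mathcal{R})|\,\dif s\le C\|\nabla^2\mathcal{R}\|\varepsilon_S$. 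Choosing $S$ with $\varepsilon_S\rho/2\le\tfrac12$ and using that $\Psi$ is bounded on the compact interval $[0,S]$, the inequality $\Psi_t\le\mathscr{L}(\bm{\mathscr{X}}^{\text{gf}}_{\Gamma(t)})+\int_0^SK(t,s;\nabla^2\mathscr{L})\Psi_s\,\dif s+\tfrac12\sup_{[S,t]}\Psi$ gives, after taking $\sup_t$, a uniform bound $\sup_{t}\Psi_t<\infty$. With $\Psi$ bounded, $\Omega_t-\mathcal{R}(\bm{\mathscr{X}}^{\text{gf}}_{\Gamma(t)})=\int_0^t K(t,s;\nabla^2\mathcal{R})\Psi_s\,\dif s$: the $[0,S]$ piece $\to0$ by exponential decay and the $[S,t]$ piece is at most $C\|\nabla^2\mathcal{R}\|\varepsilon_S\sup\Psi$, which is as small as desired by taking $S$ large; hence $\Omega_t-\mathcal{R}(\bm{\mathscr{X}}^{\text{gf}}_{\Gamma(t)})\to0$.

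For the regime $\gamma(t)\to\widetilde\gamma>0$ the argument is a renewal-type comparison. The key lemma is: for any symmetric $\PP$, $\int_0^tK(t,s;\PP)\,\dif s\to\tfrac{\widetilde\gamma}{2n}\tr(\PP\,\nabla^2\mathscr{L}(\nabla^2\mathscr{L}+\delta\II_d)^{-1})$ as $t\to\infty$, and more generally $\int_0^tK(t,s;\PP)\psi_s\,\dif s\to0$ whenever $\psi_s\to0$ is bounded; both follow by expanding in the eigenbasis of $\AA^T\AA$, substituting $u=\Gamma(t)-\Gamma(s)$ so the $i$-th scalar integrand becomes $\gamma(\Gamma^{-1}(\Gamma(t)-u))\,e^{-2(\lambda_i+\delta)u}(\cdots)$, and applying dominated convergence (dominating function $\widehat\gamma e^{-2\delta u}$, with $\gamma(\Gamma^{-1}(\Gamma(t)-u))\to\widetilde\gamma$ pointwise). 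Taking $\PP=\nabla^2\mathscr{L}$ shows $\int_0^tK(t,s;\nabla^2\mathscr{L})\,\dif s\to\widetilde\gamma\rho/2<1$, so the same supremum argument as above (now with the kernel mass eventually $\le\kappa<1$) gives $\sup_t\Psi_t<\infty$. To identify the limit, set $\phi_t\defas\Psi_t-\Psi_\infty$, where $\Psi_\infty$ in the statement is precisely the fixed point $\Psi_\infty=\mathscr{L}(\bm{\mathscr{X}}^{\text{gf}}_\infty)+(\widetilde\gamma\rho/2)\Psi_\infty$ of the limiting convolution equation; subtracting from \eqref{eqa:VLoss} gives $\phi_t=h(t)+\int_0^tK(t,s;\nabla^2\mathscr{L})\phi_s\,\dif s$ with $h(t)\defas(\mathscr{L}(\bm{\mathscr{X}}^{\text{gf}}_{\Gamma(t)})-\mathscr{L}(\bm{\mathscr{X}}^{\text{gf}}_\infty))+\Psi_\infty\bigl(\int_0^tK(t,s;\nabla^2\mathscr{L})\,\dif s-\widetilde\gamma\rho/2\bigr)\to0$. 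Since $\phi$ is bounded and the kernel mass is eventually $\le\kappa<1$, a $\limsup$ bootstrap (split at $[0,S]$, $[S,t]$, send $S\to\infty$, using $\widetilde\gamma\rho/2<1$) forces $\phi_t\to0$, i.e. $\Psi_t\to\Psi_\infty$. Finally $\Omega_t-\mathcal{R}(\bm{\mathscr{X}}^{\text{gf}}_{\Gamma(t)})=\int_0^tK(t,s;\nabla^2\mathcal{R})\phi_s\,\dif s+\Psi_\infty\int_0^tK(t,s;\nabla^2\mathcal{R})\,\dif s$: the first term $\to0$ by the key lemma with $\psi_s=\phi_s$, and the second $\to\Psi_\infty\cdot\tfrac{\widetilde\gamma}{2n}\tr\bigl(\nabla^2\mathcal{R}\,\nabla^2\mathscr{L}(\nabla^2\mathscr{L}+\delta\II_d)^{-1}\bigr)$, which is the claimed limiting excess risk.

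The main obstacle is the renewal-type limit in the second regime: because the learning rate is only asymptotically constant, $K(t,s;\PP)$ is not a convolution kernel, so one cannot quote the renewal/Blackwell theorem directly and must control the non-stationarity through the $u=\Gamma(t)-\Gamma(s)$ substitution and dominated convergence, then close the argument for $\Psi_t\to\Psi_\infty$ with a self-improving $\limsup$ estimate that crucially uses the strict inequality $\widetilde\gamma\rho/2<1$ from \eqref{eq:convergence_threshold}. Establishing the uniform-in-time boundedness of $\Psi$ — needed before any limit is taken, and false without the threshold condition — is the other point requiring care.
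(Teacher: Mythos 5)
Your proposal is correct, and it takes a genuinely different route from the paper. The paper proves boundedness and convergence of $\Psi_t$ by appealing to the Volterra machinery of Gripenberg--Londen--Staffans (kernels of $L^\infty$-type, resolvent kernels, Theorems 9.3.13, 9.5.4, 9.8.2) and then sandwiches the non-stationary kernel between two genuine convolution kernels $\underline{K}(t-s)\leq K(t,s)\leq \overline{K}(t-s)$ (built from $\widetilde\gamma\pm\varepsilon$) so as to invoke the renewal theorem for the squeezing solutions $\underline\Psi_t\leq\Psi_t\leq\overline\Psi_t$, finally sending $\varepsilon\to0$. You replace all of this with elementary analysis: boundedness comes from a split-and-rearrange $\sup$ estimate using that the tail kernel mass is eventually $\leq\kappa<1$; the limit of $\int_0^t K(t,s;\PP)\,\dif s$ is obtained directly by the change of variables $u=\Gamma(t)-\Gamma(s)$ plus dominated convergence (which the paper also uses, but only for the $\Omega_t$ conclusion); and the convergence $\Psi_t\to\Psi_\infty$ is established by passing to $\phi_t=\Psi_t-\Psi_\infty$, observing $\phi$ solves a Volterra equation with vanishing forcing, and closing a $\limsup$ bootstrap $L\leq L\cdot\widetilde\gamma\rho/2$, which forces $L=0$ since $\widetilde\gamma\rho/2<1$. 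Your $\Psi_\infty$ agrees with the paper's (it is the fixed point $\Psi_\infty=\mathscr{L}(\bm{\mathscr{X}}^{\text{gf}}_\infty)+(\widetilde\gamma\rho/2)\Psi_\infty$), and the identification of the limiting excess risk is then identical. The upshot is that your argument is more self-contained and avoids citing renewal theory and the Gronwall-for-Volterra-inequalities lemma, at the cost of writing out the bootstrap explicitly; the paper's approach is shorter once those references are accepted and also naturally delivers quantitative $\varepsilon$-dependent bounds on $\liminf/\limsup\Psi_t$ en route. One minor point worth making explicit in a polished write-up: in the $\sup$ bootstrap for boundedness, one should first take $\sup$ over a compact $[S,T]$ (finite by Volterra existence theory), rearrange, and then let $T\to\infty$, so that the subtraction is licit.
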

\noindent We elaborate more on the statistical interpretations in Section \ref{sec:stats}. 
% Add this back on when we have the NeurIPS paper
% (see also \cite{Us} for further statistical conclusions).

\subsection{Comparison between SGD and HSGD}

To make the comparison between SGD and HSGD, we shall apply maps from the paths of SGD and HSGD to $\R$.\footnote{In fact, we expect that it is untrue that the processes can be realized in such a way that $\|\xx_{\lfloor tn \rfloor} - \XX_t\|$ is vanishingly small.  It is only after observing a low--dimensional shadow that the processes become indistinguishable.}  The maps of interest are chosen to have some statistical consequence, and so we will consider a general class of quadratic functions. 
%We will be interested in analyzing the dynamics of the iterates $\xx_k$ applied to different statistics, that is general quadratic statistics (see below). 
By doing so, we can capture finite-time behavior of SGD on metrics of suboptimality (c.f., function values, distance to optimality, and gradients). Moreover by allowing for generic quadratics, we can capture generalization performance of SGD at finite time under differing test and training covariance structures.  See Section \ref{sec:stats} for further development and applications.  To execute the mathematical comparison between SGD and HSGD, we require an additional assumption on the quadratic in the same spirit as Assumption \ref{ass: laundry_list}:% implies some independence of the quadratic from the left-singular-vectors of $\AA$:

\begin{assumption}[Quadratic statistics] \label{assumption: quadratics} Suppose $\mathcal{R} : \mathbb{R}^d \to \mathbb{R}$ is quadratic, i.e. there is a symmetric matrix $\TT \in \mathbb{R}^{d \times d}$, a vector $\uu \in \mathbb{R}^d$, and a constant $c \in \mathbb{R}$ so that
\begin{equation} \label{eq:statistic}
    \mathcal{R}(\xx_t) = \tfrac{1}{2} \xx_t^T \TT \xx_t + \uu^T \xx_t + c.
\end{equation}
We assume that $\mathcal{R}$ satisfies $\|\mathcal{R}\|_{H^2} \le C$ for some $C$ independent of $n$ and $d$. Moreover, we assume the following (for the same $\Omega$ and $\theta$) as in Assumption \ref{ass: laundry_list}:
\begin{equation} \label{eq:key_lemma_ass}
    	\max_{z,y \in \Omega} \max_{1 \leq i \leq n} 
    	|\ee_i^T \AA \widehat\TT  \AA^T \ee_i
    	-\tfrac1n\tr(\AA \widehat\TT  \AA^T)
    	| \leq \|\TT\| n^{-\epsilon}
    	\quad\text{where}\quad 
    	\left\{
    	\begin{aligned}
    	&\widehat\TT =  R(z) \TT R(y) + R(y) \TT R(z), \\
    	&R(z) = R(z; \AA^T\AA)
    	\end{aligned}
    	\right.
\end{equation}
\end{assumption}

\noindent This assumption ensures that quadratic $\mathcal{R}$ has a Hessian which is not too correlated with any of the left singular--vectors of $\AA$.  Establishing Assumption \ref{assumption: quadratics} can be non--trivial in the cases when the quadratic has complicated dependence on $\AA$.  In simple cases, (especially for the case of the empirical risk and the norm) it follows automatically from Assumption \ref{ass: laundry_list}.
\begin{lemma}\label{lem:qs}
    Suppose that 
    \(
    \mathcal{R}
    \)
    satisfies \eqref{eq:statistic} with $\TT$ given by a polynomial $p$ in $\AA^T \AA$ (especially  $\II$ and the monomial $\AA^T\AA$) having bounded coefficients, and suppose $\uu$ and $c$ are norm bounded independently of $n$ or $d$.  Then supposing Assumptions \ref{assumption:Target} and \ref{ass: laundry_list} for some $\theta_0 \in (0,\tfrac 12)$, for all $n$ sufficiently large and for any $\theta > \theta_0$, Assumption \ref{assumption: quadratics} holds.
\end{lemma}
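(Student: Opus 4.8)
The plan is to reduce \eqref{eq:key_lemma_ass} to entrywise-versus-trace estimates for $\MM\defas\AA\AA^T$ of exactly the type guaranteed by Assumption~\ref{ass: laundry_list}. First I would dispose of the easy part: by Assumption~\ref{assumption:Target} the spectrum of $\AA^T\AA$ lies in $[0,\|\AA\|^2]$, so $\|\TT\|=\|p(\AA^T\AA)\|=\max_{\lambda\in\mathrm{spec}(\AA^T\AA)}|p(\lambda)|$ is controlled by the (fixed) degree and coefficient bound of $p$, and $\|\uu\|,|c|$ are bounded by hypothesis, giving $\|\mathcal R\|_{H^2}\le C$; since $\uu$ and $c$ never appear in \eqref{eq:key_lemma_ass}, only that displayed estimate for $\TT=p(\AA^T\AA)$ remains.

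Next I would rewrite $\AA\widehat\TT\AA^T$ as an explicit scalar function of $\MM$. Using the intertwining identities $\AA\,q(\AA^T\AA)=q(\AA\AA^T)\AA$ and $\AA\,R(z;\AA^T\AA)=R(z;\AA\AA^T)\AA$, together with the commutativity of $R(z;\AA^T\AA)$, $R(y;\AA^T\AA)$ and $p(\AA^T\AA)$, one gets $\widehat\TT=2\,p(\AA^T\AA)R(z;\AA^T\AA)R(y;\AA^T\AA)$ and hence $\AA\widehat\TT\AA^T=2\,\Phi_{z,y}(\MM)$, where $\Phi_{z,y}(\lambda)=\tfrac{\lambda p(\lambda)}{(z-\lambda)(y-\lambda)}$ is analytic near $\mathrm{spec}(\MM)$ because $z,y\in\Omega$ lie at distance $\ge\tfrac12$ from $[0,\|\AA\|^2]$. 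Writing $\lambda_j,P_j$ for the eigenvalues and spectral projections of $\MM$, the quantity to bound becomes $2\sum_j\Phi_{z,y}(\lambda_j)\big(\ee_i^TP_j\ee_i-\tfrac1n\tr P_j\big)$, and I would split it via partial fractions $\Phi_{z,y}(\lambda)=q_{z,y}(\lambda)+\tfrac{A}{z-\lambda}+\tfrac{B}{y-\lambda}$, where $q_{z,y}$ has degree $\deg p-1$ with coefficients bounded uniformly over $\Omega\times\Omega$, and $A=\tfrac{zp(z)}{y-z}$, $B=\tfrac{yp(y)}{z-y}$; thus $\AA\widehat\TT\AA^T=2\big(q_{z,y}(\MM)+A\,R(z;\MM)+B\,R(y;\MM)\big)$.

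For the polynomial piece I would use $\MM^k=\tfrac1{2\pi i}\oint_\Omega w^k R(w;\MM)\,dw$ (legitimate because $w^k$ is entire and $\mathrm{spec}(\MM)$ is enclosed by $\Omega$): integrating Assumption~\ref{ass: laundry_list}(3) against $w^k$ over $\Omega$ and using that $\mathrm{len}(\Omega)$ and $\max_{w\in\Omega}|w|$ are bounded (Assumption~\ref{assumption:Target}) gives $|\ee_i^T\MM^k\ee_i-\tfrac1n\tr\MM^k|\le C^{k+1}n^{\theta_0-1/2}$, hence the same bound up to constants for $q_{z,y}(\MM)$, uniformly in $z,y$. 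The two resolvent terms cannot be treated separately because $|A|,|B|$ diverge as $z\to y$, so I would combine them: with $\tilde f(w)\defas\ee_i^TR(w;\MM)\ee_i-\tfrac1n\tr R(w;\MM)$, their contribution to the entrywise-minus-trace quantity is $\tfrac{zp(z)-yp(y)}{y-z}\,\tilde f(y)+zp(z)\cdot\tfrac{\tilde f(z)-\tilde f(y)}{y-z}$. The first term is $\le Cn^{\theta_0-1/2}$ because $\tfrac{zp(z)-yp(y)}{y-z}$ is a divided difference of the fixed polynomial $w\mapsto wp(w)$ on the compact $\Omega$ (hence $O(1)$) and $|\tilde f(y)|\le n^{\theta_0-1/2}$ by Assumption~\ref{ass: laundry_list}(3); the second term equals $-zp(z)\big(\ee_i^TR(z;\MM)R(y;\MM)\ee_i-\tfrac1n\tr(R(z;\MM)R(y;\MM))\big)$.

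The hard part will be that last two-resolvent entrywise-versus-trace term uniformly as $z\to y$: for $|z-y|$ bounded below it is a harmless divided difference of the bounded function $\tilde f$, but for $z$ close to $y$ it is essentially a derivative of $\tilde f$, and I would control it by a Cauchy estimate yielding $|\tilde f'|\lesssim n^{\theta_0-1/2}$ near $\Omega$. This is exactly the step that needs the one-resolvent bound of Assumption~\ref{ass: laundry_list}(3) to hold slightly off the curve $\Omega$ (equivalently, a two-resolvent local law): the estimate on $\Omega$ alone only yields $|\tilde f'|=O(1)$ there, which is too weak, whereas in every matrix model of the applications the resolvent estimates are in fact proved on a region, so this mild strengthening is available. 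Granting it, the pieces assemble to $\big|\ee_i^T\AA\widehat\TT\AA^T\ee_i-\tfrac1n\tr(\AA\widehat\TT\AA^T)\big|\le C''n^{\theta_0-1/2}$ uniformly in $i$ and in $z,y\in\Omega$; since this quantity scales linearly with $p$ and $\|\TT\|\asymp\max_{\lambda\in\mathrm{spec}(\AA^T\AA)}|p(\lambda)|$, it is $\le\|\TT\|\,n^{-\epsilon}$ for any fixed $\epsilon\in(0,\tfrac12-\theta_0)$ and all large $n$, with the diagonal $z=y$ covered by continuity — which is \eqref{eq:key_lemma_ass}.
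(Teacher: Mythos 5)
Your reduction from $\AA^T\AA$ to $\AA\AA^T$ and the partial-fraction split of $\Phi_{z,y}$ are sound, and you have correctly isolated the genuine obstacle: controlling the two-resolvent quantity $\ee_i^T R(z;\AA\AA^T)R(y;\AA\AA^T)\ee_i - \tfrac1n\tr\bigl(R(z;\AA\AA^T)R(y;\AA\AA^T)\bigr)$ uniformly as $z\to y$ on $\Omega$. The gap is in how you resolve it. You posit that Assumption~\ref{ass: laundry_list}(3) holds slightly off $\Omega$, so that a Cauchy estimate gives $|\tilde f'|\lesssim n^{\theta-1/2}$ there; but Assumption~\ref{ass: laundry_list} bounds the diagonal resolvent fluctuation only on the fixed curve $\Omega$, and the a priori resolvent bounds (Lemma~\ref{lem:res}) give only $|\tilde f'|=O(1)$ near $\Omega$, which you yourself observe is too weak. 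A function that is $O(n^{\theta-1/2})$ on a curve with only $O(1)$ a priori control nearby need not have a small derivative on that curve, so ``granting'' the extension is not available from the hypotheses of the lemma --- it is precisely what needs to be proved, and the applications you invoke to justify it are not part of the abstract statement.

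The paper sidesteps the $z\to y$ degeneration entirely. For fixed $y$ the map $z\mapsto \ee_i^T R(z)p(\AA\AA^T)R(y)\ee_i - \tfrac1n\tr\bigl(R(z)p(\AA\AA^T)R(y)\bigr)$ is a sum of simple poles inside $\Omega$, and the deformation estimate of Lemma~\ref{lem:fb}, part 2 (a harmonicity/Brownian-hitting argument for $\log|f|$) converts a bound on the \emph{outer} offset contour $\Omega_\delta$ into one on $\Omega$, with an arbitrarily small loss $\varepsilon$ in the exponent; applying this once in $z$ and once in $y$ reduces the claim to $z,y\in\Omega_\delta$. But for $z,y\in\Omega_\delta$ every $x\in\Omega$ satisfies $|z-x|,|y-x|\ge\delta$, so the single Cauchy representation (with $R(\cdot)=R(\cdot;\AA\AA^T)$)
\[
\ee_i^T R(z)p(\AA\AA^T)R(y)\ee_i - \tfrac1n\tr\bigl(R(z)p(\AA\AA^T)R(y)\bigr)
=\frac{1}{2\pi i}\oint_\Omega \frac{p(x)}{(z-x)(y-x)}\Bigl\{\ee_i^T R(x)\ee_i-\tfrac1n\tr R(x)\Bigr\}\,\dif x
\]
has a kernel bounded by $O(\delta^{-2})$ uniformly, including at $z=y$, and Assumption~\ref{ass: laundry_list}(3) on $\Omega$ gives $O(n^{\theta_0-1/2})$ directly. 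This is exactly where your argument and the paper's part ways: rather than splitting $(z-x)^{-1}(y-x)^{-1}$ into partial fractions (which pushes the $z\to y$ singularity into the coefficients $A,B$), keep the product inside the integrand and render it harmless by first moving $z,y$ out to $\Omega_\delta$; the return trip is licensed by Lemma~\ref{lem:fb}. With that replacement your reduction would go through.
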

\noindent Thus for example $\mathcal{R} = \mathscr{L}$ will satisfy Assumption \ref{assumption: quadratics} and the simple Euclidean vector norm $\mathcal{R} = \| \cdot \|^2$ under the assumptions invoked.  The other main application to consider is when $\mathcal{R}$ is a fixed matrix and the rows of $\AA$ are independent.  In this case, provided the rows of the random matrix satisfy a certain ``quadratic concentration property'', this follows (see Section \ref{sec:quad_con} for formal statements).

Our main comparison theorem is the following:
\begin{theorem}[Homogenized SGD and SGD] \label{thm:homogenized_SGD_SGD}
Suppose $n$ and $d$ are related by Assumption \ref{ass:poly}.  Suppose the $\ell^2$-regularized least-squares problem \eqref{eq:rr} satisfies Assumptions~\ref{assumption:Target} and \ref{ass: laundry_list} where $n \ge d^{\tilde{\varepsilon}}$ for some $\tilde{\varepsilon} > 0$.
Suppose the learning rate schedule $\gamma$ satisfies Assumption \ref{assumption:lr}, and the initialization $\xx_0$ satisfies Assumption \ref{assumption:init}.  
Let $\mathcal{R} \, : \, \mathbb{R}^d \mapsto \mathbb{R}$ be any quadratic statistic satisfying Assumption~\ref{assumption: quadratics}. For any deterministic $T > 0$ and any $D > 0$, there is a $C > 0$ such that
\[ \Pr \bigg [  \sup_{0 \le t \le T} |\mathcal{R}(\xx_{\lfloor tn \rfloor} )-\mathcal{R}(\XX_t)| > d^{-\tilde{\varepsilon}/2} \, \big | \, \AA, \bb, \xx_0 \bigg ] \le C d^{-D} , \]
where $\xx_t$ and $\XX_t$ are the iterates SGD and homogenized SGD respectively.
\end{theorem}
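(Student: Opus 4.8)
We condition on $\AA,\bb,\xx_0$, so that Assumptions~\ref{assumption:Target}, \ref{ass: laundry_list}, \ref{assumption:init}, and \ref{assumption: quadratics} become deterministic statements; the only randomness left is the sampling $i_1,i_2,\dots$ of SGD (and, on the homogenized side, the Brownian motion). The plan is to show that $\mathcal{R}(\xx_{\lfloor tn\rfloor})$ follows, uniformly on $[0,T]$ and with overwhelming probability, the very same deterministic curve $\Omega_t$ of \eqref{eqa:PLoss} that Theorem~\ref{thm:trainrisk} assigns to $\mathcal{R}(\XX_t)$; the asserted comparison is then the triangle inequality (the hypotheses here imply those of Theorem~\ref{thm:trainrisk} for a comparable exponent, as $\|\AA^T\AA\|=O(1)$ whereas $n\ge d^{\tilde{\varepsilon}}$). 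It is convenient to carry the pair $(\mathscr{L},\mathcal{R})$ at once, since the increment of any quadratic observable is driven by the current value of the empirical risk. The first step is an a priori estimate along the SGD trajectory: let $\tau$ be the first index $k$ at which any member of a fixed finite list of regularity functionals leaves its prescribed region --- $\mathscr{L}(\xx_k)$ and $\|\xx_k\|^2$ must stay bounded, while a list of ``flatness'' functionals ($\max_i|\ee_i^T(\AA\xx_k-\bb)|$, $\max_i|\ee_i^T\AA\xx_k|$, $\max_i|\ee_i^T\AA\AA^T(\AA\xx_k-\bb)|$, $\max_i|\ee_i^T\AA\nabla\mathcal{R}(\xx_k)|$, and their resolvent-weighted variants) must stay at the delocalization scale $n^{\theta-1/2}$ inherited from Assumptions~\ref{ass: laundry_list} and \ref{assumption:init}. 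Using the recursion \eqref{eq:sgd}, one shows by a stopping-time induction that $\tau>Tn$ with overwhelming probability: the contribution of a single step to each functional is controlled because (i) the regularizer and the least-squares structure are contractive --- in particular the sampled residual $\ee_{i_k}^T(\AA\xx_k-\bb)$ is multiplied by $1-\gamma_k\|\ee_{i_k}^T\AA\|^2$, so it cannot run away, (ii) all cross-interactions appearing are of the form $\ee_i^TR(\cdot\,;\AA\AA^T)\,(\cdot)$ and are $O(n^{\theta-1/2})$ by Assumption~\ref{ass: laundry_list}, and (iii) the remaining fluctuations are martingale increments with tiny conditional variance; discrete Gr\"onwall and a Freedman inequality, with a union bound over $k\le Tn$, close the induction. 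On $\{\tau>Tn\}$ every random quadratic form occurring below can be replaced by its trace (averaged) version up to an error that is a negative power of $n$.

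Next is a martingale decomposition of the SGD observable. For a quadratic $\mathcal{R}(\xx)=\tfrac12\xx^T\TT\xx+\uu^T\xx+c$, expand $\mathcal{R}(\xx_{k+1})-\mathcal{R}(\xx_k)$ to second order and split it into the predictable part $D_k=\EE[\,\cdot\mid\Filt_k]$ and a martingale difference $\Delta M_k$. Because $\EE[\xx_{k+1}-\xx_k\mid\Filt_k]=-\tfrac{\gamma_k}{n}\nabla f(\xx_k)$, the first-order part of $D_k$ is precisely one Euler step, of size $1/n$, of the time-changed gradient flow; the second-order part equals $\tfrac{\gamma_k^2}{2n}\sum_i(\ee_i^T(\AA\xx_k-\bb))^2\,\ee_i^T\AA\TT\AA^T\ee_i$ up to $O(n^{-2})$ regularizer cross-terms, and Assumption~\ref{assumption: quadratics} --- used directly, or, when $\TT$ depends on $\AA$, after expressing $\TT$ via the resolvent $R(\cdot\,;\AA^T\AA)$ along a contour --- replaces $\ee_i^T\AA\TT\AA^T\ee_i$ by $\tfrac1n\tr(\nabla^2\mathscr{L}\,\nabla^2\mathcal{R})$ at a cost that is a negative power of $n$. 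Hence, on the good event,
\[
D_k=-\tfrac{\gamma_k}{n}\,\langle\nabla\mathcal{R}(\xx_k),\nabla f(\xx_k)\rangle
+\tfrac{\gamma_k^2}{n}\,\mathscr{L}(\xx_k)\cdot\tfrac1n\tr\!\bigl(\nabla^2\mathscr{L}\,\nabla^2\mathcal{R}\bigr)
+(\text{negligible}).
\]
Summing over $k\le\lfloor tn\rfloor$, and using $\gamma_k=\gamma(k/n)$ (Assumption~\ref{assumption:lr}) together with the explicit gradient-flow formula \eqref{eq:GFS}, the Riemann sums of the two main terms converge to $\mathcal{R}(\bm{\mathscr{X}}^{\text{gf}}_{\Gamma(t)})$ and $\int_0^t K(t,s;\nabla^2\mathcal{R})\,\mathscr{L}(\xx_{\lfloor sn\rfloor})\,\dif s$ --- exactly the two ingredients of \eqref{eqa:PLoss} with $K$ as in \eqref{eqa:V}. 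Running the same computation for $\mathcal{R}=\mathscr{L}$ itself and invoking the stability (a Gr\"onwall estimate) of the Volterra equation \eqref{eqa:VLoss} lets one replace $\mathscr{L}(\xx_{\lfloor sn\rfloor})$ by $\Psi_s$ inside the integral.

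It remains to dispose of the martingale part and assemble. The predictable quadratic variation $\sum_k\EE[(\Delta M_k)^2\mid\Filt_k]$ is, to leading order, $\sum_k\tfrac{\gamma_k^2}{n}\sum_i(\ee_i^T\AA\nabla\mathcal{R}(\xx_k))^2(\ee_i^T(\AA\xx_k-\bb))^2$; on $\{\tau>Tn\}$ the flatness of $\AA\nabla\mathcal{R}(\xx_k)$ (or of the residual) makes the inner sum a negative power of $n$ times $\|\AA\nabla\mathcal{R}(\xx_k)\|^2\cdot 2\mathscr{L}(\xx_k)=O(1)$, so the total quadratic variation over $Tn$ steps is a negative power of $n$. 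A Freedman inequality --- with $\Delta M_k$ truncated at the good-event scale --- then bounds $\sup_{t\le T}\bigl|\sum_{k\le tn}\Delta M_k\bigr|$ by a negative power of $d$ with overwhelming probability. Combining this with the previous paragraph, $\mathcal{R}(\xx_{\lfloor tn\rfloor})$ satisfies the relation defining $\Omega_t$ up to a uniform error that is a negative power of $d$; a Gr\"onwall estimate for the Volterra equation \eqref{eqa:PLoss} --- its kernel being bounded because $\gamma$ and $\|\AA\|$ are --- yields $\sup_{t\le T}|\mathcal{R}(\xx_{\lfloor tn\rfloor})-\Omega_t|\le d^{-c}$ for some $c>0$; tracking $c$ through the argument gives the stated exponent $\tilde{\varepsilon}/2$, and Theorem~\ref{thm:trainrisk} for homogenized SGD plus the triangle inequality complete the proof.

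\textbf{The main obstacle.} The crux is the a priori step. The delocalization hypotheses are assertions about the fixed singular vectors of $\AA$ and about $\bb,\xx_0$; they must be transported to the iterate $\xx_k$, which depends intricately on the whole sampled history $i_1,\dots,i_k$, and the needed ``flatness'' of vectors such as $\AA\xx_k-\bb$, $\AA\AA^T(\AA\xx_k-\bb)$, and $\AA\nabla\mathcal{R}(\xx_k)$ is self-referential, since it is deduced at step $k+1$ from the same flatness at step $k$ together with one iteration of \eqref{eq:sgd}. A naive step-by-step bound fails --- a single SGD step can change one coordinate of the residual by an amount comparable to the residual's own size --- and one must exploit that SGD \emph{contracts} the sampled residual, that off-diagonal interactions are controlled by the resolvent bounds, and that the accumulating errors are martingale-like and hence concentrate; only then do the errors sum to a negative power of $d$ over the $Tn$ steps. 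Arranging this, and choosing the regularity functionals and the truncation levels so that the exponents of Assumptions~\ref{ass:poly}, \ref{ass: laundry_list}, and \ref{assumption: quadratics} all close simultaneously against the target rate, is where the real work lies; for $\TT$ with complicated dependence on $\AA$ this is also where the resolvent/contour representation of the iterate becomes essential.
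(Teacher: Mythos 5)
Your route is genuinely different from the paper's. You propose to show directly that the SGD observable $\mathcal{R}(\xx_{\lfloor tn\rfloor})$ concentrates on the deterministic curve $\Omega_t$, and then to use Theorem~\ref{thm:trainrisk} plus the triangle inequality to bring in homogenized SGD. The paper goes the other way around: it compares the two random paths $q(\nnu_t)$ and $q(\YY_t)$ directly via their Doob/It\^o decompositions in the spectral basis and closes a Gr\"onwall estimate over a carefully chosen finite family $Q_{\widehat g}$ of resolvent-weighted quadratics (Proposition~\ref{prop:quad_set}, Lemma~\ref{lem:approx_Q}); the comparison to $\Omega_t$ (Theorem~\ref{thm:expectation}) is then an immediate corollary. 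Your ordering is legitimate in principle, and several ingredients you identify -- the bootstrap a priori flatness estimates (which the paper proves as Proposition~\ref{prop: entry_bound_function_values}), the self-averaging step that replaces $\ee_i^T\AA\TT\AA^T\ee_i$ by its trace, the martingale/Freedman bounds -- are exactly the right tools and match the paper's technical core.

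The gap is in the middle paragraph, where you claim that summing the predictable first-order increments $D_k^{(1)}=-\tfrac{\gamma_k}{n}\langle\nabla\mathcal{R}(\xx_k),\nabla f(\xx_k)\rangle$ produces $\mathcal{R}(\bm{\mathscr{X}}^{\text{gf}}_{\Gamma(t)})$, and that the second-order part gives $\int_0^t K(t,s;\nabla^2\mathcal{R})\,\mathscr{L}(\xx_{\lfloor sn\rfloor})\,\dif s$. Neither identification holds as stated. The first-order Riemann sum is $-\int_0^t\gamma(s)\,\langle\nabla\mathcal{R}(\xx_{\lfloor sn\rfloor}),\nabla f(\xx_{\lfloor sn\rfloor})\rangle\,\dif s$, with the integrand evaluated at the random SGD iterate, not at the gradient flow; the two are not close (SGD does not track gradient flow), so this is not $\mathcal{R}(\bm{\mathscr{X}}^{\text{gf}}_{\Gamma(t)})-\mathcal{R}(\xx_0)$. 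Likewise, the raw second-order term has no factor $e^{-2(\nabla^2\mathscr{L}+\delta\II_d)(\Gamma(t)-\Gamma(s))}$: the time-convolution in the Volterra kernel only appears after one applies the integrating factor/variation-of-constants formula for the \emph{linear} drift, exactly what Lemma~\ref{lemma:iterates_hSGD} (for the SDE) accomplishes and what the paper's resolvent family $Q_{\widehat g}$ encodes on the SGD side. What your decomposition actually gives is a self-referential relation: the increment of the quadratic $\mathcal{R}$ is driven by a \emph{different} quadratic, namely $\xx\mapsto\nabla\mathcal{R}(\xx)^T\nabla f(\xx)$, whose increment is driven by yet another quadratic, and so on through the hierarchy $q\mapsto\nabla q^T\nabla f$. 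Closing this hierarchy -- which is where the contour-integral/resolvent parametrization in $Q_{\widehat g}$ and your ``resolvent-weighted variants'' stopping-time functionals become essential -- is precisely what converts the naive Doob sum into the gradient-flow-plus-convolution form of $\Omega_t$. Your proposal gestures at this in the final paragraph but does not connect it to the extraction of the gradient-flow and kernel terms; as written, the argument that SGD tracks $\Omega_t$ does not go through.
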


%  \begin{wrapfigure}[28]{r}{0.4\textwidth}
% \vspace{-0.7cm}
%  \centering \includegraphics[width = 0.9\linewidth]{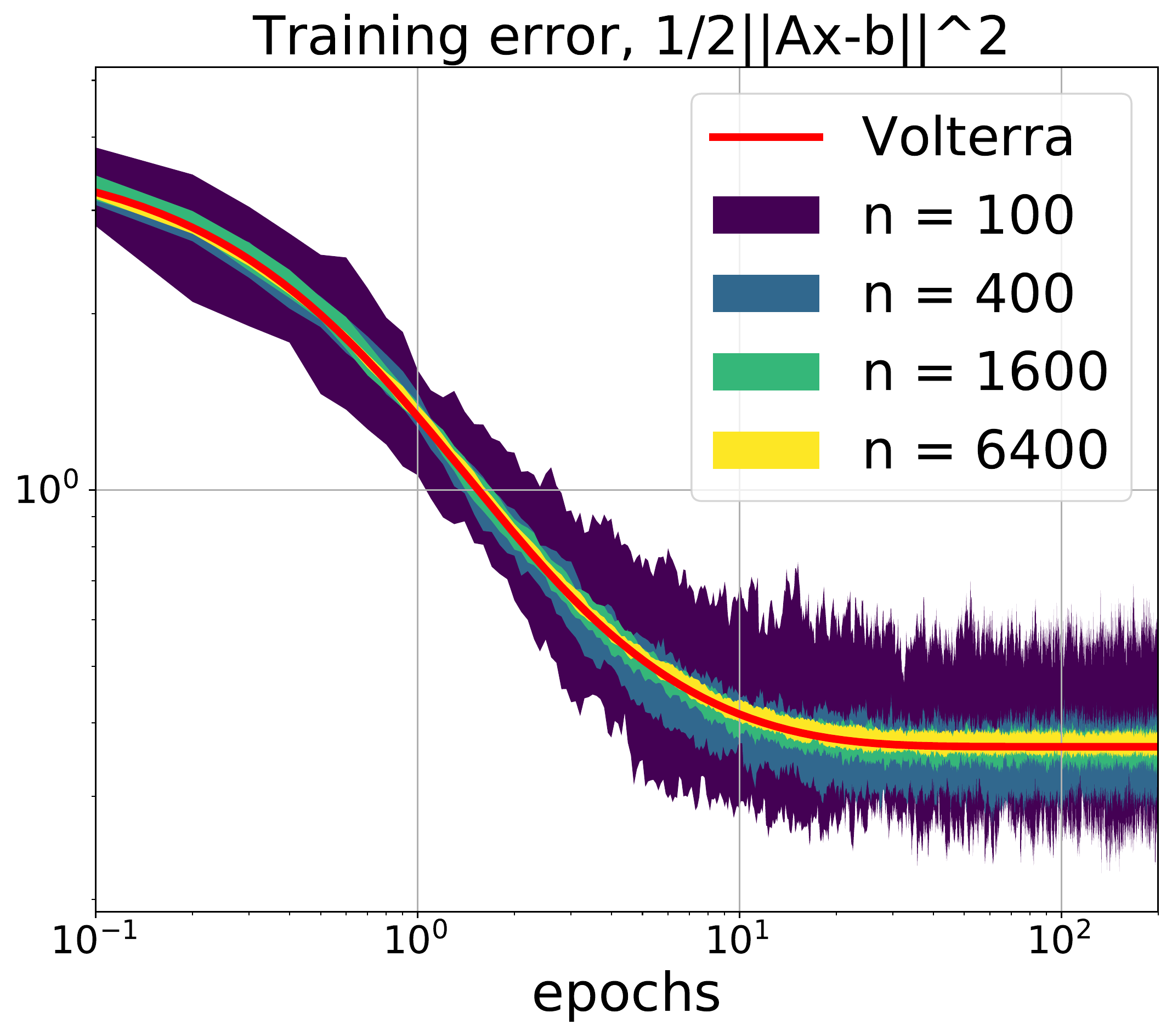}
%       \includegraphics[width = 0.95\linewidth]{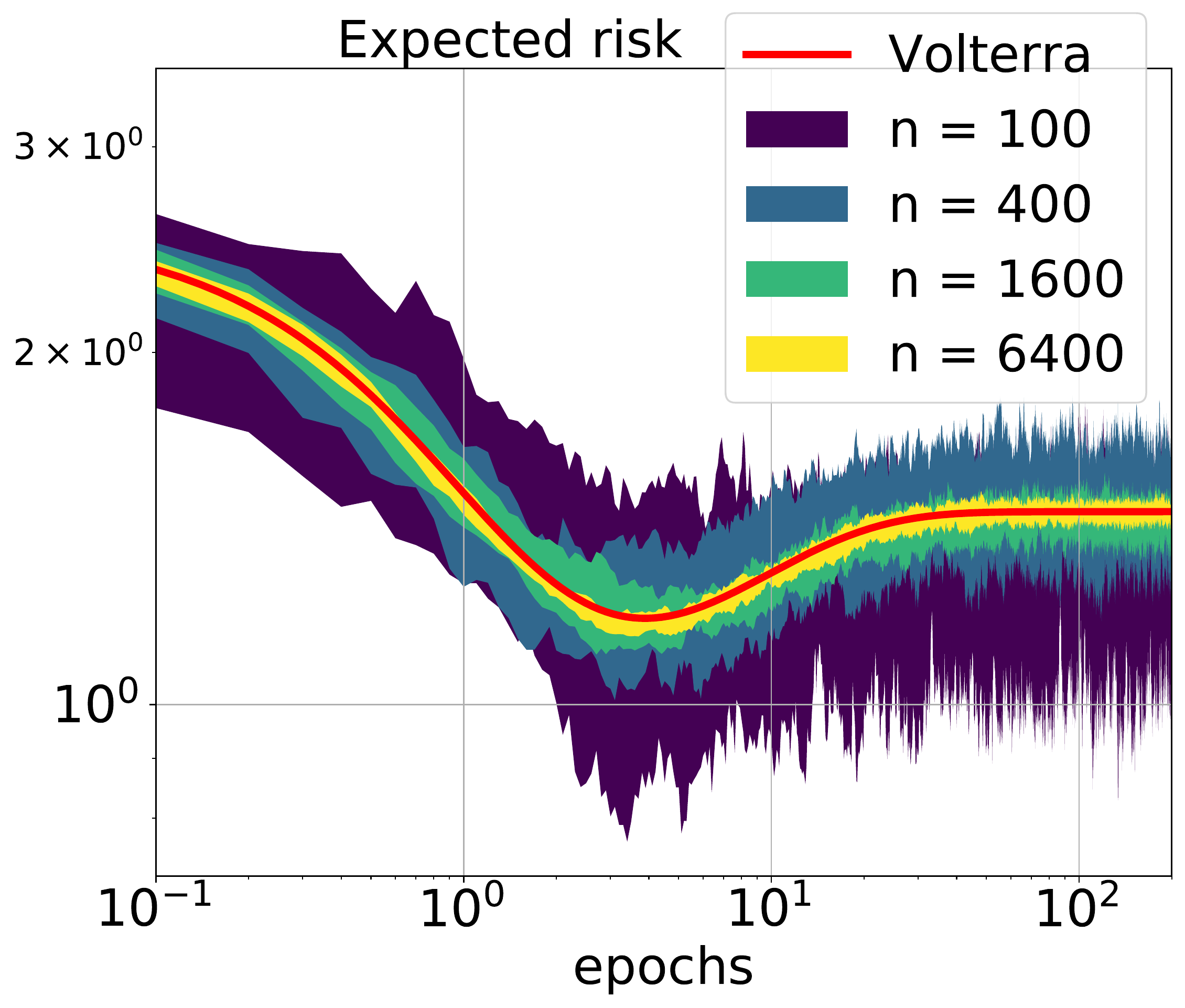}
%      \caption{\textbf{Concentration of SGD on training loss and expected risk}, $\EE[(\aa \cdot \xx-b)^2]$. Same set-up as in Fig.~\ref{fig:MSE_Gaussian}; $\aa$ generated from some covariance as $\AA$. More volatility in the expected risk across runs even for large $n$ in comparison to the training error. }
%     \label{fig:Gaussian}
% \end{wrapfigure}

\noindent The processes $\{\xx_k:k\}$ and $(\XX_t : t)$ are independent of each other, conditionally on $\AA,\bb,\xx_0,$ and so this statement is also implicitly a concentration of measure result.  More to the point, using Theorem~\ref{thm:trainrisk} (see also \citep[Theorem 2]{Us}), we can further compare $\mathcal{R}(\XX_t)$ to the deterministic path $\Omega_t$:

\begin{theorem}[Concentration of SGD]\label{thm:expectation} Under the same assumptions as Theorem~\ref{thm:homogenized_SGD_SGD} and under the further assumption that $\mathcal{R}$ and $\mathscr{L}$ have bounded $\|\mathcal{R}\|_{H^2}$ and $\|\mathscr{L}\|_{H^2}$ independent of $n$ or $d$, for some $C'$ sufficiently large
\[ \Pr \bigg [  
\sup_{0 \leq t \leq T}\biggl\|
\begin{pmatrix} \mathscr{L}(\xx_{\lfloor tn \rfloor}) \\ \mathcal{R}(\xx_{\lfloor tn \rfloor})\end{pmatrix}
-
\begin{pmatrix}\Psi_t \\ \Omega_t \end{pmatrix}
\biggr\|
 > d^{-\tilde{\varepsilon}/2} \, \big | \, \AA, \bb, \xx_0 \bigg ] \le C' d^{-D} , \]
where $\Omega_t$ solves \eqref{eqa:PLoss}. %$\EE_0[\cdot]$ is the expectation conditioned on the problem and initialization (i.e., matrix $\AA$, targets $\bb$, and initialization $\xx_0$). 
\end{theorem}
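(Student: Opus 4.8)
The statement is, in essence, a corollary of the two comparison results already in hand: Theorem~\ref{thm:homogenized_SGD_SGD}, which replaces the SGD iterates $\xx_{\lfloor tn\rfloor}$ by homogenized SGD $\XX_t$, and Theorem~\ref{thm:trainrisk} (together with its slight generalization \citep[Theorem~2]{Us}), which replaces $\XX_t$ by the deterministic Volterra solutions $\Psi_t,\Omega_t$. The plan is to run both comparisons for the pair of statistics $(\mathscr{L},\mathcal{R})$ simultaneously, intersect the resulting good events, and close with the triangle inequality in $\R^2$ and a union bound over the (finitely many) error events.

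Concretely, I would first invoke Theorem~\ref{thm:homogenized_SGD_SGD} twice. Applied to $\mathcal{R}$ it is immediate, since every hypothesis --- in particular Assumption~\ref{assumption: quadratics} for $\mathcal{R}$ --- is among the standing assumptions of Theorem~\ref{thm:expectation}. To apply it to $\mathscr{L}$ the one thing to check is that $\mathscr{L}$ obeys Assumption~\ref{assumption: quadratics}; but $\nabla^2\mathscr{L}=\AA^T\AA$ is the monomial in $\AA^T\AA$ while $\nabla\mathscr{L}(0)=-\AA^T\bb$ and $\mathscr{L}(0)=\tfrac12\|\bb\|^2$ are norm-bounded by Assumption~\ref{assumption:Target}, so Lemma~\ref{lem:qs} supplies Assumption~\ref{assumption: quadratics} for $\mathscr{L}$ (with $\theta$ taken a hair larger than in Assumption~\ref{ass: laundry_list}, uniformly for both statistics). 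This gives, conditionally on $(\AA,\bb,\xx_0)$ and w.o.p., $\sup_{0\le t\le T}|\mathcal{R}(\xx_{\lfloor tn\rfloor})-\mathcal{R}(\XX_t)|\le d^{-\tilde\varepsilon/2}$ and likewise with $\mathscr{L}$ in place of $\mathcal{R}$. For the second step I would check the hypotheses \eqref{eq:trainrisk}: $\|\bb\|\le C$ is immediate, $\tr(\AA^T\AA)=\tr(\AA\AA^T)\le n\|\AA\|^2\le Cn$ because $\AA\AA^T$ has at most $n$ nonzero eigenvalues, and, after using the homogeneity of \eqref{eq:rr} to normalize $\|\AA\|\le1$, the bound $\|\AA^T\AA\|\le nd^{-\epsilon}$ holds for every $\epsilon\le\tilde\varepsilon$ since $n\ge d^{\tilde\varepsilon}$. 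Applying Theorem~\ref{thm:trainrisk}, or its generalization \citep[Theorem~2]{Us} when the companion restriction $nd^{-\epsilon}\le C$ is not met, then gives, w.o.p.,
\[
\sup_{0\le t\le T}\Bigl\|\begin{pmatrix}\mathscr{L}(\XX_t)\\\mathcal{R}(\XX_t)\end{pmatrix}-\begin{pmatrix}\Psi_t\\\Omega_t\end{pmatrix}\Bigr\|\le d^{-\epsilon/2}.
\]
On the intersection of this event with the two from the first step, the triangle inequality in $\R^2$ bounds $\sup_t\|(\mathscr{L}(\xx_{\lfloor tn\rfloor}),\mathcal{R}(\xx_{\lfloor tn\rfloor}))-(\Psi_t,\Omega_t)\|$ by $\sqrt{2}\,d^{-\tilde\varepsilon/2}+d^{-\epsilon/2}$; a union bound collapses the three $Cd^{-D}$ error probabilities into a single $C'd^{-D}$, and shrinking $\tilde\varepsilon$ slightly absorbs the numerical constant, leaving exactly the claimed bound.

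Since the analytic substance is entirely carried by Theorems~\ref{thm:homogenized_SGD_SGD} and \ref{thm:trainrisk}, which we are entitled to use, the only genuine care is bookkeeping: one must invoke the homogenized-SGD concentration step in a form valid across the full regime $d^\alpha\le n\le d^{1/\alpha}$ of Assumption~\ref{ass:poly}, because Theorem~\ref{thm:trainrisk} as literally stated carries the auxiliary restriction $nd^{-\epsilon}\le C$ and so one must pass to the more general \citep[Theorem~2]{Us} when $n$ is a large power of $d$ --- exactly as flagged in the text preceding the statement. Keeping the exponents $\tilde\varepsilon$ and $\epsilon$, and the conditioning on $(\AA,\bb,\xx_0)$, consistent between the two comparisons is the only mild obstacle; I expect no analytic difficulty beyond what is already packaged in the two input theorems.
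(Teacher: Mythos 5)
Your proposal is correct and follows essentially the same route as the paper's own (very terse) proof: combine Theorem~\ref{thm:homogenized_SGD_SGD} with Theorem~\ref{thm:trainrisk} and conclude by a triangle inequality and union bound. Your additional bookkeeping --- invoking Lemma~\ref{lem:qs} to supply Assumption~\ref{assumption: quadratics} for $\mathscr{L}$, verifying the hypotheses in \eqref{eq:trainrisk}, and flagging that the companion restriction $nd^{-\epsilon}\le C$ in Theorem~\ref{thm:trainrisk} requires passing to the generalization in \cite{Us} for large $n$ --- is a genuine, though small, improvement in rigor over the paper's one-paragraph sketch.
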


\begin{figure}[t]
    \centering
    \includegraphics[scale = 0.18]{figures/training_loss_gaussian_small.pdf}
       \quad \includegraphics[scale = 0.18]{figures/testing_error_signal_small.pdf}
       \quad \includegraphics[scale = 0.18]{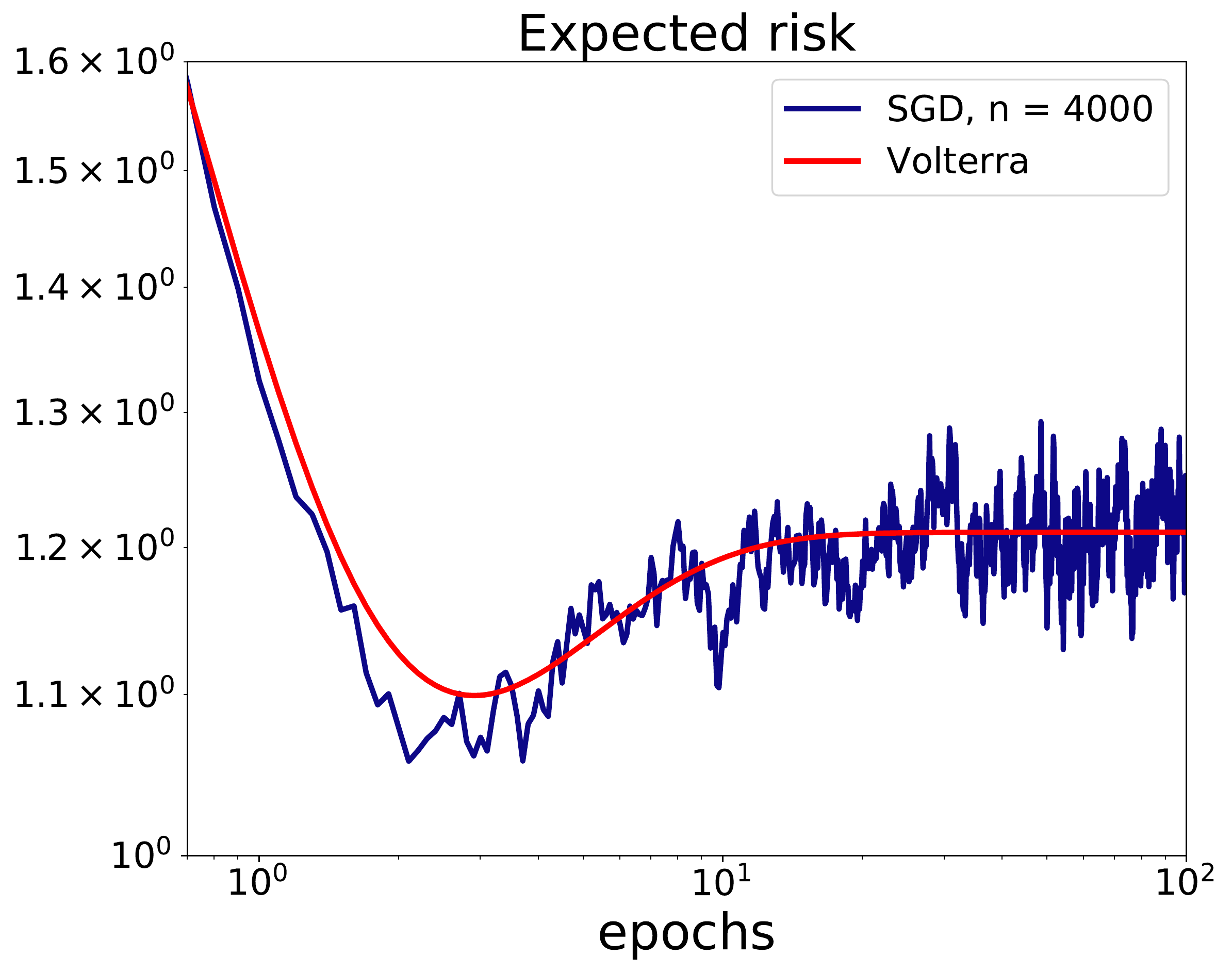}
     \caption{\textbf{Concentration of SGD on training loss and expected risk}, $1/2 \EE[(\aa \cdot \xx-b)^2]$. Set-up as in Fig.~\ref{fig:MSE_Gaussian}; For expected risk, the samples $\aa$ generated from same covariance as $\AA$. More volatility in the expected risk across runs even for large $n$ in comparison to the training error (left and center). The predicted $\Omega_t$ matches the performance of SGD on the expected risk even for a single run (right). }
    \label{fig:Gaussian}
\end{figure}

\subsection{Motivating applications}\label{sec:stats}
We give some motivating problem setups that illustrate the versatility of our setup as well as some common statistics. Of particular note is that our setup admits any quadratic test error. 

\subsubsection{Training loss} \label{sec:training_loss_intro}

One important (nonstatistical) quadratic statistic, which allows analysis of the optimization aspects of SGD in high dimensions, is the $\ell^2$-regularized loss function $f$ in \eqref{eq:rr}.  
%In that case, we take 
%\[
%\TT = \AA^T \AA + \delta \II_d,
%\quad
%\uu = -\AA^T \bb,
%\quad\text{and}\quad
%c=\tfrac{1}{2}\bb^T \bb.
%\]
%Using these choices, we have
%\[
%\tilde{q}(\xx_k) = \frac12 \| \AA \xx_k - %\bb\|^2 + \frac \delta2 \| \xx_k \|^2 = f(\xx_k).
%\]
Then provided that $\AA,\bb$ satisfy Assumptions \ref{assumption:Target} and \ref{ass: laundry_list}, $\xx_0$ is iid subgaussian, Lemmas \ref{lem:xo} and  \ref{lem:qs} and Theorem~\ref{thm:expectation} show that $f(\xx_k)$ concentrates around the solution of a Volterra integral equation.
A natural setup under which Assumptions \ref{assumption:Target} and \ref{ass: laundry_list} are satisfied is the following:

\begin{assumption}\label{ass:sc}
Suppose $M > 0$ is a constant.
Suppose that $\SSigma$ is a positive semi-definite $d \times d$ matrix with $\tr \SSigma = 1$ and $\|\SSigma\| \leq M / \sqrt{d} < \infty.$
Suppose that $\AA$ is a random matrix $\AA = \ZZ \sqrt{\SSigma}$ where $\ZZ$ is an $n \times d$ matrix of independent, mean $0$, variance $1$ entries with subgaussian norm at most $M < \infty$, and suppose $n \leq M d$.  Finally suppose that $\bb = \AA \bbeta + \xxi$ for $\bbeta,\xxi$ iid centered subgaussian satisfying $\|\bbeta\|^2 =R$ and $\|\xxi\|^2 = \widetilde{R} \frac{n}{d}$.
\end{assumption}
\noindent These assumptions naturally lead to random matrices that satisfy Assumption \ref{ass:sc} with good probability:
\begin{lemma}\label{lem:sc}
    If $(\AA,\bb)$ satisfy Assumption \ref{ass:sc}, then $(\AA,\bb)$ satisfies Assumptions \ref{assumption:Target} and \ref{ass: laundry_list} with probability tending to $1 - e^{-\Omega(d)}$.
\end{lemma}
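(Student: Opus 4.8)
I would verify Assumptions~\ref{assumption:Target} and~\ref{ass: laundry_list} in turn. \emph{Assumption~\ref{assumption:Target}.} Writing $\AA=\ZZ\sqrt{\SSigma}$ gives $\|\AA\|^2=\|\AA\AA^T\|\le\|\SSigma\|\,\|\ZZ\|^2$, and for an $n\times d$ matrix of independent mean-zero unit-variance entries of subgaussian norm at most $M$ the bound $\|\ZZ\|\le C_M(\sqrt n+\sqrt d)$ holds off an $e^{-\Omega(d)}$ event (see e.g.~\citep{vershynin2018high}); combined with $n\le Md$, $\|\SSigma\|\le M/\sqrt d$ and --- where that crude bound is lossy --- the deformed Marchenko--Pastur edge for $\AA^T\AA$, this yields $\|\AA\|\le C$. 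For the labels, $\bb=\AA\bbeta+\xxi$ gives $\|\bb\|^2\le 2\,\bbeta^T\AA^T\AA\,\bbeta+2\|\xxi\|^2\le 2\|\AA\|^2R+2\widetilde R\,(n/d)\le 2CR+2M\widetilde R$, with $\|\xxi\|^2=\widetilde R\,(n/d)$ understood up to subgaussian fluctuations, which are $e^{-\Omega(d)}$-rare.

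\emph{Assumption~\ref{ass: laundry_list}.} Here I would run a leave-one-out (Schur complement) analysis. Abbreviate $S(z)\defas R(z;\AA\AA^T)$ (the $n\times n$ resolvent), $S_{ij}(z)=\ee_i^T S(z)\ee_j$, and $\mathcal R(z)\defas R(z;\AA^T\AA)$; for an index $i$ let $\AA^{(i)}$ be $\AA$ with its $i$-th row deleted and $\mathcal R^{(i)}(z)\defas R(z;\AA^{(i)T}\AA^{(i)})$, so that $\aa_i=\sqrt{\SSigma}\,\zz_i$ ($\zz_i$ the $i$-th row of $\ZZ$) is independent of $\mathcal R^{(i)}$, while $\bbeta,\xxi$ are independent of $\AA$. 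Because $\Omega$ sits at distance at least $1/2$ from $[0,1+\|\AA\|^2]$, hence (eigenvalue interlacing) from the spectra of $\AA\AA^T$, $\AA^{(i)}\AA^{(i)T}$ and $\AA^{(i)T}\AA^{(i)}$, all resolvents below have operator norm at most $2$ on $\Omega$, with $1/2\le|z|\le C$ there. The Schur complement identity gives
\[
S_{ii}(z)=\frac{1}{z\,(1-\aa_i^T\mathcal R^{(i)}(z)\aa_i)},
\]
and an analogous computation gives $S_{ij}(z)=S_{ii}(z)\,\aa_i^T\mathcal R^{(i)}(z)\aa_j$ for $i\ne j$, while $\ee_i^T S(z)\bb=\aa_i^T\mathcal R(z)\bbeta+\ee_i^T S(z)\xxi$ using $S(z)\AA=\AA\,\mathcal R(z)$, with $\aa_i^T\mathcal R(z)\bbeta=(1-\aa_i^T\mathcal R^{(i)}(z)\aa_i)^{-1}\,\aa_i^T\mathcal R^{(i)}(z)\bbeta$ by Sherman--Morrison.

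Each right-hand side is a quadratic or linear form in an independent subgaussian vector. The quadratic form $\aa_i^T\mathcal R^{(i)}\aa_i=\zz_i^T\MM\zz_i$, $\MM\defas\sqrt{\SSigma}\,\mathcal R^{(i)}\sqrt{\SSigma}$, has $\|\MM\|_F^2\le\|\mathcal R^{(i)}\|^2\tr(\SSigma^2)\le4\|\SSigma\|\le4M/\sqrt d$ and $\|\MM\|\le2M/\sqrt d$, so the Hanson--Wright inequality gives $|\aa_i^T\mathcal R^{(i)}\aa_i-\tr(\SSigma\mathcal R^{(i)})|\le d^{-1/4+\epsilon'}$ off an $e^{-d^{\Omega(1)}}$ event, and likewise $\|\aa_j\|^2=1+O(d^{-1/4+\epsilon'})$. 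Conditioning on $\AA^{(i)}$, the forms $\aa_i^T\mathcal R^{(i)}\aa_j$ and $\aa_i^T\mathcal R^{(i)}\bbeta$ are linear in $\zz_i$ with coefficient vectors of squared norm at most $4\|\SSigma\|\|\aa_j\|^2$ and $4\|\SSigma\|R$, both $O(d^{-1/2})$, hence $\le d^{-1/4+\epsilon'}$; conditioning on $\AA$, $\ee_i^T S\xxi$ is linear in $\xxi$ with coefficient vector of squared norm at most $4$ and per-coordinate variance of order $\|\xxi\|^2/n=\widetilde R/d$, hence $\le d^{-1/2+\epsilon'}$. Since the Schur denominators satisfy $|1-\aa_i^T\mathcal R^{(i)}\aa_i|=|z\,S_{ii}|^{-1}\ge(2|z|)^{-1}\gtrsim1$, conditions (1)--(2) of Assumption~\ref{ass: laundry_list} follow at scale $d^{-1/4+\epsilon'}$. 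For condition (3), Sherman--Morrison gives $|\tr(\SSigma\mathcal R^{(i)})-\tr(\SSigma\mathcal R)|=|(1-\aa_i^T\mathcal R^{(i)}\aa_i)^{-1}\,\aa_i^T\mathcal R^{(i)}\SSigma\mathcal R^{(i)}\aa_i|=O(\|\SSigma\|)=O(d^{-1/2})$, so $S_{ii}$ agrees with the $i$-free quantity $(z(1-\tr(\SSigma\mathcal R)))^{-1}$ up to $O(d^{-1/4+\epsilon'})$, whence $|S_{ii}-\tfrac1n\tr S|\le\max_j|S_{ii}-S_{jj}|=O(d^{-1/4+\epsilon'})$.

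Finally I would upgrade these pointwise-in-$z$ bounds to uniform ones: each quantity above is analytic near $\Omega$ with derivative $O(1)$ there (bounded by $4$ times powers of $\|\AA\|\le C$), hence $O(1)$-Lipschitz on $\Omega$, so it suffices to control it on a $\mathrm{poly}(d)$-point net of $\Omega$; a union bound over the net, the $n\le Md$ indices, and the at most $n^2$ pairs $(i,j)$ costs a polynomial factor and leaves failure probability $e^{-d^{\Omega(1)}}$, which with the spectral step yields the claimed $1-e^{-\Omega(d)}$. One then fixes $\theta$: since $n\le Md$, $n^{\theta-1/2}\ge(Md)^{\theta-1/2}\gtrsim d^{\theta-1/2}$, so any $\theta\in(1/4,1/2)$ with $\epsilon'$ small makes $d^{-1/4+\epsilon'}\le n^{\theta-1/2}$ for all large $d$. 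The main obstacle will be not any single estimate but keeping all of them --- together with the perturbative replacements $\mathcal R^{(i)}\to\mathcal R$, $\aa_i^T\mathcal R^{(i)}\aa_i\to\tr(\SSigma\mathcal R)$ and the bounded-away-from-zero Schur denominators --- under simultaneous control, uniformly over $\Omega$ and all indices; this is exactly where the normalization $\tr\SSigma=1$ with $\|\SSigma\|$ small enters, since it drives the Hanson--Wright fluctuations down to $O(d^{-1/4})\ll n^{\theta-1/2}$.
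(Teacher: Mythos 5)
Your proof takes the same conceptual route the paper intends: for Assumption~\ref{ass: laundry_list}, the paper's proof is essentially a pointer to the leave-one-out / Schur-complement resolvent analysis carried out in detail for the random features model in Section~\ref{sec:random_features}, and for Assumption~\ref{assumption:Target} it invokes standard subgaussian norm bounds and a net argument. You have fleshed out the sample-covariance specialization of that leave-one-out argument directly in terms of $R(z;\AA\AA^T)$ and $R(z;\AA^T\AA)$, rather than passing through the block linearization $H$ of Section~\ref{sec:random_features}. The Schur identities $S_{ii}=(z(1-\aa_i^T\mathcal R^{(i)}\aa_i))^{-1}$, $S_{ij}=S_{ii}\aa_i^T\mathcal R^{(i)}\aa_j$, and the Sherman--Morrison reduction $\aa_i^T\mathcal R\,\bbeta=(1-\aa_i^T\mathcal R^{(i)}\aa_i)^{-1}\aa_i^T\mathcal R^{(i)}\bbeta$ are all correct, as is the Hanson--Wright / subgaussian-linear-form bookkeeping using $\|\sqrt\SSigma\mathcal R^{(i)}\sqrt\SSigma\|_F^2\le\|\mathcal R^{(i)}\|^2\|\SSigma\|$; the lower bound $|1-\aa_i^T\mathcal R^{(i)}\aa_i|=|zS_{ii}|^{-1}\gtrsim1$ is a clean deterministic substitute for the three-case argument of Lemma~\ref{lem_im_part}, and the analytic-Lipschitz net over $\Omega$ is a valid alternative to the Brownian-motion/harmonic-majorization argument of Lemma~\ref{lem:fb}.

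Two caveats, both of which you inherit from the paper rather than introduce. First, the probability rate: with $\|\MM\|_F^2\asymp\|\SSigma\|\asymp d^{-1/2}$, the Hanson--Wright and subgaussian linear-form bounds at the required scale $t=d^{-1/4+\epsilon'}$ give failure probability $\exp(-cd^{2\epsilon'})$, not $e^{-\Omega(d)}$, and since $t$ must tend to zero you cannot push $\epsilon'$ up to $1/2$; so the union bound over the net, the $n$ indices, and the $n^2$ pairs still leaves $e^{-d^{\Omega(1)}}$, dominated only weakly by the $e^{-\Omega(d)}$ from the spectral-norm event. Your final sentence should read $e^{-d^{\Omega(1)}}$ (or ``high probability''), matching what Section~\ref{sec:random_features} actually proves; the $e^{-\Omega(d)}$ in the lemma statement is an overclaim of the paper itself. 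Second, your spectral norm check for Assumption~\ref{assumption:Target} is hand-waved exactly where the paper is also vague: the crude bound $\|\AA\|^2\le\|\SSigma\|\,\|\ZZ\|^2$ gives $O(\sqrt d)$ under $\|\SSigma\|\le M/\sqrt d$ and $n\le Md$, and the Koltchinskii--Lounici refinement $\|\AA^T\AA-n\SSigma\|\lesssim n\|\SSigma\|\sqrt{r(\SSigma)/n}$ does not rescue it in general (consider $\SSigma$ supported on a $\sqrt d$-dimensional subspace); a genuine proof of boundedness of $\|\AA\|$ would need either additional structure on $\SSigma$ or a strengthening of the norm hypothesis to $\|\SSigma\|=O(1/d)$. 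Be explicit that this step requires more than the stated ``deformed Marchenko--Pastur edge'' invocation if you want a complete argument.
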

\noindent Hence, under these assumptions, we conclude:
\begin{theorem}\label{thm:Ab}
Suppose $(\AA, \bb)$ satisfy Assumption \ref{ass:sc}, $\delta >0$ and $\xx_0$ is iid centered subgaussian with $\Exp \|\xx_0\|^2 = \widehat{R}.$ Suppose that $\gamma(\cdot)$ satisfies Assumption \ref{assumption:lr}, then for some $\epsilon > 0$, for all $T >0$, and for all $D>0$ there is a $C>0$ such that
\[
\Pr
\biggl(
\sup_{0 \leq t \leq T}
\left\| 
\begin{pmatrix}
\mathscr{L}(\xx_{\lfloor tn \rfloor}) \\
\tfrac{1}{2} \|\xx_{\lfloor tn \rfloor} - \bbeta \|^2
\end{pmatrix} - 
\begin{pmatrix}
\Psi_t  \\ 
\Omega_t
\end{pmatrix}
\right\| > d^{-\epsilon}
\biggr)
\leq Cd^{-D},
\]
where $\Psi_t$ solves \eqref{eqa:V} and $\Omega_t$ solves \eqref{eqa:PLoss} with $\mathcal{R} = \frac{1}{2}\|\cdot - \bbeta\|^2$.
\end{theorem}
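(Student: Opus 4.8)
The plan is to deduce Theorem~\ref{thm:Ab} from the master concentration result Theorem~\ref{thm:expectation} by verifying, for the random instance produced under Assumption~\ref{ass:sc}, every structural hypothesis that Theorem~\ref{thm:expectation} (equivalently, the combination of Theorems~\ref{thm:homogenized_SGD_SGD} and \ref{thm:trainrisk}) requires. Three families of hypotheses need checking: (i) the data/target normalization and resolvent/delocalization bounds, Assumptions~\ref{assumption:Target} and \ref{ass: laundry_list}; (ii) the initialization bound, Assumption~\ref{assumption:init}; and (iii) the quadratic-statistic bound, Assumption~\ref{assumption: quadratics}, together with $H^2$-norm boundedness, applied to \emph{both} functionals in the statement, $\mathscr{L}$ and $\mathcal{R} = \tfrac12\|\cdot - \bbeta\|^2$. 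The learning-rate hypothesis (Assumption~\ref{assumption:lr}) is given directly, and I take Assumption~\ref{ass:poly}, in particular $n \ge d^{\tilde{\varepsilon}}$ for some $\tilde{\varepsilon}>0$ (as holds in the proportionate regime $n \asymp d$), as part of the ambient setup, since this is what Theorem~\ref{thm:homogenized_SGD_SGD} needs beyond $n \le Md$. Each of (i)--(iii) is a single invocation of an auxiliary lemma already proved.

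For (i), Lemma~\ref{lem:sc} says precisely that Assumption~\ref{ass:sc} implies Assumptions~\ref{assumption:Target} and \ref{ass: laundry_list} (with some fixed $\theta_0 \in (0,\tfrac12)$) on an event $E_1$ with $\Pr[E_1^{\mathrm c}] \le e^{-\Omega(d)}$; here $\tr\SSigma = 1$ is what pins $\tr(\AA^T\AA) = \mathcal{O}(n)$, and $\|\SSigma\| \le M/\sqrt{d}$ is what supplies the operator-norm control and the delocalization of the left singular vectors of $\AA = \ZZ\sqrt{\SSigma}$. For (ii), $\xx_0$ is an iid centered subgaussian vector of Euclidean norm $\mathcal{O}(1)$ independent of $(\AA,\bb)$, so each coordinate has subgaussian norm $\mathcal{O}(d^{-1/2})$; hence $\Exp\xx_0 = \bm{0}$ (so $\|\Exp\xx_0\|_\infty = 0 \le C/n$) and $\max_i\|(\xx_0)_i\|_{\psi_2}^2 \le C' d^{-1} \le C'M n^{-1} \le C'M n^{2\theta_0-1}$ (the second inequality is $n \le Md$, the third uses $\theta_0 > 0$). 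This is exactly the case singled out immediately after Lemma~\ref{lem:xo}, so that lemma applies and yields Assumption~\ref{assumption:init} for any $\theta > \theta_0$ on an event $E_2$. The one point going beyond the literal statement of Lemma~\ref{lem:xo} is that I want $\Pr[E_2^{\mathrm c}] = \mathcal{O}(d^{-D})$ rather than merely $o(1)$; this follows by re-running its proof with a union bound over a $\mathrm{poly}(d)$-net of the contour $\Omega$ together with a Hanson--Wright estimate, since for fixed $\AA$ each $\ee_i^T R(z;\AA^T\AA)\xx_0$ is a subgaussian linear form in the independent coordinates of $\xx_0$ at scale $\lesssim n^{\theta_0-1/2}$.

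For (iii), both functionals fall under Lemma~\ref{lem:qs}. The empirical risk $\mathscr{L}$ has Hessian $\TT = \AA^T\AA$ (the monomial in $\AA^T\AA$ with unit coefficient), linear part $\uu = -\AA^T\bb$, and constant $\tfrac12\|\bb\|^2$, all $\mathcal{O}(1)$ on $E_1$; and $\mathcal{R} = \tfrac12\|\cdot\|^2 - \bbeta^{T}(\cdot) + \tfrac12\|\bbeta\|^2$ has $\TT = \II_d$, i.e.\ the \emph{constant} polynomial in $\AA^T\AA$, linear part $\uu = -\bbeta$ with $\|\uu\| = \|\bbeta\|$, and constant $\tfrac12\|\bbeta\|^2$, all $\mathcal{O}(1)$ independently of $n$ and $d$. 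Since the key bound \eqref{eq:key_lemma_ass} of Assumption~\ref{assumption: quadratics} involves $\TT$ only through $\widehat\TT = R(z)\TT R(y) + R(y)\TT R(z)$ with $R(\cdot)=R(\cdot;\AA^T\AA)$, Lemma~\ref{lem:qs}, whose conclusion is deterministic given Assumptions~\ref{assumption:Target} and \ref{ass: laundry_list}, delivers Assumption~\ref{assumption: quadratics} for $\mathscr{L}$ and for $\mathcal{R}$, for all $n$ large, on $E_1$. The required $H^2$ bounds hold for the same reasons: $\|\mathscr{L}\|_{H^2} \le \|\AA\|^2 + \|\AA\|\,\|\bb\| + \tfrac12\|\bb\|^2 = \mathcal{O}(1)$ on $E_1$, and $\|\mathcal{R}\|_{H^2} = 1 + \|\bbeta\| + \tfrac12\|\bbeta\|^2 = \mathcal{O}(1)$.

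Finally, with $G = E_1 \cap E_2$ (so $\Pr[G^{\mathrm c}] \le C_1 d^{-D}$), all hypotheses of Theorem~\ref{thm:expectation} hold on $G$ with the constants above, so, applied with $\mathcal{R} = \tfrac12\|\cdot - \bbeta\|^2$ (and $\tfrac12\|\xx - \bbeta\|^2 = \mathcal{R}(\xx)$), it gives the asserted estimate with $d^{-\tilde{\varepsilon}/2}$ in place of $d^{-\epsilon}$, conditionally on $(\AA,\bb,\xx_0)$, with conditional failure probability $\le C_2 d^{-D}$ for a constant $C_2$ uniform over $G$. Removing the conditioning, $\Pr[\sup_{0 \le t \le T}\|\cdots\| > d^{-\tilde{\varepsilon}/2}] \le \Exp[\mathbf{1}_G\,\Pr[\,\cdots \mid \AA,\bb,\xx_0\,]] + \Pr[G^{\mathrm c}] \le (C_1 + C_2)d^{-D}$, and taking $\epsilon = \tilde{\varepsilon}/2$ finishes the proof. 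The argument is thus entirely hypothesis-checking; the sole step of genuine analytic content, namely the resolvent delocalization bounds of Assumption~\ref{ass: laundry_list} for the anisotropic sample-covariance ensemble $\AA = \ZZ\sqrt{\SSigma}$, is the real heart of the matter and is delegated to Lemma~\ref{lem:sc} rather than reproved here; the only other non-bookkeeping item is the minor upgrade of the convergence in Lemma~\ref{lem:xo} to a polynomial-rate tail bound.
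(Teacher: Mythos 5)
Your hypothesis-checking is careful and, as far as it goes, correct: Lemma~\ref{lem:sc} supplies Assumptions~\ref{assumption:Target} and \ref{ass: laundry_list} with exponentially small failure probability; the coordinate-wise subgaussian scale $\lesssim d^{-1/2}\le Cn^{2\theta_0-1}$ for iid $\xx_0$ is exactly the regime Lemma~\ref{lem:xo} targets (and the polynomial tail you want is already present in its proof via the overwhelming-probability bound on the net, even though the lemma statement is phrased only as ``probability tending to $1$''); and both $\mathscr{L}$ and $\mathcal{R}=\tfrac12\|\cdot-\bbeta\|^2$ have Hessians that are polynomials in $\AA^T\AA$ (namely $\AA^T\AA$ and $\II_d$), so Lemma~\ref{lem:qs} gives Assumption~\ref{assumption: quadratics} for both. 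The conditioning/unconditioning step via total probability is also fine.

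However, your proof and the paper's proof are not doing the same thing, and I believe yours is missing the step the paper singles out as the crux. Theorem~\ref{thm:expectation} produces $\Psi_t=\Exp[\mathscr{L}(\XX_t)\,|\,\AA,\bb,\xx_0]$ and $\Omega_t=\Exp[\mathcal{R}(\XX_t)\,|\,\AA,\bb,\xx_0]$, i.e.\ solutions of Volterra equations whose forcing terms $\mathscr{L}(\bm{\mathscr{X}}^{\text{gf}}_{\Gamma(t)})$ and $\mathcal{R}(\bm{\mathscr{X}}^{\text{gf}}_{\Gamma(t)})$ still depend on the realizations of $\bbeta$, $\eeta$ (through $\bb=\AA\bbeta+\eeta$) and $\xx_0$. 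The paper's own proof of Theorem~\ref{thm:Ab} explicitly distinguishes this quantity from the $\Psi_t$ appearing in the theorem's conclusion and states that the additional work required is ``concentration of the gradient flow term'': because $\bbeta,\eeta,\xx_0$ are independent iid subgaussian vectors, the quadratic forms $\mathscr{L}(\bm{\mathscr{X}}^{\text{gf}}_{\Gamma(t)})$ and $\mathcal{R}(\bm{\mathscr{X}}^{\text{gf}}_{\Gamma(t)})$ concentrate (via Hanson--Wright, with cross terms vanishing by independence and centering) around $\Exp[\mathscr{L}(\bm{\mathscr{X}}^{\text{gf}}_{\Gamma(t)})\,|\,\AA]$ and $\Exp[\mathcal{R}(\bm{\mathscr{X}}^{\text{gf}}_{\Gamma(t)})\,|\,\AA]$, the trace formulas in \eqref{eq:fullPsi_training}--\eqref{eq:fullOmega_training} involving only $(R,\widetilde R,\widehat R)$ and $\AA$. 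One then propagates this to $\Psi_t$ and $\Omega_t$ via Gronwall (the Volterra operator is bounded on $[0,T]$), which makes the target $\Psi_t,\Omega_t$ deterministic given $\AA$. Your argument stops at the random Volterra solutions, which leaves the theorem's $\Psi_t,\Omega_t$ depending on the (unknown) $\bbeta,\eeta,\xx_0$ --- a strictly weaker conclusion and not the one the surrounding discussion (in particular the displayed formulas \eqref{eq:fullPsi_training}--\eqref{eq:fullOmega_training}) is referring to. To close the gap, add: a Hanson--Wright bound showing $\sup_{0\le t\le T}|\mathscr{L}(\bm{\mathscr{X}}^{\text{gf}}_{\Gamma(t)})-\Exp[\mathscr{L}(\bm{\mathscr{X}}^{\text{gf}}_{\Gamma(t)})\,|\,\AA]|\le d^{-c}$ w.o.p.\ (and likewise for $\mathcal{R}$), uniformity in $t$ coming from a mesh plus Lipschitz-in-$t$ control of the exponential integrating factor, and then stability of the Volterra solution under $L^\infty$ perturbations of the forcing term.
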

\noindent We discuss generalization implications in the the next section. Other works (see e.g., \citep{bordelon2022learning,ziyin2022strength}) also have loss dynamics of SGD but in the streaming or one-pass setting. This result, on the other hand, holds for the multi-pass SGD setting. 

Theorem \ref{thm:Ab} generalizes \citep{paquetteSGD2021} in that it allows for varying training rates, adds a regularization parameter, and allows for non--orthogonally--invariant designs $\AA$.  We further note that under the assumptions of Theorem \ref{thm:Ab}, we can further approximate the behavior of gradient flow to show that
\begin{equation}\label{eq:fullPsi_training}
\begin{aligned}
\Psi_t &=  \mathscr{L}(\bm{\mathscr{X}}_{\Gamma(t)}^{\text{gf}}) + \frac{1}{n} \int_0^t \gamma^2(s) \tr \bigg ( (\AA^T \AA)^2 e^{-2(\AA^T \AA + \delta \II_d)(\Gamma(t)-\Gamma(s))} \bigg ) \Psi_s \, \dif s\\
   \text{where} \qquad  \mathscr{L}(\bm{\mathscr{X}}_{\Gamma(t)}^{\text{gf}})
    &=
    \frac{R}{2d} \tr \bigg [ (\AA^T\AA) \bigg ( \AA^T \AA (\AA^T \AA + \delta \II_d)^{-1} \big (\II_d-e^{-(\AA^T\AA + \delta \II_d)\Gamma(t)} \big )- \II_d \bigg )^2 \bigg ] \\
    &+
   \frac{ \widetilde{R}}{2d} \tr \bigg [ \bigg ( \AA(\AA^T \AA + \delta \II_d)^{-1}\big [ \II_d - e^{-(\AA^T \AA + \delta \II_d)\Gamma(t)} \big ]\AA^T - \II_n \bigg )^2 \bigg ] \\
    &+
    \frac{\widehat{R}}{2d} \tr \big ( \AA^T \AA e^{-2 (\AA^T \AA + \delta \II_d) \Gamma(t)} \big ).
\end{aligned}
\end{equation}
For the risk $\mathcal{R}(\cdot) = 1/2 \|\cdot - \bbeta\|^2$, we have following expression
\begin{equation}\label{eq:fullOmega_training}
\begin{aligned}
    \Omega_t &= \mathcal{R}(\bm{\mathscr{X}}_{\Gamma(t)}^{\text{gf}}) + \frac{1}{n} \int_0^t \gamma^2(s) \tr \bigg ( (\AA^T \AA) e^{-2(\AA^T \AA + \delta \II_d)(\Gamma(t)-\Gamma(s))} \bigg ) \Psi_s \, \dif s\\
    \text{where} \qquad \mathcal{R}(\bm{\mathscr{X}}_{\Gamma(t)}^{\text{gf}}) &= 
    \frac{R}{2d} \tr \bigg [ \bigg ( \AA^T \AA (\AA^T \AA + \delta \II_d)^{-1} \big (\II_d-e^{-(\AA^T\AA + \delta \II_d)\Gamma(t)} \big )- \II_d \bigg )^2 \bigg ] \\
    &+
   \frac{ \widetilde{R}}{2d} \tr \bigg [ \bigg ( (\AA^T \AA + \delta \II_d)^{-1}\big [ \II_d - e^{-(\AA^T \AA + \delta \II_d)\Gamma(t)} \big ]\AA^T \bigg )^2 \bigg ] \\
    &+
    \frac{\widehat{R}}{2d} \tr \big ( e^{-2 (\AA^T \AA + \delta \II_d) \Gamma(t)} \big ).
\end{aligned}
\end{equation}
Under the learning rate assumptions in Theorem~\ref{thm:eventualrisk}, the limiting gradient flow terms simplify
\begin{equation}
    \begin{aligned}
    \mathscr{L}(\bm{\mathscr{X}}_{\infty}^{\text{gf}})
    &=
    \frac{R}{2d} \tr \bigg [ (\AA^T\AA) \bigg ( \AA^T \AA (\AA^T \AA + \delta \II_d)^{-1} - \II_d \bigg )^2 \bigg ] +
   \frac{ \widetilde{R}}{2d} \tr \bigg [ \bigg ( \AA(\AA^T \AA + \delta \II_d)^{-1}\AA^T - \II_n \bigg )^2 \bigg ] \\
    \mathcal{R}(\bm{\mathscr{X}}_{\infty}^{\text{gf}}) 
    &= 
    \frac{R}{2d} \tr \bigg [ \bigg ( \AA^T \AA (\AA^T \AA + \delta \II_d)^{-1} - \II_d \bigg )^2 \bigg ] +
   \frac{ \widetilde{R}}{2d} \tr \bigg [ \bigg ( (\AA^T \AA + \delta \II_d)^{-1} \AA^T \bigg )^2 \bigg ].
    \end{aligned}
\end{equation}

\subsubsection{Excess risk of SGD for ERM in linear regression} \label{sec:erm_excess_risk_intro}
In the standard linear regression setup, we suppose that $\AA$ is generated by taking $n$ independent $d$--dimensional samples from a centered distribution $\mathcal{D}_f$ which we assume to be standardized (mean $0$ and expected sample-norm-squared $1$).  We let the matrix $\SSigma_f \in \mathbb{R}^{d \times d} $ be the feature covariance of $\mathcal{D}_f$, that is
\begin{equation} \label{eq:train_covariance}
    \SSigma_f \defas \Exp [ \aa \aa^T ],\quad \text{where}\quad \aa \sim \mathcal{D}_f.
\end{equation}
Suppose there is a linear (``ground truth'' or ``signal'') function $\beta : \mathbb{R}^d \to \mathbb{R}$, which for simplicity we suppose to have $\beta(0) = 0$.  In this case, we identify $\beta$ with a vector using the representation $\aa \mapsto \bbeta^T\aa$.  We suppose that our data is drawn from a distribution $\mathcal{D}$ on $\mathbb{R}^d \times \mathbb{R}$, with the property that
\[
\Exp[ \, b \, | \,  \aa \, ] = \bbeta^T \aa, \quad \text{where}\quad (\aa, b) \sim \mathcal{D},
\]
and the data $\aa \sim \mathcal{D}_f$.

Hence we suppose that $[\AA ~|~ \bb]$ is a $\R^{n \times d} \times \R^{n \times 1}$ matrix on independent samples from $\mathcal{D}$.  The vector $\xx_t$ represents an estimate of $\bbeta$, and the population risk is
\[
%\tilde{q}(\xx_t) 
\mathcal{R}(\xx_t)
\defas \frac{1}{2} \Exp [ (b - \xx_t^T \aa)^2 | \xx_t] \quad \text{where}\quad (\aa, b) \sim \mathcal{D},
\]
where $(\aa,b)$ is an sample independent of $\xx_t$.  This can be evaluated in terms of the feature covariance matrix $\SSigma_f$ and the noise $\eta^2 \defas \Exp[ \, (b-\bbeta^T \aa)^2 \, ]$ to give
\begin{equation}\label{eq:lrq}
%\tilde{q}(\xx_t) = 
\mathcal{R}(\xx_t)
=\frac{1}{2} \eta^2 + \frac{1}{2} (\bbeta-\xx_t)^T \SSigma_f (\bbeta-\xx_t).
\end{equation}
It is important to note that the sequence $\{\xx_t\}_{ t \ge 0}$ is generated from the iterates of SGD applied to the $\ell^2$-regularized least-squares problem \eqref{eq:rr}.

In the case that $(\aa, b)$ is jointly Gaussian, it follows that we may represent
\[
\aa = \SSigma_f^{1/2}\zz, \quad b = \bbeta^T \aa + \eta w,
\quad\text{where}\quad (\zz,w) \sim N(0, \II_d \oplus 1).
\]
Therefore, it follows that the iterates $\xx_t$ are generated from the SGD algorithm applied to the problem:
\[
\min_{\xx} \frac{1}{2} \| \AA \xx - \bb\|^2_2 + \frac{\delta}{2} \|\xx\|^2
\quad\text{where}\quad
\bb = \AA \bbeta + \eta \ww,
\]
and the vector $\ww$ is iid $N(0,1)$ random variables, independent of $\AA$.  This is also known as the generative model with noise.

Moreover, if $\mathcal{D}$ satisfies Assumption \ref{ass:sc} (with $\SSigma=\SSigma_f$) then the population risk $\mathcal{R}(\xx_k)$ is well approximated by $\Omega$:
\begin{theorem}\label{thm:AbR}
Suppose $(\AA, \bb)$ satisfy Assumption \ref{ass:sc}, $\delta >0$ and $\xx_0$ is iid centered subgaussian with $\Exp \|\xx_0\|^2 = \widehat{R}.$ Suppose that $\gamma(\cdot)$ satisfies Assumption \ref{assumption:lr}, then for some $\epsilon > 0$, for all $T >0$, and for all $D>0$ there is a $C>0$ such that
\[
\Pr
\biggl(
\sup_{0 \leq t \leq T}
\left\| 
\begin{pmatrix}
\mathscr{L}(\xx_{\lfloor tn \rfloor}) \\
\mathcal{R}(\xx_{\lfloor tn \rfloor}) 
\end{pmatrix} - 
\begin{pmatrix}
\Psi_t  \\ 
\Omega_t
\end{pmatrix}
\right\| > d^{-\epsilon}
\biggr)
\leq Cd^{-D},
\]
where $\Psi_t$ solves \eqref{eqa:V} and $\Omega_t$ solves \eqref{eqa:PLoss} with $\mathcal{R}$ given by \eqref{eq:lrq}.
\end{theorem}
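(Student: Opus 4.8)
The plan is to derive Theorem~\ref{thm:AbR} by checking, on an event of overwhelming probability, all the hypotheses of the abstract result Theorem~\ref{thm:homogenized_SGD_SGD} (and of Theorem~\ref{thm:trainrisk}/Theorem~\ref{thm:expectation}) for the population-risk quadratic $\mathcal{R}$ of~\eqref{eq:lrq}, and then quoting those theorems. Step one is the reduction recorded just above the statement: since $(\aa,b)$ is jointly Gaussian, the population-risk SGD iterates coincide with the iterates of the $\ell^2$-regularized problem~\eqref{eq:rr} for the generative target $\bb = \AA\bbeta + \eta\ww$, with $\ww$ iid $N(0,1)$ independent of $\AA$. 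This is exactly the model of Assumption~\ref{ass:sc} with $\SSigma = \SSigma_f$, $R = \|\bbeta\|^2$, and $\widetilde R$ matched to $\eta^2$ (up to the $n/d$ normalization of $\|\xxi\|^2$). Hence Lemma~\ref{lem:sc} supplies Assumptions~\ref{assumption:Target} and~\ref{ass: laundry_list} with some $\theta_0\in(0,\tfrac12)$ off an event of probability $e^{-\Omega(d)}$; Lemma~\ref{lem:xo} supplies Assumption~\ref{assumption:init} for the iid centered subgaussian $\xx_0$ (for any $\theta>\theta_0$, w.o.p.); Assumption~\ref{assumption:lr} on $\gamma$ is assumed directly; and Lemma~\ref{lem:qs} gives Assumption~\ref{assumption: quadratics} for $\mathscr{L}$, whose Hessian $\nabla^2\mathscr{L}=\AA^T\AA$ is a monomial in $\AA^T\AA$ (with offset $-\AA^T\bb$ and constant $\tfrac12\|\bb\|^2$, both norm-bounded by Assumption~\ref{assumption:Target}).

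The one genuinely new ingredient is Assumption~\ref{assumption: quadratics} for the population risk $\mathcal{R}(\xx)=\tfrac12\eta^2+\tfrac12(\bbeta-\xx)^T\SSigma_f(\bbeta-\xx)$. Its Hessian is $\TT=\SSigma_f$, a \emph{fixed} (deterministic) matrix; the offset $\uu=-\SSigma_f\bbeta$ and constant $c$ are independent of $\AA$ (and $\mathcal{R}$ is independent of the Brownian path), and since $\|\bbeta\|^2=R$ exactly we get $\|\mathcal{R}\|_{H^2}=\|\SSigma_f\|+\|\SSigma_f\bbeta\|+|c|\le M/\sqrt d+M\sqrt R+\tfrac12\eta^2+\tfrac12 MR$, bounded in $n,d$. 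Because the resolvent estimate~\eqref{eq:key_lemma_ass} involves only $\TT=\SSigma_f$, I would invoke the general statement of Section~\ref{sec:quad_con}: for a fixed Hessian and a design $\AA$ whose rows are independent and satisfy the ``quadratic concentration property'', \eqref{eq:key_lemma_ass} holds. Writing $\AA=\ZZ\sqrt{\SSigma_f}$, the rows $\aa_i=\sqrt{\SSigma_f}\,\zz_i$ are independent with $\zz_i$ subgaussian, so this property holds by Hanson--Wright. Concretely one writes $\ee_i^T\AA\widehat\TT\AA^T\ee_i=\zz_i^T(\sqrt{\SSigma_f}\,\widehat\TT\,\sqrt{\SSigma_f})\zz_i$, removes the dependence of $\widehat\TT$ on $\zz_i$ by a leave-one-out (rank-one) replacement $\widehat\TT\to\widehat\TT_{(i)}$ controlled through the resolvent identity and $\|\aa_i\|^2\approx\tr\SSigma_f=1$, and applies Hanson--Wright conditionally on $\AA_{(i)}$; simultaneously $\tfrac1n\tr(\AA\widehat\TT\AA^T)$ is compared to the same leave-one-out surrogate $\tr(\SSigma_f\widehat\TT_{(i)})$. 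A union bound over $1\le i\le n$ together with a net on the compact contour $\Omega$ (using Lipschitzness of $z\mapsto R(z)$ there) upgrades this to the uniform bound demanded by~\eqref{eq:key_lemma_ass}.

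The delicate point, and the step I expect to be the real obstacle, is that~\eqref{eq:key_lemma_ass} asks for a bound \emph{relative} to $\|\TT\|=\|\SSigma_f\|$, which itself may be as small as $1/d$; so, unlike the $\TT=\II$ case covered by Lemma~\ref{lem:qs}, one cannot afford to lose logarithmic factors. The resolution is that both the operator and Frobenius norms of $\sqrt{\SSigma_f}\,\widehat\TT_{(i)}\,\sqrt{\SSigma_f}$ are controlled by powers of $\|\SSigma_f\|$ — roughly $\|\SSigma_f\|^2$ and $\|\SSigma_f\|^{3/2}$ respectively, using $\tr\SSigma_f=1$ and $\|R\|\le 2$ on $\Omega$ — so Hanson--Wright yields a deviation $\lesssim\|\SSigma_f\|^{3/2}\sqrt{\log d}$; since $\|\SSigma_f\|\le M/\sqrt d$, the extra factor $\|\SSigma_f\|^{1/2}\le M^{1/2}d^{-1/4}$ absorbs the logarithm and leaves a genuine polynomial gain, so~\eqref{eq:key_lemma_ass} holds with some $\epsilon>0$ (taken small relative to the polynomial relation between $n$ and $d$ in Assumption~\ref{ass:poly}). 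With Assumptions~\ref{assumption:Target},~\ref{ass: laundry_list},~\ref{assumption:lr},~\ref{assumption:init} and~\ref{assumption: quadratics} all in force on an event whose complement has probability at most $Cd^{-D}$, Theorem~\ref{thm:homogenized_SGD_SGD} compares $\mathscr{L}(\xx_{\lfloor tn\rfloor}),\mathcal{R}(\xx_{\lfloor tn\rfloor})$ to $\mathscr{L}(\XX_t),\mathcal{R}(\XX_t)$ uniformly on $[0,T]$, Theorem~\ref{thm:trainrisk} compares the latter to $\Psi_t,\Omega_t$, and combining the two (while shrinking $\epsilon$ slightly to merge the several exponents) gives the stated bound.
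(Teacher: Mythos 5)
Your proposal is correct and follows essentially the same route the paper intends: reduce the population-risk problem to the generative-model optimization with target $\bb=\AA\bbeta+\eta\ww$ as explained just above the statement, verify Assumptions~\ref{assumption:Target} and~\ref{ass: laundry_list} via Lemma~\ref{lem:sc}, Assumption~\ref{assumption:init} via Lemma~\ref{lem:xo}, Assumption~\ref{assumption:lr} directly, and Assumption~\ref{assumption: quadratics} for $\mathscr{L}$ via Lemma~\ref{lem:qs} and for $\mathcal{R}$ via the quadratic-concentration machinery of Section~\ref{sec:quad_con}; then quote Theorem~\ref{thm:homogenized_SGD_SGD} and Theorem~\ref{thm:trainrisk} (equivalently, Theorem~\ref{thm:expectation}). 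You correctly identify the one genuinely new ingredient relative to Theorem~\ref{thm:Ab}: the Hessian of $\mathcal{R}$ is the fixed matrix $\SSigma_f$, so Lemma~\ref{lem:qs} does not apply and you instead need Lemma~\ref{lem:stdrows}.

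Two comments on the middle paragraph. First, the bound proportional to $\|\TT\|=\|\SSigma_f\|$ is already what Lemma~\ref{lem:stdrows} delivers: the quadratic concentration property (Definition~\ref{def:quad_con}) is stated for unit operator-norm test matrices and scales linearly, so applied to $\widehat\TT/\|\widehat\TT\|_{\text{op}}$ it automatically gives a deviation $\lesssim\|\widehat\TT\|_{\text{op}}\,n^{\theta-1/2}\lesssim\|\TT\|\,n^{\theta-1/2}$. What you actually need to check is that the rows $\aa_i=\sqrt{\SSigma_f}\,\zz_i$ of $\AA$ satisfy that property with some $\theta<1/2$, and here your Hanson--Wright computation is exactly right: for a test matrix $\WW$ of unit operator norm, $\|\sqrt{\SSigma_f}\,\WW\sqrt{\SSigma_f}\|_F\le\|\WW\|_{\text{op}}\|\SSigma_f\|^{1/2}\le M^{1/2}d^{-1/4}$ by $\tr\SSigma_f=1$ and $\|\SSigma_f\|\le M/\sqrt d$, which gives the property with any $\theta\in(1/4,1/2)$. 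Second, the dependence of $\mathcal{R}$ on $\bbeta$ through its linear and constant terms does not affect Assumption~\ref{assumption: quadratics}, which only constrains the Hessian, and your $H^2$-norm bound disposes of the rest — fine.

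The one step the paper's (very terse) joint proof of Theorems~\ref{thm:Ab}, \ref{thm:AbR}, \ref{thm:random_features} emphasizes that your proposal does not address is the additional concentration of the gradient-flow forcing terms $\mathscr{L}(\bm{\mathscr{X}}_{\Gamma(t)}^{\text{gf}})$ and $\mathcal{R}(\bm{\mathscr{X}}_{\Gamma(t)}^{\text{gf}})$ in \eqref{eqa:VLoss}--\eqref{eqa:PLoss}, over the randomness of $\bbeta,\eeta,\xx_0$, to their conditional-on-$\AA$ expectations (by independence of those vectors and Hanson--Wright on their coordinates). This is what produces the deterministic-in-$(\bbeta,\eeta,\xx_0)$ expressions displayed in \eqref{eq:fullPsi_training} and \eqref{eq:excess_risk_ERM_limit}. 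Taking the statement of Theorem~\ref{thm:AbR} literally — with $\Psi_t,\Omega_t$ solving the (random) Volterra equations for the given realization of $\bbeta,\eeta,\xx_0$ — your argument already closes; but if $\Psi_t,\Omega_t$ are intended to be those explicit expressions, you should add the forcing-term concentration step, which is short and of the same flavor as your treatment of Lemma~\ref{lem:xo}.
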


\noindent We remark that under Assumption~\ref{ass:sc} (and in-distribution) that $\eta^2 =  \tfrac{\widetilde{R}}{d}$. In the case of out-of-distribition regression (see section below), we have that $\eta^2 \neq \frac{\widetilde{R}}{d}$ as the $\eta$ represents the population noise.  \\

\noindent The loss function $\mathcal{L}$ evaluated at gradient flow is the same as in \eqref{eq:fullOmega_training} as is the limiting loss $\Omega_{\infty}$. For the test risk $\mathcal{R}$ in \eqref{eq:lrq} evaluated at gradient flow, we have the following expressions for 
\begin{equation}
\begin{aligned} \label{eq:excess_risk_ERM_limit}
    \mathcal{R}(\bm{\mathscr{X}}_{\Gamma(t)}^{\text{gf}}) 
    &= \frac{R}{2d} \tr \bigg ( \SSigma_f \bigg ( \CC(t) \AA^T \AA-\II_d \bigg )^2 \bigg ) + \frac{\widetilde{R}}{2d} \tr \bigg ( \SSigma_f \AA^T\AA \CC^2(t) \bigg )\\
    &+ \frac{\widehat{R}}{2d} \tr \bigg ( \SSigma_f \exp \big (-2 (\AA^T \AA + \delta \II_d) \Gamma(t) \big ) \bigg ) + \frac{1}{2} \eta^2 \\
    \text{and} \quad \mathcal{R}(\bm{\mathscr{X}}_{\infty}^{\text{gf}})
    &= \frac{R}{2d} \tr \bigg ( \SSigma_f \big (\AA^T\AA (\AA^T\AA + \delta \II_d)^{-1} - \II_d \big )^2 \bigg ) + \frac{\widetilde{R}}{2d} \tr \bigg ( \SSigma_f \AA^T \AA \big ( \AA^T\AA + \delta \II_d \big )^{-2} \bigg )  + \frac{1}{2} \eta^2 \\
    \text{where} \quad \CC(t) 
    &\defas (\AA^T \AA + \delta \II_d)^{-1} \bigg (\II_d - \exp\big (-(\AA^T\AA + \delta \II_d)\Gamma(t) \big) \bigg ).
\end{aligned}
\end{equation}

\noindent Using Theorem \ref{thm:eventualrisk}, we conclude that in the case that $\gamma(t) \to 0$ as $t \to \infty$, the excess risk of SGD tends to $0$.  More interestingly, in the interpolation regime, $\mathscr{L}(\bm{\mathscr{X}}_\infty^\text{gf})=0$, i.e.\ the empirical risk tends to $0$.  In this case, even without taking $\gamma \to 0,$ the excess risk of SGD tends to $0$.  If on the other hand it does not tend to $0$ (i.e., $\gamma(t) \to \widetilde{\gamma}$), we arrive at the formula for excess risk of SGD over the ridge estimator risk:
\begin{equation}\label{eq:ererm}
\Omega_\infty- \mathcal{R}(\bm{\mathscr{X}}_\infty^{\text{gf}})
=
\mathcal{L}(\bm{\mathscr{X}}_{\infty}^\text{gf}) \times \frac{\widetilde{\gamma}}{2n} 
\frac{
\tr
\bigl(
(\nabla^2 \mathscr{L})
\SSigma_f
\bigl( \nabla^2 \mathscr{L} + \delta \II_d \bigr)^{-1}
\bigr)}
{
1
-
\frac{\widetilde{\gamma}}{2n}
\tr
\bigl(
(\nabla^2 \mathscr{L})^2
\bigl( \nabla^2 \mathscr{L} + \delta \II_d \bigr)^{-1}
\bigr)
}
=
\Psi_\infty
\times
\frac{\widetilde{\gamma}}{2n} 
\tr
\biggl(
\tfrac{
(\nabla^2 \mathscr{L})}
{\bigl( \nabla^2 \mathscr{L} + \delta \II_d \bigr)}
\SSigma_f
\biggr)
.
\end{equation}
\noindent We note that the right-hand-side is proportional to $\Psi_\infty$ (c.f.\ Theorem \ref{thm:eventualrisk}), and hence this excess risk due to SGD will be small if the limiting empirical risk $\Psi_\infty$ is small.  This also shows that the regularization term $\delta$ interacts with the excess risk due to SGD: if the spectrum of $\nabla^2 \mathcal{R}$ is heavy in that it has slowly decaying eigenvalues, the reduction in excess risk due to the $\ell^2$--regularization $\delta$ can be large.

\subsubsection{(Out-of-distribution) linear regression}

As before, we suppose that the data matrix $\AA$ is generated by taking $n$ independent $d$-dimensional samples from a centered distribution $\mathcal{D}_f$ with feature covariance $\SSigma_f$ (see \eqref{eq:train_covariance}). We also suppose, as in the previous in-distribution example, that there is a linear (``ground truth" or ``signal") function $\beta : \mathbb{R}^d \to \mathbb{R}$ which we identify with the vector $\bbeta \in \mathbb{R}^d$ and for which $\mathbb{E}[b | \aa] = \bbeta^T \aa$ where $(\aa, b) \sim \mathcal{D}$ and the data $\aa \sim \mathcal{D}_f$. We will generate our target $b$ from the distribution $(\aa, b) \sim \mathcal{D}$. We then let $\xx_t$ be the iterates generated by SGD applied to the optimization problem
\[
    \min_{x \in \mathbb{R}^d} \, \frac{1}{2} \|\AA \xx-\bb\|^2 + \frac{\delta}{2} \|\xx\|^2,
\quad \text{where} \quad  (\aa_i, b_i) \sim \mathcal{D}.
\]

The main distinction from the previous example is that we measure our generalization error using a different distribution than $\mathcal{D}$. Explicitly, there exists another centered distribution $\widehat{\mathcal{D}_f}$ (standardized) with covariance features matrix $\widehat{\SSigma}_f \in \mathbb{R}^{d \times d}$ from which we generate a vector $\widehat{\aa} \sim \widehat{\mathcal{D}}_f$. Moreover, we generate a test point $(\widehat{\aa}, \widehat{b})$ from a new distribution $\widehat{\mathcal{D}}$ such that 
$\EE[ \widehat{b} | \widehat{\aa} ] = \bbeta^T \widehat{\aa}$ 
with the same $\bbeta$ as before and the distribution $\widehat{\mathcal{D}}$ has $\widehat{\aa}$-marginal $\widehat{\mathcal{D}}_f$. We measure the population risk, $\mathcal{R}: \mathbb{R}^d \to \mathbb{R}$ as 
\begin{equation}
    \mathcal{R}(\xx_t) \defas \frac{1}{2} \EE[ (\widehat{b}- \xx_t^T \widehat{\aa} )^2 | \xx_t] = \frac{1}{2} \eta^2 + (\xx_t - \bbeta)^T \widehat{\SSigma}_f (\xx_t-\bbeta) \quad \text{where} \quad \eta^2 \defas \EE[(\widehat{b}-\bbeta^T \widehat{\aa})^2 ].
\end{equation}
In this setting, we can again derive the limiting excess risk, which has a similar formula for $\mathcal{R}(\bm{\mathscr{X}}_{\Gamma(t)}^{\text{gf}})$ as in \eqref{eq:excess_risk_ERM_limit} by replacing $\SSigma_f$ with $\widehat{\SSigma}_f$.

% Here $(\widehat{\aa}, \widehat{b}) \sim \widehat{\mathcal{D}}$ and $\EE[\widehat{\aa} \widehat{\aa}^T ] = \widehat{\SSigma}_f$
% \begin{align*} 
%      \EE[(\widehat{b}-\xx_t^T \widehat{\aa})^2 | \xx_t] &= \EE[ \widehat{b}^2 - 2 \xx_t^T \widehat{\aa} \widehat{b} + (\xx_t^T \widehat{\aa})^2 | \xx_t] = \EE[ \widehat{b}^2 ] - 2 \EE[\widehat{b} \widehat{\aa}^T]\xx_t + \xx_t^T \widehat{\SSigma}_f \xx_t\\
%     &= \widehat{\eta}^2  -2 \EE[\bbeta^T \widehat{\aa} \widehat{\aa}^T ] \xx_t + \xx_t^T \widehat{\SSigma}_f \xx_t + \EE[\bbeta^T \widehat{\aa} \widehat{\aa}^T \beta ]\\
%     & = \widehat{\eta}^2 + (\xx_t-\bbeta)^T \widehat{\SSigma}_f (\xx_t -\bbeta)
% \end{align*}

% Here $\widehat{\eta}^2 = \EE[ (\widehat{b}-\bbeta^T \widehat{\aa})^2]$ and $\EE[\widehat{b} | \widehat{a}] = \bbeta^T \widehat{\aa}$ 

% Here we are solve $(\aa, b) \sim \mathcal{D}$ such that $\EE[b | \aa] = \bbeta^T \aa$ and the data $a \sim \mathcal{D}_f$ with covariance $\SSigma_f$. The iterates $\xx_t$ are generated from SGD applied to the problem
% \[ \min_{\xx \in \R^d} \frac{1}{2}\|\AA \xx- \bb\|^2 + \frac{\delta}{2} \|\xx\|^2\]
% where the rows of $\AA$ are $\aa_i \sim \mathcal{D}_f$ and the $(\aa_i, b_i) \sim \mathcal{D}$.

\subsubsection{Random features model of a linear ground ground truth} \label{sec:random_features_intro}
We follow a setup based upon \citep{mei2019generalization,adlam2020neural}. As before, we suppose that the data matrix $\XX$ is generated by taking $n$ independent $n_0$-dimensional samples from a centered distribution $\mathcal{D}_f$ with feature covariance 
\[\SSigma_f \defas \EE[\XX_i^T \XX_i], \qquad \text{where $\XX_i \in \mathbb{R}^{1 \times n_0}$ and $\XX_i \sim \mathcal{D}_f$.}\]
We suppose for simplicity that $\XX$ is a data matrix having dimension $n \times n_0$ whose iid rows are drawn from a multivariate Gaussian with covariance $\SSigma_f$ and nice covariance structure:
%Suppose for simplicity the covariance $\SSigma_f$ satisfies

\begin{figure}[t]
%\vspace{-1cm}
 \centering 
  \includegraphics[width=\textwidth]{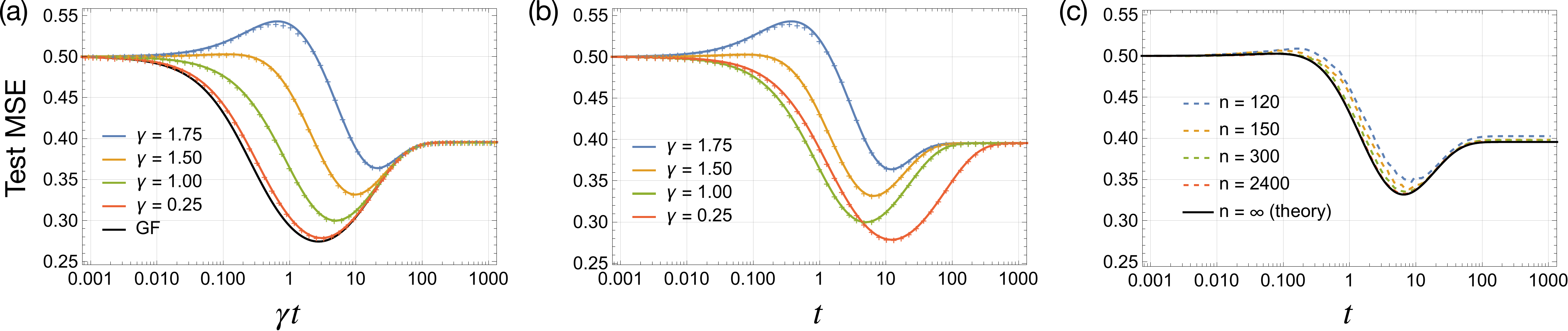}
     \caption{Time-dependent test error of the random feature model under SGD, as described by Thm.~\ref{thm:random_features}, with zero regularization ($\delta = 0$), no additive noise ($\eta = 0$), normalized ReLU activation function $\sigma(\cdot) \propto \max\{0,\cdot\}$, and aspect ratios $n/d = 3/5$ and $n_0/d = 1/2$. {\bf (a,b)} Good agreement is observed between the asymptotic theoretical predictions (solid curves) and finite-size empirical simulations with $n=4800$ (markers). {\bf(a)} As a function of the integrated learning rate $\Gamma(t) = \gamma t$, the curves for small constant learning rates $\gamma$ approach that of gradient flow (black line). {\bf (b)} As a function of time $t$, there can be a tradeoff between large $\gamma$ which enables faster progress and small $\gamma$ which generates lower noise. {\bf(c)} Empirical simulations at various finite sizes (dashed curves) for $\gamma=1.5$: for large $n$, the curves closely approximate the asymptotic predictions (solid black curve); for moderate $n$, the deviations appear most significant for intermediates times.}
    \label{fig:RF_convergence}
\end{figure}

\begin{assumption}\label{ass:RF_cov}
The distribution $\mathcal{D}_f$ is multivariate normal and
the covariance matrix $\SSigma_f$ of the random features data satisfies for some $C>0$
\[
%\overline{\tr}{(\SSigma_f)}=1
\tfrac{1}{n_0}{\tr}{(\SSigma_f)}=1
\quad\text{and}\quad
\|\SSigma_f\| \leq C.
\]
\end{assumption}
\noindent This allows $\XX$ to be represented equivalently as $\XX = \ZZ\SSigma^{1/2}/\sqrt{n_0}$ for a iid standard Gaussian matrix $\ZZ$.
We suppose that $\WW$ is an $(n_0 \times d)$ iid feature matrix having standard Gaussian entries and independent of $\ZZ$ so that $\ZZ \SSigma^{1/2} \WW/\sqrt{n_0}$ is a matrix whose rows are standardized.  

We let $\sigma$ be an activation function satisfying:
\begin{assumption}\label{ass:RF_sigma}
The activation function satisfies for $C_0,C_1 \geq 0$ 
\[
%|\sigma(x)| \leq e^{C_|x|},
%\quad\text{and}\quad
|\sigma'(x)| \leq C_0e^{C_1|x|},
\quad\text{for all}\quad x \in \R,
\quad\text{and for standard normal $Z$,}\quad
\Exp \sigma(Z) = 0.
\]
\end{assumption}
\noindent  We note that from the outset, the growth rate of the derivative of the activation function implies a similar bound on the growth rate of the underlying activation function $\sigma$.
As before, we suppose the data $[\XX ~|~ \bb]$ is arranged in the matrix $\R^n \times (\R^{n_0} \times \R)$ where each row is an independent sample from $\mathcal{D}$. We now transform the data $\XX \in \mathbb{R}^{n \times n_0}$ by putting
\[
\AA =  \sigma(\XX\WW / \sqrt{n_0} ) \in \mathbb{R}^{n \times d},
\]
where $\WW \in \mathbb{R}^{n_0 \times d}$ is a matrix independent of $[\XX~|~\bb]$ of independent standard normals.\footnote{In \cite{mei2019generalization}, the distribution of the columns are taken as independent uniform vectors on the sphere $\sqrt{d}\,\mathbb{S}^{d-1}$.  The activation function $\sigma$ is a 1-Lipschitz function from $\R \to \R$ which is applied entrywise to the underlying matrix.}
%Note that we assume that the rows of $\AA$ are normalized. 
The activation function $\sigma \, : \, \mathbb{R} \to \mathbb{R}$ is applied element-wise.% and it is assumed that $|\psi(x)|, |\psi'(x)| = \mathcal{O}(\exp(Cx))$ for some positive $C$. 
% 
%We further assume that $\psi$ is shifted so that
%\[
%\Exp \psi(Z) = 0 \quad\text{where}\quad Z \sim N(0,1).
%\]

We introduce the following notation
\begin{equation}
    \SSigma_{\sigma}(\WW) \defas \EE[ \sigma(\XX_i \WW / \sqrt{n_0} )^T \sigma(\XX_i \WW / \sqrt{n_0}) \, | \, \WW] \quad \text{and} \quad \widehat{\sigma}(\WW) \defas \EE[\XX_i^T \sigma(\XX_i \WW / \sqrt{n_0}) | \WW]
\end{equation}
The population risk, $\mathcal{R} : \mathbb{R}^d \to \mathbb{R}$ as a random variable in $\XX$ and $\WW$, is
\begin{equation}\label{eq:RF_Risk}
\begin{aligned}
\mathcal{R}(\xx_t) &\defas
\Exp [ (b - \xx_t^T \sigma(\XX_i \WW / \sqrt{n_0}))^2 | \xx_t, \WW]\\
&= \eta^2 + \Exp[(\XX_i \bbeta -  \sigma(\XX_i \WW / \sqrt{n_0}) \xx_t )^2 \, | \, \xx_t, \WW]\\
&= \eta^2 + \bbeta^T \SSigma_f \bbeta + \xx_t^T \SSigma_{\sigma}(\WW) \xx_t - 2 \bbeta^T \widehat{\sigma}(\WW) \xx_t, \quad \text{where $(\XX_i, b) \sim \mathcal{D}$ and $\Exp[ \, b \, | \, \XX_i] = \XX_i \bbeta$.}
\end{aligned}
\end{equation}
The $\ell^2$-regularized least-squares problem is now
\[
\min_{\xx} \frac{1}{2} \| \AA \xx - \bb\|^2_2 + \frac{\delta}{2} \|\xx\|^2
\quad\text{where}\quad
\bb = \XX \bbeta + \eta \ww,
\]
which is the random features regression.  This should be compared to a two--layer neural network model, in which the hidden layer has dimension $n_0$.  However, the hidden layer weights are simply generated randomly in advance and are left untrained.  The optimization is only performed on the final layers' weights ($\xx$). 

\begin{theorem} \label{thm:random_features}
Suppose that $n,d,n_0$ are proportionally related.
Suppose that the data matrix $\XX$ satisfies Assumption \ref{ass:RF_cov},
and the random features $\WW$ are iid standard normal.
Suppose $\bb = \XX \bbeta + \eta \ww$ with $\bbeta,\ww$ independent isotropic subgaussian vectors with $\Exp \|\bbeta\|^2 = 1/n_0$ and $\Exp \|\ww\|^2 = 1$ and $\eta$ bounded independent of $n$.
Suppose the activation function satisfies Assumption \ref{ass:RF_sigma}.
Suppose the initialization $\xx_0$ is iid centered subgaussian with $\Exp \|\xx_0\|^2 = \widehat{R}.$ Suppose that $\gamma(\cdot)$ satisfies Assumption \ref{assumption:lr}.
Then for some $\epsilon > 0$, for all $T >0$, and for all $D>0$ there is a $C>0$ such that
\[
\Pr
\biggl(
\sup_{0 \leq t \leq T}
\left\| 
\begin{pmatrix}
\mathscr{L}(\xx_{\lfloor tn \rfloor}) \\
\mathcal{R}(\xx_{\lfloor tn \rfloor}) 
\end{pmatrix} - 
\begin{pmatrix}
\Psi_t  \\ 
\Omega_t
\end{pmatrix}
\right\| > d^{-\epsilon}
\biggr)
\leq Cd^{-D},
\]
where $\Psi_t$ solves \eqref{eqa:V} and $\Omega_t$ solves \eqref{eqa:PLoss} with $\mathcal{R}$ given by \eqref{eq:RF_Risk}.
\end{theorem}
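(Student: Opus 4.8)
\emph{Proof strategy.} The plan is to deduce Theorem~\ref{thm:random_features} from the abstract concentration result Theorem~\ref{thm:expectation} (which itself rests on Theorems~\ref{thm:homogenized_SGD_SGD} and~\ref{thm:trainrisk}). The entire task then reduces to verifying, with overwhelming probability over the draw of $(\XX,\WW,\bbeta,\ww,\xx_0)$, that the deterministic least-squares problem~\eqref{eq:rr} built from $\AA=\sigma(\XX\WW/\sqrt{n_0})$ and $\bb=\XX\bbeta+\eta\ww$ satisfies all the standing hypotheses: Assumption~\ref{ass:poly} together with $n\ge d^{\tilde\varepsilon}$ (immediate from $n\asymp d\asymp n_0$); Assumption~\ref{assumption:lr} (hypothesized); the normalization Assumption~\ref{assumption:Target}; the resolvent/delocalization bounds of Assumption~\ref{ass: laundry_list}; the initialization Assumption~\ref{assumption:init}; and Assumption~\ref{assumption: quadratics} for the quadratic $\mathcal{R}$ in~\eqref{eq:RF_Risk}, which has Hessian $2\SSigma_\sigma(\WW)$, gradient $-2\widehat\sigma(\WW)^T\bbeta$ at the origin, and constant term $\eta^2+\bbeta^T\SSigma_f\bbeta$. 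On the resulting good event one applies Theorem~\ref{thm:expectation}, then absorbs the exceptional event into the constant $C$ by the law of total probability. Note that Assumption~\ref{assumption:init} comes for free from Lemma~\ref{lem:xo} once Assumption~\ref{ass: laundry_list} is in hand, since $\xx_0$ is iid centered subgaussian of the stated scale; so the real content lies in the remaining three points, all of which are statements about the nonlinear random-features matrix $\AA$ and its associated population kernels.

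For Assumption~\ref{assumption:Target}, the bound $\|\bb\|\le C$ is a routine second-moment estimate plus a Hanson--Wright concentration, using $\Exp\|\bbeta\|^2=1/n_0$, $\Exp\|\ww\|^2=1$, $\|\SSigma_f\|\le C$, $n\asymp n_0$, and $\eta=O(1)$. For $\|\AA\|\le C$, I would note that, conditionally on $\XX$, each entry of $\XX\WW/\sqrt{n_0}$ is a centered Gaussian with variance $\|\XX_i\|^2/n_0$, concentrating near $1$ by Assumption~\ref{ass:RF_cov}, so each entry of $\AA$ is distributed like $\sigma$ of an $O(1)$ Gaussian; by Assumption~\ref{ass:RF_sigma} this has finite moments of every order though it need not be subgaussian. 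Truncating $\sigma$ at a slowly growing level and invoking the operator-norm bounds for independent-entry matrices with all moments used in the random-features literature~\citep{mei2019generalization,adlam2020neural} gives $\|\AA\|\le C$, hence $\|\mathscr{L}\|_{H^2}\le C$, w.o.p. The same truncation, the bound $\|\WW\|\lesssim\sqrt{n_0}$, and Gaussian integration-by-parts for the population kernels (using $\Exp\sigma(Z)=0$ to kill the rank-one piece of $\SSigma_\sigma$) yield $\|\SSigma_\sigma(\WW)\|\le C$ and $\|\widehat\sigma(\WW)^T\bbeta\|=O(1)$, so that $\|\mathcal{R}\|_{H^2}\le C$ w.o.p.\ as well.

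The core of the argument is Assumption~\ref{ass: laundry_list}, together with the related quadratic bound~\eqref{eq:key_lemma_ass} inside Assumption~\ref{assumption: quadratics}. The key structural fact is that, conditionally on $\WW$, the rows of the augmented matrix $[\AA~|~\bb]$ are iid, the $i$-th being $(\sigma(\XX_i\WW/\sqrt{n_0}),\,\XX_i\bbeta+\eta w_i)$, a fixed function of the Gaussian vector $(\XX_i,w_i)$. I would then invoke the general reduction developed in Section~\ref{sec:quad_con}: if the rows of a matrix with independent rows possess a \emph{quadratic concentration property} --- all moments bounded, and $|\aa_i^{T}M\aa_i-\tr(\SSigma_\sigma(\WW)M)|\le n^{-1/2+o(1)}$ w.o.p.\ uniformly over symmetric $M$ with $\|M\|\le 1$, with the analogous control of $\aa_i^{T}M\bb$ --- then Assumptions~\ref{ass: laundry_list} and~\ref{assumption: quadratics} follow for any fixed bounded target and any fixed bounded Hessian $\TT$; applying this with $\TT=2\SSigma_\sigma(\WW)$ (fixed given $\WW$ and bounded w.o.p.) and with $M=\widehat\TT$ the resolvent-dressed matrix of~\eqref{eq:key_lemma_ass} disposes of Assumption~\ref{assumption: quadratics} as well. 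It remains to establish the quadratic concentration property for $\aa_i=\sigma(\XX_i\WW/\sqrt{n_0})$: once $\sigma$ is truncated, $\XX_i\mapsto\aa_i^{T}M\aa_i$ is Lipschitz with a polynomially-bounded constant, so Gaussian concentration of measure yields fluctuations of order $n^{-1/2+o(1)}$ about the conditional mean; a further concentration over the Gaussian matrix $\WW$ identifies that mean with $\tr(\SSigma_\sigma(\WW)M)$ to within $n^{-1/2+o(1)}$; and uniformity over $M$ and over the contour $\Omega$ follows from a net argument exploiting analyticity of the resolvent.

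The step I expect to be the main obstacle is exactly this quadratic concentration estimate under the weak hypothesis on $\sigma$: because $|\sigma'(x)|\le C_0e^{C_1|x|}$, neither $\sigma$ nor the quadratic forms $\aa_i^{T}M\aa_i$ are globally Lipschitz, and the truncation level $x_\star=x_\star(d)$ must be calibrated so that simultaneously the truncation error is super-polynomially small (hence negligible against the target $d^{-D}$), the post-truncation Lipschitz constant grows only polynomially in $d$ (so the $n^{-1/2}$ Gaussian-concentration scale survives), and the event $\{\max_{ij}|(\XX\WW)_{ij}|/\sqrt{n_0}>x_\star\}$ is itself super-polynomially rare. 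Granting this calibration, the remainder --- the reduction via Section~\ref{sec:quad_con}, the application of Theorem~\ref{thm:expectation}, and the passage to the claimed unconditional bound with $\Psi_t,\Omega_t$ driven by $\nabla^2\mathscr{L}=\AA^{T}\AA$ and $\nabla^2\mathcal{R}=2\SSigma_\sigma(\WW)$ in~\eqref{eqa:V}--\eqref{eqa:PLoss} --- is an application of machinery already in place.
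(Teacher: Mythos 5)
Your strategy matches the paper's: verify that the random-features instance satisfies Assumptions~\ref{assumption:Target}, \ref{ass: laundry_list}, \ref{assumption:init}, and \ref{assumption: quadratics} with overwhelming probability over $(\XX,\WW,\bbeta,\ww,\xx_0)$, then invoke Theorem~\ref{thm:expectation} and absorb the exceptional event. You also correctly isolate the crux: under $|\sigma'(x)|\le C_0e^{C_1|x|}$, one must truncate $\sigma$ at a level $x_\star$ calibrated so that the truncation error and the event $\{\max_{ij}|(\XX\WW)_{ij}|/\sqrt{n_0}>x_\star\}$ are super-polynomially rare while the post-truncation Lipschitz constant stays subpolynomial. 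The paper executes exactly this in the proof of Proposition~\ref{prop:RF_qf}, truncating at $x_0=(\log n_0)^{1-\epsilon}$, passing through Gaussian concentration and Adamczak's quadratic-form inequality (Theorem~\ref{thm:adamczak}) for the truncated rows, and controlling the truncation error in operator norm.

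One place your description is loose: you invoke ``the general reduction developed in Section~\ref{sec:quad_con}'' as yielding Assumption~\ref{ass: laundry_list}, but Lemma~\ref{lem:stdrows} there is formulated only for Assumption~\ref{assumption: quadratics}, with a fixed deterministic $\TT$, and does not state the diagonal/off-diagonal resolvent and $\bb$-correlation bounds. The paper instead proves Assumption~\ref{ass: laundry_list} directly in Section~\ref{sec:random_features} via the linearized block matrix $H$ of \eqref{eq_rf_linearization}, the Schur-complement identities \eqref{eq_schur_diag}--\eqref{eq_schur_offdiag}, and the good event $\mathcal{T}$ of \eqref{def_good_event}, culminating in Proposition~\ref{prop_rf_resolvent}. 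Since Schur complementation of $H$ is nothing but Sherman--Morrison--Woodbury for the resolvent, your route via row-quadratic concentration and the paper's linearization are the same calculation under different bookkeeping, both driven by Proposition~\ref{prop:RF_qf}; but Section~\ref{sec:quad_con} cannot simply be cited, and the leave-one-out argument must be rerun, including separate handling of the $\bb$-bilinear form since $\bb=\XX\bbeta+\eta\ww$ is correlated with $\AA$ through $\XX$. You should also note that the Hessian $2\SSigma_\sigma(\WW)$ of $\mathcal{R}$ is $\WW$-measurable, so the application of the quadratic-concentration lemma to \eqref{eq:key_lemma_ass} has to be done after conditioning on $\WW$ (which renders the rows of $\AA$ conditionally iid), as you mention but only in passing. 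Finally, the paper's own one-paragraph proof flags the concentration of the gradient-flow forcing terms over the randomness of $\bbeta,\ww,\xx_0$ (a Hanson--Wright step) as the operative content; this step is not strictly needed to prove the theorem as formally written, since $\Psi_t,\Omega_t$ in \eqref{eqa:VLoss}--\eqref{eqa:PLoss} are allowed to remain $(\AA,\bb,\xx_0)$-measurable, but it is what makes the downstream deterministic formulas available, and it would be worth at least acknowledging it.
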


Finally, as in \eqref{eq:ererm} we derive the excess risk of SGD ($\gamma(t) \to \widetilde{\gamma}$) over ridge regression:
\begin{equation}\label{eq:RF_er}
\Omega_\infty- \mathcal{R}(\bm{\mathscr{X}}_\infty^{\text{gf}})
= \mathcal{L}(\bm{\mathscr{X}}_{\infty}^{\text{gf}}) \times
\frac{\widetilde{\gamma}}{2n} 
\frac{
\tr
\bigl(
(\nabla^2 \mathscr{L})
\SSigma_{\sigma}(\WW)
\bigl( \nabla^2 \mathscr{L} + \delta \II_d \bigr)^{-1}
\bigr)}
{
1
-
\tfrac{\widetilde{\gamma}}{2n}
\tr
\bigl(
(\nabla^2 \mathscr{L})^2
\bigl( \nabla^2 \mathscr{L} + \delta \II_d \bigr)^{-1}
\bigr)
}
=
\Psi_\infty
\times
\frac{\widetilde{\gamma}}{2n} 
\tr
\biggl(
\frac{
(\nabla^2 \mathscr{L})}
{\bigl( \nabla^2 \mathscr{L} + \delta \II_d \bigr)}
(\SSigma_{\sigma}(\WW))
\biggr)
.
\end{equation}
We again recall that $\Psi_\infty$ is the limiting empirical risk of SGD.
The risk of the ridge regression estimator $\mathcal{R}(\bm{\mathscr{X}}_\infty^{\text{gf}})$
was first given by~\citet{mei2019generalization}.

\subsection{ Related work } 

We highlight recent progress of research in the high-dimensional setting, specifically random features, as well as recent advances in analyzing the excess risk of SGD. As these areas are highly active, we present below a non-exhaustive list of the current progress. 

%\paragraph{SGD and excess risk}
In the literature, convergence guarantees and risk bounds are available for analyzing SGD and its variants \citep{shalev2016accelerated,needell2016stochastic,moulines2011nonasymptotic,defossez2015averaged,varre2021last}. A popular paradigm for analyzing risk bounds of SGD is the one-pass or streaming setting where one supposes that the gradient estimators are independent with a common distribution \citep{gurbuzbalaban2020heavy-tail,jain2018accelerating}. Such a setting was considered in a series of works \citep{jain2018accelerating,dieuleveut2017harder} which explored `one-pass' SGD on a least-squares under a design condition on the data matrix. Extending this idea, \citep{zou2021benefits} provide upper and lower excess risk bounds for constant stepsize SGD on the $\ell^2$-regularized least-squares problem which extends the work of \citep{bartlett2020benign,tsigler2020benign}. These bounds are characterized by the full eigenspectrum of the population covariance matrix. Beyond the confines of the streaming setting, much less is known about the risk bounds for multi-pass SGD \citep{lin2017optimal, pillaud2018statistical, lei2021generalization, zou2022risk}. Like in this work, \citep{lin2017optimal,pillaud2018statistical,zou2022risk} consider the simplified setting of analyzing the behavior of multi-pass SGD on a (high-dimensional) $\ell^2$-regularized least-squares problem. In contrast to our exact dynamics of the excess risk, previous works provide only bounds. Other approaches to analyzing generalization performance of multi-pass SGD include uniform stability (see c.f. \citep{hardt2016train}). Excess risk and uniform stability are related through a (loose) triangle inequality; we preferred to focus on the excess risk in this paper and leave discussions of uniform stability for future directions. 

% \paragraph{SGD and implicit regularization.} 
The exact features of SGD that are responsible for the success of SGD on high-dimensional problems are the subject of extensive research. These features are often labelled under the umbrella term the \textit{implicit regularization effects of SGD}. The implicit regularization mechanism has been primarily hypothesized to exist in highly nonconvex settings. In such a setting, empirical observations have led to the conclusion that the noise generated by small batch \citep{keskar2016on, hoffer2017train} and/or large learning rate \citep{lewkowycz2020large} SGD leads to better generalization performance. A proposed mechanism for this improvement is that the stochasticity inherent in SGD allows the optimizer to escape traps (c.f. \citep{bouchaud1990anomalous} or \citep{zhu2018anisotropic}) which poorly generalize and are stationarity points for gradient flow. A related point of view on the proposed escape mechanism is that SGD demonstrates a preference for flat minima, long considered to be a preferable solution for generalization properties \citep{hochreiter1997flat}. For the least-squares problem, multi-pass SGD converges to the minimial norm solution \citep{gunasekar2018characterizing,neyshabur2014in,zhang2016accelerated} which is widely cited as the implicit bias of SGD \citep{kobak2020optimal,derezinksi2020exact}.

% \paragraph{SGD and stochastic differential equations.} 
In this work, we analyze a new mathematical tool, homogenized SGD, introduced simultaneously in \citep{paquette2021dynamics,mori2021logarithmic}, and we show that this SDE is the high-dimensional equivalence of SGD. This technique of using SDEs to analyze SGD is not new (see, for example, stochastic modified equation (SME) \citep{li2017stochastic,mandt2016variational} and Langevin dynamics \citep{cheng2018sharp} and other SDE formulations and intepretations  \citep{ jastrzkebski2017three,Kushner,ljung1977,barrett2021implicit}). A common construction of previously studied SDEs (such as the SME) require that the learning rate $\gamma \to 0$ which is not common in practical application. In contrast, our homogenized SGD allows for bounded step sizes and it is easily analyzable. Moreover in \citep{yaida2018fluctuationdissipation} the author showed there is no small learning rate limit of SGD that produces nontrival stochastic behavior. While this is true for a fixed dimension, in the high-dimensional setting, there is an equivalence between SGD and a nontrival stochastic SDE (we call homogenized SGD, Section~\ref{sec:homogenized_SGD_intro_1}). 

% \paragraph{High-dimensionality.} 
Our framework is inspired by the the phenomenology of random matrix theory and high dimensionality. In the context of linear overparameterized models, a large body of works \citep{hastie2019surprises,hsu2012random,dobriban2018high,wu2020on,xu2019on} analyze the excess risk (test error) of ridge regression at the optimum in the asymptotic regime where both sample size $n$ and dimension $d$ go to infinity as $d/n \to r \in (0,\infty)$. Different structures on the data matrix $\AA$ have been considered (for general covariance structures on the data matrix and a general non-isotropic source condition on the parameters \citep{wu2020on,richards2021asymptotics} which were built on early observations analyzing the minimum-norm interpolated least squares and ridge regression in the random design setting \citep{belkin2019reconciling,dobriban2018high,hastie2019surprises}). Beyond the least squares setting, exact high-dimensional asymptotics of first order methods on random data exist for among objectives such as logistic regression  \citep{celentano2021high,mignacco2020dynamical}) and rank-one matrix completion \citep{bodin2021rank}, and a general lower bound on the high-dimensional generalization performance of 2-layer networks \citep{seroussi2021lower}.

% \paragraph{Random features.} 
A central example where our results hold is the random feature setting, introduced by \citep{Rahimi2008Random} for scaling kernel machines. Random features models provide a rich but tractable class of models to gain further insights into generalization phenomena \citep{mei2019generalization,liao2020Random,adlam2020understanding,adlam2020neural,tripuraneni2021covariate}. These models are particularly of interest because of their connection to neural networks where the number of random features corresponds to model complexity \citep{jacot2018neural,neal1996priors,lee2018deep} and because of its use as a practical method for data analysis \citep{Rahimi2008Random,shankar2020neural}. 

From a technical perspective, our analysis of random features requires tools and recent results from random matrix theory. A central challenge in analyzing random features stems from the fact that matrices of the form $\sigma(\XX \WW)$ have nonlinear dependencies between elements. Known results about these matrices such as spectral information \citep{peche2019note,benigni2019eigenvalue,pennington2017nonlinear,louart2018random} and linearization simplification \citep{adlam2020neural}. In this paper, we verify that random features model satisfies the resolvent conditions in Assumption~\ref{ass: laundry_list} by using techniques similar to sample covariance matrices (see \citep{bai2010spectral,couillet2022random} for an introduction to sample covariance matrices).  

The present paper builds on the earlier works of \citep{paquetteSGD2021,paquette2021dynamics} and aims to establish a unifying framework for analyzing different statistics of SGD on the high-dimensional $\ell^2$-regularized least-squares regression problem.  In \citep{paquetteSGD2021}, the authors derived a convolution-type Volterra equation that gave the exact dynamics of SGD for the least-squares loss in the large-scale asymptotic limit $(d/n \to \infty)$. Such a result required a left orthogonal invariance condition on the data matrix $\AA$. Building on this work (under left orthogonal invariance), the authors then  showed that there was an SDE, called homogenized SGD, whose dynamics on least-squares loss matched those of SGD \citep{paquette2021dynamics}. Under the assumption that homogenized SGD held, they analyzed the dynamics of a class of stochastic momentum algorithms.

In this work, we give a complete picture of the relationship between homogenized SGD and SGD in the high-dimensional setting on a $\ell^2$-regularized least-squares problem. We focus on the mathematical aspects, that is, we rigorously show that homogenized SGD is the high-dimensional equivalence of SGD under any quadratic statistics (see Assumption~\ref{assumption: quadratics}). We  reduces the data matrix assumptions to a simply family of resolvent conditions, which can roughly be viewed as a weak (non-quantitative) form of delocalization of sample-side singular vectors of the data. In this way, we can incorporate a wider variety of models often used in machine learning such as random features and sample covariance matrices. 

%Add this when we have the NeurIPS paper out
% In this series of works, we give a complete picture of the relationship between homogenized SGD and SGD in the high-dimensional setting on a $\ell^2$-regularized least-squares problem; both mathematically (this paper) and its statistical implications \citep{Us}. As the first in this series, we focus on the mathematical aspects, that is, we rigorously show that homogenized SGD is the high-dimensional equivalence of SGD under any quadratic statistics (see Assumption~\ref{assumption: quadratics}). We  reduces the data matrix assumptions to a simply family of resolvent conditions, which can roughly be viewed as a weak (non-quantitative) form of delocalization of sample-side singular vectors of the data. In this way, we can incorporate a wider variety of models often used in machine learning such as random features and sample covariance matrices. In the companion paper, \citep{Us}, we delve into the statistical interpretation of this high-dimensional equivalence by looking at the excess risk under homogenized SGD. In particular, we give conclusions about implicit regularization effects of multi-pass SGD and contrast the excess risks of streaming and multi-pass SGD.    

\paragraph{Organization.} The remainder of the article is structured as follows: in Section~\ref{sec:resolvents} we give a summary properties of the resolvent and prove simple cases where Assumptions~\ref{assumption:init}, \ref{assumption: quadratics}, and \ref{ass:sc} hold (specifically the proofs of Lemmas~\ref{lem:xo}, \ref{lem:qs}, and \ref{lem:sc}). Section~\ref{sec:dynamics} introduces homogenized SGD and proves some properties of it. In particular, the concentration of homogenized SGD around its mean, Theorem~\ref{thm:trainrisk}, is discussed in Section~\ref{sec:concentration_hsgd}. Our main results are then described and proved in Section~\ref{sec:comp_SGD_HSGD}. For instance, the technical argument for the high-dimensional equivalence to SGD, Theorem~\ref{thm:homogenized_SGD_SGD}, and the concentration of statistics of SGD to a deterministic function, Theorem~\ref{thm:expectation}, are in this section. Section~\ref{sec:martingale_errors} details bounds on the martingale error terms that arise in proving Theorem~\ref{thm:homogenized_SGD_SGD}, the high-dimensional equivalence of SGD. We highlight how the assumptions hold for the random features setting in Section~\ref{sec:random_features}. 
%The article concludes on showing some numerical simulations in Section~\ref{sec:numerical_simulations}.

\section{Resolvents} \label{sec:resolvents}

We have formulated many of our assumptions in terms of resolvents.   Resolvents offer many advantages, especially for the analysis of random matrices.  In this section, we discuss a few properties of resolvents generally, and then we prove the lemmas in the body of the text that show simple setups under which these assumptions are satisfied (and which are essentially resolvent exercises) -- Lemmas \ref{lem:xo}, \ref{lem:qs}, \ref{lem:sc} which are established in Section \ref{sec:rlemmas}.

Recalling the definition of the resolvent
$R(z; \MM)$ of a matrix $\MM \in \mathbb{R}^{d \times d}$ is
\begin{equation}\label{eqr:def}
  R(z; \MM) =  (z\II_d-\MM)^{-1} \quad \text{for $z \in \mathbb{C}$.}
\end{equation}
The resolvent encodes the spectral properties of the matrix $\MM$ as an analytic (matrix--valued) function.  For symmetric matrices $\MM$, the resolvent has poles at each eigenvalue of the matrix $\MM$ on the real line, but is analytic in $\mathbb{C} \setminus\{ \lambda(\MM)\}$.  Moreover, if $\MM = \UU\operatorname{diag}(\lambda_1,\lambda_2,\dots,\lambda_d) \UU^T$ is a diagonalization of $\MM$, then we have a representation of the resolvent as 
\begin{equation}\label{eqr:diag}
R(z; \MM) = \UU \operatorname{diag}(z-\lambda_1,z-\lambda_2,\dots,z-\lambda_d)^{-1} \UU^T.
\end{equation}

This in particular allows for the representation of analytic functions of $\MM$ by contour integration in the complex plane.  If we suppose that $\Omega$ is any simple contour that encloses the eigenvalues of $\MM$, we have the representation for analytic functions $\varphi$
\begin{equation}\label{eqr:int}
\varphi(\MM) = \frac{1}{2\pi i}\oint_\Omega  \varphi(z) R(z; \MM)\dif z.
\end{equation}
In particular when $\varphi$ is an entire function, and so $\varphi(\MM)$ can be represented by a convergent power series of $\MM$, this gives an alternative representation.  However \eqref{eqr:int} is more powerful and correctly generalizes to the application of analytic functions which are analytic in the interior of $\Omega$.  We shall be especially interested in $\varphi$ such as those that appear in \eqref{eq:GFS}.

%Besides its ability to represent analytic functions of the matrix argument, the resolvent also enjoys \emph{a priori} %stability properties and algebraic identities.  
Resolvents also enjoy some \emph{a priori} estimates which are convenient for probabilistic analyses.
\begin{lemma}\label{lem:res}
    Under Assumptions \ref{assumption:Target} and \ref{ass: laundry_list}, the resolvent satisfies the following estimates. \begin{enumerate}
        \item There is a constant $C=C(\Omega)>0$ such that
    \(
    \max_{z \in \Omega} \|R(z; \MM)\| \leq C.
    \)
    \item For the same constant $C$,
    \(
    \max_{z,y \in \Omega} \|\tfrac{R(z; \MM)-R(y; \MM)}{z-y}\| \leq C^2.
    \)
    \end{enumerate}
\end{lemma}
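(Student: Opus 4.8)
The plan is to deduce both bounds from one geometric observation: by construction in Assumption~\ref{ass: laundry_list}, the contour $\Omega$ lies at distance exactly $\tfrac12$ from the real segment $[0,1+\|\AA\|^2]$, and this segment contains the spectrum of $\MM$ — the estimates will be applied only with $\MM$ equal to $\AA\AA^T$ or $\AA^T\AA$, both symmetric positive semidefinite with operator norm $\|\AA\|^2\le 1+\|\AA\|^2$, and more generally the argument works for any symmetric $\MM$ whose spectrum is enclosed by $\Omega$. Hence $\operatorname{dist}\bigl(z,\mathrm{spec}(\MM)\bigr)\ge \tfrac12$ for every $z\in\Omega$. (Assumption~\ref{assumption:Target} enters only to keep $\Omega$ of bounded size; for the two estimates stated here one uses only the fixed distance $\tfrac12$, which is why the constant can be taken to be absolute.)

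For part~1, I would diagonalize $\MM=\UU\operatorname{diag}(\lambda_1,\dots,\lambda_d)\UU^T$ and use the representation \eqref{eqr:diag}, so that $\|R(z;\MM)\|=\max_{1\le j\le d}|z-\lambda_j|^{-1}=1/\operatorname{dist}\bigl(z,\mathrm{spec}(\MM)\bigr)\le 2$, giving the claim with $C=C(\Omega)=2$.

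For part~2, I would use the resolvent identity: for $z\ne y$,
\[
R(z;\MM)-R(y;\MM)=R(z;\MM)\bigl[(y\II_d-\MM)-(z\II_d-\MM)\bigr]R(y;\MM)=-(z-y)\,R(z;\MM)R(y;\MM),
\]
so dividing by $z-y$ and using submultiplicativity of the operator norm gives $\bigl\|(R(z;\MM)-R(y;\MM))/(z-y)\bigr\|=\|R(z;\MM)R(y;\MM)\|\le\|R(z;\MM)\|\,\|R(y;\MM)\|\le C^2$ by part~1. When $z=y$ the difference quotient is read as the derivative $\partial_z R(z;\MM)=-R(z;\MM)^2$, whose norm is likewise at most $C^2$ (equivalently, let $y\to z$ in the previous display).

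There is no genuine obstacle here: the only points needing care are recording that the distance $\tfrac12$ from $\Omega$ to the spectrum is independent of $n$ and $d$ — so the constants are dimension-free — and handling the removable singularity $z=y$ in part~2.
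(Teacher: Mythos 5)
Your proof is correct and follows essentially the same route as the paper: part~1 via the diagonal representation \eqref{eqr:diag} and the fixed distance $\tfrac12$ from $\Omega$ to the spectrum, and part~2 via the resolvent identity $R(z;\MM)-R(y;\MM)=-(z-y)R(z;\MM)R(y;\MM)$ and submultiplicativity. Your explicit constant $C=2$ and the remark on the removable singularity at $z=y$ are small additions to what the paper records, but the argument is the same.
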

\begin{proof}
For the first point, using the diagonalization \eqref{eqr:diag}, the operator norm of the resolvent $R(z; \MM)$ is precisely given by the maximum of $|z-\lambda_j|^{-1}$ over all $z\in \Omega$ and all eigenvalues $\lambda_j$ of $\AA^T\AA$.  As we have assumed that $z$ and $\lambda_j$ are separated by some absolute constant (depending on $\Omega'$), we conclude that the first point holds.  For the second point, the difference of resolvents satisfies the identity
\[
\frac{R(z; \MM)-R(y; \MM)}{z-y}
=
-R(z; \MM)R(y; \MM),
\]
which can be verified by multiplying both sides by $(y-\MM)$ and $(z-\MM)$.  Thus in fact, we may bound the operator norm of this expression by the square of the maximum of the operator norms of the resolvents, as claimed.
\end{proof}

One particular identity which simplifies the stochastic analysis of the resolvent is the \emph{Ward identity}, which states
\begin{equation}\label{eq:Ward}
\sum_{j}
|R(z; \MM)_{ij}|^2
= (\Im z)^{-1} \Im R(z; \MM)_{ii}
\quad\text{for all}\quad i \in \{1,2,\dots,n\}.
\end{equation}
This can be verified just by computing the imaginary part of the diagonal resolvent entry.

In part because of this equation, it is simpler to work in domains which are bounded away from the real axis.  More generally, it is convenient to have some tools for comparing resolvent expressions in different parts of the plane.
\begin{lemma}\label{lem:fb}
Suppose that for some complex numbers $\{w_j\}$ and all $z \in \mathbb{C}\setminus \{ \lambda_1, \lambda_2, \dots, \lambda_n\},$
\[
f(z) = \sum_{j=1}^n \frac{w_j}{z-\lambda_j}
\quad\text{satisfies}\quad |f^{(k)}(z)| \leq  \|R(z; \AA\AA^T)\|^{-k-1}_{op} \leq 
\bigl(\min_i |z-\lambda_i|\bigr)^{-k-1}
\quad
k\in\{0,1\}.
\]
Suppose Assumption \ref{assumption:Target} holds and suppose that $\Omega$ is as in Assumption \ref{ass: laundry_list}.  
\begin{enumerate}
    \item 
    For any $\varepsilon,\theta \in (0,\tfrac 12)$ and all $\delta > 0$ sufficiently small, there is a finite set $D \subset \mathbb{C}$ having $\Im z \geq \delta$, $|D| \leq C(\delta)(1+ \|\AA \AA^T\|) \times n^{1/2-\theta}$ and $D$ contained in a $\mathcal{O}(\sqrt{\delta})$ neighborhood of $\Omega$ such that
    \[
    \begin{aligned}
    &\max_{z \in D}
    |f(z)|
    \leq n^{\theta-1/2}
    \implies
    \max_{z \in \Omega}
    |f(z)|
    \leq n^{\theta+\varepsilon-1/2}
    \quad\text{and}\\
    &
    \max_{z \in \Omega}
    |f(z)|
    \leq n^{\theta-1/2}
    \implies
    \max_{z \in D}
    |f(z)|
    \leq n^{\theta+\varepsilon-1/2} \\
    \end{aligned}
    \]
    for all $n$ sufficiently large.
    \item 
     For any $\varepsilon,\theta \in (0,\tfrac 12)$ and all $\delta > 0$ sufficiently small, if $\Omega_\delta$ is the set of $z \in \mathbb{C}$ such that $\min_{y \in \Omega} |z-y| = \delta$ and $z$ is connected to $\infty$ in $\mathbb{C}\setminus \Omega$ 
    \[
    \max_{z \in \Omega_\delta}
    |f(z)|
    \leq n^{\theta-1/2}
    \implies
    \max_{z \in \Omega}
    |f(z)|
    \leq n^{\theta+\varepsilon-1/2}
    \]
    for all $n$ sufficiently large.
\end{enumerate}
\end{lemma}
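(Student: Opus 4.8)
The plan is to run, for both parts, a single mechanism---the two-constants theorem (Nevanlinna) for the subharmonic function $\log|f|$ on disks on which $f$ is analytic---fed by the crude uniform control that the hypothesis supplies: once $\delta$ is small, every point within an $\mathcal O(\sqrt\delta)$-neighborhood of $\Omega$, and every point of $B(z_0,\tfrac14)$ for $z_0\in\Omega$, sits at distance $\ge\tfrac14$ from $\mathrm{spec}(\AA\AA^T)\subset[0,1+\|\AA\|^2]$, so there $|f^{(k)}(z)|\le 4^{k+1}$ for $k\in\{0,1\}$; in particular $|f|\le M$ and $f$ is $M$-Lipschitz on such sets, $M$ universal. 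For Part 1 I would take $D$ to consist of a $\rho$-net, $\rho\asymp n^{\theta+\varepsilon-1/2}$, of $\{z\in\Omega:|\Im z|\ge\delta\}$, together with $\rho$-nets of the horizontal segments $S_\pm(z_\bullet):=[z_\bullet-\sqrt\delta,z_\bullet+\sqrt\delta]\pm i\delta$ centered at the two real-axis crossings $z_\bullet\in\{-\tfrac12,\,1+\|\AA\|^2+\tfrac12\}$ of $\Omega$. (If $\Im z\ge\delta$ is required on $D$ rather than $|\Im z|\ge\delta$, keep only the upper halves and recover the lower half-plane from $\overline{f(\bar z)}=f(z)$, which holds whenever the weights $w_j$ are real, as in the applications.) Since each segment has length $\mathcal O(\sqrt\delta)$ and lies in an $\mathcal O(\sqrt\delta)$-neighborhood of $\Omega$ while $\mathrm{length}(\Omega)=\mathcal O(1+\|\AA\AA^T\|)$, this gives $|D|\lesssim\rho^{-1}\big(1+\|\AA\AA^T\|+\sqrt\delta\big)\le C(\delta)(1+\|\AA\AA^T\|)\,n^{1/2-\theta}$ because $\rho^{-1}\le n^{1/2-\theta}$.

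For the forward implication of Part 1, assume $\max_D|f|\le n^{\theta-1/2}$. On $\{z\in\Omega:|\Im z|\ge\delta\}$ every point is within $\rho$ of a net point lying on $\Omega$, so Lipschitz continuity gives $|f|\le n^{\theta-1/2}+M\rho\le n^{\theta+\varepsilon-1/2}$. For $z'\in\Omega$ with $|\Im z'|\le\delta$, near some $z_\bullet$: first extend by Lipschitz from the netted segments to get $|f|\le 2n^{\theta-1/2}$ on $S:=S_+(z_\bullet)\cup S_-(z_\bullet)$; then, with $\log|f|$ subharmonic on $B(z_\bullet,\tfrac14)$ and $|f|\le M$ there, the two-constants theorem yields $|f(z')|\le(2n^{\theta-1/2})^{\omega}M^{1-\omega}$, $\omega$ being the harmonic measure of $S$ from $z'$ in $B(z_\bullet,\tfrac14)\setminus S$. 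The key geometric estimate is $\omega\ge 1-C\sqrt\delta$: the segments lie at heights $\pm\delta$ and have width $\sqrt\delta$, so they trap $z'$ in a channel of aspect ratio $\asymp\delta/\sqrt\delta=\sqrt\delta$, and a Brownian motion from $z'$ reaches $\partial B(z_\bullet,\tfrac14)$ before $S$ only with probability $\mathcal O(\sqrt\delta)$ (escaping a long thin channel). Hence $|f(z')|\le C\,n^{(\theta-1/2)(1-C\sqrt\delta)}=C\,n^{\theta-1/2+C\sqrt\delta(1/2-\theta)}\le n^{\theta+\varepsilon-1/2}$ once $\delta\le\delta_0(\varepsilon,\theta)$ and $n$ is large.

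The converse implication of Part 1 runs the same machine backwards, and Part 2 is this with no discretization needed. For the converse: the far net points lie on $\Omega$, so a bound on $\Omega$ is inherited there for free, while a point $z'$ on some $S_\pm(z_\bullet)$ lies at distance $\le C\sqrt\delta$ from the arc $\Gamma:=\Omega\cap B(z_\bullet,\tfrac14)$, which is a ``fat'' wall separating $B(z_\bullet,\tfrac14)$; the two-constants theorem on $B(z_\bullet,\tfrac14)\setminus\Gamma$, with $|f|\le n^{\theta-1/2}$ on $\Gamma$ and $|f|\le M$ on the circle, then gives $|f(z')|\le(n^{\theta-1/2})^{\omega'}M^{1-\omega'}$ with $\omega'\ge 1-C\sqrt\delta$ (distance to the wall over size of the domain), hence $\le n^{\theta+\varepsilon-1/2}$. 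For Part 2, given $z\in\Omega$ the set $\Omega_\delta\cap B(z,\tfrac14)$ is a smooth arc of length $\asymp\tfrac14$ at distance $\delta$ from $z$ separating $B(z,\tfrac14)$; since $|f|\le n^{\theta-1/2}$ on $\Omega_\delta$ and $|f|\le M$ on $\partial B(z,\tfrac14)$, the two-constants theorem gives $|f(z)|\le(n^{\theta-1/2})^{\omega}M^{1-\omega}$ with $\omega=1-\mathcal O(\delta)$, whence $|f(z)|\le n^{\theta-1/2+\mathcal O(\delta)}\le n^{\theta+\varepsilon-1/2}$ for $\delta$ small and $n$ large.

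The hard part is everything happening near where $\Omega$ (resp.\ $\Omega_\delta$) meets the real axis. There $f$ enjoys only the crude bound $|f|\le M$, so a plain continuity estimate across an $\mathcal O(\sqrt\delta)$-wide gap loses a constant and is useless; the whole content is to pick a blocking set (the thin $\sqrt\delta$-scale segments $S_\pm$ in Part 1, the contour arc in the converse and in Part 2) that makes the harmonic-measure deficit in the two-constants theorem only $\mathcal O(\sqrt\delta)$---small enough to be swallowed by the $n^{\varepsilon}$ slack after fixing $\delta$ small in terms of $(\varepsilon,\theta)$---while keeping $|D|$ inside the stated budget. Verifying the channel-escape and distance-to-wall harmonic-measure estimates is routine (comparison with a half-plane or a strip), and verifying that $\Gamma=\Omega\cap B(z_\bullet,\tfrac14)$ and $\Omega_\delta\cap B(z,\tfrac14)$ genuinely separate the disk is an elementary check on the stadium geometry of $\Omega$.
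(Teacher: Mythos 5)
Your argument is essentially the one the paper uses: a net on the part of $\Omega$ with $|\Im z|\ge\delta$ plus a Lipschitz extension, and then a harmonic-measure / Brownian-motion estimate near the real-axis crossings. What you call the two-constants theorem on $B(z_\bullet,\tfrac14)$ is exactly the paper's inequality $\log|f(z)|\le\Exp\log|f(B_T+z)|$ run on the annulus between two companion contours $\Omega_1,\Omega_2$; the paper's $U_\delta$ even contains the same $\mathcal O(\sqrt\delta)$-long horizontal segments at height $\pm\delta$ that you call $S_\pm(z_\bullet)$. You correctly note that $\log|f|$ is only subharmonic (the paper writes ``harmonic'', which is inaccurate since $f$ can have zeros in the annulus, though the sub-mean-value inequality is all that is used), and you are more explicit than the paper about the channel-escape estimate that makes the harmonic-measure defect vanish with $\delta$.

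Two small technical points you should tighten. First, with $\rho\asymp n^{\theta+\varepsilon-1/2}$ the Lipschitz step gives only $|f|\le n^{\theta-1/2}+M\rho\le Cn^{\theta+\varepsilon-1/2}$ with a constant $C>1$, not $\le n^{\theta+\varepsilon-1/2}$; take $\rho=n^{\theta-1/2}$ instead (this still meets the cardinality budget exactly and gives $|f|\le (1+M)n^{\theta-1/2}\le n^{\theta+\varepsilon/2-1/2}$ for $n$ large, with the remaining $\varepsilon/2$ absorbed in the harmonic-measure step, as the paper does). Second, your estimate that the channel-escape probability is $\mathcal O(\sqrt\delta)$ is a large overclaim of that probability: the channel has height $2\delta$ and length $2\sqrt\delta$, so after rescaling to unit height the length is $1/\sqrt\delta$ and the escape probability is exponentially small in $1/\sqrt\delta$; what you actually need, and what is true, is merely that it is $o(1)$ as $\delta\to0$ uniformly over the relevant $z'$, so the conclusion stands. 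In Part 2 your choice of wall is also fine, provided one is careful to run the two-constants theorem in the connected component of $B(z,\tfrac14)\setminus\Omega_\delta$ containing $z$ (so that $\Omega_\delta$ genuinely separates $z$ from the far boundary).
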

\noindent In particular to verify Assumption \ref{ass: laundry_list}, Assumption \ref{assumption:init}, or Assumption \ref{assumption: quadratics} it suffices to check it on the finite set $D$.  For the final assumption, which depends on two parameters, one may take both $z,y \in D$.
\begin{proof}
\noindent \emph{Part 1.} By assumption, the curve $\Omega$ encloses the spectrum, and hence we can find two other curves $\Omega_1,\Omega_2$ so that $\Omega_1$ encloses the spectrum, $\Omega$ encloses $\Omega_1$ and $\Omega_2$ encloses $\Omega$.  Furthermore, there is some $\mu > 0$ which is the minimum distance between $\Omega_1$ and $\Omega$, $\Omega$ and $\Omega_2$ and $\Omega_1$ and the spectrum.  By the construction of $\Omega$ we may assume that $\mu$ is bounded below by an absolute constant.  Let $\mathcal{A}$ be the annular region contained in $\Omega_2$ but not contained in $\Omega_1$.

We will pick $D$ to be a $n^{1/2-\theta}$--net of the set
\[
U_\delta\defas
\Omega \cap \{ z : |\Im z| \geq \delta\}
\bigcup
\mathcal{A} 
\cap \{z : |\Im z| = \delta\} 
\cap \{z : d(z,\Omega) \leq \sqrt{\delta}\}.
\]
We then suppose that $f$ is bounded by $n^{\theta-1/2}$ uniformly over $D$.
By the bound on $f^{(k)}$ we note that on this set, we have
\[
\max_{z \in U_\delta} |f^{(1)}(z)| \leq 2 \mu^{-2}.
\]
Hence, we conclude that from the bound on $f$,
\[
\max_{z \in U_\delta} |f(z)| \leq
\max_{z \in D} |f(z)| 
+ 2\mu^{-2}n^{\theta-1/2}
\leq n^{\theta+\varepsilon/2-1/2}
\]
for all $n$ sufficiently large and all $\delta >0$.

Now we suppose that $(B_t : t \geq 0)$ is a standard complex Brownian motion, and we let $z \in \Omega$ be arbitrary.  Let $T$ be the first time that $B_t + z$ hits $U_\delta \cup \Omega_1 \cup \Omega_2$.  Note that if $z \in U_\delta$, then $T=0$.  The function $\log |f(z)|$ is harmonic in $\mathcal{A}$, and hence we have that 
\[
\begin{aligned}
\log |f(z)| 
&= \Exp \log |f(B_T+z)|
\leq 
({\theta+\varepsilon/2-1/2})\log n \times \Pr( B_T + z \in U_\delta) \\
&+
\bigl\{ \max_{z \in \Omega_1 \cup \Omega_2} \log |f(z)| \bigr\}
\times (1-\Pr( B_T + z \in U_\delta)).
\end{aligned}
\]
Setting $p_\delta(z) = \Pr( B_T + z \in U_\delta)$ and using that $f$ is bounded by $2/\mu$, we have that
\[
\log |f(z)| \leq 
({\theta+\varepsilon/2-1/2})\log n \times p_\delta(z)
+\log (2/\mu).
\]
By sending $\delta \to 0,$ we have that $p_\delta(z)$ converges to $1$ uniformly in $z \in \Omega$ as $z \in \Omega$ is separated uniformly from $\Omega_1$ and $\Omega_2$ by $\mu$, but on sending $\delta \to 0$, the set $U_\delta$ contains two horizontal line segments at distance $\mathcal{O}(\delta)$ from $z$ which extend a distance to the left and to the right a distance bounded below by $\mathcal{O}({\sqrt{\delta}})$.  Hence we can pick $\delta_0$ sufficiently small that we can make $p_\delta(z)$ satisfy
\[
|p_\delta(z)|({\theta+\varepsilon/2-1/2}) \geq ({\theta+3\varepsilon/4-1/2}).
\]
Then for all $n$ sufficiently large, we have
\[
\max_{z \in \Omega}
\log |f(z)| \leq 
({\theta+\varepsilon-1/2})\log n,
\]
which completes the proof of the implication.

For the second implication, we have that all of $U_\delta$ is within some neighborhood $\mathcal{O}(\sqrt{\delta})$ of $\Omega$.  This in effect is the same argument as the next part, and so we do not enter into the details.

\noindent \emph{Part 2.} 
Let $\Omega_\delta$ be as in the statement and let $\Omega_1$ be as in the previous part.  Let $B_t$ be standard complex Brownian motion, and let $T$ be the first time $B_t + z$ hits $\Omega_1 \cup \Omega_\delta$, for any $z \in \Omega$.  Then 
\[
\begin{aligned}
\log |f(z)| 
&= \Exp \log |f(B_T+z)|
\leq 
({\theta-1/2})\log n \times \Pr( B_T + z \in \Omega_\delta) \\
&+
\bigl\{ \max_{z \in \Omega_1} \log |f(z)| \bigr\}
\times (1-\Pr( B_T + z \in \Omega_\delta)).
\end{aligned}.
\]
Once more, on sending $\delta \to 0$ the probability that Brownian motion hits $\Omega_\delta$ tends to $1$.  
\end{proof}

\subsection{Resolvent lemmas}\label{sec:rlemmas}
We now turn to the proof of the Lemmas~\ref{lem:xo}, \ref{lem:qs}, and \ref{lem:sc}. 
\begin{proof}[Proof of Lemma \ref{lem:xo}]
We recall for convenience the statement to be proven.
We suppose that Assumption \ref{ass: laundry_list} holds with some $\theta_0 \in (0,\tfrac12)$ and that $\xx_0$ is chosen randomly, independent of $(\AA,\bb)$, in such a way that for some $C$ independent of $d$ or $n$
    %so that for some $R > 0$ independent of $n$ 
    \[
    \|\Exp \xx_0\|_\infty \leq C/n
    \quad\text{and}\quad
    \max_j \|(\xx_0-\Exp \xx_0)_j\|_{\psi_2}^2 \leq Cn^{2\theta_0-1}.
    \]
We wish to show that for any $\theta >\theta_0$, Assumption \ref{assumption:init} holds on an event of probability tending to $1$ as $n \to \infty$, i.e.
\[
	\displaystyle \max_{z \in \Omega} \max_{1 \leq i \leq n} |\ee_i^T R(z; \AA^T\AA) \xx_0| \leq n^{\theta-1/2}.
\]
We note that using Lemma \ref{lem:fb} we may instead prove the claim for $z \in D$ for some set $D$ having imaginary part bounded away from the axis.
We begin by observing that for any fixed $z \in D$,
\[
\ee_i^T R(z; \AA^T\AA) \xx_0
=
\ee_i^T R(z; \AA^T\AA) \Exp(\xx_0)
+
\sum_{j=1}^n
\ee_i^T R(z; \AA^T\AA) (\xx_0 - \Exp \xx_0)_j.
\]
The first of these terms can be controlled solely using Assumption \ref{ass: laundry_list} and the norm bound on $\xx_0$.  For the second, we need a stochastic estimate.  By the Pythagorean theorem for the subgaussian norm and the Ward identity,
\[
\biggl\|
\sum_{j=1}^n
\ee_i^T R(z; \AA^T\AA) (\xx_0 - \Exp \xx_0)_j
\biggr\|_{\psi_2}^2
\leq
\frac{n^{2\theta_0}}{n}
\sum_{j=1}^n |\ee_i^T R(z; \AA^T\AA)\ee_j|^2 \leq n^{-1+2\theta_0} \delta^{-1} |\Im \ee_i^T R(z; \AA^T\AA) \ee_i|.
\]
Hence bounding the resolvent using the imaginary part
\[
\biggl\|
\sum_{j=1}^n
\ee_i^T R(z; \AA^T\AA) (\xx_0 - \Exp \xx_0)_j
\biggr\|_{\psi_2}^2
\leq \delta^{-2}n^{-1+2\theta_0}.
\]
It follows that taking a union bound over $D$ for any $\theta > \theta_0$
\[
\max_{z \in D}
\bigl|
\sum_{j=1}^n
\ee_i^T R(z; \AA^T\AA) (\xx_0 - \Exp \xx_0)_j
\bigr|
\leq C(\delta)n^{\theta-1/2}
\]
with overwhelming probability.
\end{proof}

\begin{proof}[Proof of Lemma \ref{lem:qs}]
    We recall that which we wish to show.
    Suppose that 
    \(
    \mathcal{R}
    \)
    satisfies \eqref{eq:statistic} with $\TT$ given by a polynomial $p$ in $\AA^T \AA$. 
    Suppose $\uu$ and $c$ are norm bounded independently of $n$ or $d$.  
    Suppose Assumptions \ref{assumption:Target} and \ref{ass: laundry_list} for some $\theta \in (0,\tfrac 12)$.
    We shall show that 
    for all $n$ sufficiently large and for any $\theta' > \theta$, $\mathcal{R}$ satisfies Assumption \ref{assumption: quadratics}.  For a meromorphic function $f$
    \[
    \ee_i^T \AA f(\AA^T\AA) \AA^T \ee_i
    =
    \ee_i^T \tilde{f}(\AA\AA^T) \ee_i,
    \]
    where $\tilde f(x) = xf(x)$ for all $x \in \mathbb{C}$.  Thus it suffices to show for any monomial $p$, any $\varepsilon > 0$ and all $n$ sufficiently large that
    \[
       	\max_{z,y \in \Omega} \max_{1 \leq i \leq n} 
    	|\ee_i^T R(z; \AA\AA^T) p(\AA\AA^T)R(y; \AA\AA^T) \ee_i
    	-\tfrac1n\tr(R(z; \AA\AA^T) p(\AA\AA^T)R(y; \AA\AA^T))
    	| \leq (1/2)n^{\theta+\varepsilon-1/2}.
    \]
    Let $\Omega_\delta$ be as in Lemma \ref{lem:fb}, part 2.  Then it suffices to show for $z,y \in \Omega_\delta$ that 
    \begin{equation}\label{eq:ORdelta}
       	\max_{z,y \in \Omega_\delta} \max_{1 \leq i \leq n} 
    	|\ee_i^T R(z; \AA\AA^T) p(\AA\AA^T)R(y; \AA\AA^T) \ee_i
    	-\tfrac1n\tr(R(z; \AA\AA^T) p(\AA\AA^T)R(y; \AA\AA^T))
    	| \leq n^{\theta+\varepsilon/2-1/2},
    \end{equation}
    by applying Lemma \ref{lem:fb} twice: once to move $z$ to $\Omega$ $y$-by-$y$ and once more to move $y$ to $\Omega$ $z$-by-$z$.
    Let $f(x) = (z-x)^{-1}(y-x)^{-1}p(x)$.  Then by Cauchy's integration formula
    \[
    \begin{aligned}
    &\ee_i^T R(z; \AA\AA^T) p(\AA\AA^T)R(y; \AA\AA^T) \ee_i
    	-\tfrac1n\tr(R(z; \AA\AA^T) p(\AA\AA^T)R(y; \AA\AA^T) \\
    &=
    \frac{1}{2\pi i}
    \oint_{\Omega}
    f(x)\bigl\{
    \ee_i^T R(x; \AA\AA^T) \ee_i
    -\tfrac1n\tr(R(x; \AA\AA^T))
    \bigr\}\,\dif x.
    \end{aligned}
    \]
    Hence we conclude \eqref{eq:ORdelta} for any $\varepsilon >0$ using Assumption \ref{ass: laundry_list} and the bound $|f(x)| = \mathcal{O}(\delta^{-2})$.

\end{proof}

\begin{proof}[Proof of Lemma \ref{lem:sc}]
We recall that we suppose that $\AA$ is a random matrix $\AA = \ZZ \sqrt{\SSigma}$ where $\ZZ$ is an $n \times d$ matrix of independent, mean $0$, variance $1$ entries with subgaussian norm at most $M < \infty$, and suppose $n \leq M d$.  We also suppose that $\bb = \AA \bbeta + \eeta$ for $\bbeta,\eeta$ iid centered subgaussian satisfying $\|\bbeta\|^2 =R$ and $\|\eeta\|^2 = \tilde{R}$.

We should show that Assumptions \ref{assumption:Target} and \ref{ass: laundry_list} hold with probability tending to $1 - e^{-\Omega(d)}$.
For Assumption \ref{ass: laundry_list}, the proof strategy is the same as the more complicated random features case which is fully elaborated in detail in Section \ref{sec:random_features}.  Hence, we do not enter into the details. The norm bound on $\bb$ and $\AA^T\AA$ can be derived from subgaussian estimates and a standard net argument, see \cite{vershynin2018high}.

\end{proof}

\subsection{Quadratic concentration property}\label{sec:quad_con}

In this section, we show how Assumption \ref{assumption: quadratics} follows for random matrices $\AA$ with independent rows.  Recall that we have assumed that each row is standardized to have $\Exp \aa = 0$ and $\Exp \|\aa\|^2 = 1$.
Beyond this, we will need to assume:
\begin{definition} \label{def:quad_con}
Say that a random vector $\aa \in \R^d$ has the \emph{quadratic concentration property} if there is a $\theta \in (0,1/2)$ and a $C>0$ so that for any $D > 0$ such that for any deterministic matrix $\WW \in \R^{d\times d}$ with operator norm $1$
\[
\Pr\biggl(
\bigl|\aa^T \WW \aa - \Exp \aa^T \WW \aa \bigr| > d^{\theta-1/2}
\biggr)
\leq C d^{-D}.
\]
\end{definition}
\noindent A simple example of a random vector satisfying this is any vector with iid subgaussian entries, which follows from the Hanson--Wright inequality (c.f.\ Theorem \ref{thm:adamczak}).  Moreover, the image of a $d$--dimensional vector of iid mean $0$ subgaussian entries by a bounded matrix also satisfies this inequality.

We show that for random matrices that satisfy Definition \ref{def:quad_con}, Assumption \ref{assumption: quadratics} holds.
\begin{lemma}\label{lem:stdrows}
Suppose $n,d$ satisfy Assumption \ref{ass:poly}.
Suppose that $\AA$ is an $n\times d$ random matrix whose rows are iid, are standardized (satisfy $\Exp \|\ee_i^T\AA\|^2 = 1$) and have the quadratic concentration property with parameter $\theta \in (0,1/2)$.  Then for any deterministic $d\times d$ matrix $\TT$ and for any $\epsilon < \alpha(\theta-1/2)$, with $\Omega$ as in Assumption \ref{assumption: quadratics},
\begin{equation}
    	\max_{z,y \in \Omega} \max_{1 \leq i \leq n} 
    	|\ee_i^T \AA \widehat\TT  \AA^T \ee_i
    	-\tfrac1n\tr(\AA \widehat\TT  \AA^T)
    	| \leq \|\TT\|_{op} n^{-\epsilon}
    	\quad\text{where}\quad 
    	\left\{
    	\begin{aligned}
    	&\widehat\TT =  R(z) \TT R(y) + R(y) \TT R(z), \\
    	&R(z) = R(z; \AA^T\AA)
    	\end{aligned}
    	\right.
\end{equation}
with overwhelming probability.
\end{lemma}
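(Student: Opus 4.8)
The plan is to reduce the two‑parameter contour statement to finitely many pointwise estimates, and then prove each pointwise estimate by a leave‑one‑row‑out argument driven by the quadratic concentration property. By the degree‑$1$ homogeneity of $\widehat\TT$ in $\TT$ I may and do assume $\|\TT\|_{op}=1$. Fix $y$ and $i$ and consider $g(z):=\ee_i^T\AA\widehat\TT\AA^T\ee_i-\tfrac1n\tr(\AA\widehat\TT\AA^T)$ as a function of $z$. Since the factors $\AA(\cdot)\AA^T$ annihilate the kernel of $\AA$, $g$ is rational, vanishes at $\infty$, and has simple poles only at the nonzero eigenvalues of $\AA^T\AA$ (equivalently of $\AA\AA^T$); its residues sum in absolute value to $O(1)$, using $\|\aa_i\|^2=1+o(1)$ w.o.p.\ (quadratic concentration with $\WW=\II_d$), $\|R(y;\AA^T\AA)\|_{op}\le 2$ on $\Omega$ (the spectrum of $\AA^T\AA$ lies in $[0,\|\AA\|^2]$, surrounded by $\Omega$ at distance $\tfrac12$), and $\tr(\AA^T\AA)=\sum_i\|\aa_i\|^2\le 2n$ w.o.p. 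Hence, after dividing by an absolute constant, $g$ satisfies the structural and first‑derivative hypotheses of Lemma~\ref{lem:fb}; the same applies in the $y$ variable. Applying Lemma~\ref{lem:fb} twice (once in $z$, $y$‑by‑$y$, once in $y$) it suffices to prove the desired bound for $z,y$ ranging over a finite set $D$ with $|D|\le C(1+\|\AA\AA^T\|)n^{1/2-\theta}\le C'n^{3/2-\theta}$ w.o.p., all points having imaginary part bounded away from $0$; a union bound over the polynomially many events indexed by $D\times D\times\{1,\dots,n\}$ then finishes.

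Fix $z,y\in D$ and $1\le i\le n$. Write $S_i:=\AA^T\AA-\aa_i\aa_i^T$, $R_{S_i}(w):=(w\II_d-S_i)^{-1}$, $m_i(w):=\aa_i^TR_{S_i}(w)\aa_i$, and $\widehat\TT_{(i)}:=R_{S_i}(z)\TT R_{S_i}(y)+R_{S_i}(y)\TT R_{S_i}(z)$. Sherman--Morrison gives $\aa_i^TR(w;\AA^T\AA)=(1-m_i(w))^{-1}\aa_i^TR_{S_i}(w)$, and since $\AA^T\AA$ is symmetric also $R(w;\AA^T\AA)\aa_i=(1-m_i(w))^{-1}R_{S_i}(w)\aa_i$, whence the exact identity
\[
\ee_i^T\AA\widehat\TT\AA^T\ee_i=\aa_i^T\widehat\TT\aa_i=\frac{\aa_i^T\widehat\TT_{(i)}\aa_i}{(1-m_i(z))(1-m_i(y))}.
\]
From $1-m_i(w)=\bigl(1+\aa_i^TR(w;\AA^T\AA)\aa_i\bigr)^{-1}$, $\|R(w;\AA^T\AA)\|_{op}\le 2$ and $\|\aa_i\|^2\le 1+o(1)$ we get $|1-m_i(w)|\ge\tfrac14$ for large $n$, w.o.p.; also $S_i\preceq\AA^T\AA$ so $\|R_{S_i}(w)\|_{op}\le 2$ on $\Omega$ and hence $\|\widehat\TT_{(i)}\|_{op}\le 8$.

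Conditionally on $\AA_{(i)}$ (the matrix $\AA$ with row $i$ deleted), the matrix $\widehat\TT_{(i)}$ and the scalars $m_i(z),m_i(y)$ are fixed, and $\aa_i$ is an independent row with $\Exp[\aa_i\aa_i^T]=\SSigma:=\Exp[\aa\aa^T]$ (so $\tr\SSigma=1$). Applying the quadratic concentration property to the fixed matrices $\widehat\TT_{(i)}/8$ and $R_{S_i}(w)/2$ yields, w.o.p.,
\[
\bigl|\aa_i^T\widehat\TT_{(i)}\aa_i-\tr(\widehat\TT_{(i)}\SSigma)\bigr|\le Cd^{\theta-1/2},\qquad \bigl|m_i(w)-\tr(R_{S_i}(w)\SSigma)\bigr|\le Cd^{\theta-1/2}.
\]
Combining with the lower bound $|1-m_i(w)|\ge\tfrac14$ via an elementary quotient estimate,
\[
\aa_i^T\widehat\TT\aa_i=\widetilde Q_i+O(d^{\theta-1/2})\text{ w.o.p.},\qquad \widetilde Q_i:=\frac{\tr(\widehat\TT_{(i)}\SSigma)}{(1-\tr R_{S_i}(z)\SSigma)(1-\tr R_{S_i}(y)\SSigma)},
\]
where $\widetilde Q_i$ is a bounded, $\AA_{(i)}$‑measurable scalar. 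Averaging this over $i$ and union bounding, $\tfrac1n\tr(\AA\widehat\TT\AA^T)=\tfrac1n\sum_i\aa_i^T\widehat\TT\aa_i=\tfrac1n\sum_i\widetilde Q_i+O(d^{\theta-1/2})$ w.o.p.

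It remains to show $\bigl|\widetilde Q_i-\tfrac1n\sum_j\widetilde Q_j\bigr|\le n^{-\epsilon}$ w.o.p.\ for any $\epsilon<\alpha(\tfrac12-\theta)$, i.e.\ that the deterministic‑equivalent scalar $\widetilde Q_i$ concentrates; note $\Exp\widetilde Q_i$ is independent of $i$ by exchangeability of the rows and equals $\Exp[\aa_1^T\widehat\TT\aa_1]=\Exp[\tfrac1n\tr(\AA\widehat\TT\AA^T)]$, so it is enough that each $\widetilde Q_i$ concentrates about its mean. This is the one genuinely random‑matrix step, and it is carried out exactly as the resolvent estimates underlying Assumption~\ref{ass: laundry_list} for the random features model in Section~\ref{sec:random_features}: run a Doob martingale over the iid rows of $\AA_{(i)}$, express each increment through the resolvent identities of Section~\ref{sec:resolvents} as a rank‑$\le 2$ perturbation of the products $R_{S_i}(\cdot)$, and control it with another application of the quadratic concentration property, so that the increments telescope to a fluctuation of size $n^{-\epsilon}$; equivalently, identify $\widetilde Q_i$ with the solution of the self‑consistent (Marchenko--Pastur type) fixed point equation it obeys, which is manifestly non‑random. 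I expect this last step — extracting genuinely small martingale increments from the non‑commuting resolvent products, rather than the crude bound that only uses $\|\SSigma\|_{op}$ — to be the main obstacle; everything preceding it is resolvent bookkeeping together with a single use of the quadratic concentration property per row. Assembling the three pieces and the union bound over $D\times D\times\{1,\dots,n\}$, and translating $d^{\theta-1/2}\le n^{\alpha(\theta-1/2)}$ via Assumption~\ref{ass:poly}, completes the proof.
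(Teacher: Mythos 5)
Your proposal follows the same backbone as the paper's proof up to the penultimate step: reduce to finitely many $(z,y)$ via Lemma~\ref{lem:fb}, then isolate the $i$th row with a Sherman--Morrison (leave-one-out) expansion and apply the quadratic concentration property to the resulting quadratic forms $\aa_i^T\widehat\TT_{(i)}\aa_i$ and $m_i(w)=\aa_i^TR_{S_i}(w)\aa_i$, conditionally on $\AA_{(i)}$. That part is correct (and, incidentally, your sign convention $(1-m_i(w))^{-1}$ is the right one; the displayed \eqref{eq:loo} in the paper has a sign slip, but the intended identity agrees with yours). You also correctly flag that the exponent condition in the statement should read $\epsilon<\alpha(\tfrac12-\theta)$.

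The gap is exactly where you say it is, but it is not the genuinely hard step you think it is. You land on the $i$-dependent deterministic equivalent
\[
\widetilde Q_i=\frac{\tr(\widehat\TT_{(i)}\SSigma)}{\bigl(1-\tr(R_{S_i}(z)\SSigma)\bigr)\bigl(1-\tr(R_{S_i}(y)\SSigma)\bigr)},
\]
and then need $\bigl|\widetilde Q_i-\tfrac1n\sum_j\widetilde Q_j\bigr|\le n^{-\epsilon}$, which you propose to attack by running a Doob martingale over the rows of $\AA_{(i)}$. That step is never carried out, and you yourself flag it as ``the main obstacle.'' As written, the proof is therefore incomplete at precisely this point: the concentration of the leave-one-out deterministic equivalent across $i$ is asserted, not proved, and a Doob martingale over the $n-1$ remaining iid rows would itself require nontrivial resolvent bookkeeping (rank-one perturbation of products such as $R_{S_i}(z)\TT R_{S_i}(y)$, tracking the error through the quotient) that is not spelled out and is not a small routine variation of anything earlier in the paper.

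What the paper does instead is much cheaper and sidesteps the issue entirely. After the first Sherman--Morrison step, it applies Sherman--Morrison \emph{again}, this time inside the traces, to compare $\tr\bigl(K\widehat R(z)\TT\widehat R(y)\bigr)$ with $\tr\bigl(KR(z)\TT R(y)\bigr)$ and $\tr\bigl(K\widehat R(z)\bigr)$ with $\tr\bigl(KR(z)\bigr)$ (where $K=\SSigma$ and $R$, $\widehat R$ are the full and leave-one-out resolvents). The rank-one corrections are again quadratic forms in the row $\aa_i$, so one more invocation of the quadratic concentration property bounds the discrepancy by $O(d^{\theta-1/2})$. The resulting $w(z,y)$ is then a scalar built purely out of the \emph{full} resolvent $R(\cdot;\AA^T\AA)$ and $K$, hence manifestly independent of $i$, and $\tfrac1n\tr(\AA\widehat\TT\AA^T)=\tfrac1n\sum_i\aa_i^T\widehat\TT\aa_i$ is immediately within $O(d^{\theta-1/2})$ of $w(z,y)$ as well. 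No martingale, no fixed-point equation, no local-law machinery is needed. Replacing your final sketched paragraph with this ``convert $R_{S_i}$ back to $R$ inside the traces'' step would close the gap and bring the argument in line with the paper's.
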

\begin{proof}
As the rows of $\AA$ have the quadratic concentration property, we have that with overwhelming probability the operator norm of $\AA$ is $\mathcal{O}(n)$, and in particular is polynomially bounded.  By Lemma \ref{lem:fb}, it suffices to show the claim for $z,y \in D$, a set of $z$ in the upper half plane with imaginary part at least some $\delta>0$.
As this set of $\{(z,y)\}$ has polynomial in $n$ cardinality, it suffices to show it for a fixed pair of $(z,y)$ with imaginary part greater than $\delta>0$.

It suffices to show the claim for a fixed $i$.  
Consider the matrix $\widehat{\AA}$ in which the row $\aa \defas \ee_i^T\AA$ has been set to 0.
Then 
\[
\widehat{\AA}^T\widehat{\AA} = \AA^T\AA 
- \aa^T \aa.% + \widehat{\aa}^T\widehat{\aa}.
\]
From the Sherman--Morrison--Woodbury formula
\begin{equation}\label{eq:loo}
R(z; {\AA}^T{\AA})
=
R(z; \widehat{\AA}^T\widehat{\AA})
-
R(z; \widehat{\AA}^T\widehat{\AA})
\aa^T \aa
R(z; \widehat{\AA}^T\widehat{\AA})
\bigl(
1+\aa 
R(z; \widehat{\AA}^T\widehat{\AA}) \aa^T
\bigr)^{-1}.
\end{equation}
We abbreviate $R(z)=R(z;\AA^T\AA)$ and $\widehat{R}(z)=R(z;\widehat{\AA}^T\widehat{\AA})$.
Hence with $u(z) = \aa 
\widehat R \aa^T
$ and
\[
\ee_i^T \AA R(z) \TT R(y)  \AA^T \ee_i
=
\aa \widehat{R}\TT \widehat{R} \aa^T
\biggl(\frac{1}{1+u(z)}\biggr)
\biggl(\frac{1}{1+u(y)}\biggr).
\]
Thus by independence of the rows, and the quadratic concentration property 
\[
|\aa \widehat{R}(z)\TT \widehat{R}(y) \aa^T
-
\tr(K\widehat{R}(z)\TT \widehat{R}(y))| \leq C(\delta)d^{\theta-1/2}
\]
with overwhelming probability, where $K$ is the covariance matrix $\Exp(\aa \otimes \aa)$.  Likewise 
\[
|u(z) - \tr(K \widehat{R}(z))| \leq  C(\delta)d^{\theta-1/2}
\]
with overwhelming probability.  

Now using that $\tr(K) = \Exp \|\ee_i\AA\|^2= 1,$ we can express
\[
1+\tr(K \widehat{R}(z))
=\tr\bigl(K (z+1 - \widehat{\AA}^T\widehat{\AA})\widehat{R}(z)\bigr).
\]
We bound this below in modulus by either using the real part or the imaginary part of $(z+1 - \widehat{\AA}^T\widehat{\AA})\widehat{R}(z)$.
Observe that
\[
\frac{x+i\delta + 1}{x+i\delta} = 
\frac{x^2+\delta^2+x}{x^2+\delta^2}
- 
\frac{i\delta}{x^2+\delta^2}.
\]
We apply this function to $x$ being $\Re z - \lambda_j(\widehat{\AA}^T\widehat{\AA})$ with $j$ running over all eigenvalues and with $z$ in a neighborhood of $\Omega$.  The real part thus is bounded below provide $x \geq 0$ and otherwise in which case the imaginary part is at least $\mathcal{O}(\delta)$ (using that $x$ is bounded below).  Thus letting $\Pi_z$ be the orthogonal projection map into the eigenspaces of $\widehat{\AA}^T\widehat{\AA}$ for which $\Re z - \lambda_j(\widehat{\AA}^T\widehat{\AA}) \geq 0$, we have
\[
\Re(1+\tr(K \Pi_z \widehat{R}(z))) \geq \tr(K \Pi_z)
\quad\text{and}\quad
|\Im(1+\tr(K (1-\Pi_z) \widehat{R}(z)))| \geq c(\Omega)\delta \tr(K (1-\Pi_z)).
\]
As we have that 
\[
\tr(K \Pi_z) + \tr(K (1-\Pi_z)) = 1,
\]
we conclude that for some constant $c$ depending on $\delta,\Omega$
\begin{equation}
|1+\tr(K \widehat{R}(z))| \geq c(\Omega,\delta).
\end{equation}

Moreover, we may then (using the same \eqref{eq:loo} representation) compare
\[
|
\tr(K\widehat{R}(z)\TT \widehat{R}(y))
-
\tr(K {R}(z)\TT {R}(y))
| \leq C(\delta)d^{\theta-1/2}
\]
as well as
\[
|
\tr( K \widehat{R}(z)) -
\tr( K {R}(z))|
\leq C(\delta)d^{\theta-1/2}.
\]
Combining everything, we arrive at a concentration inequality
\[
\biggl|
\ee_i^T \AA R(z) \TT R(y)  \AA^T \ee_i
-
w(z,y)
\biggr| \leq C(\delta)d^{\theta-1/2}
\quad\text{with}
\quad
w(z,y)=
\frac{\tr(K\widehat{R}(z)\TT \widehat{R}(y))}{(1+\tr( K {R}(z)))(1+\tr( K {R}(y)))}
.
\]
As $w$ does not depend on $i$, we conclude that
\[
|w(z,y) - \frac{1}{n}\tr \AA R(z) \TT R(y) \AA^T| \leq C(\delta)d^{\theta-1/2}
\]
with overwhelming probability, and so we've completed the proof at a single $(z,y).$
Taking $\epsilon$ as in the statement of the Theorem concludes the proof.

\end{proof}

%\section{Arc of the proof: comparing HSGD and SGD} \label{sec:dynamics}
\section{SGD and HSGD under the statistic} \label{sec:dynamics}

In this section we make the first steps to the proof of the comparison theorem, Theorem \ref{thm:homogenized_SGD_SGD}.  In particular, we reformat and reformulate the problem. Throughout this Section~\ref{sec:dynamics}, Section~\ref{sec:comp_SGD_HSGD}, and Section~\ref{sec:martingale_errors}, we  normalize our matrix $\AA$ so that it has row sum always $1$ without loss of generality.

%In this section, we introduce a diffusion approximation, denoted by \textit{homogenized SGD}, as the high-dimensional equivalence of SGD (\textit{c.f.}, \cite{paquette2021dynamics}). While analyzing SGD using diffusion processes is not new (e.g. \cite{li2019stochastic,li2017stochastic,mandt2016variational}), homogenized SGD is constructed by sending $n \to \infty$ as compared with sending the stepsize $\gamma(t) \to 0$. This ensures that the learning rate is bounded away from $0$ and thus homogenized SGD allows us to analyze properties of the learning rate in the diffusion process. Moreover it is explicitly solvable. We define this process and some properties in Section~\ref{sec:homogenized_SGD_intro}. Finally in Section~\ref{sec:comp_SGD_HSGD}, we show the high-dimensional equivalence of SGD to homogenized SGD (Theorem~\ref{thm:homogenized_SGD_SGD}). 

We decompose $\AA = \UU \SSigma \VV^T$ where $\UU \in \mathbb{R}^{n \times n}$, $\VV^T \in \mathbb{R}^{d \times d}$ orthogonal matrices and $\SSigma \in \mathbb{R}^{n \times d}$ is a diagonal matrix with the singular values of $\AA$ on the diagonal. We make the following change of variables by $\widehat{\nnu}_k \defas \VV^T\xx_{k}$,
\begin{equation} 
    \begin{aligned}
        \widehat{\nnu}_{k+1} = \widehat{\nnu}_k - \gamma_k \SSigma^T \UU^T \PP_k ( \UU \SSigma \nnu_k - \bb) - \tfrac{\gamma_k \delta}{n} \widehat{\nnu}_k, \qquad \PP_k = \ee_{i_k} \ee_{i_k}^T.
    \end{aligned}
\end{equation}

The least squares term in the objective function \eqref{eq:rr} plays an important role owing to the randomness that is not present in the $\ell^2$-regularization term. To make this explicit, we will denote the following
\begin{equation} \label{eq:lsq}
    \mathscr{L}(\xx) = \frac{1}{2} \|\AA \xx-\bb\|^2 \quad \text{and} \quad \widehat{\mathscr{L}}(\nnu) \defas \frac{1}{2} \|\SSigma \nnu - \UU^T \bb\|^2.
\end{equation}
Note that if $\nnu = \VV^T \xx$, then $\mathscr{L}(\xx) = \widehat{\mathscr{L}}(\nnu)$.

\subsection{Embedding into continuous time} We next consider an embedding of the process $\widehat{\nnu}_k$ into a continuous time. This is done to simplify the analysis and does not change the underlying behavior of SGD. Let $\mathbb{N}_0 \defas \mathbb{N} \cup \{0\}$. We define the infinite random sequence of times $\{\tau_k \, : \, k \in \mathbb{N}_0\}$ with $0 = \tau_0 < \tau_1 < \tau_2 < \hdots $, which will record the time at which the $k$-th update of SGD occurs. The distribution of these times $\{\tau_k \, : \, k \in \mathbb{N}_0\}$ will follow a standard rate $n$-Poisson process. This means that the interarrival times are i.i.d. Exp$(n)$ random variables, \textit{i.e.}, those with mean $\tfrac{1}{n}$, and we note this randomization is independent of both SGD, the matrix $\AA$, and vector $\bb$. The function $N_t$ will count the number of arrivals of the Poisson process before time $t$, that is
\[ N_t \defas \sup \{k \in \mathbb{N}_0 \, : \, \tau_k \le t\}. \]
Then for any $t > 0$, $N_t$ is Poisson$(nt)$. 

We embed the process $\widehat{\nnu}$ into continuous time by taking $\nnu_t = \widehat{\nnu}_{N_t}$. We scaled time (by choosing the rate of the Poisson process) so that in a single unit of time $t$, the algorithm has done one complete pass (in expectation) of the data set. 

\subsection{SGD under the statistic} \label{sec:Doob}

We compute the Doob decomposition for quasi-martingales \cite[Thm. 18, Chapt. 3]{protter2005stochastic} for $\nnu_t$, the iterates of SGD in the eigenspace. We will apply this to derive an exact expression for the behavior of SGD applied to the quadratics using It\^{o}'s formula \cite[Thm. 33, Chapt. 2]{protter2005stochastic}. Here we let $\mathcal{F}_t$ be the $\sigma$-algebra of information available to the process at time $t \ge 0$. We compute the compensator for the quadratic $q$ applied to $\nnu_t$. So we take, for any $j \in [d]$,
\begin{equation} 
    \begin{aligned}
        \mathcal{A}_{t,j} \defas \partial_t \mathbb{E}[\nu_{t,j} | \mathcal{F}_t] = \lim_{\varepsilon \downarrow 0} \varepsilon^{-1} \mathbb{E}[\nu_{t + \varepsilon, j} - \nu_{t,j} | \mathcal{F}_t]
    \end{aligned} 
\end{equation}
Thus the compensator for the $\nu_{t,j} - \nu_{t,0}$ process is $\int_0^t \mathcal{A}_{s,j} \, \dif s$. We then have the decomposition
\begin{equation} \label{eq:nnu}
    \begin{aligned}
        \nu_{t,j} &= \nu_{0,j} + \int_0^t \mathcal{A}_{s,j} \, \dif s + \mathcal{M}_{t,j}\\
        \dif \nu_{t,j} &= \mathcal{A}_{t,j} + \dif \mathcal{M}_{t,j},
    \end{aligned}
\end{equation}
where $\{\mathcal{M}_{t,j} \, : \, t \ge 0\}$ are $\mathcal{F}_t$-adapted martingales. For the computation of $\mathcal{A}_{t,j}$, we observe that as $\varepsilon \to 0$, $\mathcal{A}_{t,j}$ is dominated by the contribution of a single Poisson point arrival; as in time $\varepsilon$, the probability of having multiple Poisson point arrivals is $\mathcal{O}(n^2 \varepsilon^2)$, whereas the probability of having a single arrival is $1-e^{-n \varepsilon} \sim n \varepsilon$ as $\varepsilon \to 0$. For notational simplicity, we let the projection matrix $\PP \in \mathbb{R}^{d \times d}$ be an i.i.d. copy of $\PP_1$, which is independent of all the randomness so far. It follows that 
\begin{equation} \label{eq:nu_change}
    \begin{aligned} 
        \mathcal{A}_t &= n \mathbb{E} \left [ \nnu_{t} - \gamma(t) \SSigma^T \UU^T \PP (\UU \SSigma \nnu_t - \bb) - \tfrac{\gamma(t) \delta}{n} \nnu_{t} -  \nnu_{t} \, | \, \mathcal{F}_t \right ]\\
        &= -\gamma(t) \left ( \SSigma^T \SSigma \nnu_t - \SSigma^T \UU^T \bb + \delta \nnu_t \right ).
    \end{aligned}
\end{equation}

Next we apply It\^{o}'s formula to a quadratic $q$, that is, a 2nd-degree polynomial with complex coefficients, or $q(\xx) = \xx^T \widehat{\SS} \xx + \widehat{\hh}^T \xx + \widehat{c}$ where the matrix $\widehat{\SS} \in \mathbb{C}^{d \times d}$ (not necessarily symmetric), the vector $\widehat{\hh} \in \mathbb{C}^d$, and $\widehat{c}$ is a constant. We make note that this quadratic $q$ is not the same as the statistic in Assumption~\ref{assumption: quadratics}. We must apply It\^{o}'s formula to a larger class of quadratics in order to deduce the equivalence of homogenized SGD and SGD. Because of this, we will need to consider matrices $\widehat{\SS}$ which are possibly non-symmetric and complex. Recall It\^{o}'s formula \cite[Thm. 33, Chapt. 2]{protter2005stochastic} applied to a quadratic $q$ evaluated at $\nnu_t$:
\begin{equation} \label{eq: statistics_nu}
    \begin{aligned}
        &q(\nnu_t) = q(\nnu_0) + \int_0^t \nabla q(\nnu_{s-}) \cdot \dif \nnu_{s}\\
        % \sum_{j=1}^d \int_0^t \frac{\partial g}{\partial z_j} (\nnu_{s-}) \, \dif \nu_{s,j}\\
        & \quad + \sum_{0 < s \le t} \big \{ q(\nnu_{s}) - q(\nnu_{s-}) - \nabla q(\nnu_{s-})^T \Delta \nnu_{s} \big \} \\
        % \sum_{j=1}^d \frac{\partial g}{\partial z_j} (\nnu_{s-}) \Delta \nu_{s,j} \}\\
        & = q(\nnu_0) +  \int_0^t \nabla q (\nnu_{s-}) \cdot ( \dif \nnu_{s} - \mathcal{A}_{s} \dif s + \mathcal{A}_{s} \dif s) +  \frac{1}{2}\sum_{0 \le s \le t}  ( \Delta \nnu_s) ^T \nabla^2 q(\nnu_{s-}) \Delta \nnu_s   \\
        &= q(\nnu_0) - \int_0^t \gamma(s) \nabla q(\nnu_{s})^T \left (\SSigma^T \SSigma \nnu_{s} - \SSigma^T \UU^T \bb + \delta \nnu_{s} \right ) \dif s +  \frac{1}{2} \sum_{0 \le s \le t} ( \Delta \nnu_s) ^T \nabla^2 q(\nnu_{s-}) \Delta \nnu_s\\
        & \quad + \int_0^t \nabla q(\nnu_{s-}) \cdot \, \dif \mathcal{M}_{s}.
    \end{aligned}    
\end{equation}
Here $\Delta \nu_{s,j} = \nu_{s,j} - \nu_{s-,j}$ which captures the jumps at $s$ and $\nnu_{s-} = \nnu_{s-\varepsilon}$ for $\varepsilon > 0$, that is, the value of $\nnu$ right before the jump. We define the martingale
\begin{equation} \label{eq:martingale_grad}
\mathcal{M}_{t}^{\text{grad}}(q) \defas \int_0^t \nabla q(\nnu_{s-}) \cdot \, \dif \mathcal{M}_{s}.
\end{equation}

Next for the sum $\sum_{0 \le s \le t} \tfrac{1}{2} (\Delta \nnu_s)^T \nabla^2 q(\nnu_{s-}) \Delta \nnu_s$ in \eqref{eq: statistics_nu}, we perform a Doob decomposition,
\begin{equation} \label{eq:quadratic}
    \begin{aligned}
        \sum_{0 < s \le t} (\Delta \nnu_s)^T \nabla^2 q(\nnu_{s-}) \Delta \nnu_s 
        %&= \sum_{0 < s \le t} (\Delta \nnu_s)^T (\nabla^2 q) \Delta \nnu_s\\
        &= \underbrace{(\Delta \nnu_0)^T (\nabla^2 q) \Delta \nnu_0}_{= 0} + \int_0^t \mathcal{B}_s \, \dif s + \mathcal{M}_t^{\text{quad}}(q)
    \end{aligned}
\end{equation}
where $\{\mathcal{M}_t^{\text{quad}}(q) \, : \, t \ge 0\}$ are $\mathcal{F}_t$-adapted martingales and
\begin{equation}\begin{aligned} \label{eq:martingale_quad}
    \mathcal{M}_t^{\text{quad}}(q) &\defas         \sum_{0 < s \le t} (\Delta \nnu_s)^T \nabla^2 q(\nnu_{s-}) \Delta \nnu_s - \int_0^t \mathcal{B}_s \dif s\\
   \text{and} \quad  \mathcal{B}_t &\defas \lim_{\varepsilon \downarrow 0} \varepsilon^{-1} \mathbb{E}[ (\Delta \nnu_{t+\varepsilon})^T (\nabla^2 q) (\Delta \nnu_{t+\varepsilon}) | \mathcal{F}_{t} ].
\end{aligned}
\end{equation}
As in $\mathcal{A}_{t,j}$, for the computation of $\mathcal{B}_{t}$, we observe that as $\varepsilon \to 0$, $\mathcal{B}_{j}$ is dominated by the contribution of a single Poisson point arrival; as in time $\varepsilon$, the probability of having multiple Poisson point arrivals is $\mathcal{O}(n^2 \varepsilon^2)$, whereas the probability of having a single arrival is $1-e^{-n \varepsilon} \sim n \varepsilon$ as $\varepsilon \to 0$. As again, we let the projection matrix $\PP$ be an i.i.d. copy of $\PP_1$, which is independent of all the randomness so far. It follows from \eqref{eq:quadratic}
\begin{equation}
    \begin{aligned} \label{eq:conditional_expectation_difference}
        \mathcal{B}_t &= \gamma(t)^2 n \mathbb{E} \left [ (\SSigma^T \UU^T \PP (\UU \SSigma \nnu_{t}-\bb) - \tfrac{\delta}{n} \nnu_{t})^T (\nabla^2 q)(\SSigma^T \UU^T \PP (\UU \SSigma \nnu_{t}-\bb) - \tfrac{\delta}{n} \nnu_{t})  | \mathcal{F}_{t}\right ]\\
        &= \gamma^2(t) n \mathbb{E}[ \big(\SSigma^T \UU^T \PP(\UU \SSigma \nnu_t - \bb ) \big)^T (\nabla^2 q) \big(\SSigma^T \UU^T \PP(\UU \SSigma \nnu_t - \bb ) \big) | \mathcal{F}_{t} ] \\
        % &= \gamma^2 n \mathbb{E}[ \| \SS^{1/2} \SSigma^T \UU^T \PP (\UU \SSigma \nnu_{t} - \bb) \|^2 | \mathcal{F}_{t} ] \\
        &\qquad + \frac{\gamma^2(t) \delta^2}{n} \nnu_{t}^T (\nabla^2 q) \nnu_{t-} - 2 \gamma^2(t) \delta \mathbb{E} \big [\nnu_{t}^T (\nabla^2 q) (\SSigma^T \UU^T \PP (\UU \SSigma \nnu_{t}-\bb) ) | \mathcal{F}_{t}  \big ]\\
        &= \gamma^2(t) \sum_{i=1}^n (\ee_i^T \UU \SSigma (\nabla^2 q)  \SSigma^T \UU^T \ee_i) (\ee_i^T (\UU \SSigma \nnu_{t} - \bb) )^2 \\
        % &= \textcolor{red}{\gamma^2 n \sum_{j=1}^d \EE [ (\ee_j^T \SS^{1/2}  \SSigma^T \UU^T \ee_i)^2 (\ee_i^T (\UU \SSigma \nnu_{t} - \bb) )^2 | \mathcal{F}_{t} ] \quad \text{need to change}} \\
        & \qquad + \frac{\gamma^2(t) \delta^2}{n} \nnu_{t}^T (\nabla^2 q) \nnu_{t} - \frac{2 \gamma^2(t) \delta}{n} \nnu_{t}^T (\nabla^2 q) \SSigma^T (\SSigma \nnu_{t} - \UU^T \bb).
    \end{aligned}
\end{equation}
% To compute this conditional expectation, we record the following the lemma.

% \begin{lemma} \label{lem:conditional_expectation} Suppose that $\uu$ and $\vv$ are fixed vectors in $\mathbb{R}^n$. Then 
% \[ \mathbb{E}_{i} \left (  u_i v_i \right )^2 = \frac{1}{n} \sum_{i=1}^n (u_i v_i)^2.   \]
% \end{lemma}

Using \eqref{eq:conditional_expectation_difference}, we express the Hessian term \eqref{eq:quadratic} as
\begin{equation}
    \begin{aligned}
        \sum_{0 \le s \le t} (\Delta \nnu_s)^T &(\nabla^2 q) (\Delta \nnu_s) = \int_0^t \mathcal{B}_s \dif s + \mathcal{M}_t^{\text{quad}}(q)\\
        & = \sum_{i=1}^n \int_0^t \gamma^2(s) (\ee_i^T \UU \SSigma (\nabla^2 q) \SSigma^T \UU^T \ee_i)  (\ee_i^T (\UU \SSigma \nnu_s -\bb))^2 \, \dif s\\
        % &= \textcolor{red}{\sum_{j=1}^d \sum_{i=1}^n \int_0^t \gamma^2 (\ee_j^T \SS^{1/2} \SSigma^T \UU^T \ee_i)^2 (\ee_i^T (\UU \SSigma \nnu_s -\bb))^2 \dif s}\\
        & \quad + \frac{1}{n} \int_0^t \gamma^2(s) \left ( \delta^2 \nnu_s^T (\nabla^2 q) \nnu_s - 2 \delta \nnu_s^T (\nabla^2q) \SSigma^T (\SSigma \nnu_s - \UU^T \bb) \right ) \, \dif s + \mathcal{M}_t^{\text{quad}}(q).
    \end{aligned}
\end{equation}

Finally, the quadratic $q(\nnu_t)$ in its Doob decomposition is
\begin{align}
    &q(\nnu_t) = q(\nnu_0) - \int_0^t \gamma(s) \nabla q(\nnu_s)^T (\SSigma^T \SSigma \nnu_s - \SSigma^T \UU \bb + \delta \nnu_s) \, \dif s \label{eq:blah_1} \\
    & + \frac{1}{2} \sum_{i=1}^n \int_0^t \gamma^2(s) (\ee_i^T \UU \SSigma (\nabla^2 q) \SSigma^T \UU^T \ee_i)(\ee_i^T (\UU \SSigma \nnu_s - \bb))^2 \dif s \label{eq:blah_2}\\
    % & \qquad + \textcolor{red}{\gamma^2 \sum_{j=1}^d \sum_{i=1}^n \int_0^t  (\ee_j^T \SS^{1/2} \SSigma^T \UU^T \ee_i)^2 (\ee_i^T (\UU \SSigma \nnu_s -\bb))^2 \dif s} \label{eq:blah_2}\\
    & + \frac{1}{2n} \int_0^t \gamma^2(s) \left ( \delta^2 \nnu_s^T (\nabla^2 q) \nnu_s - 2 \delta \nnu_s^T (\nabla^2 q) \SSigma^T (\SSigma \nnu_s - \UU^T \bb) \right ) \, \dif s + \mathcal{M}_t^{\text{grad}}(q) + \mathcal{M}_t^{\text{quad}}(q), \label{eq:blah_3}
\end{align}
where $\mathcal{M}_t^{\text{grad}}(q)$ and $\mathcal{M}_t^{\text{quad}}$ are defined in \eqref{eq:martingale_grad} and \eqref{eq:martingale_quad}, respectively. In the large $n$-limit, we expect that the first term in \eqref{eq:blah_3} to vanish. For the martingales $\mathcal{M}_t^{\text{grad}}$ and $\mathcal{M}_t^{\text{quad}}$, we will use concentration to show that it vanishes. We now return to \eqref{eq:blah_2}. The \textit{key lemma} to simplifying \eqref{eq:blah_2} is that $\ee_i^T \UU \SSigma (\nabla^2 q) \SSigma^T \UU^T \ee_i$ self-averages to $\frac{\tr(\SSigma (\nabla^2 q) \SSigma^T)}{n}$ (see Assumption~\ref{assumption: quadratics}). The error incurred by the key lemma, we denote by
    \begin{equation}  \begin{aligned} \label{eq:keylemma_error}
        \mathcal{E}_t^{\text{KL}}(q) &\defas \frac{1}{2} \sum_{i=1}^n \int_0^t \gamma^2(s) (\ee_i^T \UU \SSigma (\nabla^2 q) \SSigma^T \UU^T \ee_i) (\ee_i^T (\UU \SSigma \nnu_s-\bb))^2 \, \dif s  \\
        &- \frac{1}{n} \tr((\nabla^2 q) \SSigma^T \SSigma ) \int_0^t \gamma^2(s) \widehat{\mathscr{L}}(\nnu_s) \dif s\\
        &\le \max_{1 \le i \le n} \left \{ (\ee_i^T \UU \SSigma (\nabla^2 q) \SSigma^T \UU^T \ee_i) - \frac{1}{n} \tr((\nabla^2 q) \SSigma^T \SSigma ) \right \} \int_0^t \gamma^2(s) \widehat{\mathscr{L}}(\nnu_s) \, \dif s
        % &\text{where $f_1(\nnu_s) \defas \frac{1}{2} \|\SSigma \nnu_s - \bb\|^2$.}
        % \textcolor{red}{\mathcal{E}_t^{\text{KL}} \defas \gamma^2 \int_0^t \sum_{j=1}^d \sum_{i=1}^n (\ee_j^T \SS^{1/2} \SSigma^T \UU^T \ee_i)^2 (\ee_i^T (\UU \SSigma \nnu_s-\bb))^2 \, \dif s - \gamma^2 \frac{2}{n} \tr(\SS \SSigma^T \SSigma ) \int_0^t f_1(\nnu_s) \dif s}
    \end{aligned}
    \end{equation}
Here we use the definition of $\widehat{\mathscr{L}}$ in \eqref{eq:lsq}. We will apply Assumption~\ref{assumption: quadratics} (i.e., key lemma) to the above expression with many different quadratics. However they will all have the form \eqref{eq:key_lemma_ass} (see Section~\ref{sec:comp_SGD_HSGD} for specifics). With this, we have that $\mathcal{E}_t^{\text{KL}}$ will go to $0$ as $n \to \infty$ provided $\int_0^t \gamma^2(s) \widehat{\mathscr{L}}(\nnu_s) \, \dif s$ is bounded. 

We now express $q(\nnu_t)$ below as a sum of two terms (1st line and 2nd line of \eqref{eq:SGD_statistic}). When $q$ is the statistic $g$, the first line of \eqref{eq:SGD_statistic} will equal the behavior of SGD under a different process which we call homogenized SGD (see Section~\ref{sec:homogenized_SGD_intro}). The 2nd line are the error terms which will vanish as $n \to \infty$:
% as a sum of two terms, the first of which is the value of the statistic $g$ under homogenized SGD and the second term are errors which vanish as $n$ gets large
\begin{equation} \label{eq:SGD_statistic}
    \begin{aligned}
    &q(\nnu_t) = q(\nnu_0) - \int_0^t \gamma(s) \nabla q(\nnu_s)^T ( \SSigma^T \SSigma \nnu_s - \SSigma^T \UU^T \bb + \delta \nnu_s) \, \dif s + \frac{1}{n} \tr( (\nabla^2 q) \SSigma^T \SSigma) \int_0^t \gamma^2(s) \widehat{\mathscr{L}}(\nnu_s) \, \dif s\\
    & + \frac{1}{2n} \int_0^t \gamma^2(s) \left ( \delta^2 \nnu_s^T (\nabla^2 q) \nnu_s - 2 \delta \nnu_s^T (\nabla^2 q) \SSigma^T (\SSigma \nnu_s - \UU^T \bb) \right ) \, \dif s + \mathcal{M}_t^{\text{grad}}(q) + \mathcal{M}_t^{\text{quad}}(q) + \mathcal{E}_t^{\text{KL}}(q).
    \end{aligned}
\end{equation}

\subsection{HSGD in the spectral basis} \label{sec:homogenized_SGD_intro}
In the spectral basis, HSGD takes a slightly simpler form: namely the gradient flow generator becomes diagonal.  All interaction between the different coordinates of the solution of the SDE in the spectral basis arises through the empirical risk $\mathscr{L}$.  Under the change of variables $\YY_t \defas \VV^T \XX_t$ where $\VV^T$ are the right singular vectors of $\AA$, we have that
\begin{equation} \begin{aligned} \label{eq:HSGD_dif}
    \dif \YY_t &= - \gamma(t) \VV^T \nabla f(\XX_t) + \gamma(t) \sqrt{ \tfrac{2}{n} \mathscr{L}(\XX_t) \SSigma^T \SSigma} \dif (\VV^T \BB_t)\\
    &= \left ( -\gamma(t) \SSigma^T \SSigma \YY_t + \gamma(t) \SSigma^T \UU^T \bb -  \delta \gamma(t) \YY_t \right ) \, \dif t + \gamma(t) \sqrt{ \tfrac{2}{n} \widehat{\mathscr{L}}(\YY_t) \SSigma^T \SSigma } \dif (\VV^T \BB_t).
\end{aligned}
\end{equation}
Here we used $\widehat{\mathscr{L}}(\YY_t)  = \frac{1}{2}\|\SSigma \YY_t - \UU^T \bb\|^2 = \frac{1}{2} \| \AA \XX_t - \bb\|^2 = \mathscr{L}(\XX_t)$ and $\VV^T \nabla f(\XX_t) = \VV^T \AA^T (\AA \XX_t - \bb) + \delta \VV^T \XX_t = \SSigma^T ( \SSigma \YY_t - \UU^T \bb ) + \delta \YY_t$. We also note that $\VV^T \BB_t$ is another Brownian motion. Fix an arbitrary quadratic $q : \mathbb{R}^d \to \mathbb{C}$ as in Section~\ref{sec:Doob}. By using It\^{o}'s formula \cite[Thm. 33, Chapt. 2]{protter2005stochastic}, we deduce that
\begin{equation} \label{eq:ito_diffusion}
    \begin{aligned}
        &q(\YY_t) = q(\YY_0) + \int_0^t \nabla q(\YY_{s}) \cdot \dif \YY_s + \frac{1}{2} \int_0^t \dif \YY_s^T \nabla^2 q(\YY_{s-}) \dif \YY_s\\
        &= q(\YY_0) - \int_0^t \gamma(s) \nabla q(\YY_{s})^T \big( \SSigma^T \SSigma \YY_s - \SSigma^T \UU^T \bb +  \delta \YY_s \big) \, \dif s +  \tr( (\nabla^2 q) \SSigma^T \SSigma) \frac{2}{n} \int_0^t \gamma^2(s) \widehat{\mathscr{L}}(\YY_s) \, \dif s \\
        & \quad + \int_0^t \gamma(s) \nabla q(\YY_s) \cdot \sqrt{ \tfrac{2}{n} \widehat{\mathscr{L}}(\YY_s) \SSigma^T \SSigma } \,  \dif (\VV^T \BB_s)\\
        &= q(\YY_0) - \int_0^t \gamma(s) \nabla q(\YY_{s})^T \big ( \SSigma^T \SSigma \YY_s - \SSigma^T \UU^T \bb + \delta \YY_s \big ) \, \dif s +  \frac{2}{n} \tr( (\nabla^2 q) \SSigma^T \SSigma) \int_0^t \gamma^2(s) \widehat{\mathscr{L}}(\YY_s)  \, \dif s \\
        & \quad + \mathcal{M}_t^{\text{hSGD}}(q),
    \end{aligned}
\end{equation}
where $\mathcal{M}_t^{\text{hSGD}}(q) \defas \int_0^t \gamma(s) \nabla q(\YY_s) \cdot \sqrt{ \tfrac{2}{n} \widehat{\mathscr{L}}(\YY_s) \SSigma^T \SSigma } \,  \dif (\VV^T \BB_s)$ is a martingale. 

\subsection{Properties of homogenized SGD and SGD} 

We begin our analysis by discussing bounds on the norms of the iterates of SGD and homogenized SGD which will be fruitful in the next section. 

We remark to control the errors, we will need to make an \textit{a priori} estimate that effectively shows that the iterates remain bounded. Thus, we define the stopping time, for any fixed $\varepsilon > 0$, by
\begin{equation} \label{eq:stopping_time_norm}
    \vartheta_{\varepsilon} \defas \inf \big \{ t \ge 0 \, : \, \|\nnu_t\|^2_2 > n^{\varepsilon} \quad \text{or} \quad \|\YY_t\|^2 > n^{\varepsilon} \big \} \quad \text{for some $\varepsilon > 0$}.
\end{equation}
The choice of $\varepsilon$ will be determined later, and as such, we simplify notation by $\vartheta \defas \vartheta_{\varepsilon}$. In particular, the value of $\varepsilon = \min\{\theta/2, (1/2 - \theta)/4\}$ will suffice where $\theta$ is defined in Assumption~\ref{ass: laundry_list}. It will be convenient to work with homogenized SGD and SGD under the stopped processes, that is, $\nnu_t^{\vartheta} \defas \nnu_{t \wedge \vartheta}$ and $\YY_t^{\vartheta} \defas \YY_{t \wedge \vartheta}$. We will later remove this stopping time by showing it does not occur with overwhelming probability (c.f. Corollary~\ref{lem:stopping_time_function_values}). Recall the bound on the learning rate (see Assumption~\ref{assumption:lr}), 
\begin{equation}
    \widehat{\gamma} \defas \sup_t \gamma(t).
\end{equation}

First with overwhelming probability, we show that $\vartheta$ does not occur for homogenized SGD, $\YY_t$.

\begin{lemma}[Boundedness of loss and norm under Homogenized SGD] \label{lem:boundedness_loss_norm_homogenized_SGD} Fix a constant $T > 0$. With overwhelming probability, for any $\varepsilon > 0$,
\begin{align*}
    \sup_{0 \le t \le T} \|\YY_t\|^2 \le n^{\varepsilon}, \quad \text{where $\YY_t$ is homogenized SGD defined in \eqref{eq:HSGD_dif}.}
\end{align*}
% In fact, fix any quadratic, $\widehat{q} \, : \, \mathbb{R}^d \to \mathbb{R}$, such that $\widehat{q}(\xx) = \xx^T \widehat{\SS} \xx + \widehat{\hh}^T \xx + \widehat{c}$, with $\widehat{\SS}$, $\widehat{\hh}$, and $c$ bounded norm independent of $n$. With overwhelming probability,
% \[ \sup_{0 \le t \le T} \widehat{q}(\YY_t) \le n^{\varepsilon}. \]
\end{lemma}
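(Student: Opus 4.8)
The plan is to apply It\^o's formula to the squared norm $q(\yy)=\|\yy\|^2$ along homogenized SGD in the spectral basis, \eqref{eq:HSGD_dif}, and then to run a stopping-time bootstrap. Since the assertion only becomes stronger as $\varepsilon$ shrinks, I would first reduce to the case $\varepsilon\in(0,\tfrac12)$. Introduce $\sigma\defas\inf\{t\ge 0:\|\YY_t\|^2>n^{\varepsilon}\}$; because $\YY$ has continuous sample paths (it solves an It\^o SDE) and $\|\YY_0\|^2=\|\xx_0\|^2\le C$ with overwhelming probability under the standing assumption on the initialization, proving the lemma is equivalent to showing $\sigma>T$ with overwhelming probability.

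Next I would read off from \eqref{eq:ito_diffusion}, with $\nabla q(\yy)=2\yy$ and $\nabla^2 q=2\II_d$, the identity
\[
\|\YY_{t\wedge\sigma}\|^2 = \|\YY_0\|^2 - \int_0^{t\wedge\sigma} 2\gamma(s)\,\YY_s^T\bigl(\SSigma^T\SSigma\YY_s - \SSigma^T\UU^T\bb + \delta\YY_s\bigr)\,\dif s + \tfrac{4}{n}\tr(\SSigma^T\SSigma)\int_0^{t\wedge\sigma}\gamma^2(s)\,\widehat{\mathscr{L}}(\YY_s)\,\dif s + M_{t\wedge\sigma},
\]
with $M\defas\mathcal{M}^{\text{hSGD}}(q)$, and estimate each deterministic term. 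The quadratic form $\YY_s^T(\SSigma^T\SSigma+\delta\II_d)\YY_s$ is nonnegative, so its contribution to the drift is $\le 0$; the cross term is bounded by $C(1+\|\YY_s\|^2)$ via Cauchy--Schwarz and the arithmetic--geometric mean inequality, using $\widehat\gamma,\|\AA\|,\|\bb\|\le C$ (Assumptions~\ref{assumption:Target} and \ref{assumption:lr}); and the It\^o correction is controlled from $\tfrac1n\tr(\SSigma^T\SSigma)=\tfrac1n\tr(\AA^T\AA)\le C$ (the row normalization of this section) together with $\widehat{\mathscr{L}}(\yy)\le\|\AA\|^2\|\yy\|^2+\|\bb\|^2\le C(1+\|\yy\|^2)$. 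This yields the integral inequality $\|\YY_{t\wedge\sigma}\|^2\le\|\YY_0\|^2+C\int_0^t(1+\|\YY_{s\wedge\sigma}\|^2)\,\dif s+M_{t\wedge\sigma}$ on $[0,T]$. To handle $M$, I would bound its predictable quadratic variation: on $[0,T\wedge\sigma]$ the a priori bound $\|\YY_s\|^2\le n^{\varepsilon}$ holds (by continuity of $\YY$ at $s=\sigma$), so $\langle M\rangle_{T\wedge\sigma}=\tfrac8n\int_0^{T\wedge\sigma}\gamma^2(s)\widehat{\mathscr{L}}(\YY_s)(\YY_s^T\SSigma^T\SSigma\YY_s)\,\dif s\le CTn^{2\varepsilon-1}$, whence by the Burkholder--Davis--Gundy inequality and Markov's inequality with a sufficiently high moment (legitimate since $2\varepsilon-1<0$) one gets $\sup_{0\le t\le T}|M_{t\wedge\sigma}|\le 1$ with overwhelming probability. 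On the intersection of this event with $\{\|\YY_0\|^2\le C\}$, Gr\"onwall's inequality gives a constant $K=K(T)$, independent of $n$ and $d$, with $\sup_{0\le t\le T}\|\YY_{t\wedge\sigma}\|^2\le K$; for $n$ large enough that $K<n^{\varepsilon}$, the event $\{\sigma\le T\}$ would force $\|\YY_\sigma\|^2=n^{\varepsilon}>K$ by path-continuity, a contradiction, so $\sigma>T$ (and the conclusion is vacuous for small $n$ by definition of overwhelming probability).

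The only genuinely delicate point is the uniform-in-time control of the stochastic integral $M$ together with the bootstrap from the a priori stopping-time bound to the honest estimate; both are routine here precisely because the diffusion coefficient in \eqref{eq:HSGD_dif} is governed by $\widehat{\mathscr{L}}$, which is itself quadratically dominated by $\|\YY\|^2$. I expect this scheme --- It\^o's formula, coefficient estimates, a stopping time, Burkholder--Davis--Gundy, Gr\"onwall --- to be the template for the corresponding (harder) statements about SGD, where the additional burden is bounding the martingale errors $\mathcal{M}^{\text{grad}}$, $\mathcal{M}^{\text{quad}}$ and the key-lemma error $\mathcal{E}^{\text{KL}}$ appearing in \eqref{eq:SGD_statistic}.
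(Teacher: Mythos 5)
Your proof is correct, and it takes a genuinely different route from the paper's. The paper's argument is a pure Lyapunov\slash supermartingale calculation: it shows that $g(t,\YY_t)=(1+\|\YY_t\|^2)^{\lambda}e^{-(c_1\lambda+c_2\lambda^2)t}$ is a nonnegative supermartingale for all $\lambda\ge0$ by running It\^o on $g$ and choosing $c_1,c_2$ large enough to dominate both the linear-in-$\lambda$ drift terms and the $\lambda(\lambda-1)$ term coming from the diffusion correction. Doob's $L^1$ maximal inequality for nonnegative supermartingales then yields $\Pr(\sup_{t\le T}(1+\|\YY_t\|^2)\ge a\mid\Filt_0)\le a^{-\lambda}e^{(c_1\lambda+c_2\lambda^2)T}(1+\|\YY_0\|^2)^{\lambda}$, and setting $a=n^{\varepsilon}$ and then letting $\lambda$ grow gives the claim. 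By contrast you localize with a stopping time, apply It\^o to $\|\YY\|^2$ directly, push the nonnegative part of the drift away, bound the quadratic variation of $\mathcal{M}^{\text{hSGD}}$ on the stopped interval by $CTn^{2\varepsilon-1}$ using the a priori bound, control $\sup|M|$ via BDG (or equivalently the exponential martingale inequality for continuous martingales with bounded bracket), and then close the argument with Gr\"onwall and a continuity contradiction at the stopping time.

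Both routes are legitimate. What the paper's approach buys is economy: no stopping time, no Gr\"onwall, and no separate maximal inequality for the martingale --- the supermartingale drift condition absorbs the martingale control and the deterministic growth control in a single step, and Doob's $L^1$ already gives a maximal bound. Its drawback is that, because of the $c_2\lambda^2T$ in the exponent, one cannot send $\lambda\to\infty$ for a fixed threshold $a$; the bound is only useful once $a$ grows like $n^{\varepsilon}$, which is exactly what the lemma states. Your stopping-time/Gr\"onwall route has more moving parts but actually yields a slightly stronger conclusion: on the overwhelming-probability event you show $\sup_{0\le t\le T}\|\YY_t\|^2\le K(T)$ for an $n$-independent constant $K(T)$, not merely $\le n^{\varepsilon}$. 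One minor accounting point: you invoke BDG plus a high-moment Markov bound, but since the quadratic variation is deterministically bounded by $CTn^{2\varepsilon-1}$ after stopping, the cleaner move is the subgaussian tail $\Pr(\sup_{t\le T}|M_{t\wedge\sigma}|>1)\le 2\exp(-cn^{1-2\varepsilon}/T)$ coming from the exponential supermartingale $e^{\lambda M-\lambda^2\langle M\rangle/2}$; either way the probability is overwhelming.
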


\begin{proof} First, we show that there exists constants $c_1, c_2 > 0$ such that 
\begin{align}
    g(t, \YY_t) = (1 + \|\YY_t\|^2)^{\lambda} e^{-(c_1 \lambda + c_2 \lambda^2)t}
\end{align}
is a supermartingale for every $\lambda \ge 0$. Let $c_1, c_2 \ge 0$ be constants to be determined later. From It\^{o}'s formula and \eqref{eq:HSGD_dif}, we have that 
\begin{align} \label{eq:super_martingale}
    & \dif  g(t,\YY_t)  = \partial_t g(t, \YY_t) + \nabla g(t, \YY_t)^T \dif \YY_t + \dif \YY_t^T \nabla^2_{yy} g(t, \YY_t) \dif \YY_t\\
    &= -(c_1 \lambda + c_2 \lambda^2) e^{-(c_1 \lambda + c_2 \lambda^2)t} (1 + \|\YY_t\|^2)^{\lambda} \label{eq:super_martingale_2} \\
    & + e^{-(c_1 \lambda + c_2 \lambda^2)t} \lambda (1 + \|\YY_t\|^2)^{\lambda -1} \YY_t^T \big [ (-\gamma(t) ( \SSigma^T \SSigma + \delta \II_d ) \YY_t + \gamma(t) \SSigma^T \UU^T \bb ) \dif t + \sqrt{\tfrac{2}{n} \widehat{\mathscr{L}}(\YY_t) \SSigma^T \SSigma} \dif \BB_t \big ] \label{eq:super_martingale_3}\\
    & + \tfrac{\gamma^2(t)}{n} e^{-(c_1 \lambda + c_2 \lambda^2)t } \widehat{\mathscr{L}}(\YY_t) (1+ \|\YY_t\|^2)^{\lambda -1} \tr \big ( [ \lambda(\lambda-1) (1 + \|\YY_t\|^2 )^{-1} \YY_t \YY_t^T + \lambda \II_d ] \SSigma^T \SSigma \big ).\label{eq:super_martingale_1}
\end{align}
For each term in \eqref{eq:super_martingale_3} and \eqref{eq:super_martingale_1}, we upper bound that only depend on the norms of $\SSigma$, $\bb$, $\UU$, and $\delta$ multiplied by $g(t, \YY_t)$. Then because \eqref{eq:super_martingale_2} is negative, we can choose $c_1$ and $c_2$ large enough, based on the bounds of \eqref{eq:super_martingale_3} and \eqref{eq:super_martingale_1}, so that the drift term is overall negative. For the bounds on \eqref{eq:super_martingale_3} and \eqref{eq:super_martingale_1}, we have 
\begin{equation*} \begin{aligned}
    &| \gamma(t) \YY_t^T (\SSigma^T \SSigma + \delta \II_d) \YY_t | \le \widehat{\gamma} \|\SSigma^T \SSigma + \delta \II_d \| (1 + \|\YY_t\|^2)\\
    &| \gamma(t) \YY_t^T \SSigma^T \UU^T \bb | \le \widehat{\gamma} \|\SSigma^T \UU^T \bb\|_2 \|\YY_t\|_2 \le \widehat{\gamma} \|\SSigma^T \UU^T \bb\|_2 (1 + \|\YY_t\|^2)\\
    &\widehat{\mathscr{L}}(\YY_t) 
    % = \|\SSigma \YY_t - \UU^T \bb\|^2 = \YY_t^T \SSigma^T \SSigma \YY_t - 2 \bb^T \UU \SSigma \YY_t + \|\bb\|^2 
    \le \|\SSigma^T \SSigma\| (1 + \|\YY_t\|^2) + 2 \|\SSigma \UU^T \bb\|_2 (1 + \|\YY_t\|^2) + \|\bb\|^2_2 (1 + \|\YY_t\|^2)\\
    &\lambda (\lambda-1) (1 + \|\YY_t\|^2)^{-1} \tr( \YY_t \YY_t^T \SSigma^T \SSigma)  \le \lambda (\lambda -1) \|\SSigma^T \SSigma\|.
\end{aligned} \end{equation*}
It immediately follows that the terms in \eqref{eq:super_martingale_3} and \eqref{eq:super_martingale_1} are upper bounded by constants multiplied by $g(t,\YY_t)$. We can now define the constants $c_1$ and $c_2$ sufficiently large enough so that the drift term in $\partial_t g(t, \YY_t)$ is negative, 
\begin{equation}
\begin{aligned} \label{eq:constants_supermartingale}
    c_1 & > \widehat{\gamma} \|\SSigma^T \SSigma + \delta \II_d\| + \widehat{\gamma} \|\SSigma^T \UU^T \bb\|_2 + \tfrac{\widehat{\gamma}^2}{n} 2\|\SSigma^T \SSigma\| (\|\SSigma^T \SSigma\| + 2 \|\SSigma^T \UU^T \bb\|_2 + \|\bb\|^2)\\
    c_2 & > \tfrac{\widehat{\gamma}^2}{n} \|\SSigma^T \SSigma\| (\|\SSigma^T \SSigma\| + 2 \|\SSigma^T \UU^T \bb\|_2 + \|\bb\|^2).
\end{aligned}
\end{equation}
This ensures that the drift term is strictly less than $0$ and hence we have that $g(t, \YY_t)$ is a supermartingale.

Fix a constant $T > 0$ and let 
\[ S_T \defas \sup_{0 \le t \le T} \|\YY_t\|^2 \le \sup_{0\le t \le T} 1 + \|\YY_t\|^2.\] 
For convenience, denote the supermartingale $M_t^{\lambda} \defas (1 + \|\YY_t\|^2)^{\lambda} e^{-(c_1 \lambda + c_2 \lambda^2)t}$ for any $\lambda > 0$ and constants $c_1$, $c_2$ defined in \eqref{eq:constants_supermartingale}. It follows that $(S_T)^{\lambda} e^{-(c_1 \lambda + c_2 \lambda^2)T} \le \displaystyle \sup_{0 \le t \le T} M_t^{\lambda}$. By optional stopping ($M_t^{\lambda}$ is a positive supermartingale) and Doob's $L^1$-inequality \citep[Chapt 2., Theorem 1.7]{revuz1999continuous}, we deduce for any $a > 0$ and $\lambda > 0$
\begin{align*}
    \Pr(S_T \ge a \, | \, \mathcal{F}_0) &\le \Pr(\sup_{0 \le t \le T} M_t^{\lambda} \ge a^{\lambda} e^{-(c_1\lambda + c_2 \lambda^2)T} \, | \, \mathcal{F}_0)\\
     &\le a^{-\lambda} e^{(c_1 \lambda + c_2 \lambda^2) T} \sup_{0 \le t \le T} \EE[M_t^{\lambda} \, | \, \mathcal{F}_0] \\
     &\le  a^{-\lambda} e^{(c_1 \lambda + c_2 \lambda^2) T} \EE[M_0^{\lambda} \, | \, \mathcal{F}_0].
\end{align*}
The result follows by setting $a = n^{\varepsilon}$. 

% For a generic quadratic $\widehat{q}$, we see that $\displaystyle \sup_{0 \le t \le T} \widehat{q}(\YY_t) \le C(\SS, \hh, c) (1 + \|\YY_t\|^2)$ for some constant $C(\SS, \hh, c)$. The result immediately follows from the same argument. 
\end{proof}

Next, we get a bound on the size of the iterates of SGD \eqref{eq:nnu}, $\nnu_t$, at the stopping time. When $t = \vartheta$, the norm $\|\nnu_t\|^2_2$ is not too large.

\begin{lemma}[Size of $\nnu_t$] \label{lem:boundedness_loss_norm_SGD} Let Assumptions~\ref{assumption:Target}, \ref{ass: laundry_list}, and \ref{assumption: quadratics} hold. Choose $\varepsilon = \min\{\theta/2, 1/4(1/2-\theta)\}$. Fix a constant $T > 0$ and suppose the stopping time $\vartheta \le T$. For $t = \vartheta$, $\displaystyle \|\nnu_t^{\vartheta}\|^2_2 \le C(\delta, \gamma, \SSigma, \UU, \bb, T) n^{\varepsilon}$ with overwhelming probability. 
\end{lemma}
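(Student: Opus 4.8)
The plan is to prove this as a single-step jump estimate, using only the definition of the stopping time and the norm bounds of Assumption~\ref{assumption:Target}. First I would observe that $\nnu_t = \widehat{\nnu}_{N_t}$ is a piecewise-constant, right-continuous process whose jumps occur exactly at the Poisson arrival times, so the left limit $\nnu_{\vartheta-}$ exists and equals the SGD iterate immediately preceding $\vartheta$. By the very definition of $\vartheta$ as the first time the squared norm strictly exceeds $n^{\varepsilon}$, every iterate strictly before $\vartheta$ has squared norm at most $n^{\varepsilon}$; hence $\|\nnu_{\vartheta-}\|^2 \le n^{\varepsilon}$, i.e.\ $\|\nnu_{\vartheta-}\| \le n^{\varepsilon/2}$. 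Writing $\nnu_{\vartheta} = \nnu_{\vartheta-} + \Delta\nnu_{\vartheta}$, it then suffices to control the single increment $\Delta\nnu_{\vartheta}$: it vanishes if no Poisson arrival occurs at $\vartheta$ (the case where the stopping time is triggered by $\YY_t$, which, the arrival times being independent of $\YY$, is almost surely not an arrival time), and otherwise equals $-\gamma(\cdot)\,\SSigma^T\UU^T\ee_{i_\star}\ee_{i_\star}^T(\UU\SSigma\nnu_{\vartheta-}-\bb) - \tfrac{\gamma(\cdot)\delta}{n}\nnu_{\vartheta-}$, where $i_\star$ is the row sampled at that arrival and $\gamma(\cdot) \le \widehat{\gamma}$ by Assumption~\ref{assumption:lr}.

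Next I would bound this increment term by term using Assumption~\ref{assumption:Target}: $\|\SSigma^T\UU^T\ee_{i_\star}\| \le \|\SSigma\| = \|\AA\| \le C$, then $|\ee_{i_\star}^T\UU\SSigma\nnu_{\vartheta-}| \le \|\SSigma\|\,\|\nnu_{\vartheta-}\| \le C\,n^{\varepsilon/2}$, then $|\ee_{i_\star}^T\bb| \le \|\bb\| \le \sqrt{C}$, and finally $\tfrac{\delta}{n}\|\nnu_{\vartheta-}\| \le \tfrac{\delta}{n}n^{\varepsilon/2}$. Combining, $\|\Delta\nnu_{\vartheta}\| \le \widehat{\gamma}\,C\bigl(C n^{\varepsilon/2} + \sqrt{C}\,\bigr) + \tfrac{\widehat{\gamma}\delta}{n}n^{\varepsilon/2}$, which, since $n^{\varepsilon/2}\ge 1$, is at most $C'\, n^{\varepsilon/2}$ for a constant $C' = C'(\delta,\widehat{\gamma},\|\AA\|,\|\bb\|)$. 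Hence $\|\nnu_{\vartheta}\| \le \|\nnu_{\vartheta-}\| + \|\Delta\nnu_{\vartheta}\| \le (1+C')\,n^{\varepsilon/2}$, and squaring yields $\|\nnu^{\vartheta}_{\vartheta}\|^2 = \|\nnu_{\vartheta}\|^2 \le (1+C')^2\, n^{\varepsilon}$, the asserted bound, with the $T$-dependence entering only harmlessly through $\widehat{\gamma} = \sup_t\gamma(t)$.

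Since this chain of inequalities is entirely deterministic once $(\AA,\bb)$ satisfy the norm bounds of Assumption~\ref{assumption:Target}, the bound holds pathwise on $\{\vartheta \le T\}$ and hence a fortiori with overwhelming probability; the specific value $\varepsilon = \min\{\theta/2,(1/2-\theta)/4\}$ plays no role in this argument beyond fixing a single choice to be reused later in the comparison. I do not anticipate a genuine obstacle here: the only point requiring care is that, because SGD is a jump process, one cannot conclude $\|\nnu_{\vartheta}\|^2 \le n^{\varepsilon}$ outright -- the squared norm can overshoot the threshold at the stopping time -- and the substance of the lemma is precisely the a priori estimate showing that this overshoot is merely a lower-order $O(n^{\varepsilon/2})$ correction, which one obtains by bounding the size of a single SGD step against $\|\nnu_{\vartheta-}\| \le n^{\varepsilon/2}$ and the operator-norm bounds on $\AA$ and $\bb$.
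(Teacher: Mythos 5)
Your proof is correct, and it takes a genuinely different and more elementary route than the paper's. The paper's own proof of this lemma applies the full Doob/It\^o decomposition \eqref{eq:SGD_statistic} to the quadratic $\|\cdot\|^2$, bounds the drift by $\int_0^\vartheta \|\nnu_s\|^2\,\dif s \leq T n^\varepsilon$ (using that the integrand is controlled strictly before the stopping time), and then invokes the \emph{probabilistic} bounds of Proposition~\ref{prop:martingale_errors_SGD} and Lemma~\ref{lem:key_lemma} (proven only later, in Section~\ref{sec:martingale_errors}) to show that the martingale and key-lemma error terms $\mathcal{M}^{\text{grad}}$, $\mathcal{M}^{\text{quad}}$, $\mathcal{E}^{\text{KL}}$ are all $o(1)$ with overwhelming probability; the specific choice of $\varepsilon$ is what makes those exponents negative. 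You instead observe that the only thing happening at the stopping time is at most one SGD jump: $\nnu_t$ is a piecewise-constant c\`adl\`ag process, so $\|\nnu_{\vartheta-}\|^2 \leq n^\varepsilon$ by the definition of $\vartheta$, and the increment $\Delta\nnu_\vartheta$ (which vanishes almost surely if $\vartheta$ is triggered by the continuous process $\YY$, by independence of the Poisson clock) is bounded deterministically by $C'n^{\varepsilon/2}$ using only $\|\AA\|\leq C$, $\|\bb\|\leq C$, and $\widehat\gamma<\infty$ from Assumptions~\ref{assumption:Target} and~\ref{assumption:lr}. This gives $\|\nnu_\vartheta\|^2 \leq (1+C')^2 n^\varepsilon$ pathwise. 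Your approach buys simplicity and robustness: it is entirely deterministic (so ``with overwhelming probability'' is vacuously satisfied), it does not rely on forward references to the martingale machinery of Section~\ref{sec:martingale_errors}, and the constant it produces is actually $T$-independent. It also does not use the specific value $\varepsilon = \min\{\theta/2,(1/2-\theta)/4\}$, which you correctly note is irrelevant to this particular bound. One small imprecision in your prose: the overshoot in the \emph{squared} norm is of the same order $n^\varepsilon$ as the threshold (not $n^{\varepsilon/2}$, which is only the overshoot in the norm itself), but this only changes the constant and does not affect the conclusion.
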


\begin{proof} We apply \eqref{eq:SGD_statistic} to the quadratic $\|\cdot\|^2$, that is, 
\begin{equation} 
    \begin{aligned}
    \|\nnu_t^{\vartheta}\|^2 &= \|\nnu_0\|^2 - 2 \int_0^{\vartheta} \gamma(s) \nnu_s^T ( \SSigma^T \SSigma \nnu_s - \SSigma^T \UU^T \bb + \delta \nnu_s) \, \dif s + \frac{2}{n} \tr(\SSigma^T \SSigma) \int_0^{\vartheta} \gamma^2(s) \widehat{\mathscr{L}}(\nnu_s) \, \dif s\\
    & \qquad + \frac{1}{n} \int_0^{\vartheta} \gamma^2(s) \left ( \delta^2 \|\nnu_s\|^2 - 2 \delta \nnu_s^T \SSigma^T (\SSigma \nnu_s - \UU^T \bb) \right ) \, \dif s\\
    & \qquad + \mathcal{M}_{t\wedge \vartheta}^{\text{grad}}(\|\cdot\|^2) + \mathcal{M}_{t \wedge \vartheta}^{\text{quad}}(\|\cdot\|^2) + \mathcal{E}_{t \wedge \vartheta}^{\text{KL}}(\|\cdot\|^2).
    \end{aligned}
\end{equation}
We can bound the terms $\nnu_s^T ( \SSigma^T \SSigma \nnu_s - \SSigma^T \UU^T \bb + \delta \nnu_s)$, $\widehat{\mathscr{L}}(\nnu_s)$, and $\delta^2 \|\nnu_s\|^2 - 2 \delta \nnu_s^T \SSigma^T (\SSigma \nnu_s - \UU^T \bb)$ can all be bounded by terms depending on $C(\delta, \gamma, \SSigma, \bb) \|\nnu_s\|^2_2$. Since these occur for $0 \le s < \vartheta$, we deduce that there exists a constant such that for $t = \vartheta$ the norm 
\[\|\nnu_t^{\vartheta}\|^2 \le C(\delta, \gamma, \SSigma, \bb, T) \int_0^{\vartheta} \|\nnu_s\|^2_2 \, \dif s + |\mathcal{M}_{t \wedge \vartheta}^{\text{grad}}(\|\cdot\|^2)| + |\mathcal{M}_{t \wedge \vartheta}^{\text{quad}}(\|\cdot\|^2)| + |\mathcal{E}_{t \wedge \vartheta}^{\text{KL}}(\|\cdot\|^2)|.\]
Let $\varepsilon = \min\{\theta/2, 1/4(1/2-\theta)\}$ and $\alpha = \theta + 1/4(1/2-\theta)$. Since $\vartheta \le T$, we have from Proposition~\ref{prop:martingale_errors_SGD} and Assumption~\ref{assumption: quadratics} that each of the events $|\mathcal{M}_{t\wedge \vartheta}^{\text{grad}}| \le n^{-1/2 + \alpha + \varepsilon}$, $|\mathcal{M}_{t \wedge \vartheta}^{\text{quad}}| \le n^{-1/2 + \alpha + \varepsilon}$, and $|\mathcal{E}_{t \wedge \vartheta}^{\text{KL}}| \le n^{\varepsilon + \theta - 1/2}$ occur with overwhelming probability. The choice of $\alpha$ and $\varepsilon$ ensure that $-1/2 + \alpha + \varepsilon < 0$ and $\varepsilon + \theta - 1/2 < 0$. The intersection of these events also occurs with overwhelming probability. We observe that the integral occurs up to but not including $\vartheta$, and thus the integrand $\|\nnu_s\|^2$ is bounded by $n^{\varepsilon}$.  The result immediately follows. 
\end{proof}

\subsection{Concentration of homogenized SGD} \label{sec:concentration_hsgd}
In this section, we show that homogenized SGD applied to any quadratic statistic $\mathcal{R} \, : \, \mathbb{R}^d \to \mathbb{R}$ with $\|\mathcal{R}\|_{H^2} \le C$, $C > 0$ (see Definition~\ref{def:quad}) concentrates around its mean (see Theorem~\ref{thm:trainrisk}), that is, 
for any $T > 0$ and for any $D>0$ there is an $C'>0$ sufficiently large that for all $d>0$
\begin{equation} \label{eq:concentration_hsgd_1}
\Pr\biggl[
\sup_{0 \leq t \leq T}\biggl\|
\begin{pmatrix} \mathscr{L}(\XX_t) \\ \mathcal{R}(\XX_t)\end{pmatrix}
-
\begin{pmatrix}\Psi_t \\ \Omega_t \end{pmatrix}
\biggr\| > d^{-\epsilon/2} 
\biggr] \leq C'd^{-D},
\end{equation}
where $\Psi_t$ and $\Omega_t$ solve \eqref{eqa:VLoss} and \eqref{eqa:PLoss}. In particular, we will show that $\Psi_t$ is the solution to a Volterra integral equation, 
\begin{equation}
    \Psi_t = \mathscr{L}(\bm{\mathscr{X}}_{\Gamma(t)}^{\text{gf}}) + \frac{1}{n} \int_0^t \gamma^2(s)  \tr \big ( (\AA^T \AA)^2 e^{-2(\AA^T \AA + \delta \II_d)(\Gamma(t) - \Gamma(s))} \big ) \Psi_s \, \dif s.
\end{equation}
And the expression $\Omega_t$ satisfies the integral equation,
\begin{equation}
    \Omega_t = \mathcal{R}(\bm{\mathscr{X}}_{\Gamma(t)}^{\text{gf}}) + \frac{1}{n} \int_0^t \gamma^2(s) \tr \big ( \nabla^2 \mathcal{R} (\AA^T \AA) e^{-2 (\AA^T \AA + \delta \II_d)(\Gamma(t)-\Gamma(s))} \big ) \Psi_s \, \dif s.
\end{equation}

An important input into $\Psi_t$ and $\Omega_t$ will be the value of the respective function under \textit{gradient flow} (GF). Recall gradient flow defined in \eqref{eq:GFS} as the process $\bm{\mathscr{\XX}}_t^{\text{gf}}$ which solves the ODE
\begin{equation} \label{eq:gf}
\dif \bm{\mathscr{\XX}}_{\Gamma(t)}^{\text{gf}} = - \gamma(t) \nabla f(\bm{\mathscr{\XX}}_{\Gamma(t)}^{\text{gf}}),
\end{equation}
for an objective function $f$ initialized with $\mathscr{X}_0^{\text{gf}} = \xx_0$ and learning rate $\gamma(t)$. For the $\ell^2$-regularized least-squares problem \eqref{eq:rr}, we solve this ODE \eqref{eq:gf}, 
\begin{equation}
    \bm{\mathscr{X}}_{\Gamma(t)}^{\text{gf}} = e^{-(\AA^T \AA + \delta \II_d) \Gamma(t)} \XX_0 + \int_0^t e^{-(\AA^T \AA + \delta \II_d)(\Gamma(t)-\Gamma(s))} \AA^T \bb\, \dif s.
\end{equation}
We note that gradient flow under $\mathcal{R}$ and $\mathcal{L}$ is explicitly computable from initialization, data matrix, and target vector information.  

With this in hand, we can proceed to evaluate homogenized SGD (see \eqref{eqF:HSGD}) under the $\mathcal{R}$ and $\mathscr{L}$, which we recall below
\[
\dif \XX_t = \gamma(t)(- (\AA^T \AA + \delta \II_d) \XX_t + \AA^T \bb ) \, \dif t + \gamma(t) \sqrt{\tfrac{2}{n} \mathscr{L}(\XX_t) \AA^T \AA} \,  \dif \BB_t,
\]
where $\XX_0 = \xx_0$, the process $(\BB_t \, : \, t\ge 0)$ is a $d$-dimensional standard Brownian motion, and the least squares loss function $\mathscr{L}(\XX_t) = \tfrac{1}{2} \|\AA \XX_t - \bb\|^2_2$. 

The following lemma and its proof is an extension of the result in \citep[Lemma B.2]{paquetteSGD2021} adapted to accommodate a regularization parameter and a time dependent learning rate. Lemma~\ref{lemma:iterates_hSGD} gives us a recursive expression for homogenized SGD $\XX_t$.

\begin{lemma}[Iterates of homogenized SGD, Lemma B.2, \cite{paquetteSGD2021}] \label{lemma:iterates_hSGD} Let $\XX_t$ be the iterates generated under homogenized SGD. For any $t \ge 0$, 
\begin{equation} \begin{aligned} \label{eq:coordinate}
    \XX_t &= \bm{\mathscr{X}}_{\Gamma(t)}^{\text{gf}} + \int_0^t \gamma(s) \sqrt{ \tfrac{2}{n} \mathscr{L}(\XX_s) \AA^T \AA } e^{-(\AA^T \AA + \delta \II_d)(\Gamma(t)-\Gamma(s))} \, \dif \BB_s.
    % \YY_t &= e^{- (\SSigma^T \SSigma + \delta \II_d) \Gamma(t)} \YY_0 + \int_0^t \gamma(s) e^{-(\SSigma^T \SSigma + \delta \II_d) (\Gamma(t)-\Gamma(s))}  (\SSigma^T \UU^T \bb) \, \dif s \\
    % & \quad + \int_0^t \gamma(s) \sqrt{ \tfrac{2}{n} \widehat{\mathscr{L}}(\YY_s) \SSigma^T \SSigma } e^{-(\SSigma^T \SSigma + \delta \II_d)(\Gamma(t)-\Gamma(s))} \, \dif \BB_s.
\end{aligned}
\end{equation}
If the learning rate $\gamma(t) \equiv \gamma$ is constant, the expression simplifies to
\begin{equation}
\begin{aligned} \label{eq:constant_stepsize}
\XX_t &= \bm{\mathscr{X}}_{\gamma t}^{\text{gf}} + \gamma \int_0^t \sqrt{ \tfrac{2}{n} \mathscr{L}(\XX_s) \AA^T \AA } e^{-\gamma(\AA^T \AA + \delta \II_d)(t-s)} \, \dif \BB_s.
% \YY_t &= e^{-\gamma (\SSigma^T \SSigma + \delta \II_d)t} \YY_0 + (\SSigma^T \SSigma + \delta \II_d)^{-1} (\II_d - e^{-\gamma(\SSigma^T \SSigma + \delta \II_d)t})(\SSigma^T \UU^T \bb) \\
%     & \quad + \gamma \int_0^t \sqrt{ \tfrac{2}{n} \widehat{\mathscr{L}}(\YY_s) \SSigma^T \SSigma } e^{-\gamma(\SSigma^T \SSigma + \delta \II_d)(t-s)} \, \dif \BB_s.
\end{aligned}
\end{equation}
\end{lemma}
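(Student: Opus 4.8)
The plan is to prove this identity by the classical variation-of-constants (integrating-factor) method for linear stochastic differential equations. Write $\MM \defas \AA^T\AA + \delta \II_d$, so that homogenized SGD reads $\dif\XX_t = \gamma(t)(-\MM\XX_t + \AA^T\bb)\,\dif t + \gamma(t)\sqrt{\tfrac2n\mathscr{L}(\XX_t)\AA^T\AA}\,\dif\BB_t$, a linear SDE with affine drift. Existence and uniqueness of a strong solution is not at issue here: the drift is affine (hence globally Lipschitz), the diffusion coefficient is $\gamma(t)\sqrt{\tfrac2n}\,\sqrt{\mathscr{L}(\XX_t)}\,(\AA^T\AA)^{1/2}$ with $\sqrt{\mathscr{L}(\xx)} = \tfrac{1}{\sqrt 2}\|\AA\xx - \bb\|$ globally Lipschitz in $\xx$, and $\gamma$ is bounded by Assumption \ref{assumption:lr}; so the standard Lipschitz SDE theory applies and $\XX_t$ is a well-defined semimartingale with moments of all orders on $[0,T]$.

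Next I would introduce the deterministic integrating factor $t\mapsto e^{\Gamma(t)\MM}$, which is continuously differentiable (since $\gamma$ is continuous, $\Gamma\in C^1$) and of finite variation with zero quadratic variation, and apply the integration-by-parts formula for semimartingales to $\YY_t \defas e^{\Gamma(t)\MM}\XX_t$. Because the integrating factor has no martingale part there is no It\^o correction, and using $\tfrac{\dif}{\dif t} e^{\Gamma(t)\MM} = \gamma(t)\MM e^{\Gamma(t)\MM}$ together with the commutation of $e^{\Gamma(t)\MM}$ and $\MM$, the drift terms $\gamma(t)\MM e^{\Gamma(t)\MM}\XX_t$ and $-\gamma(t)e^{\Gamma(t)\MM}\MM\XX_t$ cancel, leaving
\[
\dif\YY_t = \gamma(t)e^{\Gamma(t)\MM}\AA^T\bb\,\dif t + \gamma(t)e^{\Gamma(t)\MM}\sqrt{\tfrac2n\mathscr{L}(\XX_t)\AA^T\AA}\,\dif\BB_t.
\]
Integrating from $0$ to $t$ (noting $\YY_0 = \XX_0 = \xx_0$ since $\Gamma(0)=0$) and multiplying by $e^{-\Gamma(t)\MM}$ then yields
\[
\XX_t = e^{-\Gamma(t)\MM}\xx_0 + \int_0^t \gamma(s)\,e^{-(\Gamma(t)-\Gamma(s))\MM}\AA^T\bb\,\dif s + \int_0^t \gamma(s)\,e^{-(\Gamma(t)-\Gamma(s))\MM}\sqrt{\tfrac2n\mathscr{L}(\XX_s)\AA^T\AA}\,\dif\BB_s.
\]

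It then remains to identify the first two terms with $\bm{\mathscr{X}}_{\Gamma(t)}^{\text{gf}}$. Substituting $u = \Gamma(s)$ in the Lebesgue integral gives $\int_0^t \gamma(s)e^{-(\Gamma(t)-\Gamma(s))\MM}\AA^T\bb\,\dif s = \int_0^{\Gamma(t)} e^{-(\Gamma(t)-u)\MM}\AA^T\bb\,\dif u = \MM^{-1}(\II_d - e^{-\Gamma(t)\MM})\AA^T\bb$, so the first two terms equal $e^{-\MM\Gamma(t)}\xx_0 + (\AA^T\AA+\delta\II_d)^{-1}[\II_d - e^{-\MM\Gamma(t)}]\AA^T\bb$, which is exactly the closed form \eqref{eq:GFS} of $\bm{\mathscr{X}}_{\Gamma(t)}^{\text{gf}}$. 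Since $\sqrt{\mathscr{L}(\XX_s)\AA^T\AA} = \sqrt{\mathscr{L}(\XX_s)}\,(\AA^T\AA)^{1/2}$ commutes with $e^{-(\Gamma(t)-\Gamma(s))\MM}$ (both being functions of $\AA^T\AA$), the stochastic-integral term may be written with the factors in either order, giving \eqref{eq:coordinate}; the constant learning-rate case \eqref{eq:constant_stepsize} is simply the specialization $\Gamma(t) = \gamma t$. There is no genuine obstacle here: the only points requiring a moment of care are the justification that the integration-by-parts identity carries no It\^o correction (deterministic $C^1$ integrating factor, zero covariation with $\XX$) and that the stochastic integral is well-defined, which follows from adaptedness together with the finite moments of $\XX$ on $[0,T]$ (or, alternatively, from the a priori stopping-time bounds of Lemma \ref{lem:boundedness_loss_norm_homogenized_SGD}) ensuring square-integrability of the integrand.
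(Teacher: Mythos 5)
Your proof is correct and follows essentially the same variation-of-constants route as the paper: apply the integrating factor $e^{\Gamma(t)(\AA^T\AA+\delta\II_d)}$, integrate, and substitute $u=\Gamma(s)$ to identify the deterministic part with the closed form \eqref{eq:GFS} of gradient flow. The extra remarks on well-posedness, square-integrability, and the absence of an It\^o correction are sound but not present in the paper's shorter proof, which simply states the integrating-factor identity and integrates.
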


\begin{proof} Applying an integrating factor to \eqref{eq:HSGD_dif}, we have that
\begin{equation}
    \dif ( e^{\Gamma(t) (\AA^T \AA + \delta \II_d)} \XX_t ) = \gamma(t) e^{(\AA^T \AA + \delta \II_d)\Gamma(t)} \AA^T \bb \dif t + \gamma(t) \sqrt{\tfrac{2}{n} \mathscr{L}(\XX_t) \AA^T \AA } e^{\Gamma(t) (\AA^T \AA + \delta \II_d)} \dif \BB_{t}.
\end{equation}
By integrating both sides, we get that
\begin{align*}
    \XX_t &= e^{-(\AA^T \AA + \delta \II_d) \Gamma(t)} \XX_0 + \int_0^t \gamma(s) e^{-(\AA^T \AA + \delta \II_d)(\Gamma(t) - \Gamma(s))} \, \AA^T \bb \, \dif s\\
    & \quad + \int_0^t \gamma(s) \sqrt{ \tfrac{2}{n} \mathscr{L}(\XX_s) \AA^T \AA } e^{-(\AA^T \AA + \delta \II_d)(\Gamma(t)-\Gamma(s))} \, \dif \BB_s.
\end{align*}
By a simple change of variables (here set $s = \Gamma(s)$), the first two terms reduce to gradient flow but evaluated at time $\Gamma(t)$. This is precisely $\bm{\mathscr{X}}_{\Gamma(t)}^{\text{gf}}$. \end{proof}
% For constant learning rate (see \eqref{eq:constant_stepsize}), we used that 
% \[\int_0^t e^{-\gamma (\SSigma^T \SSigma + \delta \II_d) (t-s)} \, \dif s = \tfrac{1}{\gamma} (\SSigma^T \SSigma + \delta \II_d)^{-1} (1-e^{-\gamma(\SSigma^T \SSigma + \delta \II_d) t} ).\]

% The second expression immediately follows after squaring out the first expression, taking expectations, and using the isotropic initialization. 

% \begin{equation} \begin{aligned}
%     \YY_{t,j} = e^{-\gamma (\sigma_j^2 + \delta)t} \YY_{0,j} + \gamma \int_0^t e^{-\gamma(\sigma_j^2 + \delta)(t-s)} (\SSigma^T \UU^T \bb)_j \dif s + \gamma \sigma_j \sqrt{\tfrac{2}{n}}  \int_0^t \sqrt{\widehat{f}_1(\YY_s)} e^{-\gamma (\sigma_j^2 + \delta) (t-s)} \dif \BB_{s,j}
% \end{aligned}
% \end{equation}

The SDE \eqref{eqF:HSGD} evolves according to the least squares loss function $\mathscr{L}$ applied to homogenized SGD.  Consequently, the proof of Theorem~\ref{thm:trainrisk} (see \eqref{eq:concentration_hsgd_1}) begins by deriving the dynamics of the loss function $\mathscr{L}$ (see \cite{paquetteSGD2021, paquette2021dynamics} for a similar formula). From this, we give an expression for the quadratic statistic $\mathcal{R}$. We now prove Theorem~\ref{thm:trainrisk}.

% Using Lemma~\ref{lemma:iterates_hSGD} formulation of the iterates of SGD, we derive an expression for the exact dynamics of SGD applied to the least squares loss function $\mathscr{L}$. The result is below.  
% To account for the time dependent learning rate, we introduce the accumulated learning rate defined as 
% \begin{equation}
%     \Gamma(t) \defas \int_0^t \gamma(s) \, \dif s.
% \end{equation}
% One can view this quantity as the amount of time we run gradient flow (see Lemma~\ref{lemma:iterates_hSGD}). 

% In order to simplify some of the expressions, we place an independent assumption on the initialization $\xx_0$:

% \begin{assumption}[Initialization] \label{assumption: initialization} The initial vector $\xx_0 \in \mathbb{R}^d$ is chosen so that $\xx_0$ is a centered sub-Gaussian with $\|\xx_0\|^2 = \tfrac{d}{n} R^2$ where $R \in \mathbb{R}$.  
% \end{assumption}

\begin{proof}[Proof of Theorem~\ref{thm:trainrisk}] Define $\QQ_t \defas \exp( (\AA^T \AA + \delta \II_d) \Gamma(t))$. The proof will be broken down into six steps. \\

\noindent \emph{Step 1. Volterra equation for the expected loss}. It follows from Lemma~\ref{lemma:iterates_hSGD}
% Applying It\^o's rule to $\QQ_t \XX_t$, we derive
% \[
% \dif(\QQ_t \XX_t) = \gamma(t) \QQ_t \AA^T \bb \dif t
% + \gamma(t)\QQ_t \sqrt{\tfrac{2}{n}\mathscr{L}(\XX_t) \AA^T \AA}\dif \BB_t.
% \]
% Hence
% \[
% \QQ_t\XX_t= \QQ_0 \XX_0 + \int_0^t 
% \gamma(s) \QQ_s \AA^T \bb \dif s
% + \int_0^t \gamma(s)\QQ_s \sqrt{\mathscr{L}(\XX_s) \mathscr{M}_s + \mathscr{A}_s}\dif \BB_s.
% \]
% Note that on setting $\mathscr{M}_s=\mathscr{A}_s=0$, this gives gradient flow $\mathscr{X}_{\Gamma(t)}$, and hence we have representation
\[
\XX_t= \bm{\mathscr{X}}_{\Gamma(t)}^{\text{gf}}
+ \QQ_t^{-1} \int_0^t \gamma(s)\QQ_s \sqrt{\tfrac{2}{n} \mathscr{L}(\XX_s) \AA^T \AA}\dif \BB_s.
\]
Expanding the quadratic,
\begin{equation}\label{eq:rawL}
\begin{aligned}
\mathscr{L}( \XX_t)
&=
\mathscr{L}\bigl( 
\bm{\mathscr{X}}_{\Gamma(t)}^{\text{gf}} \bigr)
+
\nabla \mathscr{L}(\bm{\mathscr{X}}_{\Gamma(t)}^{\text{gf}})^T\QQ_t^{-1} \int_0^t \gamma(s)\QQ_s \sqrt{\tfrac{2}{n} \mathscr{L}(\XX_s) \AA^T \AA } \, \dif \BB_s
\\
&+
\frac{1}{2} \biggl\|\AA\QQ_t^{-1} \int_0^t \gamma(s)\QQ_s \sqrt{\tfrac{2}{n} \mathscr{L}(\XX_s) \AA^T \AA} \, \dif \BB_s\biggr\|^2.
\end{aligned}
\end{equation}
It follows that with $\Filt_t$ the sigma--algebra generated by $(\XX_0,\mathscr{L}, (\BB_s: 0 \leq s \leq t))$
if we compute the $\Filt_0$--conditional expectation, the Brownian integral vanishes, and we are left with two contributions from the second norm--squared process
\begin{equation}\label{eq:rawV}
\begin{aligned}
\Exp \bigl[ \mathscr{L}( \XX_t) ~\vert~ \Filt_0]
&=
\mathscr{L}\bigl( 
\bm{\mathscr{X}}_{\Gamma(t)}^{\text{gf}} \bigr)
% +
% \int_0^t 
% \gamma^2(s)
% \tr
% \biggl(
% \AA^T\AA
% \QQ_t^{-2}\QQ_s^2\mathscr{A}_s
% \biggr)
% \dif s
+
\int_0^t 
\gamma^2(s) n^{-1}
\tr
\biggl(
(\AA^T\AA)^2
\QQ_t^{-2}\QQ_s^2
\biggr)
\Exp \bigl[ \mathscr{L}( \XX_s) ~\vert~ \Filt_0]
\dif s.
\end{aligned}
\end{equation}
This is the claimed Volterra equation, $\Psi_t \defas \EE[\mathscr{L}(\XX_t) \, | \Filt_0]$ in \eqref{eqa:VLoss} with
\[K(t,s; \nabla^2 \mathscr{L}) = \gamma^2(s) n^{-1} \tr \bigg ( (\AA^T \AA)^2 \QQ_t^{-2} \QQ_s^2 \bigg ).
\]

\noindent \emph{Step 2. High probability boundedness of $\mathscr{L}$}.
We observe before beginning that many of the quantities that appear in the expressions above are bounded.
The gradient flow $\bm{\mathscr{X}}_{\Gamma(t)}^{\text{gf}}$ satisfies a uniform bound, solely in terms of its initial conditions and in particular the boundedness of $\mathscr{L}(\bm{\mathscr{X}}_{\Gamma(t)}^{\text{gf}})$ satisfies $\mathscr{L}(\bm{\mathscr{X}}_{\Gamma(t)}^{\text{gf}}) \le \mathscr{L}(\bm{\XX}_0)$.  The matrix $\QQ_t^{-1}\QQ_s$ is uniformly bounded in norm by $1$ for all $t \geq s$.  We have also assumed that $\|\AA\|$ and $\|\bb\|$ are bounded.
By applying It\^o's formula to the norm $u_t = \frac12 \|\XX_t\|^2$, we have from \eqref{eqF:HSGD} that
\[
\dif u_t
= -\gamma(t) \XX_t^T (\AA^T (\AA \XX_t - \bb) + \delta \XX_t) \dif t
+ \gamma(t)\XX_t^T \sqrt{\tfrac{2}{n} \mathscr{L}(\XX_t) \AA^T \AA}\dif \BB_t
+ \frac{\gamma^2(t)}{n}\tr\bigl(\mathscr{L}(\XX_t) \AA^T \AA\bigr)\dif t.
\]

%This has the same law as the strong solution of
%\[
%5\dif u_t
%= -\gamma(t) \XX_t^T (\AA^T (\AA \XX_t - \bb)) \dif t
%+ \sqrt{\tr \XX_t \XX_t^T \bigl(\mathscr{L}(\XX_t) \mathscr{M}_t + \mathscr{A}_t\bigt) }\dif W_t
%+ \frac{1}{2}\tr\bigl(\mathscr{L}(\XX_t) \mathscr{M}_t + \mathscr{A}_t\bigr)\dif t,
%\]
%for some univariate Brownian motion $(W_t : t \geq 0)$.
From the norm boundedness of $\AA$ and $\bb$, we can bound $\mathscr{L}(\XX_t) \leq 2\|\AA\|^2u_t + 2\|\bb\|^2 \leq C(u_t + 1)$.  Likewise, increasing $C$ as need be, using the boundedness of $\gamma(t)$ and $n^{-1} \tr (\AA^T \AA)$, we conclude
\[
\dif \langle u_t \rangle
= 2\gamma^2(t) n^{-1} \tr\bigl( \XX_t \XX_t^T \mathscr{L}(\XX_t) \AA^T \AA \bigr)
\leq C(u_t+1)^2.
\]
where $\langle \cdot \rangle$ is the quadratic variation \citep[Chapter IV]{revuz1999continuous}.
It follows that $z_t \defas \log( 1 + u_t ) - Ct$ is supermartingale with $\langle z_t \rangle \leq C$ for some sufficiently large $C$ and all $t \leq T$.  Hence with probability at least $1-e^{-2C(T)(\log d)^{3/2}}$,
\[
z_t \leq (\log d)^{3/4}
\]
for all $t \leq T$.
On this same event it follows for a sufficiently large constant $C>0$
\[
f(t) = \mathscr{L}(\XX_t) + \delta u_t 
\quad\text{and}
\quad
\mathscr{L}(\XX_t) \leq C(u_t + 1) \leq C^2e^{Ct+(\log d)^{3/4}}
\]
for all $t \leq T$.\\

%From boundedness of $\|\AA\|,$ $\|\QQ_t^{-1}\QQ_s\|$, and the boundedness of gradient flow, we have that there is a constant $C>0$ sufficiently large that in \eqref{eq:rawV},
%\[
%| \Exp \bigl[ \mathscr{L}( \XX_t) ~\vert~ \Filt_0] | \leq C^t.
%\]
%we compare the loss under $\mathscr{L}$
%Taking the difference

\noindent \emph{Step 3. Concentration of the loss}.  We may now control the difference of the loss from its expectation.  Specifically, in comparing 
\eqref{eq:rawL} and \eqref{eq:rawV}, we may express the difference $\Delta_t \defas \mathscr{L}(\XX_t)-\Exp[ \mathscr{L}(\XX_t) ~|~ \Filt_0]$ as
\begin{equation}\label{eq:Deltat}
\begin{aligned}
\Delta_t &= \MM_t^{(1)} + \MM_t^{(2)} + \int_0^t
\gamma^2(s) n^{-1}
\tr
\biggl(
(\AA^T\AA)^2
\QQ_t^{-2}\QQ_s^2
\biggr)
\Delta_s
\dif s, \quad\text{where} \\
\MM_t^{(1)}
&=\nabla \mathscr{L}(\bm{\mathscr{X}}_{\Gamma(t)}^{\text{gf}})^T\QQ_t^{-1} \int_0^t 
\gamma(s)\QQ_s \sqrt{\tfrac{2}{n} \mathscr{L}(\XX_s) \AA^T \AA } \, \dif \BB_s, \quad\text{and} \\
\MM_t^{(2)}
&=
\frac{1}{2} \biggl\|\AA\QQ_t^{-1} \int_0^t 
\gamma(s)\QQ_s \sqrt{ \tfrac{2}{n} \mathscr{L}(\XX_s) \AA^T \AA}\dif \BB_s\biggr\|^2
-
\int_0^t 
\gamma^2(s) n^{-1}
\tr
\biggl(
(\AA^T\AA)^2
\QQ_t^{-2}\QQ_s^2
\biggr) \mathscr{L}(\XX_s)
\dif s.
\end{aligned}
\end{equation}

We claim that both processes $\MM_t^{(1)}$ and $\MM_t^{(2)}$ are small, whose proof we defer.  Specifically, with probability $1-C(T)e^{-(\log d)^{3/2}}$ we have
\[
\max_{0 \leq t \leq T}\bigl\{ |\MM_t^{(1)}| + |\MM_t^{(2)}| \bigr\} \leq d^{-3\epsilon/4}.
\]
From the uniform boundedness in norm of $\AA^T\AA$, we then conclude from \eqref{eq:Deltat} for all $t \leq T$
\[
|\Delta_t|
\leq 
d^{-3\epsilon/4}
+\int_0^t \|\AA^T\AA\| |\Delta_s|\dif s.
\]
Using Gronwall's inequality, 
\[
|\Delta_t| \leq \|\AA^T\AA\|^{-1} \bigl( e^{\|\AA^T\AA\| t}-1\bigr) d^{-3\epsilon/4}.
\]
Thus we conclude by increasing the constants in the claimed bound that the desired inequality holds.\\

\noindent \emph{Step 4 (Deferred). Concentration of the martingales}. The quantities $\MM_t^{(1)}$ and $\MM_t^{(2)}$ are, unfortunately, not martingales. We rectify this by decoupling the integral dependence on $t$ and the other time $t$ dependent terms.  A meshing argument is then applied. We introduce two martingales, for each fixed $t \in [0,T]$,
\[
\begin{aligned}
\MM_u^{(1,t)}
&=
\nabla \mathscr{L}(\bm{\mathscr{X}}_{\Gamma(t)}^{\text{gf}})^T\QQ_t^{-1} \int_0^u 
\gamma(s)\QQ_s \sqrt{\tfrac{2}{n} \mathscr{L}(\XX_s) \AA^T \AA}\dif \BB_s. \\
\MM_u^{(2,t)}
&=
\frac{1}{2} \biggl\|\AA\QQ_t^{-1} \int_0^u 
\gamma(s)\QQ_s \sqrt{\tfrac{2}{n}\mathscr{L}(  \XX_s) \AA^T \AA} \dif \BB_s\biggr\|^2
-
\int_0^u 
\gamma^2(s)
n^{-1}\tr
\biggl(
(\AA^T\AA)^2
\QQ_t^{-2}\QQ_s^2(\mathscr{L}(\XX_s))
\biggr)
\dif s.
\end{aligned}
\]
We first show that if we fix any $t \leq T$, then for all $d$ sufficiently large with respect to $T$ and with probability at least $1-2e^{-(\log d)^{3/2}}$,
\[
\max_{0 \leq u \leq t}\bigl\{ |\MM_u^{(1,t)}| + |\MM_u^{(2,t)}| \bigr\} \leq d^{-7\epsilon/8}.
\]
We will then need to use a meshing argument to complete the argument.  We show the details for the first.  Those for the second are similar.

We simply need to bound the quadratic variation of each.  Note
\[
\langle \MM_u^{(1,t)} \rangle
=
\int_0^u 
2\gamma^2(s)
n^{-1} \tr\bigl(  \QQ_t^{-1} \QQ_s \nabla \mathscr{L}(\bm{\mathscr{X}}_{\Gamma(t)}^{\text{gf}}) \nabla \mathscr{L}(\bm{\mathscr{X}}_{\Gamma(t)}^{\text{gf}})^T \QQ_t^{-1} \QQ_s \AA^T \AA \mathscr{L}(\XX_s) \bigr)\dif u.
\]
Here we use the norm boundedness of $n^{-1} \|\AA^T \AA\|$ by $d^{-\epsilon}$.  We further bound the other terms in norm to produce
\[
\langle \MM_u^{(1,t)} \rangle
\leq
2d^{-\epsilon} \|\AA^T\AA\| \bigl(C^2 e^{Cu + (\log d)^{3/4}}\bigr) \mathscr{L}(\bm{\mathscr{X}}_{\Gamma(t)}^{\text{gf}}). %\tr(\mathscr{X}_{\Gamma(t)} \mathscr{X}_{\Gamma(t)}^T). 
\]
We note that $\mathscr{L}(\bm{\mathscr{X}}_{\Gamma(t)}^{\text{gf}}) \le \mathscr{L}(\bm{\mathscr{X}}_0)$. Hence with probability at least $1 - e^{-(\log d)^{3/2}}$ (for all $d$ sufficiently large with respect to $T,\|\AA\|,\|\bb\|, \epsilon$),
\[
\max_{0 \leq u \leq t}
|
\MM_u^{(1,t)}
|
\leq
d^{-7\epsilon/8}/2.
\]

\noindent \emph{Step 5 (Deferred). Mesh argument}. Finally, we use a union bound to gain the control from Step 4 over a mesh of $[0,T]$ of spacing $d^{-100}$.  From the union bound, we therefore have for all these mesh points $\{t_k\}$
\[
\max_k \max_{0 \leq u \leq t_k}\bigl\{ |\MM_u^{(1,t_k)}| + |\MM_u^{(2,t_k)}| \bigr\} \leq d^{-7\epsilon/8},
\]
and this holds with probability $1-2Td^{100}e^{-(\log d)^{3/2}}$.  For $t \in [t_k,t_{k+1}]$, we just use that
\[
\|
% \mathscr{X}_{\Gamma(t)}^T\AA^T\AA\QQ_t^{-1}
% -
% \mathscr{X}_{\Gamma(t_{k+1})}^T\AA^T\AA\QQ_{t_{k+1}}^{-1}\|
\nabla \mathscr{L}(\bm{\mathscr{X}}_{\Gamma(t)}^{\text{gf}})^T \QQ_t^{-1}
-
\nabla \mathscr{L}(\bm{\mathscr{X}}_{\Gamma(t_{k+1})}^{\text{gf}})^T \QQ_{t_{k+1}}^{-1}\|
\leq C(T,\AA, \bb)d^{-100},
\]
and thus on the event that $\mathscr{L}(\XX_s)$ is bounded, we have for $t \in [t_k,t_{k+1}]$
\[
|\MM_t^{(1)} - \MM_t^{(1,t_{k+1})}| \leq C(T,\AA, \bb)d^{-50}
\]
for all $d$ sufficiently large with respect to $T$, $\|\AA\|,$ and $\|\bb\|$.\\

\noindent \emph{Step 6. Other quadratics}. Hence, if we take $\Psi_t$ as a solution to the Volterra equation
\[
\Psi_t
=
\mathscr{L}\bigl( 
\bm{\mathscr{X}}_{\Gamma(t)}^{\text{gf}}\bigr)
+
\int_0^t 
\gamma^2(s) n^{-1}
\tr
\biggl(
(\AA^T\AA)^2
\QQ_t^{-2}\QQ_s^2
\biggr)
\Psi_s
\dif s,
\]
then we have a high--quality approximation for the loss $\mathscr{L}(\XX_t)$, and moreover, applying It\^o's equation, we may always represent another quadratic $\mathcal{R} :\R^d \to \R$,
\[
\begin{aligned}
\mathcal{R}( \XX_t)
&=
\mathcal{R}\bigl( 
\bm{\mathscr{X}}_{\Gamma(t)}^{\text{gf}}\bigr)
+
\nabla \mathcal{R}(\bm{\mathscr{X}}_{\Gamma(t)}^{\text{gf}})^T \QQ_t^{-1} \int_0^t 
\gamma(s)
\QQ_s \sqrt{\tfrac{2}{n} \AA^T \AA \mathscr{L}(\XX_s)}\dif \BB_s
% \mathscr{X}_{\Gamma(t)}^T(\nabla^2 \mathcal{E})\QQ_t^{-1} \int_0^t 
% \gamma(s)
% \QQ_s \sqrt{\mathscr{L}(\XX_s) \mathscr{M}_s + \mathscr{A}_s}\dif \BB_s
\\
&+ \frac{1}{2} \bigg (\QQ_t^{-1} \int_0^t \gamma(s)
\QQ_s \sqrt{\tfrac{2}{n} \AA^T \AA \mathscr{L}(\XX_s)}\dif \BB_s \bigg )^T (\nabla^2 \mathcal{R}) \QQ_t^{-1} \int_0^t \gamma(s)
\QQ_s \sqrt{\tfrac{2}{n} \AA^T \AA \mathscr{L}(\XX_s)}\dif \BB_s
% \frac{1}{2} \biggl\|\sqrt{ (\nabla^2 \mathcal{E}) }\QQ_t^{-1} \int_0^t \gamma(s)
% \QQ_s \sqrt{\mathscr{L}(\XX_s) \mathscr{M}_s + \mathscr{A}_s}\dif \BB_s\biggr\|^2.
\end{aligned}
\]
By comparing this to the same expression, where we replace the losses $\mathscr{L}(\XX_s)$ by $\Psi_s$ and compute expectations over the Brownian terms, we arrive at (compare \eqref{eq:rawV})
\[
\begin{aligned}
\mathcal{R}( \XX_t)
&=
\MM_t^{(3)}
+
\mathcal{R}\bigl( 
\bm{\mathscr{X}}_{\Gamma(t)}^{\text{gf}}\bigr)
+ 
\int_0^t 
\gamma^2(s) n^{-1}
\tr
\biggl(
(\nabla^2 \mathcal{R})
\AA^T \AA \QQ_t^{-2}\QQ_s^2 
\biggr)
\Psi_s
\dif s.
\end{aligned}
\]
Provided the Hessian $(\nabla^2 \mathcal{R})$ and gradient $\nabla \mathcal{R}(0)$ are bounded independently of $d$ uniformly on $T$, the concentration of $\MM_t^{(3)}$ now follows exactly as in Steps 4 and 5.
\end{proof}

\section{Main technical argument}\label{sec:comp_SGD_HSGD} 
In this section, %we show that the statistic $\tilde{q}$ defined in Assumption~\ref{assumption: quadratics} applied to the iterates of SGD $\xx_t$ and homogenized SGD $\XX_t$ are close, that is, the difference $\tilde{q}(\xx_t)-\tilde{q}(\XX_t)$ is small with overwhelming probability. We make this precise in below.
we will prove Theorem~\ref{thm:homogenized_SGD_SGD} and Theorem~\ref{thm:expectation} below after we have introduced some notation and lemmas. Note that the statement is equivalent to proving a result about the statistic $g$ under the change of basis iterates $\nnu$ and $\YY$. The proof of Theorem~\ref{thm:homogenized_SGD_SGD} will use an elaborate net argument --- we construct a set of quadratics which contains our chosen statistic $g$ (or $\tilde{q}$ after a change of basis) and we show that the difference between homogenized SGD \eqref{eq:ito_diffusion} and SGD \eqref{eq:SGD_statistic} over this entire class of quadratics $Q$ is small. 

\begin{remark} In what proceeds, we work in the basis formed by the vectors of $\VV^T$ (spectral decomposition on the data matrix $\AA = \UU \SSigma \VV^T$), or equivalently, $\vv = \VV^T \xx$. Consequently the statistic \eqref{eq:statistic} in the basis of $\VV^T$ is
\begin{equation} \label{eq:statistic_1}
    \mathcal{R}(\VV \vv) \defas \widehat{\mathcal{R}}(\vv) = \frac{1}{2} \vv^T \SS \vv + \hh^T \vv + c, \quad \text{where $\SS \defas \VV^T\TT \VV$ and $\hh \defas \VV^T \uu$.}
\end{equation}
When referring to Assumption~\ref{assumption: quadratics} we will use this formulation \eqref{eq:statistic_1} of the statistic.
\end{remark}

It will be convenient to introduce the function $\widehat{f}: \mathbb{R}^d \to \mathbb{R}$ by
\begin{equation} \label{eq:rr_simplified}
    \widehat{f}(\vv) \defas \widehat{\mathscr{L}}(\vv) + \tfrac{\delta}{2} \|\vv\|^2 =  \tfrac{1}{2} \|\SSigma \vv - \UU^T \bb\|^2 + \tfrac{\delta}{2} \|\vv\|^2. 
\end{equation}  

First to define this set of quadratics, let $\widehat{g} : \mathbb{R}^d \to \mathbb{R}$ be any generic quadratic $\widehat{g}(\xx) = \xx^T \widehat{\SS} \xx + \widehat{\hh}^T \xx + c$ for which $\widehat{\SS}$ is symmetric, deterministic and, the norm $\|\widehat{\SS} \|$ is bounded independent of $n$ and $\|\widehat{\hh}\|_2 = \|\nabla \widehat{g} (0)\|_2$ is deterministic, bounded independent of $n$.
% for which $\widehat{g}(\xx) = \xx^T \widehat{\SS} \xx + \widehat{\hh}^T \xx + \widehat{c}$ where the $\widehat{\SS}$ is a deterministic symmetric positive semi-definite matrix and the deterministic vector $\widehat{\hh} \in \mathbb{R}^d$ and constant $\widehat{c}$ are arbitrary. 
We want to define a set of quadratics that contains $\widehat{g}$ and whose cardinality is small compared with the ``ball" of quadratics. The set of quadratics must satisfy the property that for any quadratic $q$ in the set the expression $\nabla q(\xx)^T \nabla \widehat{f}(\xx) - \nabla q(\yy)^T \nabla \widehat{f}(\yy)$ is small if $q(\xx) - q(\yy)$ is small. Note here we can not appeal to continuity because we do not know that $\xx$ and $\yy$ are close. To do so, we introduce some notation
\begin{align}
    \MM \defas \SSigma^T \SSigma + \delta \II_d \qquad \text{and} \qquad \RR(z) \defas (\II_d - z \MM)^{-1} \quad \text{for $z \in \mathbb{C}$}.
\end{align}
We can relate $\RR(z)$ to the resolvent of $\AA^T \AA$ by the following
\begin{equation} \label{eq:resolvent_relation}
    z \VV^T [ R(z) - \delta^{-1} \II_d] \VV = \RR(1/z) \quad \text{for $z \in \mathbb{C} \setminus \{0\}$.}
\end{equation}
For $z,y \in \mathbb{C}$, we define a set of quadratics based on the fixed quadratic $\widehat{g}$ by
\begin{equation} \begin{aligned} \label{eq:quadratic_set_hatg}
      Q_{\widehat{g}} &\defas \bigg \{ \xx^T (\nabla^2 \widehat{g}) \xx, \{\xx^T \RR(z) (\nabla^2 \widehat{g}) \RR(y) \xx \, : \, |z| = |y| = (2\|\MM\|)^{-1}\},\\
      & \qquad \quad  (\nabla \widehat{g}(0))^T\xx, \{ (\nabla \widehat{g}(0))^T \RR(z) \xx \, : \, |z| = (2\|\MM\|)^{-1} \}, ( (\nabla^2 \widehat{g}) \SSigma^T \UU^T \bb)^T \xx, \\
      & \qquad \quad \{ \xx^T \RR(z) (\nabla^2 \widehat{g}) \RR(y) \SSigma^T \UU^T \bb \, : \, |z| = |y| =  (2\|\MM\|)^{-1}\}, \widehat{g}(\xx) \bigg \}.
    %   Q_{\widehat{g}} &\defas \bigg \{ \xx^T \widehat{\SS} \xx, \{\xx^T \RR(z) \widehat{\SS} \RR(y) \xx \, : \, |z| = |y| = (2\|\MM\|)^{-1}\},\\
    %   & \qquad \quad  (\nabla \widehat{g}(0))^T\xx, \{ (\nabla \widehat{g}(0))^T \RR(z) \xx \, : \, |z| = (2\|\MM\|)^{-1} \}, ( \widehat{\SS} \SSigma^T \UU^T \bb)^T \xx, \\
    %   & \qquad \quad \{ \xx^T \RR(z) \widehat{\SS} \RR(y) \SSigma^T \UU^T \bb \, : \, |z| = |y| =  (2\|\MM\|)^{-1}\}, \widehat{g}(\xx) \bigg \}.
\end{aligned} \end{equation}
We recall the gradient of the loss function \eqref{eq:rr}, $\nabla \widehat{f}(\xx) = (\SSigma^T \SSigma + \delta \II_d) \xx - \SSigma^T \UU^T \bb = \MM \xx - \SSigma^T \UU^T \bb$. Using this set $Q_{\widehat{g}}$, we show it is ``closed" under this ``gradient mapping". As we will need the stopped processes $\nnu_t^{\vartheta}$ and $\YY_t^{\vartheta}$ later to show Theorem~\ref{thm:homogenized_SGD_SGD}, we will work under this stopping time (see \eqref{eq:stopping_time_norm}). 

\begin{proposition}[$Q_{\widehat{g}}$ closed under gradient mapping] \label{prop:quad_set} Let $\widehat{g} : \mathbb{R}^d \to \mathbb{R}$ be any quadratic such that $\widehat{g}(\xx) = \xx^T \widehat{\SS} \xx + \widehat{\hh}^T \xx + c$ for which $\widehat{\SS}$ is symmetric with norm $\|\widehat{\SS} \|$ bounded, independent of $n$ and $\|\widehat{\hh}\|_2 = \|\nabla \widehat{g} (0)\|_2$ is bounded, independent of $n$. For any $T > 0$, define
\begin{equation} \label{eq:max_quadratic}
    \mathcal{Q}_T^{\widehat{g}} \defas \sup_{q \in Q_{\widehat{g}}} \sup_{0 \le t \le T} |q(\nnu_t^{\vartheta}) - q(\YY_t^{\vartheta})|,
\end{equation}
where $\nnu_t$ and $\YY_t$ are the iterates of SGD and homogenized SGD in the $\VV$ basis, respectively and $\vartheta$ is the stopping time defined in \eqref{eq:stopping_time_norm}. For the function $\widehat{f}(\cdot) = \tfrac{1}{2} \|\SSigma \cdot - \UU^T \bb\|^2 + \frac{\delta}{2} \|\cdot\|^2$, 
\begin{align}
    \sup_{q \in Q_{\widehat{g}}} &\sup_{0 \le t \le T} | \nabla q(\nnu_t^{\vartheta})^T \nabla \widehat{f}(\nnu_t^{\vartheta}) - \nabla q(\YY_t^{\vartheta})^T \nabla \widehat{f}(\YY_t^{\vartheta}) | \le C \cdot \mathcal{Q}_T^{\widehat{g}}, \label{eq:grad_condition}\\
     \sup_{q \in Q_{\widehat{g}}} &\sup_{0 \le t \le T} | (\nnu_t^{\vartheta})^T \nabla^2 q(\nnu_t^{\vartheta}) \SSigma^T( \SSigma \nnu_t^{\vartheta}- \UU^T \bb) -  (\YY_t^{\vartheta})^T \nabla^2 q(\YY_t^{\vartheta}) \SSigma^T( \SSigma \YY_t^{\vartheta}- \UU^T \bb)| \le C \cdot \mathcal{Q}_T^{\widehat{g}}, \label{eq:function_condition} \\
     \sup_{q \in Q_{\widehat{g}}} &\sup_{0 \le t \le T} | (\nnu_t^{\vartheta})^T \nabla^2 q(\nnu_t^{\vartheta}) \nnu_t^{\vartheta} - (\YY_t^{\vartheta})^T \nabla^2 q(\YY_t^{\vartheta}) \YY_t^{\vartheta}| \le 2\mathcal{Q}_T^{\widehat{g}} \label{eq:hessian_bound}
\end{align}
where the constants $C \defas C(\delta, \SSigma, \bb)$ are independent of $n$ and depend only on the matrix $\SSigma$, the targets $\bb$, and the $\ell^2$-regularization parameter $\delta$. Moreover, we have that 
\begin{equation} \label{eq:operator_bound}
    \sup_{q \in Q_{\widehat{g}}} \|\nabla^2 q\| \le 4 \|\nabla^2 \widehat{g}\| \quad \text{and} \quad \sup_{q \in Q_{\widehat{g}}} \|\nabla q(0)\|_2 \le C \cdot (\|\nabla^2 \widehat{g}\| + \|\nabla \widehat{g}(0)\|_2), 
\end{equation}
where the constant $C = C(\delta, \SSigma, \bb)$ is independent of $n$. 
\end{proposition}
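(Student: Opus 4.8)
The strategy is to exhibit $Q_{\widehat g}$ as \emph{algebraically closed} under the three contractions that appear on the left-hand sides, in the following sense: for every $q\in Q_{\widehat g}$, the quantity in question can be written --- after deleting an additive constant, which cancels in the difference between $\nnu^\vartheta_t$ and $\YY^\vartheta_t$ --- as a finite linear combination $\sum_j c_j\,q_j(\xx)$ with $q_j\in Q_{\widehat g}$, together with finitely many contour averages $\tfrac{1}{2\pi i}\oint_{|w|=r}w^{-m}\,q_w(\xx)\,\dif w$ ($m\in\{1,2\}$, $r\defas(2\|\MM\|)^{-1}$, $q_w\in Q_{\widehat g}$), in such a way that $\sum_j|c_j|$ and the number of averages are bounded by $C(\delta,\SSigma,\bb)$. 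Granting this, the bounds are immediate: differencing at $\nnu^\vartheta_t,\YY^\vartheta_t$, applying the triangle inequality, and using $\sup_{0\le t\le T}|q(\nnu^\vartheta_t)-q(\YY^\vartheta_t)|\le\mathcal{Q}_T^{\widehat{g}}$ for each $q\in Q_{\widehat g}$, a single term contributes $\le|c_j|\,\mathcal{Q}_T^{\widehat{g}}$ and a single average contributes $\le\tfrac{1}{2\pi}(2\pi r)\,r^{-m}\mathcal{Q}_T^{\widehat{g}}=(2\|\MM\|)^{m-1}\mathcal{Q}_T^{\widehat{g}}$. Note the identities below are pointwise in $\xx$, so the stopping time plays no role here and the stopped iterates can be plugged in freely.

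The algebraic input is elementary resolvent calculus with $\RR(z)=(\II_d-z\MM)^{-1}$. Since $\RR(z)$ commutes with $\MM$ and every eigenvalue of $z\MM$ has modulus $\le|z|\,\|\MM\|\le\tfrac12$ on the circle $|z|=r$, one has: $z\mapsto\RR(z)$ analytic on $|z|<\|\MM\|^{-1}$; $\|\RR(z)\|\le2$ on $|z|=r$; $\RR(0)=\II_d$; $\RR'(0)=\MM$; and $\MM\RR(z)=\RR(z)\MM=z^{-1}(\RR(z)-\II_d)$. Consequently Cauchy's theorem lets us replace any bare factor $\II_d=\RR(0)$ flanking $\nabla^2\widehat g$ by $\tfrac{1}{2\pi i}\oint_{|w|=r}w^{-1}\RR(w)\,\dif w$, and any bare factor $\MM$ flanking $\nabla^2\widehat g$ by $\tfrac{1}{2\pi i}\oint_{|w|=r}w^{-2}\RR(w)\,\dif w$. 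These two moves plus the $\MM$-identity are exactly what is needed to rewrite expressions such as $\xx^T(\nabla^2\widehat g)\MM\xx$, $\widehat{\hh}^T\MM\xx$, or $(\SSigma^T\UU^T\bb)^T\RR(y)(\nabla^2\widehat g)\RR(z)\MM\xx$ as (single or double) contour averages of the resolvent-sandwiched members of $Q_{\widehat g}$. Here the ``all-on-the-circle'' families in the definition of $Q_{\widehat g}$ are essential: a leftover $\II_d$ does not lie in $Q_{\widehat g}$, which is precisely why $Q_{\widehat g}$ contains $\xx^T\RR(z)(\nabla^2\widehat g)\RR(y)\xx$ and its linear analogues for \emph{all} $|z|=|y|=r$.

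Writing $\HH\defas\nabla^2\widehat g$ and $\nabla\widehat f(\xx)=\MM\xx-\SSigma^T\UU^T\bb$, one then checks \eqref{eq:grad_condition} by running through the seven kinds of generator. For $q(\xx)=\xx^T\RR(z)\HH\RR(y)\xx$ one has $\nabla q(\xx)=(\RR(z)\HH\RR(y)+\RR(y)\HH\RR(z))\xx$, and $\xx^T\RR(z)\HH\RR(y)\MM\xx=y^{-1}(\xx^T\RR(z)\HH\RR(y)\xx-\xx^T\RR(z)\HH\xx)$, the first term already in $Q_{\widehat g}$ and the second a $w^{-1}$-average of type-(b) elements, while $\xx^T\RR(z)\HH\RR(y)\SSigma^T\UU^T\bb$ is a type-(f) element; for $q(\xx)=\xx^T\HH\xx$ one gets $\xx^T\HH\MM\xx$ (a $w^{-2}$-then-$w^{-1}$ double average of type-(b) elements) plus the type-(e) element $\xx^T\HH\SSigma^T\UU^T\bb$; for the linear generators $\widehat{\hh}^T\RR(z)\xx$ and $\widehat{\hh}^T\xx$ the $\MM$-identity yields terms in $Q_{\widehat g}$ (types (d), (c)) and a $w^{-2}$-average of type-(d) elements respectively; for $((\nabla^2\widehat g)\SSigma^T\UU^T\bb)^T\xx$ and $\xx^T\RR(z)\HH\RR(y)\SSigma^T\UU^T\bb$ one gets $(\SSigma^T\UU^T\bb)^T\HH\MM\xx$ and $(\SSigma^T\UU^T\bb)^T\RR(y)\HH\RR(z)\MM\xx$, handled by a double average and by the $\MM$-identity plus a $w^{-1}$-average of type-(f) elements; and $\widehat g$ itself combines the $\xx^T\HH\xx$ and $\widehat{\hh}^T\xx$ cases. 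In every case the total coefficient mass is a fixed polynomial in $2\|\MM\|$, giving \eqref{eq:grad_condition}. For \eqref{eq:hessian_bound}, $\xx^T(\nabla^2 q)\xx$ equals $2q(\xx)$ when $q$ is one of the two resolvent-sandwiched quadratic generators, equals the type-(a) element $\xx^T\HH\xx$ when $q=\widehat g$, and vanishes for the linear generators, so its difference is $\le2\mathcal{Q}_T^{\widehat{g}}$. For \eqref{eq:function_condition}, use $\SSigma^T(\SSigma\xx-\UU^T\bb)=\nabla\widehat f(\xx)-\delta\xx$ and $(\nabla^2 q)\xx=\nabla q(\xx)-\nabla q(0)$ to write $\xx^T(\nabla^2 q)\SSigma^T(\SSigma\xx-\UU^T\bb)=\nabla q(\xx)^T\nabla\widehat f(\xx)-\nabla q(0)^T\nabla\widehat f(\xx)-\delta\,\xx^T(\nabla^2 q)\xx$; the first difference is \eqref{eq:grad_condition}, the second is \eqref{eq:grad_condition} applied to the linear member $\xx\mapsto\nabla q(0)^T\xx$ of $Q_{\widehat g}$ (which one checks lies in $Q_{\widehat g}$ up to a constant, generator by generator), and the third is $\delta$ times \eqref{eq:hessian_bound}. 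Finally \eqref{eq:operator_bound} is read off by inserting $\|\RR(z)\|\le2$ (valid on $|z|=r$) into the explicit forms of $\nabla^2 q$ and $\nabla q(0)$ already listed.

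I expect the main obstacle to be organizational rather than analytic: for each of the seven generators and each of the three contractions, one must verify that after the resolvent identities and Cauchy substitutions the integrands land \emph{exactly} in $Q_{\widehat g}$ --- in one of the prescribed forms with both resolvent arguments on the radius-$r$ circle --- and that the running coefficient mass stays bounded uniformly over $z,y$ on that circle. Nothing is needed beyond the triangle inequality and the length-$2\pi r$ contour estimate, but the case analysis is lengthy and the bookkeeping (which slot needs a $w^{-1}$-move, which needs a $w^{-2}$-move, and which subterm is already a member of $Q_{\widehat g}$) must be carried out systematically.
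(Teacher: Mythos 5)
Your proof is correct and takes essentially the same route as the paper: a case-by-case reduction over the seven generators of $Q_{\widehat g}$, driven by the resolvent identity $\MM\RR(z) = z^{-1}(\RR(z)-\II_d)$ and Cauchy's integral formula on the circle $|z|=(2\|\MM\|)^{-1}$ to trade bare $\II_d$'s and $\MM$'s for contour averages of resolvent-sandwiched members of $Q_{\widehat g}$, followed by $\|\RR(z)\|\le 2$ for \eqref{eq:operator_bound}. The one place you diverge, cosmetically, is the reduction of \eqref{eq:function_condition}: you use the unified identity $\xx^T(\nabla^2 q)\SSigma^T(\SSigma\xx-\UU^T\bb) = \nabla q(\xx)^T\nabla\widehat f(\xx) - \nabla q(0)^T\nabla\widehat f(\xx) - \delta\,\xx^T(\nabla^2 q)\xx$, treating all $q$ at once, whereas the paper first specializes to $q(\xx)=\xx^T\tilde\SS\xx$ (so $\nabla q(0)=0$) and then argues that linear generators and $\widehat g$ inherit the bound because the LHS depends only on $\nabla^2 q$. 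Your version requires the additional observation — which you correctly flag and which does hold generator by generator — that $\xx\mapsto\nabla q(0)^T\xx$ lies in $Q_{\widehat g}$ up to an additive constant for every $q\in Q_{\widehat g}$; this is a fair trade, slightly more bookkeeping in exchange for not having to case-split the reduction step. Likewise your observation that $\xx^T(\nabla^2 q)\xx=2q(\xx)$ for the quadratic generators (using symmetry of $\nabla^2\widehat g$) is a cleaner restatement of the paper's appeal to $\xx^T\tilde\SS\xx,\xx^T\tilde\SS^T\xx\in Q_{\widehat g}$.
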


\begin{proof} For any $q \in Q_{\widehat{g}}$ with $q(\xx) = \xx^T \tilde{\SS} \xx$ for some matrix $\tilde{\SS}$, we see that \eqref{eq:function_condition} can be written as
\begin{align*}
    \xx^T \nabla^2q(\xx) \SSigma^T (\SSigma \xx - \UU^T \bb) = \nabla q(\xx)^T \nabla \widehat{f}(\xx) - \delta \xx^T (\tilde{\SS} + \tilde{\SS}^T) \xx . 
\end{align*}
If we show that \eqref{eq:grad_condition} holds, we immediately conclude \eqref{eq:function_condition} as all terms can be bounded by $\mathcal{Q}_T^{\widehat{g}}$ as $\xx^T \tilde{\SS} \xx, \xx^T \tilde{\SS}^T \xx \in Q_{\widehat{g}}$. Note we immediately satisfy \eqref{eq:function_condition} for linear functions of the form $\tilde{\hh}^T \xx$ since the Hessian is identically $0$ in this case. The only other $q \in Q_{\widehat{g}}$ is $\widehat{g}$. By construction, there exists a $q \in Q_{\widehat{g}}$ such that $q \neq \widehat{g}$ and it has the form $q(\xx) = \xx^T \tilde{\SS} \xx$ which has the same Hessian as that of $\widehat{g}$. It follows that \eqref{eq:function_condition} holds for $\widehat{g}$ provided \eqref{eq:grad_condition}. A similar argument holds for \eqref{eq:hessian_bound}: for any $q\in Q_{\widehat{g}}$ with $q(\xx) = \xx^T \tilde{\SS} \xx$, one has that $ \xx^T (\nabla^2 q) \xx = \xx^T \tilde{\SS} \xx + \xx^T \tilde{\SS}^T \xx$ and $\xx^T \tilde{\SS} \xx, \xx^T \tilde{\SS}^T \xx \in Q_{\widehat{g}}$. Pure linear functions and the fixed quadratic $\widehat{g}$ satisfy \eqref{eq:hessian_bound} for the same reason as \eqref{eq:function_condition}.

We now turn to showing \eqref{eq:grad_condition} holds and we do so by cases. Let $q(\xx) = \xx^T (\nabla^2\widehat{g}) \xx$ so that $\nabla q(\xx)^T \nabla \widehat{f}(\xx) = 2\xx^T (\nabla^2 \widehat{g}) \MM \xx - 2\xx^T (\nabla^2 \widehat{g}) \SSigma^T \UU^T \bb$ and $\nabla^2 q(\xx) = 2 (\nabla^2 \widehat{g})$. We need to show that the individual quadratics are (up to constants) quadratics that live in the set $Q_{\widehat{g}}$. It is clear that $\xx^T (\nabla^2 \widehat{g}) \SSigma^T \UU^T \bb$ is in the set $Q_{\widehat{g}}$. Now we need to show $\xx^T (\nabla^2 \widehat{g}) \MM \xx \in Q_{\widehat{g}}$. First, we remark that $\RR(z) = (\II - z \MM)^{-1} = \sum_{k=0}^\infty z^k \MM^k$ for $|z| < 1/ \|\MM\|$. Let $\Omega$ be the circle in $\mathbb{C}$ centered at $0$ with radius $(2 \|\MM\|)^{-1}$, which ensures that $(\II - z \MM)^{-1}$ can be represented by the power series. By Cauchy's integral formula, we have
\begin{equation}
    \xx^T (\nabla^2 \widehat{g}) \MM \xx = \frac{1}{4 \pi i} \int_{\Omega} \frac{\xx^T \RR(z) (\nabla^2 \widehat{g}) \RR(z) \xx}{z^2} \, \dif z.
\end{equation}
In particular, we have that
\begin{align*}
    |(\nnu_t^{\vartheta})^T (\nabla^2 \widehat{g}) \MM \nnu_t^{\vartheta} &- (\YY_t^{\vartheta})^T  (\nabla^2 \widehat{g}) \MM \YY_t^{\vartheta} | \\
    &\le \frac{1}{4 \pi} \int_{\Omega} \frac{|(\nnu_t^{\vartheta})^T \RR(z) (\nabla^2 \widehat{g}) \RR(z) \nnu_t^{\vartheta} - (\YY_t^{\vartheta})^T \RR(z) (\nabla^2 \widehat{g}) \RR(z) \YY_t^{\vartheta} |}{|z|^2} \, |\dif z|\\
    &\le \frac{2 \|\MM\| \mathcal{Q}_T^{\widehat{g}}}{4\pi}  \int_0^{2\pi} \, \dif t =  \|\MM\| \mathcal{Q}_T^{\widehat{g}}. 
\end{align*}
This gives \eqref{eq:grad_condition} result for $\xx^T (\nabla^2 \widehat{g}) \xx$.

Suppose $q(\xx) = \xx^T \RR(z) (\nabla^2 \widehat{g}) \RR(y) \xx$ for some $z, y$ such that $|z| = |y| = (2 \|\MM\|)^{-1}$. A simple computation shows that 
\begin{equation}
\begin{aligned} \label{eq:RSR_1}
    \nabla q(\xx)^T \nabla \widehat{f}(\xx) &= \xx^T \RR(y) (\nabla^2 \widehat{g}) \RR(z) \MM \xx + \xx^T \RR(z) (\nabla^2 \widehat{g}) \RR(y) \MM \xx\\
    & \qquad - \xx^T \RR(y) (\nabla^2 \widehat{g}) \RR(z) \SSigma^T \UU^T \bb - \xx^T \RR(z) (\nabla^2 \widehat{g}) \RR(y) \SSigma^T \UU^T \bb.
\end{aligned}
\end{equation}
The third and fourth terms in the sum of \eqref{eq:RSR_1} exist in the set $Q_{\widehat{g}}$. For the first term, we have that
\begin{align} \label{eq:RSR_2}
    \xx^T \RR(y) (\nabla^2 \widehat{g}) \RR(z) \MM \xx &= \xx^T \RR(y) (\nabla^2 \widehat{g}) \RR(z) (\MM - \tfrac{1}{z}\II_d + \tfrac{1}{z} \II_d ) \xx\\
    &= -\frac{\xx^T \RR(y) (\nabla^2 \widehat{g}) \xx}{z} + \frac{\xx^T \RR(y) (\nabla^2 \widehat{g}) \RR(z) \xx}{z}.
\end{align}
The second term in \eqref{eq:RSR_2} with $|z| = (2 \|\MM\|)^{-1}$ can be bounded by a term in $Q_{\widehat{g}}$ so we only need to consider the first term. By Cauchy's integral formula, we deduce that
\begin{align}
    |z|^{-1} |(\nnu_t^{\vartheta})^T &\RR(y) (\nabla^2 \widehat{g}) \nnu_t^{\vartheta} - (\YY_t^{\vartheta})^T \RR(y) (\nabla^2 \widehat{g}) \YY_t^{\vartheta}|\\
    &\le 
    2 \|\MM\| \left | \frac{1}{2 \pi i} \int_{\Omega} \frac{(\nnu_t^{\vartheta})^T \RR(y) (\nabla^2 \widehat{g}) \RR(z) \nnu_t^{\vartheta} - (\YY_t^{\vartheta})^T \RR(y) (\nabla^2 \widehat{g}) \RR(z) \YY_t^{\vartheta}}{z} \, \dif z \right |\\
    &\le 2\|\MM\| \mathcal{Q}_T^{\widehat{g}}.
\end{align}
This completes the result for $\xx^T \RR(y) (\nabla^2 \widehat{g}) \xx$. By reversing the roles of $z$ and $y$, we also have shown the result for $\xx^T \RR(z) (\nabla^2 \widehat{g}) \xx$. Consequently, we have show that $q(\xx) = \xx^T \RR(z) (\nabla^2 \widehat{g}) \RR(y) \xx$ satisfies \eqref{eq:grad_condition}.

Next we let $q(\xx) = (\nabla g(0))^T \xx$ so that $\nabla q(\xx)^T \nabla \widehat{f}(\xx) = (\nabla \widehat{g}(0))^T \MM \xx - (\nabla \widehat{g}(0))^T \SSigma^T \UU^T \bb$. The constant cancels when we look at the difference; hence it is only the first term we consider. As before, we will write $(\nabla \widehat{g}(0))^T \MM \xx$ in terms of the resolvent. Consequently, we deduce
\begin{align*}
    |(\nabla \widehat{g}(0))^T \MM \nnu_t^{\vartheta} - (\nabla \widehat{g}(0))^T \MM \YY_t^{\vartheta}| &= |(2 \pi i)^{-1}| \left | \int_{\Omega} \frac{(\nabla \widehat{g}(0))^T \RR(z) \nnu_t^{\vartheta} - (\nabla \widehat{g}(0))^T \RR(z) \YY_t^{\vartheta}}{z^2} \, \dif z \right |\\
    &\le 2\|\MM\| \mathcal{Q}_T^{\widehat{g}},
\end{align*}
for $(\nabla \widehat{g}(0))^T \RR(z) \xx$ is in the set $Q_{\widehat{g}}$. The result follows for $q(\xx) = (\nabla \widehat{g}(0))^T \xx$. 

Next, let $q(\xx) = ((\nabla^2 \widehat{g}) \SSigma^T \UU^T \bb)^T \xx$. Analogous to $(\nabla g(0))^T \xx$, we only need to show that $((\nabla^2 \widehat{g}) \SSigma^T \UU^T \bb)^T \MM \xx \in Q_{\widehat{g}}$. For this term, we use Cauchy's integral formula
\begin{align*}
    (\SSigma^T \UU^T \bb)^T (\nabla^2 \widehat{g}) \MM \xx = \frac{1}{(2\pi i)^2} \int_{\Omega} \int_{\Omega} \frac{\xx^T \RR(z) (\nabla^2 \widehat{g}) \RR(y) \SSigma^T \UU^T \bb}{z^2 y} \, \dif z \dif y.
\end{align*}
From this, we deduce the following
\begin{align*}
    |4 \pi^2|^{-1} \left |\int_{\Omega} \int_{\Omega} \frac{(\nnu_t^{\vartheta})^T \RR(z) (\nabla^2 \widehat{g}) \RR(y) \SSigma^T \UU^T \bb - (\YY_t^{\vartheta})^T \RR(z) (\nabla^2 \widehat{g}) \RR(y)\SSigma^T \UU^T \bb}{z^2 y} \, \dif z \dif y \right | \le 2 \|\MM\| \mathcal{Q}_T^{\widehat{g}}.
\end{align*}
It follows that \eqref{eq:grad_condition} holds for $q(\xx) = ((\nabla^2 \widehat{g}) \SSigma^T \UU^T \bb)^T \xx$.

We let $q(\xx) = \xx^T \RR(z) (\nabla^2 \widehat{g}) \RR(y) \SSigma^T \UU^T \bb$ so that 
\[
\nabla q(\xx)^T \nabla \widehat{f}(\xx) = \xx^T \MM \RR(z) (\nabla^2 \widehat{g}) \RR(y) \SSigma^T \UU^T \bb - (\SSigma^T \UU^T \bb)^T \RR(z) (\nabla^2 \widehat{g}) \RR(y) \SSigma^T \UU^T \bb.
\]
The last term is a constant which will disappear when we take the difference. Now by adding and subtracting $z^{-1} \II_d$, we can write

\begin{align*}
    \xx^T \MM \RR(z) (\nabla^2 \widehat{g}) \RR(y) \SSigma^T \UU^T \bb = -z^{-1} \xx^T (\nabla^2 \widehat{g}) \RR(y) \SSigma^T \UU^T \bb + z^{-1} \xx^T \RR(z) (\nabla^2 \widehat{g}) \RR(y) \SSigma^T \UU^T \bb.
\end{align*}
Note that $|z|^{-1} = 2 \|\MM\|$ and thus the second term lives in the set $Q_{\widehat{g}}$. For the first term, we need to use Cauchy's Integral formula, that is, 
\begin{align*}
    |z|^{-1} |(&\nnu_t^{\vartheta})^T (\nabla^2 \widehat{g}) \RR(y) \SSigma^T \UU^T \bb - (\YY_t^{\vartheta})^T (\nabla^2 \widehat{g}) \RR(y) \SSigma^T \UU^T \bb |\\
    &\le 2 \|\MM\| \frac{1}{2\pi} \left | \int_{\Omega} \frac{(\nnu_t^{\vartheta})^T \RR(z) (\nabla^2 \widehat{g}) \RR(y) \SSigma^T \UU^T \bb-(\YY_t^{\vartheta})^T \RR(z) (\nabla^2 \widehat{g}) \RR(y) \SSigma^T \UU^T \bb}{z} \, \dif z \right |\\
    &\le 2 \|\MM\| \mathcal{Q}_{T}^{\widehat{g}}.
\end{align*}

Lastly, we let $q(\xx) = \widehat{g}(\xx)$. By construction, each term of $\nabla \widehat{g}(\xx)^T \nabla \widehat{f}(\xx)$ is in $Q_{\widehat{g}}$. This proves \eqref{eq:grad_condition}.

It remains to show that the norm of the Hessian and the norm of the gradient are uniformly bounded. We observe that for $|z| = (2 \|\MM\|)^{-1}$, the operator norm of the resolvent is $\|\RR(z)\| \le \sum_{k=0}^\infty |z|^{k} \|\MM\|^k \le 2$. It follows by Cauchy-Schwarz and submultiplicative norm bounds that,
$\|\RR(y) (\nabla^2 \widehat{g}) \RR(z)\| \le 4 \|\nabla^2 \widehat{g}\|$, which is sufficient for showing \eqref{eq:operator_bound} and the corresponding uniform bound for the gradient.
\end{proof}

\begin{corollary} \label{cor:f_1_bar_q} Fix a constant $T > 0$ and let $\widehat{g}(\cdot) = \|\cdot\|^2_2$. For this $\widehat{g}$ construct the set $Q_{\|\cdot\|^2_2}$ and constant $\mathcal{Q}_T^{\|\cdot\|^2_2}$ as in \eqref{eq:quadratic_set_hatg} and \eqref{eq:max_quadratic} respectively. Then there exists a constant $C \defas C(\delta, \SSigma)$ such that
\begin{equation}
   \sup_{0 \le t \le T} \int_0^t | \widehat{\mathscr{L}}(\nnu_s^{\vartheta})-\widehat{\mathscr{L}}(\YY_s^{\vartheta}) | \, \dif  s \le C \int_0^T \mathcal{Q}_s^{\|\cdot\|^2} \, \dif s,
\end{equation}
where $\widehat{\mathscr{L}}(\cdot) = \|\SSigma \cdot - \UU^T \bb\|^2_2$.
\end{corollary}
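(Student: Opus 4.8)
The plan is to expand $\widehat{\mathscr{L}}$ as a quadratic polynomial in $\vv$ and to recognize each of its pieces --- after a resolvent rewriting of the degree-$2$ part --- as a constant multiple of a genuine member of the family $Q_{\|\cdot\|^2}$ from \eqref{eq:quadratic_set_hatg}, whose increment along $\nnu_s^{\vartheta}$ versus $\YY_s^{\vartheta}$ is controlled by $\mathcal{Q}_s^{\|\cdot\|^2}$. Writing $\SSigma^T\SSigma = \MM - \delta\II_d$, we have
\[
\widehat{\mathscr{L}}(\vv) = \tfrac12\vv^T\MM\vv - \tfrac{\delta}{2}\|\vv\|^2 - (\SSigma^T\UU^T\bb)^T\vv + \tfrac12\|\bb\|^2,
\]
so the constant term cancels in $\widehat{\mathscr{L}}(\nnu_s^{\vartheta}) - \widehat{\mathscr{L}}(\YY_s^{\vartheta})$, and it remains to estimate, at each fixed $s \in [0,T]$, the increments of the three maps $\vv \mapsto \vv^T\MM\vv$, $\vv \mapsto \|\vv\|^2$, and $\vv \mapsto (\SSigma^T\UU^T\bb)^T\vv$.

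Two of these are immediate. Since $\widehat g(\vv) = \|\vv\|^2$ is itself listed in $Q_{\|\cdot\|^2}$, and since $\bigl((\nabla^2\widehat g)\SSigma^T\UU^T\bb\bigr)^T\vv = 2(\SSigma^T\UU^T\bb)^T\vv$ is also listed there (recall $\nabla^2\widehat g = 2\II_d$ for $\widehat g = \|\cdot\|^2$), the definition \eqref{eq:max_quadratic} of $\mathcal{Q}_s^{\|\cdot\|^2}$ gives at once
\[
\bigl|\,\|\nnu_s^{\vartheta}\|^2 - \|\YY_s^{\vartheta}\|^2\,\bigr| \le \mathcal{Q}_s^{\|\cdot\|^2}
\qquad\text{and}\qquad
\bigl|(\SSigma^T\UU^T\bb)^T(\nnu_s^{\vartheta}-\YY_s^{\vartheta})\bigr| \le \tfrac12\,\mathcal{Q}_s^{\|\cdot\|^2}.
\]

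For the term $\vv^T\MM\vv$ --- the only one requiring work, since $\MM$ itself does not appear in $Q_{\|\cdot\|^2}$ --- I would reuse the Cauchy-integral device from the proof of Proposition~\ref{prop:quad_set}. With $\Omega$ the circle of radius $(2\|\MM\|)^{-1}$ about the origin (so $\|\RR(z)\| \le 2$ on $\Omega$), the Neumann expansion $\RR(z) = \sum_{k\ge0}z^k\MM^k$ yields
\[
\vv^T\MM\vv = \frac{1}{8\pi i}\oint_{\Omega}\frac{\vv^T\RR(z)(\nabla^2\widehat g)\RR(z)\vv}{z^2}\,\dif z,
\]
and for each $z\in\Omega$ the numerator equals $q_z(\vv)$ with $q_z(\xx) = \xx^T\RR(z)(\nabla^2\widehat g)\RR(z)\xx \in Q_{\|\cdot\|^2}$ (take $y=z$ in \eqref{eq:quadratic_set_hatg}). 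Hence $\bigl|q_z(\nnu_s^{\vartheta}) - q_z(\YY_s^{\vartheta})\bigr| \le \mathcal{Q}_s^{\|\cdot\|^2}$ uniformly in $z\in\Omega$, and the elementary contour estimate (with $|z|^{-2} = 4\|\MM\|^2$ and $\mathrm{length}(\Omega) = \pi/\|\MM\|$ on $\Omega$) gives $\bigl|(\nnu_s^{\vartheta})^T\MM\nnu_s^{\vartheta} - (\YY_s^{\vartheta})^T\MM\YY_s^{\vartheta}\bigr| \le \tfrac12\|\MM\|\,\mathcal{Q}_s^{\|\cdot\|^2}$.

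Combining the three bounds (and using $\|\MM\| = \|\SSigma\|^2 + \delta$), we get $|\widehat{\mathscr{L}}(\nnu_s^{\vartheta}) - \widehat{\mathscr{L}}(\YY_s^{\vartheta})| \le C(\delta,\SSigma)\,\mathcal{Q}_s^{\|\cdot\|^2}$ for every $s \in [0,T]$. Integrating in $s$ and using that the integrand is nonnegative (equivalently, that $s\mapsto\mathcal{Q}_s^{\|\cdot\|^2}$ is nondecreasing, so $\int_0^t \le \int_0^T$ for $t\le T$),
\[
\sup_{0\le t\le T}\int_0^t |\widehat{\mathscr{L}}(\nnu_s^{\vartheta}) - \widehat{\mathscr{L}}(\YY_s^{\vartheta})|\,\dif s \le C(\delta,\SSigma)\int_0^T \mathcal{Q}_s^{\|\cdot\|^2}\,\dif s,
\]
which is the claim. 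There is no real obstacle here beyond bookkeeping; the one point to be careful about is to keep the time-localized quantity $\mathcal{Q}_s^{\|\cdot\|^2}$ in every estimate (rather than the cruder $\mathcal{Q}_T^{\|\cdot\|^2}$ appearing in the $T$-uniform statement of Proposition~\ref{prop:quad_set}), which is automatic since each quadratic invoked is an exact element of $Q_{\|\cdot\|^2}$ and \eqref{eq:max_quadratic} bounds its increment at time $s$ by $\mathcal{Q}_s^{\|\cdot\|^2}$.
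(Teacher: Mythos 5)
Your proof is correct and follows the paper's own argument: expand $\widehat{\mathscr{L}}$, note the constant cancels, recognize the linear and $\|\cdot\|^2$ pieces as (scalar multiples of) members of $Q_{\|\cdot\|^2}$, and write $\vv^T\MM\vv$ via Cauchy's formula over the circle $|z|=(2\|\MM\|)^{-1}$ so it is controlled by the $\RR(z)(\nabla^2\widehat g)\RR(y)$ family. The only deviation is that you set $y=z$ and use a single contour integral where the paper uses a double one — a mild simplification that changes nothing substantive — and you are slightly more careful than the paper's phrasing in keeping the $s$-local $\mathcal{Q}_s^{\|\cdot\|^2}$ inside the time integral, which is indeed what the stated bound $\int_0^T\mathcal{Q}_s^{\|\cdot\|^2}\,\dif s$ requires.
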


\begin{proof} First, we observe that 
\[
    \widehat{\mathscr{L}}(\xx) = \xx^T \SSigma^T \SSigma \xx - 2 \xx^T \SSigma^T \UU^T \bb + \bb^T \bb = \xx^T \MM \xx - \delta \xx^T \xx - 2\xx^T \SSigma^T \UU^T \bb + \bb^T \bb.
\]
The constant $\bb^T \bb$ can be ignored for we take the difference. Both $\xx^T \SSigma^T \UU^T \bb$ and $\xx^T \xx$ are in $Q_{\|\cdot\|^2}$ so the difference evaluated at $\nnu_t^{\vartheta}$ and $\YY_t^{\vartheta}$ can be bounded by $\mathcal{Q}_T^{\|\cdot\|^2}$. As for $\xx^T \MM\xx$, we can express it using Cauchy's Integral formula,
\begin{equation} \label{eq:blah_11}
    \xx^T \MM \xx = \frac{1}{(2 \pi i)} \int_{\Omega} \int_{\Omega} \frac{\xx^T \RR(z) \RR(y) \xx}{z^2 y} \, \dif y \dif z,
\end{equation}
where $|z| = |y| = (2 \|\MM\|)^{-1}$ and $\Omega$ is a circle centered at $0$ of radius $(2 \|\MM\|)^{-1}$. It follows by \eqref{eq:blah_11} that 
\begin{equation} \begin{aligned}
    \sup_{0 \le t \le T} \int_0^t &| (\nnu_s^{\vartheta})^T \MM \nnu_s^{\vartheta} - (\YY_s^{\vartheta})^T \MM \YY_s^{\vartheta} | \, \dif s \\
    &\le \sup_{0 \le t \le T} (4 \pi^2)^{-1} \int_0^t \int_{\Omega} \int_{\Omega} \frac{|(\nnu_s^{\vartheta})^T \RR(z) \RR(y) \nnu_s^{\vartheta} - (\YY_s^{\vartheta})^T \RR(z) \RR(y) \YY_s^{\vartheta}|}{|y| |z|^2} \, \dif |z| \dif |y| \dif s \\
    &\le 2 \|\MM\| \int_0^T \mathcal{Q}_s^{\|\cdot\|^2} \dif s.
\end{aligned} \end{equation}
The result immediately follows. 
\end{proof}

We are now ready to define the set of quadratics. Using the notation established in \eqref{eq:quadratic_set_hatg} and \eqref{eq:max_quadratic} for the statistic $\widehat{\mathcal{R}}$ (resp. $\mathcal{R}$ under a change of basis) and $\|\cdot\|^2_2$, for any $T > 0$ and sequences $\nnu_t$ and $\YY_t$ of SGD and homogenized SGD respectively, we define
\begin{equation} \begin{gathered}  \label{eq:quad_sup}
    Q \defas Q_{\widehat{\mathcal{R}}} \cup  Q_{\|\cdot\|^2_2}, \qquad
    \mathcal{Q}_T \defas \sup_{q \in Q} \sup_{0 \le t \le T} |q(\nnu_t^{\vartheta}) - q(\YY_t^{\vartheta})| = \sup \{ \mathcal{Q}_T^{\widehat{\mathcal{R}}}, \mathcal{Q}_T^{\|\cdot\|^2} \},\\
    \sup_{q \in Q} \|\nabla^2 q\| \le C \max \{\|\nabla^2 \widehat{\mathcal{R}}\|, 2\}, \quad \text{and} \quad \sup_{q \in Q} \|\nabla q(0)\|_2 \le C \max\{\|\nabla^2 \widehat{\mathcal{R}} \| + \|\nabla \widehat{\mathcal{R}}(0)\|_2, 2\},
\end{gathered}
\end{equation}
where $C = C(\delta, \SSigma, \bb)$ is a constant independent of $n$.

\begin{proposition} \label{prop:bar{q}} Let $q \in Q$ where $Q$ is defined in \eqref{eq:quad_sup} and define $\mathcal{Q}_T$ as in \eqref{eq:quad_sup}. Suppose the quadratic statistic $\widehat{\mathcal{R}} : \mathbb{R}^d \to \mathbb{R}$ in \eqref{eq:statistic_1} (resp. $\mathcal{R}$ under a change of basis) satisfying Assumption~\ref{assumption: quadratics}. For any $T > 0$, 
\begin{equation} 
\begin{aligned}
    \sup_{0 \le t \le T} |q(\nnu_t^{\vartheta}) - q(\YY_t^{\vartheta})| &\le C(\delta, \widehat{\gamma}, \nabla^2 \widehat{\mathcal{R}}, \SSigma, \bb)  \int_0^T \mathcal{Q}_s \, \dif s\\
    & \qquad + \widehat{C}(\delta, \widehat{\gamma}, \nabla^2 \widehat{\mathcal{R}}, \SSigma, \bb) \cdot \left ( \int_0^T \mathcal{Q}_s \, \dif s + T \right ) \cdot n^{\varepsilon-1} + \mathcal{E}_{\text{errors}}^T(q),
\end{aligned}
\end{equation}
where the errors $\mathcal{E}_{\text{errors}}^T$ are defined by
\begin{equation}
    \begin{aligned} \label{eq:errors}
        \mathcal{E}_{\text{errors}}^T(q) \defas  \sup_{0 \le t \le T} \big \{  |\mathcal{M}_{t \wedge \vartheta}^{\text{grad}}(q)| + |\mathcal{M}_{t\wedge \vartheta}^{\text{quad}}(q)| + | \mathcal{M}_{t \wedge \vartheta}^{\text{hSGD}}(q)| + |\mathcal{E}_{t \wedge \vartheta}^{\text{KL}}(q)| \big \}.
    \end{aligned}
\end{equation}
Here the constants $C(\delta, \widehat{\gamma}, \nabla^2 \widehat{\mathcal{R}}, \SSigma, \bb)$, $\widehat{C}(\delta, \widehat{\gamma}, \nabla^2 \widehat{\mathcal{R}}, \SSigma, \bb)$ are independent of $n$ and the quadratic $q$.
\end{proposition}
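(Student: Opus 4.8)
The plan is to subtract the It\^o expansion \eqref{eq:ito_diffusion} of $q(\YY_t)$ from the Doob decomposition \eqref{eq:SGD_statistic} of $q(\nnu_t)$, both evaluated along the processes stopped at $\vartheta$, and then estimate the resulting difference term by term. Stopping at $\vartheta$ replaces each $\int_0^t(\cdot)\,\dif s$ by $\int_0^{t\wedge\vartheta}(\cdot)\,\dif s$ (so every integrand may be taken evaluated at $\nnu_s^\vartheta$, $\YY_s^\vartheta$) and each martingale $\mathcal{M}_t^{\bullet}$ by $\mathcal{M}_{t\wedge\vartheta}^{\bullet}$; since $\nnu_0=\YY_0=\VV^T\xx_0$, the initial values cancel. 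Using the identity $\SSigma^T\SSigma\,\vv-\SSigma^T\UU^T\bb+\delta\vv=\MM\vv-\SSigma^T\UU^T\bb=\nabla\widehat{f}(\vv)$, the difference $q(\nnu_{t\wedge\vartheta})-q(\YY_{t\wedge\vartheta})$ splits into four pieces: (i) the drift difference $-\int_0^{t\wedge\vartheta}\gamma(s)\big(\nabla q(\nnu_s^\vartheta)^T\nabla\widehat{f}(\nnu_s^\vartheta)-\nabla q(\YY_s^\vartheta)^T\nabla\widehat{f}(\YY_s^\vartheta)\big)\,\dif s$; (ii) the empirical-risk difference $\tfrac{1}{n}\tr\big((\nabla^2 q)\SSigma^T\SSigma\big)\int_0^{t\wedge\vartheta}\gamma^2(s)\big(\widehat{\mathscr{L}}(\nnu_s^\vartheta)-\widehat{\mathscr{L}}(\YY_s^\vartheta)\big)\,\dif s$, the empirical-risk coefficient produced on the SGD side by the key-lemma substitution coinciding with the one produced on the HSGD side by It\^o's formula; (iii) the $\mathcal{O}(1/n)$ lower-order drift appearing only on the SGD side, $\tfrac{1}{2n}\int_0^{t\wedge\vartheta}\gamma^2(s)\big(\delta^2(\nnu_s^\vartheta)^T(\nabla^2 q)\nnu_s^\vartheta-2\delta(\nnu_s^\vartheta)^T(\nabla^2 q)\SSigma^T(\SSigma\nnu_s^\vartheta-\UU^T\bb)\big)\,\dif s$; and (iv) the bundle $\mathcal{M}_{t\wedge\vartheta}^{\text{grad}}(q)+\mathcal{M}_{t\wedge\vartheta}^{\text{quad}}(q)-\mathcal{M}_{t\wedge\vartheta}^{\text{hSGD}}(q)+\mathcal{E}_{t\wedge\vartheta}^{\text{KL}}(q)$, whose supremum over $t\le T$ is at most $\mathcal{E}_{\text{errors}}^T(q)$ by the triangle inequality.

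\textbf{Leading terms.} For (i): since $q\in Q=Q_{\widehat{\mathcal{R}}}\cup Q_{\|\cdot\|^2_2}$ and both $Q_{\widehat{\mathcal{R}}}$ and $Q_{\|\cdot\|^2_2}$ are closed under the gradient mapping of Proposition~\ref{prop:quad_set}, the estimate \eqref{eq:grad_condition}, applied pointwise in $s$, gives $|\nabla q(\nnu_s^\vartheta)^T\nabla\widehat{f}(\nnu_s^\vartheta)-\nabla q(\YY_s^\vartheta)^T\nabla\widehat{f}(\YY_s^\vartheta)|\le C(\delta,\SSigma,\bb)\,\mathcal{Q}_s$, so that (i) is bounded by $C(\delta,\SSigma,\bb)\,\widehat{\gamma}\int_0^T\mathcal{Q}_s\,\dif s$. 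For (ii): by \eqref{eq:quad_sup} one has $\|\nabla^2 q\|\le C\max\{\|\nabla^2\widehat{\mathcal{R}}\|,2\}$ and, after the normalization of Section~\ref{sec:dynamics}, $\tfrac{1}{n}\tr(\SSigma^T\SSigma)=\tfrac{1}{n}\tr(\AA^T\AA)=\mathcal{O}(1)$, so the scalar prefactor $\tfrac{1}{n}\tr((\nabla^2 q)\SSigma^T\SSigma)$ is $\mathcal{O}(1)$; Corollary~\ref{cor:f_1_bar_q} (whose choice $\widehat{g}=\|\cdot\|^2_2$ is consistent with $Q_{\|\cdot\|^2_2}\subseteq Q$, so $\mathcal{Q}_s^{\|\cdot\|^2}\le\mathcal{Q}_s$) then bounds $\int_0^{t\wedge\vartheta}|\widehat{\mathscr{L}}(\nnu_s^\vartheta)-\widehat{\mathscr{L}}(\YY_s^\vartheta)|\,\dif s$ by $C(\delta,\SSigma)\int_0^T\mathcal{Q}_s\,\dif s$, so (ii) is at most $C(\delta,\widehat{\gamma},\nabla^2\widehat{\mathcal{R}},\SSigma,\bb)\int_0^T\mathcal{Q}_s\,\dif s$. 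Terms (i) and (ii) together give the leading contribution claimed.

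\textbf{Lower-order drift and errors.} Term (iii) carries an explicit factor $1/n$. I would split each integrand into its value at $\YY_s^\vartheta$ plus the $\nnu$-versus-$\YY$ discrepancy. The discrepancies are precisely the quantities controlled in \eqref{eq:function_condition} and \eqref{eq:hessian_bound}, each at most $C(\delta,\SSigma,\bb)\,\mathcal{Q}_s$, contributing $C\,\widehat{\gamma}^2\,n^{-1}\int_0^T\mathcal{Q}_s\,\dif s$. For the pieces evaluated at $\YY_s^\vartheta$: since $\YY_t$ solves the SDE \eqref{eq:HSGD_dif} its paths are continuous, so by the definition \eqref{eq:stopping_time_norm} of $\vartheta$ we have $\|\YY_s^\vartheta\|^2\le n^\varepsilon$ for all $s$; hence quadratic forms such as $(\YY_s^\vartheta)^T(\nabla^2 q)\YY_s^\vartheta$ and $(\YY_s^\vartheta)^T(\nabla^2 q)\SSigma^T(\SSigma\YY_s^\vartheta-\UU^T\bb)$ are at most $C(\SSigma,\bb)\,\|\nabla^2 q\|\,n^\varepsilon$, and integrating against $\gamma^2$ over $s\le T$ gives $C\,\widehat{\gamma}^2\,T\,n^{\varepsilon-1}$. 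Using $n^{-1}\le n^{\varepsilon-1}$, term (iii) is at most $\widehat{C}(\delta,\widehat{\gamma},\nabla^2\widehat{\mathcal{R}},\SSigma,\bb)\big(\int_0^T\mathcal{Q}_s\,\dif s+T\big)n^{\varepsilon-1}$. Finally, term (iv) is $\le\mathcal{E}_{\text{errors}}^T(q)$ by the definition \eqref{eq:errors}. Collecting (i)--(iv) proves the proposition; note the bound is deterministic given the realizations of $\nnu$ and $\YY$, all probabilistic content being confined to $\mathcal{E}_{\text{errors}}^T(q)$, which is estimated separately in Section~\ref{sec:martingale_errors}.

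\textbf{Main obstacle.} The substantive work has already been carried out in Proposition~\ref{prop:quad_set} and Corollary~\ref{cor:f_1_bar_q}; given those, the step above is essentially bookkeeping. The delicate point is the use of the stopping time $\vartheta$: it is what forces the $\mathcal{O}(1/n)$ drift in (iii) to be genuinely of order $n^{\varepsilon-1}$, via the \emph{a priori} bound $\|\YY_s^\vartheta\|^2\le n^\varepsilon$, while leaving the martingale and key-lemma structure intact so that the errors can be carried along in $\mathcal{E}_{\text{errors}}^T(q)$. A secondary subtlety is that $Q$ contains complex, non-symmetric quadratics --- the resolvent sandwiches $\vv^T\RR(z)(\nabla^2\widehat{g})\RR(y)\vv$ with $|z|=|y|=(2\|\MM\|)^{-1}$ --- which is exactly why \eqref{eq:SGD_statistic} was derived for such $q$; one must confirm that every estimate invoked (Proposition~\ref{prop:quad_set}, Corollary~\ref{cor:f_1_bar_q}) applies to this full class rather than only to real symmetric ones.
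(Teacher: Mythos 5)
Your proposal is correct and follows essentially the same path as the paper's proof: subtract \eqref{eq:ito_diffusion} from \eqref{eq:SGD_statistic}, bound the gradient-drift difference via Proposition~\ref{prop:quad_set}\eqref{eq:grad_condition}, the empirical-risk difference via Corollary~\ref{cor:f_1_bar_q} together with H\"older, and the $\mathcal{O}(1/n)$ drift by splitting into an $\nnu$-versus-$\YY$ discrepancy (controlled by \eqref{eq:function_condition} and \eqref{eq:hessian_bound}) plus a term evaluated at $\YY_s^\vartheta$ bounded by the stopping-time norm cap, then absorb all remaining terms into $\mathcal{E}_{\text{errors}}^T(q)$. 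The only point you make that the paper leaves implicit is that $\|\YY_s^\vartheta\|^2\le n^\varepsilon$ holds even at $s=\vartheta$ because $\YY$ has continuous paths, a small but genuine clarification that does not alter the argument.
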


\begin{proof} Recall the definitions for $\widehat{f}(\vv) = \widehat{\mathscr{L}}(\vv) + \tfrac{\delta}{2} \|\vv\|^2$ and $\widehat{\mathscr{L}} = \frac{1}{2} \|\SSigma \nnu - \UU^T \bb\|^2$, \eqref{eq:rr_simplified} and \eqref{eq:lsq}, respectively. For $0 \le t \le T$, It\^{o}'s formula (c.f. \eqref{eq:SGD_statistic} and \eqref{eq:ito_diffusion} in Sections~\ref{sec:Doob} and \ref{sec:homogenized_SGD_intro} respectively) yields the following 
\begin{align}
    &\sup_{0 \le t \le T} |q(\nnu_t^\vartheta)-q(\YY_t^\vartheta)| \nonumber \\
    &\le \sup_{0 \le t \le T} \widehat{\gamma} \int_0^{t \wedge \vartheta}  |\nabla q(\YY_s^\vartheta)^T \nabla \widehat{f}(\YY_s^\vartheta)  - \nabla q(\nnu_s^\vartheta)^T \nabla \widehat{f}(\nnu_s^\vartheta) | \, \dif s \label{eq:bound_1}\\
    &  + \sup_{0 \le t \le T} \frac{2\widehat{\gamma}^2}{n} | \tr( (\nabla^2 q) \SSigma^T \SSigma) | \int_0^{t \wedge \vartheta} | \widehat{\mathscr{L}}(\nnu_s^\vartheta)-\widehat{\mathscr{L}}(\YY_s^\vartheta) | \, \dif s \label{eq:bound_2}\\
    & + \sup_{0 \le t \le T} \frac{\widehat{\gamma}^2}{2n} \left | \int_0^{t \wedge \vartheta} \left ( \delta^2 (\nnu_s^\vartheta)^T (\nabla^2 q) \nnu_s^\vartheta - 2 \delta (\nnu_s^\vartheta)^T (\nabla^2 q) \SSigma^T (\SSigma \nnu_s^\vartheta - \UU^T \bb) \right ) \, \dif s \right | + \mathcal{E}_{\text{errors}}^T(q). \label{eq:bound_3}
\end{align}
% where the errors $\mathcal{E}_{\text{errors}}^T$ are defined by
% \begin{equation}
%     \begin{aligned}
%         \mathcal{E}_{\text{errors}}^T(\bar{q}) \defas  \sup_{0 \le t \le T} \big \{  |\mathcal{M}_t^{\text{grad}}(\bar{q})| + |\mathcal{M}_t^{\text{quad}}(\bar{q})| + | \mathcal{M}_t^{\text{hSGD}}(\bar{q})| + |\mathcal{E}_{\text{KL}}(\bar{q})| \big \}.
%     \end{aligned}
% \end{equation}
Using Proposition~\ref{prop:quad_set}, we will bound the quantity \eqref{eq:bound_1} in terms of $\mathcal{Q}_s$. From \eqref{eq:grad_condition} in Proposition~\ref{prop:quad_set}, we conclude that
\begin{equation} \label{eq:tilde_g}
    \begin{aligned}
        \sup_{0 \le t \le T} \int_0^{t \wedge \vartheta} | \nabla q(\YY_s^\vartheta)^T \nabla \widehat{f}(\YY_s^\vartheta) - \nabla \bar{q}(\nnu_s^\vartheta)^T \nabla \widehat{f}(\nnu_s^\vartheta) | \, \dif s
        &\le C(\delta, \SSigma, \bb) \int_0^T \mathcal{Q}_s \, \dif s.
    \end{aligned}
\end{equation}
We now turn to \eqref{eq:bound_2}. Using Holder's inequality, $|\tr((\nabla^2 q) \SSigma^T \SSigma)| \le \|\nabla^2 q\| |\tr(\SSigma^T \SSigma)|$. Moreover, by Corollary~\ref{cor:f_1_bar_q}, it follows that
\begin{equation}
    \begin{aligned}
       \sup_{0 \le t \le T} &\frac{2 \widehat{\gamma}^2 |\tr((\nabla^2 q) \SSigma^T \SSigma)|}{n} \int_0^{t \wedge \vartheta}  |\widehat{\mathscr{L}}(\nnu_s^\vartheta)-\widehat{\mathscr{L}}(\YY_s^\vartheta)| \, \dif s\\
       &\le 2 \widehat{\gamma}^2 \|\nabla ^2 q\| \frac{|\tr(\SSigma^T \SSigma)|}{n} \int_0^T  |\widehat{\mathscr{L}}(\nnu_s^\vartheta)-\widehat{\mathscr{L}}(\YY_s^\vartheta)|  \, \dif s\\
       &\le C(\delta, \SSigma) \widehat{\gamma}^2 \max\{\|\nabla^2 \widehat{\mathcal{R}} \|, 2\}  \frac{|\tr(\SSigma^T \SSigma)|}{n}  \int_0^T \mathcal{Q}_s \, \dif s,
    \end{aligned}
\end{equation}
Here we used that the operator norm of $\nabla^2 q$ is bounded by \eqref{eq:quad_sup} and Corollary~\ref{cor:f_1_bar_q} with \eqref{eq:quad_sup}. 

Next we bound the first term in \eqref{eq:bound_3}. Let $\widehat{q}(\xx) = \delta^2 \xx^T (\nabla^2 q) \xx - 2 \delta \xx^T (\nabla^2 q) \SSigma^T (\SSigma \xx-\UU^T \bb) $. By Proposition~\ref{prop:quad_set}, in particular \eqref{eq:function_condition} and \eqref{eq:hessian_bound}, a simple computation yields
\begin{align*}
    \sup_{0 \le t \le T} \bigg | \int_0^{t \wedge \vartheta} (\delta^2 (\nnu_s^\vartheta)^T (\nabla^2 q) \nnu_s^\vartheta &- 2 \delta (\nnu_s^\vartheta)^T (\nabla^2 q) \SSigma^T (\SSigma \nnu_s^\vartheta-\UU^T \bb) ) \, \dif s \bigg |\\
    & \le \int_0^T | \widehat{q}(\nnu_s^\vartheta)-\widehat{q}(\YY_s^\vartheta) | \, \dif s +  \int_0^T |\widehat{q}(\YY_s^\vartheta)| \, \dif s \\
    &\le \tilde{C}(\delta, \SSigma, \bb)  \int_0^T \mathcal{Q}_s \, \dif s + T \max_{0 \le s \le T} |\widehat{q}(\YY_s^\vartheta)|. 
\end{align*}
By definition of the stopping time $\vartheta$, we have that
\begin{align*}
    \max_{0 \le s \le T} |\widehat{q}(\YY_s^\vartheta)| &\le \max_{0 \le s \le T} \|\nabla^2 q\| \|\delta^2 \II_d - 2\delta \SSigma^T \SSigma\| \|\YY_s^\vartheta\|^2_2 + \|\nabla^2 q\| \|\SSigma^T \UU^T \bb\|_2 \|\YY_s^\vartheta\|_2\\
    &\le 4 \max\{ \|\nabla^2 \widehat{\mathcal{R}} \|, 2\} \big ( \|\delta^2 \II_d - 2 \delta \SSigma^T \SSigma\| n^{\varepsilon} + \|\SSigma^T \UU^T \bb\|_2 n^{\varepsilon} \big ).
\end{align*}
Here we used that $\|\nabla^2 q\| \le 4 \max\{ \|\nabla^2 \widehat{\mathcal{R}} \|, 2\}$ (c.f., \eqref{eq:operator_bound} in Proposition~\ref{prop:quad_set}). 
% We also need to bound $\|\widehat{g}\|_{H^2}$,
% \begin{align*}
%     \|\widehat{g}\|_{H^2} \le \|\SS\| \|\delta^2 \II - 2\delta \SSigma^T \SSigma\| + \|\SS\| \|\SSigma^T \UU^T \bb\|_2 \le \|\delta^2 \II - 2\delta \SSigma^T \SSigma\| + \|\SSigma^T \UU^T \bb\|_2.
% \end{align*}
As a consequence, we have 
\begin{align*}
   \frac{\widehat{\gamma}^2}{n} \sup_{0 \le t \le T} &\bigg | \int_0^{t \wedge \vartheta} (\delta^2 (\nnu_s^\vartheta)^T (\nabla^2 q) \nnu_s^\vartheta - 2 \delta (\nnu_s^\vartheta)^T (\nabla^2 q) \SSigma^T (\SSigma \nnu_s^\vartheta-\UU^T \bb) ) \, \dif s \bigg | \\
   &\le \frac{\widehat{\gamma}^2}{n} \tilde{C}(\delta, \SSigma, \bb)  \int_0^T \mathcal{Q}_s \, \dif s + T \max_{0 \le s \le T} |\widehat{q}(\YY_s^\vartheta)|  \\
   &\le \frac{\widehat{\gamma}^2}{n}  \widehat{C}(\delta, \nabla^2 \widehat{\mathcal{R}}, \SSigma, \bb)\left ( \int_0^T \mathcal{Q}_s \, \dif s + T \cdot n^{\varepsilon} \right ),
\end{align*}
where $\widehat{C}(\delta, \nabla^2 \widehat{\mathcal{R}}, \SSigma, \bb)$ is some constant independent of $q$. The result follows. 
\end{proof}

The cardinality of the set of quadratics $Q$ is large. We show below that we can approximate any quadratic in $Q$ with quadratics from a set of quadratics $\bar{Q}$ of smaller cardinality. This will enable us to perform a net argument. For any quadratic $q$, we introduce a $H^2$-norm, that is,
\begin{equation} \label{eq:quad_norm}
    \|q\|_{H^2} \defas |q(0)| + \|\nabla q(0)\|_2 + \|\nabla^2 q\|. 
\end{equation}
In the next lemma, we show that we can approximate any $q \in Q$ with a class of quadratics with finite cardinality.

\begin{lemma}[Approximation of $Q$] \label{lem:approx_Q} Fix an $\varepsilon > 0$. There exists a set $\bar{Q} \subseteq Q$ of cardinality $Cn^{4 \varepsilon}$ for some absolute constant (independent of $n$ and $\varepsilon$) so that for any $q \in Q$ there exists a $\bar{q} \in \bar{Q}$ such that $\|q - \bar{q}\|_{H^2} \le \widehat{C}(\delta, \SSigma, \bb) n^{-2 \varepsilon}$ where $\widehat{C}$ is some constant and $q(0) = \bar{q}(0)$. 
\end{lemma}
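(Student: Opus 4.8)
The set $Q = Q_{\widehat{\mathcal{R}}} \cup Q_{\|\cdot\|_2^2}$ is, up to the two exceptional ``endpoint'' quadratics $\widehat{\mathcal{R}}$ and $\|\cdot\|_2^2$ themselves, indexed by one or two spectral parameters $z,y$ ranging over the circle $\Omega$ of radius $(2\|\MM\|)^{-1}$; this parametrization is exactly what is exhibited in the definition \eqref{eq:quadratic_set_hatg}. The plan is to produce $\bar Q$ by replacing the continuous index set $\Omega$ (and $\Omega\times\Omega$) by a finite net, and to control the resulting error in the $H^2$-norm by exploiting the Lipschitz dependence of the resolvent $\RR(z) = (\II_d - z\MM)^{-1}$ on $z$. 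The key analytic input is that on the circle $|z| = (2\|\MM\|)^{-1}$ we have both $\|\RR(z)\| \le 2$ (this is shown at the end of the proof of Proposition~\ref{prop:quad_set}, from the geometric series) and, by the resolvent identity, $\|\RR(z) - \RR(z')\| = \|\RR(z)(z-z')\MM\RR(z')\| \le 4\|\MM\| |z - z'| \le 2|z-z'|/(2\|\MM\|)^{-1}$, so $\RR$ is Lipschitz in $z$ with a constant depending only on $\|\MM\|$, hence only on $\|\SSigma\|$ and $\delta$.

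\textbf{Step 1: set up the net.} Fix $\varepsilon > 0$. Let $\Lambda \subset \Omega$ be a minimal $n^{-2\varepsilon}$-net of the circle $\Omega$; since $\Omega$ has length $2\pi(2\|\MM\|)^{-1} = O(1)$ (as $\|\MM\| = \|\SSigma^T\SSigma + \delta\II_d\| = O(1)$ under Assumption~\ref{assumption:Target}), we have $|\Lambda| \le C n^{2\varepsilon}$ for an absolute constant $C$ depending on $\delta$ and $\|\SSigma\|$. Define $\bar Q \subseteq Q$ to consist of: the two exceptional quadratics $\widehat{\mathcal{R}}$, $\|\cdot\|_2^2$, and $\xx^T(\nabla^2\widehat{g})\xx$ for each of the two base choices $\widehat g \in \{\widehat{\mathcal R}, \|\cdot\|_2^2\}$ (these carry no spectral parameter); all one-parameter quadratics in \eqref{eq:quadratic_set_hatg} with the parameter restricted to $\Lambda$; and all two-parameter quadratics with both parameters restricted to $\Lambda$. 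Then $|\bar Q| \le C|\Lambda|^2 \le \widetilde C n^{4\varepsilon}$, which is the claimed cardinality. Note every exceptional quadratic and every $\xx^T\RR(z)(\nabla^2\widehat g)\RR(y)\xx$ etc. has $q(0) = 0$ except $\widehat{\mathcal R}$ and $\|\cdot\|_2^2$, which are already in $\bar Q$, so the requirement $q(0) = \bar q(0)$ is automatic (map each exceptional quadratic to itself, each parametrized quadratic to its nearest-parameter cousin, which has the same value $0$ at the origin).

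\textbf{Step 2: estimate $\|q - \bar q\|_{H^2}$.} Given $q \in Q$, if $q$ is one of the finitely many parameter-free quadratics, take $\bar q = q$ and the bound is trivial. Otherwise $q$ depends on one or two parameters in $\Omega$; choose $\bar q$ by rounding each parameter to its nearest point of $\Lambda$, so each parameter moves by at most $n^{-2\varepsilon}$. Since $q - \bar q$ is then a quadratic whose coefficient matrix/vector is a difference of two products each involving one or two resolvent factors $\RR(\cdot)$, sandwiched between fixed bounded matrices ($\nabla^2\widehat g$, $\SSigma^T\UU^T\bb$, $\nabla\widehat g(0)$), a telescoping argument together with the uniform bound $\|\RR(\cdot)\| \le 2$ and the Lipschitz bound $\|\RR(z) - \RR(z')\| \le C(\delta,\SSigma)|z - z'|$ gives $\|\nabla^2(q - \bar q)\| \le C(\delta,\SSigma,\bb)(\|\nabla^2\widehat g\| + \|\nabla\widehat g(0)\|)\, n^{-2\varepsilon}$ and similarly for $\|\nabla(q-\bar q)(0)\|$; the constant term of every parametrized quadratic is $0$. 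Since $\widehat g \in \{\widehat{\mathcal R},\|\cdot\|_2^2\}$ have $\|\widehat g\|_{H^2}$ bounded independently of $n$ (by Assumption~\ref{assumption: quadratics} on $\widehat{\mathcal R}$, and trivially for $\|\cdot\|_2^2$), this yields $\|q - \bar q\|_{H^2} \le \widehat C(\delta,\SSigma,\bb)\, n^{-2\varepsilon}$, as required.

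\textbf{Main obstacle.} The only real work is the telescoping estimate in Step 2: each parametrized quadratic is of the form (fixed)$\cdot\RR(z)\cdot$(fixed)$\cdot\RR(y)\cdot$(fixed) or with a single $\RR(z)$, and one must write $\RR(z)A\RR(y) - \RR(z')A\RR(y') = (\RR(z)-\RR(z'))A\RR(y) + \RR(z')A(\RR(y)-\RR(y'))$ and bound each piece using $\|\RR\|\le 2$ and the resolvent Lipschitz bound, being careful that the ``fixed'' middle/outer factors such as $\SSigma^T\UU^T\bb$ and $\nabla^2\widehat g$ contribute only their (bounded) norms and that the $H^2$-norm genuinely controls both the Hessian and the gradient-at-$0$ of the difference. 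This is routine but must be done uniformly over $Q$ so that the constant $\widehat C$ does not depend on which $q$ was chosen — which it does not, since the bound depends on $q$ only through $\|\nabla^2\widehat g\|$ and $\|\nabla\widehat g(0)\|$, both $O(1)$. No genuinely hard estimate is involved beyond the resolvent identity already used in the proof of Lemma~\ref{lem:res} and Proposition~\ref{prop:quad_set}.
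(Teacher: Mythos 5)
Your proposal is correct and follows essentially the same approach as the paper's own proof: a grid of spacing $n^{-2\varepsilon}$ on the circle $|z|=(2\|\MM\|)^{-1}$, cardinality count $\sim n^{4\varepsilon}$ from the two-parameter family, and the error bound via the resolvent identity $\RR(z)-\RR(\bar z)=\RR(z)(z-\bar z)\MM\RR(\bar z)$ together with $\|\RR(\cdot)\|\le 2$ and telescoping. The one small addition you make — explicitly checking the condition $q(0)=\bar q(0)$ and that the constant is uniform over $q\in Q$ — is implicit in the paper but worth spelling out.
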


\begin{proof}  First, we make a grid $\mathcal{G}$ of $|z| = (2 \|\MM\|)^{-1}$ such that for any $z$ with $|z| = (2 \|\MM\|)^{-1}$ there exists $\bar{z} \in \mathcal{G}$ with $|z-\bar{z}| \le n^{-2 \varepsilon}$. For all $\widehat{\SS} \in \{\II_d, \nabla^2 g\}$, set $\bar{Q}$ to be quadratics $\bar{q}$ of the form $\bar{q}(\xx) = \xx^T \RR(\bar{z}) \widehat{\SS} \RR(\bar{y})$ with $\bar{z},\bar{y} \in \mathcal{G}$, $\bar{q}(\xx) = (\nabla \widehat{\mathcal{R}} (0))^T \RR(\bar{z}) \xx$ with $\bar{z} \in \mathcal{G}$, and $\bar{q}(\xx) = \xx^T \RR(\bar{z}) \widehat{\SS} \RR(\bar{y}) \SSigma^T \UU^T \bb$ with $\bar{z}, \bar{y} \in \mathcal{G}$. It will be convenient to also add the quadratics $\xx^T \widehat{\SS} \xx$, $(\widehat{\SS} \SSigma^T \UU^T \bb)^T\xx$, $(\nabla \widehat{\mathcal{R}} (0))^T \xx$, and $\widehat{\mathcal{R}}$ where $\widehat{\SS} \in \{ \II_d, (\nabla^2 \widehat{\mathcal{R}})\}$. We note that this set $\bar{Q}$ has cardinality on the order of $n^{4 \varepsilon}$.  

We proceed by cases with all cases quite similar. Let $q(\xx) = \xx^T \RR(z) \widehat{\SS} \RR(y) \xx$ for some matrix $\widehat{\SS} \in \{\II_d, (\nabla^2 \widehat{\mathcal{R}})\}$. Choose $\bar{z}, \bar{y} \in \mathcal{G}$ such that $|z-\bar{z}| \le n^{-2\varepsilon}$ and $|y-\bar{y}| \le n^{-2 \varepsilon}$. Consider $\bar{q}(\xx) = \xx^T \RR(\bar{z}) \widehat{\SS} \RR(\bar{y}) \xx$. A simple computation shows that
\begin{equation}
\begin{aligned} \label{eq:q_case_1}
    \|q-\bar{q}\|_{H^2} &= \|\RR(z)\widehat{\SS} \RR(y) - \RR(\bar{z}) \widehat{\SS} \RR(\bar{y})\|\\
    &\le \|\widehat{\SS} \RR(y)\| \|\RR(z)-\RR(\bar{z})\| + \|\widehat{\SS} \RR(\bar{z})\| \|\RR(y)-\RR(\bar{y})\|.
\end{aligned}
\end{equation}
Since $|y| = |\bar{z}| = (2 \|\MM\|)^{-1}$, we have that $\|\RR(y)\| \le 2$. Using the identity $(\II_d - z \MM)^{-1} - (\II_d - \bar{z} \MM)^{-1} = (\II_d- z \MM)^{-1} (z-\bar{z}) \MM (\II_d-\bar{z} \MM)^{-1}$, we get a bound on the difference of $\RR(z)$, 
\begin{equation}
    \|\RR(z) - \RR(\bar{z})\| \le |z-\bar{z}| \|\II_d -z \MM\|^{-1} \|\MM\| \|\II_d-\bar{z} \MM\|^{-1} \le 4 \|\MM\| n^{-2 \varepsilon}.
\end{equation}
It immediately follows from \eqref{eq:q_case_1} that $\bar{q}$ satisfies $\|q-\bar{q}\|_{H^2} \le \widehat{C} n^{-2\varepsilon}$ and $\bar{q} \in \bar{Q}$.

The other cases for quadratics $q(\xx) = (\nabla \widehat{\mathcal{R}}(0))^T \RR(z) \xx$ and $q(\xx) = \xx^T \RR(z) \widehat{\SS} \RR(y) \SSigma^T \UU^T \bb$ we approximate with $\bar{q}(\xx) = (\nabla \widehat{\mathcal{R}} (0))^T \RR(\bar{z}) \xx$ and $\bar{q}(\xx) = \xx^T \RR(\bar{z}) \widehat{\SS} \RR(\bar{y})$ with $\bar{z}, \bar{y} \in \mathcal{G}$ respectively. Similar bounds as in the prior case yield the conclusions of the lemma. 
\end{proof}

We are ready to prove Theorem~\ref{thm:homogenized_SGD_SGD} under the assumption that the error $
 \sup_{q \in Q} \mathcal{E}^T_{\text{errors}}(q)$ (see \eqref{eq:errors}) is sufficiently small. We prove this error is small in Section~\ref{sec:martingale_errors} and in Lemma~\ref{lem:key_lemma}. 

% \begin{theorem}[Homogenized SGD and SGD] Fix a constant $T > 0$ and let $g \, : \, \mathbb{R}^d \mapsto \mathbb{R}$ be the statistic we are interested in studying where $g(\xx) = \xx^T \SS \xx + \hh^T \xx + c$  for some matrix $\SS \in \mathbb{R}^{d \times}$, vector $\hh \in \mathbb{R}^d$, and constant $c \in \mathbb{R}$. Under Assumptions ,
% \[ \sup_{0 \le t \le T} |g(\nnu_t)-g(\YY_t)| \Prto[n] 0, \]
% where $\nnu_t$ and $\YY_t$ are the iterates SGD and homogenized SGD respectively.
% \end{theorem}
  
\begin{proof}[Proof of Theorem~\ref{thm:homogenized_SGD_SGD}] As $\mathcal{R}(\xx_t) = \widehat{\mathcal{R}}(\nnu_t)$ and $\mathcal{R}(\XX_t) = \widehat{\mathcal{R}}(\YY_t)$, it suffices to prove the result for the processes $\nnu_t$ and $\YY_t$ and $\widehat{\mathcal{R}}$. We recall the stopping time $\vartheta$ in \eqref{eq:stopping_time_norm}  
\begin{equation}
    \vartheta \defas \inf \big \{ t \ge 0 \, : \, \|\nnu_t\|^2_2 > n^{\varepsilon} \quad \text{or} \quad \|\YY_t\|^2 > n^{\varepsilon} \big \}.
\end{equation}
where $\varepsilon = \min\{\theta/2, 1/4(1/2-\theta)\}$ and $0 < \theta < 1/2$ as in Assumption~\ref{ass: laundry_list}.
Fix a quadratic function $q \in Q$ where the set $Q$ is defined in \eqref{eq:quad_sup}. (We also define the value of $\mathcal{Q}_T$ in \eqref{eq:quad_sup}). It will suffice to show a bound on the difference of the quadratic $q$ applied to the stopped processes $\nnu_{t \wedge \vartheta} \defas \nnu_t^{\vartheta}$ and $\YY_{t \wedge \vartheta} \defas \YY_t^{\vartheta}$. By Lemma~\ref{lem:approx_Q}, there exists a subset $\bar{Q} \subseteq Q$ such that for every $q \in Q$ there is a $\bar{q} \in \bar{Q}$ with $\|q - \bar{q}\|_{H^2} \le C(\delta, \SSigma, \bb) n^{-2 \varepsilon}$ and $q(0) = \bar{q}(0)$.  
We then deduce that
\begin{align*}
    |q(\nnu_t^{\vartheta}) - q(\YY_t^{\vartheta})| &\le |q(\nnu_t^{\vartheta}) - \bar{q}(\nnu_t^{\vartheta})| + |\bar{q}(\YY_t^{\vartheta}) - q(\YY_t^{\vartheta})| + |\bar{q}(\nnu_t^{\vartheta})-\bar{q}(\YY_t^{\vartheta})|\\
    & \le \|q-\bar{q}\|_{H^2} (\|\nnu_t^{\vartheta}\|^2 + \|\nnu_t^{\vartheta}\|_2) + \|q-\bar{q}\|_{H^2} (\|\YY_t^{\vartheta}\|^2 + \|\YY_t^{\vartheta}\|_2) + |\bar{q}(\nnu_t^{\vartheta})-\bar{q}(\YY_t^{\vartheta})|.
\end{align*}
By our choice of $\bar{q}$, we have that $\|q-\bar{q}\|_{H^2} \le C(\delta, \SSigma, \bb) n^{-2\varepsilon}$. Moreover using the definition of the stopping time $\vartheta$, we have that up until time $t = \vartheta$, both processes $\|\YY_t^{\vartheta}\|^2_2, \|\nnu_t^{\vartheta}\|^2 \le n^{\varepsilon}$. The same bound holds the for the processes at $t = \vartheta$ by Lemma~\ref{lem:boundedness_loss_norm_homogenized_SGD} and Lemma~\ref{lem:boundedness_loss_norm_SGD}, with overwhelming probability 
(w.o.p). It follows that $\sup_{0 \le t \le T} \|\YY_t^{\vartheta}\|^2, \sup_{0\le t \le T} \|\nnu_t^{\vartheta}\|^2 \le C(\delta, \widehat{\gamma}, \SSigma, \bb) n^{\varepsilon}$, w.o.p. We deduce that w.o.p.
\begin{equation} \label{eq:blah_thm_1}
    \sup_{0 \le t \le T } |q(\nnu_t^{\vartheta}) - q(\YY_t^{\vartheta})| \le C(\delta, \widehat{\gamma}, \SSigma, \bb) n^{-\varepsilon} + \sup_{0 \le t \le T} |\bar{q}(\nnu_t^{\vartheta}) - \bar{q}(\YY_t^{\vartheta})|.
\end{equation}

It suffices from \eqref{eq:blah_thm_1} to bound $\sup_{0 \le t \le T} |\bar{q}(\nnu_t^{\vartheta}) - \bar{q}(\YY_t^{\vartheta})|$. We apply Proposition~\ref{prop:bar{q}} to the quadratic $\bar{q}$. As the constants in Proposition~\ref{prop:bar{q}} do not depend on the choice of the quadratic $\bar{q}$ and $\bar{Q} \subseteq Q$ so that $\mathcal{\bar{Q}}_s \le \mathcal{Q}_s$. Thus
\begin{equation} \begin{aligned}
    \sup_{\bar{q} \in \bar{Q}} |\bar{q}(\nnu_t^{\vartheta}) - \bar{q}(\YY_t^{\vartheta})| &\le C(\delta, \widehat{\gamma}, \nabla^2 \widehat{\mathcal{R}}, \SSigma, \bb) \int_0^T \mathcal{Q}_s \, \dif s \\
    & \qquad + \widehat{C}(\delta, \widehat{\gamma}, \nabla^2 \widehat{\mathcal{R}}, \SSigma, \bb) \cdot \left ( \int_0^T \mathcal{Q}_s \, \dif s + T \right ) n^{\varepsilon -1} + \sup_{\bar{q} \in \bar{Q}} \mathcal{E}^T_{\text{errors}}(\bar{q}),\\
    \text{where} \quad \mathcal{E}^T_{\text{errors}}(\bar{q}) &= \sup_{0 \le t \le T} \big \{ |\mathcal{M}_{t \wedge \vartheta}^{\text{grad}}(\bar{q})| + |\mathcal{M}_{t \wedge \vartheta}^{\text{quad}}(\bar{q})| + |\mathcal{M}_{t \wedge \vartheta}^{\text{hSGD}}(\bar{q})| + |\mathcal{E}_{t \wedge \vartheta}^{\text{KL}}(\bar{q})|  \big \}.
\end{aligned} \end{equation}
By Proposition~\ref{prop:martingale_errors_SGD}, there exists constants independent of $n$ and $q$ such that w.o.p.
\begin{equation} \begin{gathered} \label{eq:thm_martingale_error}
    \sup_{0 \le t \le T} |\mathcal{M}_{t \wedge \vartheta}^{\text{grad}}| \le C(\delta, \widehat{\gamma}, \SSigma,  T) (\|\nabla^2 q \| + \|\nabla q(0)\|_2) n^{-1/2 + \alpha + \varepsilon}\\
    \text{and} \quad \sup_{0 \le t \le T} | \mathcal{M}_{t \wedge \vartheta}^{\text{quad}}(q) | \le C(\delta, \widehat{\gamma}, \SSigma)  T^{1/2} \| \nabla^2 q\| n^{-1/2 + \alpha + \varepsilon}, 
\end{gathered} \end{equation}
By choosing $\alpha = \theta + 1/4(1/2-\theta)$ and $\varepsilon = \min\{\theta/2, 1/4(1/2-\theta)\}$, we have that $-1/2 + \alpha + \varepsilon < 0$. From \eqref{eq:quad_sup}, we have that $\|\nabla^2 q\|$ and $\|\nabla \bar{q}(0)\|_2$ are uniformly bounded. As \eqref{eq:thm_martingale_error} holds w.o.p and the cardinality of the set $\bar{Q}$ is $n^{4 \varepsilon}$, then it follows that $\sup_{0 \le t \le T} |\mathcal{M}_{t \wedge \vartheta}^{\text{grad}}|$ and $\sup_{0 \le t \le T} |\mathcal{M}_{t \wedge \vartheta}^{\text{quad}}|$ are uniformly bounded in $\bar{q}$ by $n^{-1/2 + \alpha + \varepsilon}$ w.o.p., that is,
\begin{equation} \begin{gathered}
    \sup_{\bar{q} \in \bar{Q}} \sup_{0 \le t \le T} |\mathcal{M}_{t \wedge \vartheta}^{\text{grad}}| \le C(\delta, \widehat{\gamma}, \SSigma, \bb, T)n^{-1/2 + \alpha + \varepsilon}\\
    \text{and} \quad 
 \sup_{\bar{q} \in \bar{Q}} \sup_{0 \le t \le T} |\mathcal{M}_{t \wedge \vartheta}^{\text{quad}}|  \le C(\delta, \widehat{\gamma}, \SSigma, \bb, T)n^{-1/2 + \alpha + \varepsilon} \quad \text{w.o.p.},
\end{gathered}
\end{equation}
and $-1/2 + \alpha + \varepsilon < 0$. Similarly, we deduce from Proposition~\ref{prop:martingale_errors_hSGD} and the key lemma (Lemma~\ref{lem:key_lemma})
\[ \sup_{\bar{q} \in \bar{Q}} \sup_{0 \le t \le T} |\mathcal{M}_{t \wedge \vartheta}^{\text{hSGD}}| \le C(\delta, \widehat{\gamma}, \SSigma, \bb, T) n^{\varepsilon - 1/2} \quad \text{and} \quad \sup_{\bar{q} \in \bar{Q}} |\mathcal{E}_{t \wedge \vartheta}^{\text{KL}}(\bar{q})| \le C(\delta, \widehat{\gamma}, \SSigma, \bb, \|\widehat{\mathcal{R}}\|_{H^2}, T) n^{\theta+\varepsilon-1/2} \quad \text{w.o.p.} \]
By construction of $\varepsilon$, we have that both $\varepsilon - 1/2 < 0$ and $\theta + \varepsilon - 1/2 < 0$. There exists constants $\tilde{\varepsilon}, \tilde{c} > 0$ such that $\tilde{\varepsilon}-\tilde{c} < 0$ and
\begin{equation}
    \sup_{\bar{q} \in \bar{Q}} \mathcal{E}_{\text{errors}}^T( \bar{q}) \le C(\delta, \widehat{\gamma}, \SSigma, \bb, \|\widehat{\mathcal{R}}\|_{H^2}, T) n^{\tilde{\varepsilon}-\bar{c}} \quad \text{w.o.p.}
\end{equation}
Returning now to \eqref{eq:blah_thm_1}, we have thus shown 
\begin{equation} \begin{aligned}
    \mathcal{Q}_T &= \sup_{q \in Q} \sup_{0 \le t \le T} |q(\nnu_t^{\vartheta})-q(\YY_t^{\vartheta})| \\
    &\le C(\delta, \widehat{\gamma}, \SSigma, \bb, \|\widehat{\mathcal{R}}\|_{H^2}, T) n^{\tilde{\varepsilon}-\bar{c}} + \widehat{C}(\delta, \widehat{\gamma}, \SSigma, \bb, \|\widehat{\mathcal{R}}\|_{H^2}, T) (n^{\varepsilon-1} + 1) \int_0^T \mathcal{Q}_s \, \dif s,
    \end{aligned}
\end{equation}
for some constants $C$ and $\widehat{C}$. We apply Gronwall's inequality (and the statistic $\widehat{\mathcal{R}} \in Q$) to conclude
\begin{equation} \label{eq:thm11_g}
   \sup_{0 \le t \le T} |\widehat{\mathcal{R}}(\nnu_t^{\vartheta})-\widehat{\mathcal{R}}(\YY_t^{\vartheta})| \le \mathcal{Q}_T \le C n^{\tilde{\varepsilon} - \tilde{c}} \exp \left ( \widehat{C} (n^{\varepsilon-1} + 1) T \right ) \quad \text{w.o.p.}
\end{equation}
Here we simplified notation so that $C \defas C(\delta, \gamma, \SSigma, \bb, \|\widehat{\mathcal{R}} \|_{H^2}, T)$ and $\widehat{C} \defas \widehat{C}(\delta, \gamma, \SSigma, \bb, \|\widehat{\mathcal{R}}\|_{H^2}, T)$. Note also that $\tilde{\varepsilon}-\tilde{c} < 0$ and thus $\sup_{0 \le t \le T} |\widehat{\mathcal{R}}(\nnu_t^{\vartheta})-\widehat{\mathcal{R}}(\YY_t^{\vartheta})| \to 0$ w.o.p. The result is almost complete except for the stopping time. We now remove this. Since $\|\cdot\|^2 \in Q$, we have that $ \displaystyle \sup_{0 \le t \le T} | \|\nnu_t^{\vartheta}\|^2- \|\YY_t^{\vartheta}\|^2 |\le  C n^{\tilde{\varepsilon} - \tilde{c}} \exp \left ( \widehat{C} (n^{\varepsilon-1} + 1) T \right )$. Moreover we observe that
\[\Pr(\vartheta > T) \ge \Pr( \{ \sup_{0\le t \le T} \|\YY_t\|^2 \le n^{\varepsilon/2} \} \cap \{ \sup_{0 \le t \le T} |\|\YY_t^{\vartheta} \|^2 - \|\nnu_t^{\vartheta}\|^2| \le n^{\varepsilon /2} \} ).\]
Since the two events on the RHS occur w.o.p. (c.f. Lemma~\ref{lem:boundedness_loss_norm_homogenized_SGD}), we have that $\vartheta > T$ occurs w.o.p. Hence we may remove the stopping time from \eqref{eq:thm11_g}. By Assumption~\ref{ass:poly}, we can convert any with overwhelming probability statement in $n$ to an with overwhelming probability statement in $d$ and the result follows.  
\end{proof} 
It immediately follows from the proof of Theorem~\ref{thm:homogenized_SGD_SGD} that $\vartheta > T$. 

% We remark to control the errors, we will need to make an \textit{a priori} estimate that effectively shows that the iterates remain bounded. Thus, we define the stopping time, for any fixed $\varepsilon > 0$, by
% \begin{equation}
%      \vartheta = \inf \big \{ t \ge 0 \, : \, \|\nnu_t\|^2_2 > n^{\varepsilon} \quad \text{or} \quad \|\YY_t\|^2 > n^{\varepsilon} \big \}.
%     % \vartheta \defas \inf \, \{t \ge 0 \, : \, \|\UU \SSigma \nnu_t - \eeta\| > n^{\theta}\}.
% \end{equation}
% We then show: 
\begin{corollary}[Similar to Lemma B.4 in \cite{paquetteSGD2021}] \label{lem:stopping_time_function_values} \label{lem:bound_function_values} For any $\varepsilon > 0$, and for any $T > 0$, $\vartheta > T$ with high probability.
\end{corollary}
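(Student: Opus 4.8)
Write $\varepsilon_0 \defas \min\{\theta/2,(1/2-\theta)/4\}$ for the particular exponent fixed in \eqref{eq:stopping_time_norm}, let $\alpha \defas \theta+\tfrac14(\tfrac12-\theta)$ as in the proofs of Lemma~\ref{lem:boundedness_loss_norm_SGD} and Theorem~\ref{thm:homogenized_SGD_SGD}, and let $\vartheta_\varepsilon$ denote the stopping time of \eqref{eq:stopping_time_norm} with threshold $n^\varepsilon$. The plan is to observe that, up to a short monotonicity remark, the statement is already contained in the final paragraph of the proof of Theorem~\ref{thm:homogenized_SGD_SGD}. First I would recall exactly what that proof delivers: on an event of overwhelming probability one has simultaneously $\sup_{0\le t\le T}\|\YY_t\|^2 \le n^{\varepsilon_0/2}$ (Lemma~\ref{lem:boundedness_loss_norm_homogenized_SGD}) and $\sup_{0\le t\le T}\bigl|\,\|\nnu_t^{\vartheta}\|^2-\|\YY_t^{\vartheta}\|^2\,\bigr|\le n^{\varepsilon_0/2}$ (the Gronwall estimate \eqref{eq:thm11_g} applied to the quadratic $\|\cdot\|^2\in Q$), hence $\sup_{0\le t\le T}\|\nnu_t^{\vartheta}\|^2\le 2n^{\varepsilon_0/2}<n^{\varepsilon_0}$ for $n$ large; since on this event neither stopped norm ever reaches the threshold $n^{\varepsilon_0}$ on $[0,T]$, the stopped and unstopped processes agree there and $\vartheta_{\varepsilon_0}>T$.

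Next I would handle a general exponent $\varepsilon>0$. For $\varepsilon\ge\varepsilon_0$ this is pure monotonicity: the set $\{t:\|\nnu_t\|^2>n^\varepsilon\ \text{or}\ \|\YY_t\|^2>n^\varepsilon\}$ shrinks as the threshold grows, so $\varepsilon\mapsto\vartheta_\varepsilon$ is non-decreasing and $\vartheta_\varepsilon\ge\vartheta_{\varepsilon_0}>T$ on the same overwhelming-probability event. For $0<\varepsilon<\varepsilon_0$ one re-runs the estimate with $\varepsilon$ in place of $\varepsilon_0$ throughout: each ingredient (Lemmas~\ref{lem:boundedness_loss_norm_homogenized_SGD} and \ref{lem:boundedness_loss_norm_SGD}, Propositions~\ref{prop:quad_set}, \ref{prop:bar{q}}, \ref{prop:martingale_errors_SGD} and \ref{prop:martingale_errors_hSGD}, and Lemma~\ref{lem:key_lemma}) is stated either for an arbitrary positive exponent or makes use of the particular value only through the two sign conditions $-\tfrac12+\alpha+\varepsilon<0$ and $\theta+\varepsilon-\tfrac12<0$, and these persist for every $0<\varepsilon\le\varepsilon_0$ because they already hold at $\varepsilon=\varepsilon_0$ and decreasing $\varepsilon$ only decreases the left-hand sides. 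Consequently the conclusion $\mathcal{Q}_T\to 0$ w.o.p.\ of Theorem~\ref{thm:homogenized_SGD_SGD} and the bound $\sup_{0\le t\le T}\|\YY_t\|^2\le n^{\varepsilon/2}$ w.o.p.\ both remain valid, and the argument of the first paragraph closes verbatim with $\varepsilon_0$ replaced by $\varepsilon$, giving $\vartheta_\varepsilon>T$ w.o.p. Finally, "with overwhelming probability" is stronger than "with high probability", and by Assumption~\ref{ass:poly} the statement may be phrased interchangeably in $n$ or $d$, so the corollary follows in the stated form.

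There is essentially no new mathematical content, so the only thing needing care — and the closest thing to an obstacle — is bookkeeping: one must check that nothing in the chain leading to $\mathcal{Q}_T=o(1)$ w.o.p.\ (in particular the martingale-error exponents of Section~\ref{sec:martingale_errors}) secretly requires $\varepsilon$ to be bounded below. Since those exponents depend on $\varepsilon$ only through the additive $+\varepsilon$ terms inside quantities already shown to be strictly negative at $\varepsilon=\varepsilon_0$, shrinking $\varepsilon$ can only help, and the proof reduces to recording the two observations above.
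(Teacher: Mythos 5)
Your proposal is correct and takes the same route as the paper, whose entire proof of this corollary is the single line ``See proof of Theorem~\ref{thm:homogenized_SGD_SGD}'' --- i.e.\ precisely the final paragraph of that proof, where $\Pr(\vartheta > T)$ is lower-bounded via the w.o.p.\ events $\sup_{t\le T}\|\YY_t\|^2 \le n^{\varepsilon/2}$ and $\sup_{t\le T}|\|\YY_t^\vartheta\|^2 - \|\nnu_t^\vartheta\|^2| \le n^{\varepsilon/2}$. What you add (the monotonicity of $\varepsilon \mapsto \vartheta_\varepsilon$, and the check that shrinking $\varepsilon$ only tightens the sign constraints on the martingale-error exponents) is exactly the bookkeeping needed to justify the ``for any $\varepsilon > 0$'' clause that the paper's one-line cross-reference leaves implicit, so this is a faithful and slightly more careful rendering of the same argument.
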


\begin{proof} See proof of Theorem~\ref{thm:homogenized_SGD_SGD}.
\end{proof}

We end this section by showing that the error induced from the key lemma, disappears as $n \to \infty$ provided that $\varepsilon$ is chosen sufficiently small. 

\begin{lemma}[Key lemma] \label{lem:key_lemma} Fix $T > 0$ and let the set $Q$ be defined as in \eqref{eq:quad_sup}. Consider the error $|\mathcal{E}_{t \wedge \vartheta}^\text{KL}(q)|$ in \eqref{eq:keylemma_error} for $q \in Q$. Under Assumptions~\ref{ass: laundry_list} and \ref{assumption: quadratics},
\begin{equation}
\begin{aligned}
\sup_{0 \le t\le T} |\mathcal{E}_{t \wedge \vartheta}^{\text{KL}}(q)|
     &\le C(\delta, \widehat{\gamma}, \SSigma, \bb, T) n^{\theta + \varepsilon - 1/2},
\end{aligned}
\end{equation}
for some constant $C(\delta, \widehat{\gamma}, \SSigma, \bb, T)$ independent of $n$. 
\end{lemma}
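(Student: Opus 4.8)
The plan is to reduce everything to two separate estimates via the bound already recorded in \eqref{eq:keylemma_error}:
\[
\sup_{0\le t\le T}\bigl|\mathcal{E}^{\text{KL}}_{t\wedge\vartheta}(q)\bigr|
\le
\mathcal{H}(q)\cdot\int_0^{T\wedge\vartheta}\gamma^2(s)\,\widehat{\mathscr{L}}(\nnu_s)\,\dif s,
\qquad
\mathcal{H}(q)\defas\max_{1\le i\le n}\Bigl|\ee_i^T\UU\SSigma(\nabla^2 q)\SSigma^T\UU^T\ee_i-\tfrac1n\tr\bigl((\nabla^2 q)\SSigma^T\SSigma\bigr)\Bigr| .
\]
The integral factor is controlled purely by the stopping time: for $s<\vartheta$ one has $\|\nnu_s\|^2\le n^\varepsilon$, hence $\widehat{\mathscr{L}}(\nnu_s)=\tfrac12\|\SSigma\nnu_s-\UU^T\bb\|^2\le C(\SSigma,\bb)\,n^\varepsilon$ (using $\|\SSigma\|=\|\AA\|\le C$ and $\|\bb\|\le C$), so $\int_0^{T\wedge\vartheta}\gamma^2(s)\widehat{\mathscr{L}}(\nnu_s)\,\dif s\le\widehat{\gamma}^2\,T\,C(\SSigma,\bb)\,n^\varepsilon$ deterministically (the single endpoint $s=\vartheta$ has Lebesgue measure zero). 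The remaining task is to show $\mathcal{H}(q)\le C(\delta,\SSigma,\bb,\|\widehat{\mathcal{R}}\|_{H^2})\,n^{\theta-1/2}$ uniformly over $q\in Q$; multiplying the two estimates then gives the claim.

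For the bound on $\mathcal{H}(q)$ I would go through the explicit list of quadratics defining $Q=Q_{\widehat{\mathcal{R}}}\cup Q_{\|\cdot\|^2}$ in \eqref{eq:quadratic_set_hatg}. Every purely linear member has $\nabla^2 q=0$, hence $\mathcal{H}(q)=0$; the only members with nonzero Hessian are $\xx^T(\nabla^2\widehat g)\xx$, $\widehat g$ itself (Hessian $\propto\nabla^2\widehat g$), and $\xx^T\RR(z)(\nabla^2\widehat g)\RR(y)\xx$ (Hessian $\RR(z)(\nabla^2\widehat g)\RR(y)+\RR(y)(\nabla^2\widehat g)\RR(z)$), with $\widehat g\in\{\widehat{\mathcal{R}},\|\cdot\|^2\}$. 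Using $\UU\SSigma(\nabla^2 q)\SSigma^T\UU^T=\AA\bigl(\VV(\nabla^2 q)\VV^T\bigr)\AA^T$ together with the resolvent identity \eqref{eq:resolvent_relation}, $\VV\RR(w)\VV^T=\tfrac1w\bigl(R(1/w;\AA^T\AA)-\delta^{-1}\II_d\bigr)$, one expands $\VV(\nabla^2 q)\VV^T$ into a fixed finite linear combination — with coefficients that are $O(\delta^{-k})$ on the relevant contours — of the matrices $\TT$, $R(z';\AA^T\AA)\TT$, $\TT R(y';\AA^T\AA)$ and $R(z';\AA^T\AA)\TT R(y';\AA^T\AA)$, where $\TT\in\{\II_d,\nabla^2\mathcal{R}\}$ and $z',y'$ run over a circle enclosing the spectrum of $\AA^T\AA$. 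Replacing each missing resolvent factor by $\II_d=\tfrac1{2\pi i}\oint_\Omega R(w;\AA^T\AA)\,\dif w$, symmetrizing, and deforming the circle to $\Omega$ (both contours enclose the spectrum and $R(\cdot;\AA^T\AA)$ is analytic between them), each term becomes a contour integral over $\Omega$ of $\widehat\TT=R(z';\AA^T\AA)\TT R(y';\AA^T\AA)+R(y';\AA^T\AA)\TT R(z';\AA^T\AA)$. For $\TT=\nabla^2\mathcal{R}$ the quantity $\max_i|\ee_i^T\AA\widehat\TT\AA^T\ee_i-\tfrac1n\tr(\AA\widehat\TT\AA^T)|$ is exactly what Assumption~\ref{assumption: quadratics} (eq.~\eqref{eq:key_lemma_ass}) controls; for $\TT=\II_d$ it is bounded by $n^{\theta-1/2}$ using Assumption~\ref{ass: laundry_list} and precisely the resolvent argument used to prove Lemma~\ref{lem:qs}. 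Estimating the contour integrals with the $O(\delta^{-k})$ prefactors, together with $\|\nabla^2\mathcal{R}\|\le\|\mathcal{R}\|_{H^2}\le C$, yields $\mathcal{H}(q)\le C(\delta,\SSigma,\bb,\|\widehat{\mathcal{R}}\|_{H^2})\,n^{\theta-1/2}$ uniformly in $q\in Q$.

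Combining the two steps gives $\sup_{0\le t\le T}|\mathcal{E}^{\text{KL}}_{t\wedge\vartheta}(q)|\le C(\delta,\widehat{\gamma},\SSigma,\bb,\|\widehat{\mathcal{R}}\|_{H^2},T)\,n^{\theta+\varepsilon-1/2}$, as claimed. I expect the main obstacle to be the third paragraph: the bookkeeping that, after the basis change \eqref{eq:resolvent_relation}, rewrites each Hessian $\nabla^2 q$ with $q\in Q$ as an explicit contour integral of precisely the symmetric resolvent sandwich $\widehat\TT$ appearing in \eqref{eq:key_lemma_ass}, while keeping track of the $\delta$-dependent prefactors and justifying the deformation from the circle $|w|=(2\|\MM\|)^{-1}$ to $\Omega$. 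The stopping-time bound on the integral factor and the final combination (also using the uniform Hessian-norm bound \eqref{eq:operator_bound}) are routine.
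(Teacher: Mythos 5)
Your proof follows essentially the same route as the paper's: factor the error into a maximum over coordinate directions times the time integral of the loss, control the integral by the stopping time $\vartheta$, and control the maximum by expressing each Hessian $\nabla^2 q$ for $q \in Q$ in terms of resolvent sandwiches (via the change of basis \eqref{eq:resolvent_relation}) and then invoking Assumptions~\ref{ass: laundry_list} and \ref{assumption: quadratics}. Your third paragraph is in fact more careful than the paper's one-sentence reduction: the paper asserts that every $q \in Q$ with $\nabla^2 q \neq 0$ has Hessian of the form $\RR(z)\widehat\SS\RR(y)+\RR(y)\widehat\SS\RR(z)$, which is not literally true for $q(\xx)=\xx^T(\nabla^2\widehat g)\xx$ and for $\widehat g$ itself, whose Hessians are bare $2\nabla^2\widehat g$; your step of writing $\II_d=\tfrac{1}{2\pi i}\oint_\Omega R(w;\AA^T\AA)\,\dif w$ and reducing the bare case to the sandwiched case by contour integration is exactly what the paper's argument needs (and is the same device used to prove Lemma~\ref{lem:qs}). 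Your inclusion of $\|\widehat{\mathcal{R}}\|_{H^2}$ in the constant is also consistent with how the lemma is actually invoked in the proof of Theorem~\ref{thm:homogenized_SGD_SGD}, even though it is suppressed in the lemma's formal statement.
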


\begin{proof} First observe that
     \begin{equation}
\begin{aligned}
     \sup_{0 \le t\le T} |\mathcal{E}_{t \wedge \vartheta}^{\text{KL}}(q)| &\le \max_{1 \le i \le n} \left \{ \big | (\ee_i^T \UU \SSigma (\nabla^2 q) \SSigma^T \UU^T \ee_i) - \frac{1}{n} \tr((\nabla^2 q) \SSigma^T \SSigma ) \big | \right \} \int_0^{t \wedge \vartheta} | \gamma^2(s) \widehat{\mathscr{L}}(\nnu_s)| \, \dif s.
\end{aligned}
\end{equation}
By the definition of the stopping time $\vartheta$, $|\mathscr{L}(\nnu_{s \wedge \vartheta})| \le C(\delta, \SSigma, \bb) n^{\varepsilon}$. Every $q \in Q$ with $\nabla^2 q \neq 0$ can be expressed as $\RR(z)\RR(y) + \widehat{R}(y) \RR(z)$ or $\RR(z)(\nabla^2 \widehat{\mathcal{R}} )\RR(y) + \RR(y) (\nabla^2 \widehat{\mathcal{R}})\RR(z)$ for $|z| = |y| = (2 \|\MM\|)^{-1}$. The resolvent of $\AA \AA^T$ and $\RR(z)$ are related (see \eqref{eq:resolvent_relation}). The result immediately follows by applying \eqref{eq:key_lemma_ass} in Assumption~\ref{ass: laundry_list} and Assumption~\ref{assumption: quadratics}.
\end{proof}

% \section{Key Lemma}

% If we use the fact that $\nnu_t = \VV^T \xx_t$, then we can define a matrix $\TT \defas \VV (\nabla^2 q) \VV^T$. We do not need the key lemma to hold for all choices of $\nabla^2 q$, but only for a class of quadratics. In this was, the key lemma is equivalent to proving the following.

% \begin{lemma}[Key Lemma without Orthogonal Invariance] \label{lemma:key} Fix a symmetric matrix $\tilde{\SS} \in \mathbb{R}^{d \times d}$ that is bounded in operator norm and deterministic and the matrix $\MM = \AA^T \AA + \delta \II$. Define $\TT \defas \RR(z) \tilde{\SS} \RR(y) + \RR(y) \tilde{\SS} \RR(z)$ with $|z| = |y| = (2 \|\MM\|)^{-1}$ where the matrix $\RR(z) = (\II - z \MM)^{-1}$ for $z \in \mathbb{C}$.  With overwhelming probability, there exists an $\varepsilon > 0$ such that
% \begin{equation}
%     \left |   (\ee_i^T \AA \TT \AA^T \ee_i)  - \frac{\tr(\AA \TT \AA^T )}{n} \right | \le n^{-\varepsilon} \qquad \text{for all $i = 1, \hdots n$}.
% \end{equation}
% % where $\TT = \RR(z) \tilde{\SS} \RR(y)$ with $|z| = |y| = (2 \|\MM\|)^{-1}$ and $\MM = \AA^T \AA + \delta \II$ and $\RR(z) = (\II - z \MM)^{-1}$ for $z \in \mathbb{C}$. Here the matrix $\tilde{\SS} \in \mathbb{R}^{d \times d}$ is symmetric, bounded in operator norm, and deterministic.
% \end{lemma}

\subsection{Concentration and limiting behavior of SGD under the statistic} \label{sec:concentration_SGD} 
In this section, we show the concentration of SGD under any quadratic statistic $\mathcal{R}$ and specifically, the least-square loss $\mathcal{L}$ (Theorem~\ref{thm:expectation}). Next, we prove the limiting behavior of SGD on the excess risk with both a learning rate $\gamma(t) \to 0$ (Robbins-Monro setting) and constant $\gamma(t) \equiv \gamma$ (see Theorem~\ref{thm:eventualrisk}). Finally, we will show the result for our three motivating examples: training loss (Section~\ref{sec:training_loss_intro} Theorem~\ref{thm:Ab}), empirical risk minimization in linear regression (Section~\ref{sec:erm_excess_risk_intro}, Theorem~\ref{thm:AbR}), and random features (Section~\ref{sec:random_features_intro}, Theorem~\ref{thm:random_features}).

We proceed to find the generalization error (or loss function) for SGD. As shown in Theorem~\ref{thm:homogenized_SGD_SGD}, we do not need to work directly with the iterates of SGD, but rather can use homogenized SGD \eqref{eq:HSGD_dif} instead. Moreover in Theorem~\ref{thm:trainrisk}, homogenized SGD applied to the loss $\mathcal{L}$ and the risk $\mathcal{R}$ concentrate around their means, which we recall below, respectively, 
\begin{equation} \label{eq:lsq_hSGD}
    \begin{aligned}
         \Psi_t &= \mathscr{L}(\bm{\mathscr{X}}_{\Gamma(t)}^{\text{gf}}) + \frac{1}{n} \int_0^t \gamma^2(s) \text{\rm tr}\big ((\AA^T \AA)^2 e^{-2 (\AA^T \AA + \delta \II_d) (\Gamma(t)-\Gamma(s))} \big ) \Psi_t \, \dif s,\\
        \Omega_t &= \mathcal{R}(\bm{\mathscr{X}}_{\Gamma(t)}^{\text{gf}})
        + \frac{1}{n} \int_0^t \gamma^2(s) \text{\rm tr} \big ((\nabla^2 \mathcal{R}) \AA^T \AA e^{-2(\AA^T \AA + \delta \II_d)(\Gamma(t)-\Gamma(s))} \big ) \Psi_s \, \dif s,
        % \EE_0[ \mathscr{L}(\XX_t)] &= \frac{R^2}{2n} \text{\rm tr}(\AA^T \AA e^{-2 (\AA^T \AA + \delta \II_d) \Gamma(t)} )\\
        % & + \frac{1}{2} \left \| \left [ \AA \left ( \int_0^t \gamma(s) e^{-(\AA^T \AA + \delta \II_d) (\Gamma(t)-\Gamma(s))}  \, \dif s \right ) \AA^T - \II_n \right ] \bb \right \|^2\\
        % & + \frac{1}{n} \int_0^t \gamma^2(s) \text{\rm tr}\big ((\AA^T \AA)^2 e^{-2 (\AA^T \AA + \delta \II_d) (\Gamma(t)-\Gamma(s))} \big ) \EE[\mathscr{L}(\XX_s) | \HH, \bb] \, \dif s.
    \end{aligned}
\end{equation}
where $\bm{\mathscr{X}}_{\Gamma(t)}^{\text{gf}}$ is gradient flow after $\Gamma(t)$ amount of time. 

\begin{remark} \rm{ When the learning rate is a constant $\gamma(t) \equiv \gamma$, the expression \eqref{eq:lsq_hSGD} simplifies significantly and $\Psi_t$ reduces to a solution of the convolution-type Volterra equation, that is, 
\begin{equation} \label{eq:lsq_hSGD_constant}
    \begin{aligned}
        \Psi_t = \mathscr{L}(\bm{\mathscr{X}}_{\Gamma(t)}^{\text{gf}})
        + \frac{\gamma^2}{n} \int_0^t \text{\rm tr}\big ((\AA^T \AA)^2 e^{-2 \gamma (\AA^T \AA + \delta \II_d) (t-s)} \big ) \Psi_s \, \dif s.
        % \EE_0[ \mathscr{L}(\XX_t)] &= \frac{R^2}{2n} \text{\rm tr}(\AA^T \AA e^{-2\gamma (\AA^T \AA + \delta \II_d) t} )\\
        % & + \frac{1}{2} \| \big [ \AA( \AA^T \AA + \delta \II_d)^{-1}(\II_d - e^{-\gamma (\AA^T \AA + \delta \II_d) t}) \AA^T - \II_n \big ] \bb\|^2\\
        % & + \frac{\gamma^2}{n} \int_0^t \text{\rm tr}\big ((\AA^T \AA)^2 e^{-2 \gamma (\AA^T \AA + \delta \II_d) (t-s)} \big ) \EE[\mathscr{L}(\XX_s) | \HH, \bb] \, \dif s.
    \end{aligned}
\end{equation}
}
\end{remark}
With these in hand, the main result -- concentration of SGD -- can be shown.

\begin{proof}[Proof of Theorem~\ref{thm:expectation}] 
% By Assumption~\ref{}, we can convert any with overwhelming probability statement in $n$ to an with overwhelming probability statement in $d$. 
Instead of considering SGD, Theorem~\ref{thm:homogenized_SGD_SGD} shows that we can directly work with homogenized SGD with the difference between the two vanishingly small as $d \to \infty$. In fact, $|\mathcal{R}(\xx_{\lfloor tn \rfloor }) - \mathcal{R}(\XX_t)| \le d^{-\tilde{\varepsilon}/2}$ with overwhelming probability for some $\tilde{\varepsilon}$. By Theorem~\ref{thm:trainrisk}, homogenized SGD concentrates around its mean with overwhelming probability. Combining these two theorems, proves the result after noting that the mean behavior of homogenized SGD is $\Psi_t$ and $\Omega_t$.
\end{proof}

We now consider specific examples of the loss function $\mathcal{L}$ and the excess risk $\mathcal{R}$ as discussed in Section~\ref{sec:stats}. In particular, we consider the training loss (Section~\ref{sec:training_loss_intro} Theorem~\ref{thm:Ab}), empirical risk minimization in linear regression (Section~\ref{sec:erm_excess_risk_intro}, Theorem~\ref{thm:AbR}), and random features (Section~\ref{sec:random_features_intro}, Theorem~\ref{thm:random_features}). The proofs of these theorem essentially follows from the same reasoning: concentration of the gradient flow. 

\begin{proof}[Proof of Theorem~\ref{thm:Ab}, ~\ref{thm:AbR}, and \ref{thm:random_features}] We will do Theorem~\ref{thm:Ab} in detail. The other theorems are a similar argument. To distinguish the different $\Psi_t$ and $\Omega_t$ functions, we denote the $\Psi_t$ from Theorem~\ref{thm:trainrisk} as $\Psi_t = \EE_0[\mathscr{L}(\XX_t)] \defas \EE[\mathscr{L}(\XX_t) \, | \, \AA, \bb, \xx_0]$. We recall that the targets come from a generative model, that is $\bb = \AA \bbeta + \eeta$ where the vector $\bbeta$ is an unknown signal and $\eeta$ is some additive noise. As $\bbeta, \eeta,$ and the initialization $\xx_0$ are assumed to be iid subgaussian and independent, $\EE[\mathscr{L}(\XX_t) \, | \, \AA, \bb, \xx_0]$ will concentrate around its mean $\EE[\mathscr{L}(\XX_t) \, | \, \AA]$. This amounts to concentration of the gradient flow term. By independence assumptions, all cross terms between $\eeta$, $\bbeta$, and $\xx_0$ are $0$ so only quadratic forms remain. The iid assumption between coordinates then finishes the proof. 
\end{proof}

\subsubsection{Limiting loss and risk values} 
We now analyze the limiting loss (risk) values, $\Psi_{\infty}$ ($\Omega_\infty$), respectively. The expression for $\Psi_t$ in \eqref{eq:lsq_hSGD} is a linear Volterra equation. Much is known about properties of linear Volterra equations and their convergence properties (\textit{c.f.} \citep{gripenberg1980volterra} or \citep{Asmussen}). In Volterra  terminology (see \citep{gripenberg1980volterra}), the forcing term $g(t)$ and kernel $K(t,s; \AA^T \AA)$, respectively, are explicitly
\begin{equation}
    \begin{aligned}
        g(t) &\defas  \mathscr{L}(\bm{\mathscr{X}}_{\Gamma(t)}^{\text{gf}}) \quad \text{and} \quad K(t,s; \AA^T \AA) \defas \frac{\gamma^2(s)}{n} \text{\rm tr}\big ( (\AA^T \AA)^2 e^{-2(\AA^T \AA + \delta \II_d)(\Gamma(t)-\Gamma(s))} \big ).
    \end{aligned}
\end{equation}
The kernel represents the inherent noise produced by the algorithm itself and it is what makes SGD different than, say gradient flow. Provided that $\sup_{t \ge 0} \int_0^t K(t,s; \AA^T \AA) \dif s \le 1$, there is a unique bounded solution $\Psi_t$ to \eqref{eq:lsq_hSGD} (see proof of Theorem~\ref{thm:eventualrisk}). The solution then is given by $\Psi_t = g + r \star g$ where the function $r : \mathbb{R}^2 \to \mathbb{R}$ is called the \textit{resolvent} of $K(t,s; \AA^T\AA)$ and the operation $\star$ acts on functions by $(f \star g) (t) = \int_0^t f(t,s) g(s)  \dif s$ (see \citep[Chapter 9]{gripenberg1980volterra}). If we make additional assumptions on the learning rate \eqref{eq:convergence_threshold}, the sufficient condition for uniqueness of the solution holds (i.e., $\sup_{t \ge 0} \int_0^t K(t,s; \AA^T \AA) \dif s \le 1$); we recall these assumptions below,
\begin{equation}\label{eq:learning_rate_convergence_threshold}
   \Gamma(t) \to \infty, \quad \gamma(t) \to \widetilde{\gamma} > 0, \quad \text{and} \quad  \widetilde{\gamma} < 2 \big ( n^{-1} \tr( (\AA^T \AA)^2 (\AA^T \AA + \delta \II_d)^{-1} \big )^{-1}
\end{equation}
We now will prove the limiting loss (risk) values under two different learning rate scenarios. We record the limiting loss values below for reference within the proof. First, when $\widetilde{\gamma} = 0$ (Robbins-Monro setting), it will follow that $\lim_{t \to \infty} \Psi_t - \mathscr{L}(\bm{\mathscr{X}}_{\Gamma(t)}^{\text{gf}}) = 0$ and $\lim_{t \to \infty} \Omega_t - \mathcal{R}(\bm{\mathscr{X}}_{\Gamma(t)}^{\text{gf}}) = 0$. On the other hand for an arbitrary $\widetilde{\gamma}$, the limiting empirical risk $\Psi_{\infty}$ is given by 
\begin{equation} \label{eq:limit_loss_main}
\Psi_\infty 
=
\mathscr{L}\bigl( 
\bm{\mathscr{X}}_{\infty}^{\text{gf}}\bigr)
\times
\biggl(
1
-
\frac{ \widetilde{\gamma}}{2n}
\tr
\bigl(
(\nabla^2 \mathscr{L})^2
\bigl( \nabla^2 \mathscr{L} + \delta \II_d \bigr)^{-1}
\bigr)
\biggr)^{-1}.
\end{equation}
The limiting excess risk of SGD over gradient flow is given by
\begin{equation} \label{eq:limit_risk_main}
\Omega_t-\mathcal{R}\bigl( 
\bm{\mathscr{X}}_{\Gamma(t)}^{\text{gf}}\bigr)
\xrightarrow[t\to\infty]{ } \frac{\widetilde{\gamma}}{2n}   \Psi_\infty \times 
\tr
\biggl(
(\nabla^2 \mathcal{R}) (\nabla^2 \mathscr{L})
\bigl( \nabla^2 \mathscr{L} + \delta \II_d \bigr)^{-1}
\biggr).
\end{equation}
With these in hand, we now prove Theorem~\ref{thm:eventualrisk}.

\begin{proof}[Proof of Theorem~\ref{thm:eventualrisk}] 
First suppose that the limiting loss $\Psi_t$ is bounded and it exists at infinity. Note that it is sufficient to prove \eqref{eq:limit_loss_main} for an arbitrary $\widetilde{\gamma}$ as we can set $\widetilde{\gamma} = 0$ to recover the Robbins-Monro result. We show under this condition on $\Psi_t$ that the limiting risk value holds for $\Omega_{\infty}$. A simple computation with a change of variables gives
\begin{equation}
\begin{aligned}
\lim_{t \to \infty} \Omega_t -& \mathcal{R}(\bm{\mathscr{X}}_{\Gamma(t)}^{\text{gf}})\\
&= \lim_{t \to \infty} n^{-1} \int_0^t \gamma^2(s) \text{tr}\bigg ( (\nabla^2 \mathcal{R}) \AA^T \AA \exp \big (-2(\AA^T \AA + \delta \II_d)(\Gamma(t)-\Gamma(s)) \big ) \bigg ) \Psi_s \, \dif s\\
&= \lim_{t \to \infty} n^{-1} \int_0^{\Gamma(t)} \gamma(s) \text{tr}\bigg ( (\nabla^2 \mathcal{R}) \AA^T \AA \exp \big (-2(\AA^T \AA + \delta \II_d)(\Gamma(t)-s) \big ) \bigg ) \Psi_{\Gamma^{-1}(s)} \, \dif s\\
&= \lim_{t \to \infty} n^{-1} \int_0^{\Gamma(t)} \gamma(\Gamma(t)-v) \text{tr}\bigg ( (\nabla^2 \mathcal{R}) \AA^T \AA \exp \big (-2(\AA^T \AA + \delta \II_d)v \big ) \bigg ) \Psi_{\Gamma^{-1}(\Gamma(t)-v)} \, \dif v.
\end{aligned}
\end{equation}
Dominated convergence theorem allows us to interchange the integral and limit as $\Psi_t$ and $\gamma(t)$ are bounded. We pull out the limiting values of $\lim_{t \to \infty} \gamma(t) = \widetilde{\gamma}$ and $\Psi_{\infty}$. By integrating, we deduce 
\begin{equation}
\begin{aligned}
    \lim_{t \to \infty} \Omega_t - \mathcal{R}(\bm{\mathscr{X}}_{\Gamma(t)}^{\text{gf}}) &= \lim_{t \to \infty} n^{-1} \int_0^{\Gamma(t)} \gamma(\Gamma(t)-v) \text{tr}\bigg ( (\nabla^2 \mathcal{R}) \AA^T \AA \exp \big (-2(\AA^T \AA + \delta \II_d)v \big ) \bigg ) \Psi_{\Gamma^{-1}(\Gamma(t)-v)} \, \dif v\\
    &= \widetilde{\gamma} \Psi_{\infty} n^{-1} \int_0^{\infty} \text{tr}\bigg ( (\nabla^2 \mathcal{R}) \AA^T \AA \exp \big (-2(\AA^T \AA + \delta \II_d)v \big ) \bigg ) \, \dif v\\
    &=  \widetilde{\gamma} \Psi_{\infty} n^{-1} \tr \bigg ( (\nabla^2 \mathcal{R}) \AA^T \AA (2(\AA^T \AA + \delta \II_d))^{-1} \bigg )
\end{aligned}
\end{equation}
The result for the limiting risk  value $\lim_{t \to \infty} \Omega_t - \mathcal{R}(\bm{\mathscr{X}}_{\Gamma(t)}^{\text{gf}})$ follows. 

It remains to show that $\Psi_t$ is bounded and exists at infinity with its limiting value given by \eqref{eq:limit_loss_main}. Recall the loss kernel for $\Psi_t$ given by
\begin{equation}
    K(t,s) \defas K(t,s; \nabla^2 \mathscr{L}) = \gamma^2(s) n^{-1} \tr\bigg ( (\nabla^2 \mathscr{L}) \AA^T \AA \exp \big (-2(\AA^T\AA + \delta\II_d)(\Gamma(t)-\Gamma(s)) \big) \bigg ),
\end{equation}
so that $\Psi_t$ is the solution to the Volterra equation
\begin{align} \label{eq:psi_volterra}
    \Psi_t = \mathscr{L}(\bm{\mathscr{X}}_{\Gamma(t)}^{\text{gf}}) + \int_0^t K(t,s) \Psi_s \, \dif s.
\end{align}
We show, under the upper bound on $\gamma(t)$ that is \eqref{eq:learning_rate_convergence_threshold}, that the kernel $K(s,t)$ is of $L^{\infty}$-type on $[0, \infty)$. A kernel is $L^\infty$-type if $\vertiii{K}_{L^{\infty}(J)} < \infty$ for a set $J \subset \mathbb{R}$ where $\vertiii{K}_{L^{\infty}(J)} = \sup_{t \in J} \int_J |K(s,t)| \, \dif s$ \citep[Chapter 9.2]{gripenberg1980volterra}. For this, we see that for each $t$ and $s$
\begin{equation} \label{eq:change_variables_volterra}
\begin{aligned}
K(t,s) \le n^{-1} \widehat{\gamma} \cdot \gamma(s) \tr\bigg ( (\nabla^2 \mathscr{L}) \AA^T \AA \exp \big (-2(\AA^T\AA + \delta\II_d)(\Gamma(t)-\Gamma(s)) \big) \bigg ).
\end{aligned}
\end{equation}
This implies by change of variables that
\begin{equation}
    \begin{aligned}
    \int_0^t K(t,s) \, \dif s &\le \int_0^{\Gamma(t)} n^{-1} \widehat{\gamma} \tr\bigg ( (\nabla^2 \mathscr{L}) \AA^T \AA \exp \big (-2(\AA^T\AA + \delta\II_d)(\Gamma(t)-s) \big) \bigg ) \dif s\\
    &\le \frac{\widehat{\gamma}}{2n} \tr \big ( (\AA^T\AA)^2 (\AA^T \AA + \delta\II_d)^{-1} \big ) < \infty. 
    \end{aligned}
\end{equation}
Hence, it follows that the kernel $K$ is $L^\infty$-type on $[0,\infty)$. To prove the boundedness assumption of $\Psi_t$, we will need something slightly stronger. We show that there exists a finite number of intervals $J_i$ such that $\cup_i J_i = [0, \infty)$ and $\vertiii{K}_{L^{\infty}(J_i)} \le 1$. From this and Theorem 9.3.13 in \citep{gripenberg1980volterra}, it will follow that the resolvent is also of type $L^\infty$ on $[0, \infty)$. Since $\gamma(t) \to \widetilde{\gamma}$, there exists a $t_0$ such that for all $t \ge t_0$, $\gamma(t) \le \widetilde{\gamma} + \varepsilon$. This $\varepsilon > 0$ can be chosen sufficiently small such that $\widetilde{\gamma} + \varepsilon < 2\big (n^{-1}\tr( (\AA^T \AA)^2 (\AA^T\AA + \delta \II_d)^{-1} ) \big )^{-1}$ (see \eqref{eq:learning_rate_convergence_threshold}, Assumption on $\widetilde{\gamma}$). First, we observe that 
\[\sup_t \sup_{0 \le s \le t_0} K(t,s) \le n^{-1}\widehat{\gamma}^2 \tr \big ( (\AA^T\AA)^2 e^{2(\AA^T \AA + \delta \II_d)\Gamma(t_0)} \big ) < \infty.\]
We break up the interval $[0, t_0]$ into finitely many intervals of length each of which has a length strictly less than $\big ( n^{-1}\widehat{\gamma}^2 \tr \big ( (\AA^T\AA)^2 e^{2(\AA^T \AA + \delta \II_d)\Gamma(t_0)} \big ) \big )^{-1}$. If we denote these intervals by $J_i$, then it immediately follows by bounding the integral using the sup of $K$ multiplied by the length of the interval $J_i$ that 
\[ \vertiii{K}_{L^{\infty}(J_i)} = \sup_{t \in J_i} \int_{J_i} K(t,s) \, \dif s < 1. \]
It only remains to show on the tail, that is, $J_{\infty} \defas (t_0, \infty)$, for which $\vertiii{K}_{L^{\infty}(J_{\infty})} < 1$. Using the same change of variables as in \eqref{eq:change_variables_volterra} and our choice of $t_0$, we have that for all $t \ge t_0$
\begin{align*}
    \int_{t_0}^t K(t,s) \, \dif s 
    &\le \int_{t_0}^t n^{-1} (\widetilde{\gamma} + \varepsilon) \gamma(s) \tr\bigg ( (\nabla^2 \mathscr{L}) \AA^T \AA \exp \big (-2(\AA^T\AA + \delta\II_d)(\Gamma(t)-\Gamma(s)) \big) \bigg ) \, \dif s\\
    &= \int_{\Gamma(t_0)}^{\Gamma(t)} n^{-1} (\widetilde{\gamma} + \varepsilon) \tr\bigg ( (\nabla^2 \mathscr{L}) \AA^T \AA \exp \big (-2(\AA^T\AA + \delta\II_d)(\Gamma(t)-s) \big) \bigg ) \dif s\\
    &\le \frac{\widetilde{\gamma} + \varepsilon}{2n} \tr \big ( (\AA^T\AA)^2 (\AA^T \AA + \delta\II_d)^{-1} \big ) < 1.
\end{align*}
The last inequality following by our assumption on $\widetilde{\gamma} + \varepsilon$ being sufficiently small. By Theorem 9.3.13 in \citep{gripenberg1980volterra}, we have that the resolvent is also of type $L^\infty$ on $[0, \infty)$. We also have that $K(t,s)$ is of bounded type, that is the kernel is bounded (see \citep[Definition 9.5.2]{gripenberg1980volterra} for precise definition). Since the forcing term $\mathscr{L}(\bm{\mathscr{X}}_{\Gamma(t)}^{\text{gf}})$ is bounded, then it follows by \citep[Theorem 9.5.4]{gripenberg1980volterra} that the solution to the Volterra equation \eqref{eq:psi_volterra}, $\Psi_t$, is bounded. 

We now show that $\Psi_t$ exists at infinity. Fix a $\varepsilon > 0$. By the assumptions on the learning rate, there exists a $t_0 > 0$ such that for all sufficiently large $t \ge s \ge t_0$
\begin{equation}
    \begin{aligned}
    \widetilde{\gamma}- \varepsilon  \le \gamma(t) \le \widetilde{\gamma} + \varepsilon \quad \text{and} \quad (\widetilde{\gamma} - \varepsilon)(t-s) \le \Gamma(t) - \Gamma(s) \le (\widetilde{\gamma}+ \varepsilon) (t-s). 
    \end{aligned}
\end{equation}
Using these inequalities for $\gamma(t)$, we get an upper bound and lower bound on the kernel $K(t,s)$ which we denote by $\overline{K}(t,s)$ and $\underline{K}(t,s)$, respectively. Specifically for all $t, s \ge t_0$, 
\begin{equation}
    \begin{aligned} \label{eq:kernel_bound_1}
    K(t,s) &\le \overline{K}(t,s) \defas n^{-1} (\widetilde{\gamma}+ \varepsilon)^2 \tr\bigg ( (\nabla^2 \mathscr{L}) \AA^T \AA \exp \big (-2(\AA^T\AA + \delta\II_d)(\widetilde{\gamma}-\varepsilon)(t-s) \big) \bigg )\\
    K(t,s) & \ge \underline{K}(t,s)\defas n^{-1} (\widetilde{\gamma}- \varepsilon)^2 \tr\bigg ( (\nabla^2 \mathscr{L}) \AA^T \AA \exp \big (-2(\AA^T\AA + \delta\II_d)(\widetilde{\gamma}+\varepsilon)(t-s) \big) \bigg ).
    \end{aligned}
\end{equation}
The kernels $\overline{K}(t,s)$ and $\underline{K}(t,s)$ are substantially nicer than the original $K(t,s)$ because they are proper convolution kernels. Here one can define $\overline{K} \, : \, [0, \infty) \to \mathbb{R}$ by
\[
\overline{K}(t) \defas  n^{-1} (\widetilde{\gamma} + \varepsilon)^2 \tr\bigg ( (\nabla^2 \mathscr{L}) \AA^T \AA \exp \big (-2(\AA^T\AA + \delta\II_d)(\widetilde{\gamma}-\varepsilon)t \big) \bigg ).
\]
Then it follows that $\overline{K}(t,s) = \overline{K}(t-s)$. A similar result holds for $\underline{K}(t,s)$. 

For ease of notation, define the forcing function: for $t \ge t_0$
\begin{equation}
    F(t) \defas \mathcal{L}(\bm{\mathscr{X}}_{\Gamma(t)}^{\text{gf}}) + \int_0^{t_0} K(t,s) \Psi_s \, \dif s,
\end{equation}
where $\Psi_t$ is a solution to \eqref{eq:psi_volterra}. Because $\Psi_s$ is bounded, it follows that $\lim_{t \to \infty} \int_0^{t_0} K(t,s) \Psi_s = 0$. Also it is clear that the $F(t)$ is bounded. 

Using the upper/lower bound on the kernel \eqref{eq:kernel_bound_1}, we can squeeze the value of $\Psi_t$ between two expressions: for $t, s\ge t_0$,
\begin{equation}
\begin{aligned} \label{eq:gronwall}
F(t) + \int_{t_0}^t \underline{K}(t,s) \Psi_s \, \dif s \le \Psi_t \le F(t) + \int_{t_0}^t \overline{K}(t,s) \Psi_s \, \dif s.
\end{aligned}
\end{equation}
 Using a similar argument for $K(t,s)$ and choosing $\varepsilon$ sufficiently small, $\overline{K}(t,s)$ and $\underline{K}(t,s)$ are $L^{\infty}$-type on $[0, \infty)$. Moreover using a similar argument as we did for $K$ itself, the norms $\vertiii{\overline{K}}_{L^\infty([0,\infty))} < 1$ and  $\vertiii{\underline{K}}_{L^\infty([0,\infty))} < 1$. Here we used the upper bound on $\widetilde{\gamma}$ in \eqref{eq:learning_rate_convergence_threshold} and a sufficiently small $\varepsilon$. Note we do not need to break up into finite intervals. As before, the resolvent then is of $L^{\infty}$-type on $[0, \infty)$ \citep[Corollary 9.3.10]{gripenberg1980volterra}. Further because of non-negativity, Proposition 9.8.1 in \citep{gripenberg1980volterra} yields that the resolvents are also non-negative. 
 
 Consider the upper bound (a similar argument will hold for the lower bound). We can apply Gronwall's inequality \eqref{eq:gronwall} \citep[Theorem 9.8.2]{gripenberg1980volterra}. It follows that $\Psi_t$ is upper bounded (lower bounded) by the solutions $\overline{\Psi}_t$ ($\underline{\Psi}_t$) to the following convolution Volterra equations
\[ \overline{\Psi_t} = F(t) + \int_{t_0}^t \overline{K}(t,s) \overline{\Psi}_s \, \dif s \quad \text{and} \quad \underline{\Psi}_t = F(t) + \int_{t_0}^t \underline{K}(t,s) \underline{\Psi}_s \, \dif s.\]
Specifically, we have $\underline{\Psi}_t \le \Psi_t \le \overline{\Psi}_t$ for all $t \ge t_0$. Since $\overline{\Psi}_t$ and $\underline{\Psi}_t$ are solutions to a proper convolution-type Volterra equation and $F(t)$ has a limit at infinity (denoted by $F(\infty)$), by \citep{Asmussen}, for $t \ge t_0$
\begin{equation}
    \limsup_{t \to \infty} \Psi_t \le \limsup_{t \to \infty} \overline{\Psi}_t = F(\infty) 
    \big (1- \vertiii{ \overline{K}}_{L^{\infty}([t_0, \infty))} \big )^{-1} \le F(\infty) \big (1- \vertiii{ \overline{K}}_{L^{\infty}([0, \infty))} \big )^{-1},
\end{equation}
and similarly, 
\begin{equation}
    \liminf_{t \to \infty} \Psi_t \ge \liminf_{t \to \infty} \underline{\Psi}_t  \le F(\infty) \big (1- \vertiii{ \underline{K}}_{L^{\infty}([0, \infty))} \big )^{-1} \quad \text{where} \quad F(\infty) \defas \lim_{t\to \infty} F(t) = \mathcal{L}(\bm{\mathscr{X}}_{\infty}^{\text{gf}}).
\end{equation}
A simple computation yields that 
\begin{equation}
    \begin{gathered}
    \vertiii{ \overline{K}}_{L^{\infty}([0, \infty))} = \frac{(\tilde{\gamma}+\varepsilon)^2}{\tilde{\gamma}-\varepsilon} G \quad \text{and} \quad  \vertiii{ \underline{K}}_{L^{\infty}([0, \infty))} = \frac{(\tilde{\gamma}-\varepsilon)^2}{\tilde{\gamma}+\varepsilon} G\\
    \text{where} \quad G \defas (2n)^{-1} \tr \big ( (\AA^T\AA)^2 (\AA^T\AA + \delta \II_d)^{-1} \big ). 
    \end{gathered}
\end{equation}
So for any sufficiently small $\varepsilon > 0$, we have that 
\begin{equation}
   \left ( 1 - \frac{(\tilde{\gamma}-\varepsilon)^2}{\tilde{\gamma}+\varepsilon} G \right )^{-1} \cdot \mathcal{L}(\bm{\mathscr{X}}_{\infty}^{\text{gf}}) \le \liminf_{t \to \infty} \Psi_t \le \limsup_{t \to \infty} \Psi_t \le \left ( 1- \frac{(\tilde{\gamma}+\varepsilon)^2}{\tilde{\gamma}-\varepsilon} G \right )^{-1} \cdot \mathcal{L}(\bm{\mathscr{X}}_{\infty}^{\text{gf}}).
\end{equation}
As this holds for any sufficiently small $\varepsilon$, the result follows by sending $\varepsilon \to 0$.
\end{proof}

\section{Martingale errors} \label{sec:martingale_errors}
The martingale errors $\mathcal{M}_t^{\text{hSGD}}$, $\mathcal{M}_t^{\text{grad}}$, and $\mathcal{M}_t^{\text{quad}}$,  \eqref{eq:ito_diffusion}, \eqref{eq:martingale_grad}, and \eqref{eq:martingale_quad} respectively, arise due to the stochastic processes governing homogenized SGD and the randomness in the algorithm itself. Controlling the error from homogenized SGD will be simple as the only randomness comes from the Brownian motion. On the other hand, the martingale errors from the randomness in the algorithm, that is, $\mathcal{M}_t^{\text{grad}}$ and $\mathcal{M}_t^{\text{quad}}$ are small, in part because the singular vector matrix $\UU$ is delocalized (Assumption~\ref{ass: laundry_list}). Estimating that the error generated by these martingales requires some substantial build-up (see Section~\ref{sec:martingale_errors_sgd}). Without loss of generality, we  normalize our matrix $\AA$ so that it has row sum always $1$ without loss of generality. 

First we control the martingale that arises in homogenized SGD, that is, 
\[ 
    \mathcal{M}_t^{\text{hSGD}}(q) = \int_0^t \gamma(s) \nabla q(\YY_s) \cdot \sqrt{\tfrac{2}{n} \widehat{\mathscr{L}}(\YY_s)\SSigma^T \SSigma} \cdot \dif \BB_s. 
\]
 To control the fluctuations of this martingales, we need to control its quadratic variation defined as follows. Consider a partition of time for $[0, t]$, that is, $0 = t_0 < t_1 < \hdots < t_n = t$. We define for any continuous process $Y$,
\begin{align*}
    \Delta Y_{t_k} \defas Y_{t_k} - Y_{t_{k-1}}
\end{align*}
The \textit{quadratic variation} $[Y_t(n)]$ is the limit of the sum of squares of all jumps of the process as the size of the partition $\Delta t = \max_i \{t_{i}-t_{i-1}\} \to 0$, that is, 
\begin{align} \label{eq:quad_variation}
    [Y_t] \defas \lim_{\Delta t \to 0} \sum_{k = 1}^{n} (\Delta Y_{t_k})^2.
\end{align}
% We can perform a Doob's decomposition on $[Y_t]$ and thus we define the \textit{compensator} for the quadratic variation, denoted by $\langle Y_t \rangle$, 
% \begin{align*}
%     \langle Y_t \rangle &\defas \int_0^t \mathcal{C}_s \, \dif s \quad \text{where} \quad [Y_t] = [Y_0] + \int_0^t \mathcal{C}_s \, \dif s + \mathcal{M}^{(1)}_t.
% \end{align*}
% Here $(\mathcal{M}^{(1)}_t \, : \, t\ge 0)$ are $\mathcal{F}_t$-adapted martingales. 
Using the quadratic variation of $\mathcal{M}_t^{\text{hSGD}}$, we show that the martingale from homogenized SGD is small. 

\begin{proposition}[Homogenized SGD martingale] \label{prop:martingale_errors_hSGD} Let $q$ be a quadratic with $\|\nabla^2 q\|$ and $\|\nabla q(0)\|_2$ bounded, independent of $n$. For any $T > 0$, with overwhelming probability, 
\begin{equation} \begin{gathered}
    \sup_{0 \le t \le T} | \mathcal{M}_{t \wedge \vartheta}^{\text{hSGD}}(q) | \le C(\delta, \widehat{\gamma}, \SSigma, \UU^T \bb, T) (\|\nabla^2 q \|^2 + \|\nabla q(0)\|^2)^{1/2} n^{\varepsilon - 1/2}
\end{gathered} \end{equation}
where $C(\delta, \widehat{\gamma}, \SSigma, \bb, T)$ is a constant depending only on the norms of the inputs and independent of $n$ and $q$.
\end{proposition}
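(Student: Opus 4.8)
The plan is to treat $\mathcal{M}_{t\wedge\vartheta}^{\text{hSGD}}(q)$ as a continuous (stopped) It\^o martingale and to control it by bounding its quadratic variation \emph{pointwise} and then invoking a time-uniform exponential martingale inequality. First I would record, directly from the definition of $\mathcal{M}_t^{\text{hSGD}}(q)$ in \eqref{eq:ito_diffusion} and the It\^o isometry, that
\[
\langle \mathcal{M}^{\text{hSGD}}(q)\rangle_{t\wedge\vartheta}
= \int_0^{t\wedge\vartheta} \tfrac{2}{n}\,\gamma^2(s)\,\widehat{\mathscr{L}}(\YY_s)\,\nabla q(\YY_s)^T \SSigma^T\SSigma\,\nabla q(\YY_s)\,\dif s .
\]

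Next I would bound the integrand on the stopped path. By the definition of $\vartheta$ in \eqref{eq:stopping_time_norm} together with continuity of $\YY$ (or, alternatively, directly by Lemma~\ref{lem:boundedness_loss_norm_homogenized_SGD}), we have $\|\YY_s^{\vartheta}\|^2 \le n^{\varepsilon}$ for all $s$, \emph{almost surely}. Using $\gamma(s)\le\widehat\gamma$, the elementary estimate $\widehat{\mathscr{L}}(\yy)=\tfrac12\|\SSigma\yy-\UU^T\bb\|^2 \le C(\SSigma,\UU^T\bb)(1+\|\yy\|^2)$, and the fact that for a quadratic $q$ one has $\nabla q(\yy)=(\nabla^2 q)\yy+\nabla q(0)$ so that $\nabla q(\yy)^T\SSigma^T\SSigma\nabla q(\yy)\le \|\SSigma\|^2\big(\|\nabla^2 q\|\,\|\yy\|+\|\nabla q(0)\|\big)^2$, one obtains the almost-sure bound $\langle \mathcal{M}^{\text{hSGD}}(q)\rangle_{t\wedge\vartheta}\le V$ for all $t\le T$, where $V\defas C(\delta,\widehat\gamma,\SSigma,\UU^T\bb,T)\,(\|\nabla^2 q\|^2+\|\nabla q(0)\|^2)\,n^{2\varepsilon-1}$.

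Then I would apply the standard exponential supermartingale: since $\exp\!\big(\lambda \mathcal{M}_{t\wedge\vartheta}^{\text{hSGD}}(q)-\tfrac{\lambda^2}{2}\langle\mathcal{M}^{\text{hSGD}}(q)\rangle_{t\wedge\vartheta}\big)$ is a supermartingale and the quadratic variation is $\le V$ a.s., the convex functional $e^{\lambda\mathcal{M}_{t\wedge\vartheta}^{\text{hSGD}}(q)}$ is a nonnegative submartingale with $\EE\, e^{\lambda\mathcal{M}_{T\wedge\vartheta}^{\text{hSGD}}(q)}\le e^{\lambda^2 V/2}$; Doob's maximal inequality and optimizing in $\lambda$ give $\Pr\big(\sup_{0\le t\le T}|\mathcal{M}_{t\wedge\vartheta}^{\text{hSGD}}(q)|> a\big)\le 2e^{-a^2/(2V)}$. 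Taking $a=\sqrt{2V}\,(\log d)^{3/4}$ yields the asserted bound up to a harmless $(\log d)^{3/4}$ factor, with failure probability at most $2e^{-(\log d)^{3/2}}$, i.e.\ with overwhelming probability; the polylogarithmic factor can be absorbed because only a negative exponent of $n$ is ever used downstream (equivalently, one may take $\varepsilon$ slightly smaller in \eqref{eq:stopping_time_norm}).

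The argument is essentially routine; the one point that genuinely needs the stopping time is that the quadratic-variation bound must hold \emph{almost surely} rather than merely with high probability, since otherwise the exponential-martingale/Doob step does not apply directly. Working with $\YY^{\vartheta}$ is exactly what supplies this, and is the reason the stopped processes are carried through Section~\ref{sec:comp_SGD_HSGD}. A secondary, minor point is matching the stated exponent exactly: the extra $(\log d)^{3/4}$ is innocuous and is dealt with as above.
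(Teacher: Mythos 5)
Your proposal is correct and takes exactly the route the paper does: compute the quadratic variation of $\mathcal{M}^{\text{hSGD}}_{\cdot\wedge\vartheta}(q)$, bound it pointwise almost surely on the stopped path using $\|\YY_s^\vartheta\|^2\le n^\varepsilon$, and then invoke a time-uniform exponential martingale inequality (the paper just states $\Pr(\sup|\mathcal{M}|>a)\le e^{-a^2/2b}$ directly; you reconstruct it via the exponential supermartingale and Doob, which is the standard derivation).

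One small point where your version is actually more careful than the paper's: the quadratic variation really scales like $n^{2\varepsilon-1}$, not $n^{\varepsilon-1}$. Both $\widehat{\mathscr{L}}(\YY_s^\vartheta)\lesssim n^\varepsilon$ and $\nabla q(\YY_s^\vartheta)^T\SSigma^T\SSigma\nabla q(\YY_s^\vartheta)\lesssim n^\varepsilon$, so their product contributes $n^{2\varepsilon}$, giving $V\lesssim n^{2\varepsilon-1}$; the paper's displayed bound of $(\|\nabla^2 q\|^2+\|\nabla q(0)\|^2)n^{\varepsilon-1}$ drops an $n^\varepsilon$ and consequently its choice $a\asymp n^{\varepsilon-1/2}$ appears to yield a failure probability $e^{-n^\varepsilon/2}$, whereas with the correct $V\asymp n^{2\varepsilon-1}$ that same $a$ only gives $e^{-C}$, which is not an overwhelming-probability bound. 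Your observation that one must take $a\asymp\sqrt{V}(\log d)^{3/4}$ (or shrink $\varepsilon$ slightly) to recover overwhelming probability is exactly the right fix, and, as you say, it is harmless downstream since Theorem~\ref{thm:homogenized_SGD_SGD} only requires a strictly negative exponent of $n$ on the right-hand side.
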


\begin{proof} First, we compute the quadratic variation 
\begin{equation}
    \sup_{0 \le t \le T} [\mathcal{M}_{t \wedge \vartheta}^{\text{hSGD}}(q)] = \sup_{0 \le t \le T}  \frac{2}{n} \int_0^{t \wedge \vartheta} \gamma^2(s) \widehat{\mathscr{L}}(\YY_s) \nabla q(\YY_s)^T \SSigma^T \SSigma \nabla q(\YY_s) \, \dif s.
\end{equation}
A simple computations shows that 
\[
\widehat{\mathscr{L}}(\YY_s) \le 2 ( \|\SSigma^T \SSigma \| \|\YY_s\|^2 + \| \bb\|^2 ) \quad \text{and} \quad  \|\nabla q(\YY_s)\|^2_2 \le 2 (\|\nabla^2q\|^2 \|\YY_s\|^2 + \|\nabla q(0)\|^2).
\]
The definition of $\vartheta$ yields the following upper bound 
\begin{equation} \begin{aligned}
    \sup_{0 \le t \le T} [\mathcal{M}_{t \wedge \vartheta}^{\text{hSGD}}(q)] \le C(\widehat{\gamma}, \SSigma, \bb, T) (\|\nabla^2 q\|^2 + \|\nabla q(0)\|^2) n^{\varepsilon -1}.
\end{aligned}
\end{equation}
Since $\displaystyle \sup_{0 \le t \le T} [ \mathcal{M}_{t \wedge \vartheta}^{\text{hSGD}}(q) ] \le b$ a.s., then we have that $\displaystyle \text{Pr}(\sup_{0 \le t \le T} | \mathcal{M}_{t \wedge \vartheta}^{\text{hSGD}}(q)| > a) \le \exp(-a^2 / 2 b)$. Let $a = \sqrt{C(\widehat{\gamma}, \SSigma, \bb, T) (\|\nabla^2 q\|^2 + \|\nabla q(0)\|^2)} n^{\varepsilon -1/2}$ and the result immediately follows. 
\end{proof}

\subsection{Martingale errors in SGD} \label{sec:martingale_errors_sgd}
The martingale errors $\mathcal{M}_t^{\text{grad}}$ and $\mathcal{M}_t^{\text{quad}}$, \eqref{eq:martingale_grad} and \eqref{eq:martingale_quad} respectively, are due to the randomness in the algorithm itself. They in part are small because the singular vector matrix $\UU$ is delocalized, in that its off-diagonal entries in any fixed orthogonal basis are $n^{\varepsilon - 1/2}$ with overwhelming probability. 

Estimating that the error generated by the martingales requires substantial build-up.
As an input, we will use the stopping time $\vartheta_{\varepsilon} = \vartheta$ on the size of $\nnu_t$ and $\YY_t$ processes (see \eqref{eq:stopping_time_norm}), and so we work with the stopped process defined for any $t \ge 0$ by $\nnu_t^{\vartheta} \defas \nnu_{t \wedge \vartheta}$. The most important and technical input, which we will use in multiple places, is that the function values do not concentrate too heavily in any coordinate direction. In some sense, this is the most challenging and technical statement that we will prove:

\begin{proposition} \label{prop: entry_bound_function_values} Under Assumption \ref{ass: laundry_list}, for any $T \ge 0$, any $\alpha > \theta+\varepsilon$,
\begin{equation}
    \sup_{0 \le t \le T} \sup_{1 \le i \le n} \big (\ee_i^T (\UU \SSigma \nnu_t^{\vartheta} - \bb) \big )^2 \le n^{2\alpha -1}
%    \quad\text{and}\quad
%    \sup_{0 \le t \le T} \sup_{1 \le i \le n} \big (\ee_i^T \nnu_t^{\vartheta} \big )^2 \le n^{2\alpha -1}
\end{equation}
with overwhelming probability (conditioned on $\mathcal{G}$).
\end{proposition}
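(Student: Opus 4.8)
The plan is to track the full residual vector, $\rr_t \defas \UU\SSigma\nnu_t^{\vartheta} - \bb$ (i.e.\ $\AA\xx_t - \bb$ read in the left‑singular basis, up to the stopping time $\vartheta$), show that it solves a \emph{linear} jump SDE, extract a Duhamel representation of each coordinate as ``gradient‑flow residual $+$ martingale'', bound the gradient‑flow residual deterministically from the resolvent assumptions, and close a Gronwall/bootstrap argument for $M(t)\defas\sup_{0\le u\le t\wedge\vartheta}\sup_i(\ee_i^T\rr_u)^2$. Concretely: from the Doob decomposition \eqref{eq:nu_change}, $\dif\nnu_t=-\gamma(t)(\SSigma^T\SSigma+\delta\II_d)\nnu_t\,\dif t+\gamma(t)\SSigma^T\UU^T\bb\,\dif t+\dif\mathcal{M}_t$; applying $\UU\SSigma$ and using $\SSigma(\SSigma^T\SSigma+\delta\II_d)=(\AA\AA^T+\delta\II_n)\SSigma$ yields the closed linear equation $\dif\rr_t=-\gamma(t)(\AA\AA^T+\delta\II_n)\rr_t\,\dif t-\gamma(t)\delta\bb\,\dif t+\UU\SSigma\,\dif\mathcal{M}_t$. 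Integrating against $e^{(\AA\AA^T+\delta\II_n)\Gamma(t)}$ and identifying the deterministic part with gradient flow (as in Lemma~\ref{lemma:iterates_hSGD}) gives, for each $i$,
\[
\ee_i^T\rr_t=\ee_i^T\!\big(\AA\bm{\mathscr{X}}^{\text{gf}}_{\Gamma(t)}-\bb\big)+\int_0^{t}\ee_i^T e^{-(\AA\AA^T+\delta\II_n)(\Gamma(t)-\Gamma(s))}\UU\SSigma\,\dif\mathcal{M}_s=:(\mathrm{GF})_i+(\mathrm{Fl})_i(t).
\]

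For the gradient‑flow term I would write $\AA\bm{\mathscr{X}}^{\text{gf}}_{\Gamma(t)}-\bb$ via \eqref{eq:GFS} and the identity $\AA\,p(\AA^T\AA)\AA^T=\widetilde p(\AA\AA^T)$ as a sum of $\varphi(\AA\AA^T)\bb$ (with $\varphi$ analytic and, since $\Gamma(t)\le\Gamma(T)$, uniformly bounded on $\Omega$) and an $\xx_0$‑dependent term. The contour formula \eqref{eqr:int} together with Assumption~\ref{ass: laundry_list}(1) then bounds $|(\varphi(\AA\AA^T)\bb)_i|$ by $C(T)n^{\theta-1/2}$ (taking $\varphi\equiv1$ also shows $|b_i|\le Cn^{\theta-1/2}$), and the $\xx_0$‑contribution is handled by Assumption~\ref{assumption:init} (and vanishes identically when $\xx_0=\bm0$). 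Hence $\sup_{t\le T}\sup_i|(\mathrm{GF})_i|\le C(T)n^{\theta-1/2}$.

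The heart of the argument is $(\mathrm{Fl})_i$. I would fix $t$, treat $v\mapsto\int_0^{v}\ee_i^T e^{-(\AA\AA^T+\delta\II_n)(\Gamma(t)-\Gamma(s))}\UU\SSigma\,\dif\mathcal{M}_s$ as an $\mathcal{F}_v$‑martingale on $[0,t]$, and estimate its quadratic variation. At the $k$‑th Poisson arrival, $\UU\SSigma\,\Delta\mathcal{M}_{\tau_k}=\Delta\rr_{\tau_k}=-\gamma(\tau_k)\big[\AA\AA^T\ee_{i_k}(\ee_{i_k}^T\rr_{\tau_k-})+\tfrac{\delta}{n}(\rr_{\tau_k-}+\bb)\big]$, so the jump of $(\mathrm{Fl})_i$ is controlled by $\ee_i^T\psi_{\tau}(\AA\AA^T)\ee_{i_k}$ with $\psi_\tau(x)=x e^{-(x+\delta)\tau}$ analytic and bounded on $\Omega$ uniformly for $\tau\in[0,\Gamma(T)]$. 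I would split the arrivals into $i_k\neq i$ and $i_k=i$: for $i_k\neq i$, Assumption~\ref{ass: laundry_list}(2) gives $|\ee_i^T\psi_\tau(\AA\AA^T)\ee_{i_k}|\le C(T)n^{\theta-1/2}$, and crucially $\sum_l(\ee_i^T\psi_\tau(\AA\AA^T)\ee_l)^2=\ee_i^T\psi_\tau(\AA\AA^T)^2\ee_i=O(1)$ (Assumption~\ref{ass: laundry_list}(3) plus $\tfrac1n\tr(\AA\AA^T)=1$), so averaging over the uniformly random index $i_k$ shows that the predictable compensator of this part of the quadratic variation is $\le C\int_0^tM(s)\,\dif s$ with an $n$‑independent constant; for $i_k=i$ one has $|\ee_i^T\psi_\tau(\AA\AA^T)\ee_i|=O(1)$, but such updates occur at rate $1$, are contractive on the $i$‑th coordinate, and (after a union bound over $i$) number $O(\log n)$ per coordinate w.o.p.\ The $\tfrac{\delta}{n}$‑terms are lower order. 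A Freedman/Bennett‑type exponential martingale inequality, combined with Doob's inequality, a union bound over $i$, and a fine mesh in $t\in[0,T]$ (as in Steps~4--5 of the proof of Theorem~\ref{thm:trainrisk}), then yields w.o.p.\ $\sup_{t\le T}\sup_i(\mathrm{Fl})_i(t)^2\le C(T)\big(\int_0^tM(s)\,\dif s+n^{2\theta+\varepsilon-1}\big)$ modulo lower‑order terms, whence $M(T)\le C(T)\big(n^{2\theta+\varepsilon-1}+\int_0^TM(s)\,\dif s\big)$ and Gronwall gives $M(T)\le C(T)n^{2\theta+\varepsilon-1}$; since $\alpha>\theta+\varepsilon$, this is $\le n^{2\alpha-1}$ for $n$ large, which is the claim (for the stopped process, which suffices since this proposition is used only inside the stopped analysis of Section~\ref{sec:comp_SGD_HSGD}).

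The step I expect to be the main obstacle is precisely this quadratic‑variation estimate: one must ensure that the martingale term re‑enters the bound \emph{only} through the Gronwall‑compatible integral $\int_0^tM(s)\,\dif s$ with an $n$‑independent constant, rather than through a term of the form $(\log n)^{O(1)}M(t)$ — for otherwise the $\log n$ lost in passing to an ``overwhelming probability'' martingale bound would, through Gronwall, produce an $n^{O(T)}$ factor and destroy the estimate for $\alpha$ close to $\theta+\varepsilon$. Getting this requires simultaneously exploiting the averaging over the uniform SGD indices and the spectral‑side delocalization of Assumption~\ref{ass: laundry_list}, and a careful separate treatment of the rare‑but‑$O(1)$‑strength ``self‑sampling'' increments $i_k=i$, using that they are contractive and that their total count per coordinate is only $O(\log n)$, so that the polynomial slack $n^{2(\alpha-\theta-\varepsilon)}$ absorbs all residual polylogarithmic and constant factors.
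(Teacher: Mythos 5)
Your Duhamel set-up is correct and matches the spirit of the paper's proof (which is also a residual-evolution argument): the SDE you write for $\rr_t = \UU\SSigma\nnu_t^\vartheta-\bb$ is right, and the decomposition into a gradient-flow residual plus a stochastic integral of the martingale is the right object. However, there are two linked gaps in the way you propose to close the argument, and they correspond precisely to the parts of the paper's proof that you replace with a sketch.

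First, the Gronwall closure does not go through as stated. Your compensator bound $\langle(\mathrm{Fl})_i\rangle_t\leq C\int_0^t M(s)\,\dif s$ uses $\sum_l(\ee_i^T\psi(\AA\AA^T)\ee_l)^2\leq C$ together with $\max_l(\ee_l^T\rr_s)^2\leq M(s)$. But $M(s)$ is exactly the random quantity you are trying to bound, and a Freedman/Bennett inequality requires a \emph{deterministic} envelope on the predictable quadratic variation. The only a priori deterministic bound you have from the stopping time $\vartheta$ is $M(s)\leq Cn^{\varepsilon}$, which yields $\langle\cdot\rangle\leq CTn^{\varepsilon}$ and hence $(\mathrm{Fl})_i\leq Cn^{\varepsilon/2+o(1)}$ --- nowhere near $n^{\theta+\varepsilon-1/2}$. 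To avoid this, the paper uses the \emph{opposite} H\"older split in the compensator: bound $\max_{j\neq i}(\ee_i^T\psi(\AA\AA^T)\ee_j)^2\leq Cn^{2\theta-1}$ (the off-diagonal delocalization) and $\sum_j(\ee_j^T\rr_s)^2=\|\rr_s\|^2\leq Cn^{\varepsilon}$, giving a \emph{deterministic} compensator bound $\leq CTn^{2\theta+\varepsilon-1}$ with no Gronwall loop at all; the remaining $n$-dependence then comes solely from the jump bound $|\Delta Y|\leq n^{\theta-1/2}\max_j|\ee_j^T\rr_{s-}|\leq n^{\theta+\alpha-1}$ under a secondary stopping time $\hbar_\alpha$, which is what forces the bootstrap: one proves the estimate with some $\alpha_0$ for which it is trivial, then iterates $\alpha\mapsto(\theta+\varepsilon+3\alpha)/4$. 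Your proposal has no bootstrap, and without one the jump bound for $\alpha$ away from $\theta+\varepsilon$ is too large for the exponential martingale inequality.

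Second, the self-sampling jumps $i_k=i$ are a genuine obstruction that your sketch underestimates. Under $\hbar_\alpha$ each such jump of $(\mathrm{Fl})_i$ has magnitude $\sim|\ee_i^T\psi(\AA\AA^T)\ee_i|\cdot|\ee_i^T\rr_{s-}|=O(n^{\alpha-1/2})$, not $O(n^{\theta-1/2})$; with $O(\log n)$ of them per coordinate their cumulative contribution is $O(n^{\alpha-1/2}\log n)$, which is \emph{larger} than the target $n^{\alpha'-1/2}$ for any $\alpha'<\alpha$ --- so merely counting them and invoking the polynomial slack does not suffice. What the paper actually does is remove the $i$-indexed jumps from the compensated drift entirely, which changes your drift generator $\AA\AA^T+\delta\II_n$ into the leave-one-out Hessian $\ZZ=\SSigma^T\UU^T(\II_n-\ee_i\ee_i^T)\UU\SSigma+\delta\tfrac{n-1}{n}\II_d$. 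Controlling $\ee_i^T\UU\SSigma\,\varphi(\ZZ)\,\SSigma^T\UU^T\ee_j$ from Assumption~\ref{ass: laundry_list} then requires a Sherman--Morrison--Woodbury step (the paper's Lemma~\ref{lem:weirdo}) which your cleaner $\AA\AA^T$ formulation avoids but which is exactly what buys the small jump bound. The type-2 arrivals $\tau_{k,2}$ then enter only as a one-step affine update $\ee_i^T\rr_{\tau_{k,2}}=(1-\gamma\|\AA_i\|^2)\ee_i^T\rr_{\tau_{k,2}^-}+O(n^{\varepsilon/2-1})$, and the proof tracks the \emph{running supremum} $V_k$ across these intervals; the $O(\log n)$ count then multiplies increments of size $n^{\theta-1/2}+n^{\beta-1/2}$ rather than the $O(n^{\alpha-1/2})$ raw jumps. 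In short: your skeleton is right and your diagnosis of the obstacles is accurate, but the Gronwall-with-$n$-independent-constant claim and the ``contractive, only $O(\log n)$ of them'' dismissal of the self-sampling jumps are precisely where the paper replaces heuristics with the bootstrap over $\hbar_\alpha$, the $\ZZ$-leave-one-out generator, and the $V_k$ running-sup bookkeeping.
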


Proposition~\ref{prop: entry_bound_function_values} allows us to adopt a stopping time $\hbar$, defined as 
\begin{equation} \label{eq:stopping_time_hbar}
    \hbar \defas \inf \big \{ t \le \vartheta \, : \, \max_{1 \le i \le n} ( \ee_i^T (\UU \SSigma \nnu_t - \bb))^2 > n^{-1 + 2\alpha}  \big \}. 
\end{equation}
With this proposition in hand, we can bound the martingale errors.

\begin{proposition}[Martingale error bounds] \label{prop:martingale_errors_SGD} Suppose $0 < \theta < \alpha < 1/2$ and let $0 < \varepsilon < \alpha$ in the stopping time $\vartheta$ such that $ \alpha + \varepsilon < 1/2$. Let $q$ be a quadratic with $\|\nabla^2 q(\xx)\|$ and $\|\nabla q(0)\|_2$ bounded independent of $n$. For any $T > 0$, with overwhelming probability, 
\begin{equation} \begin{gathered}
    \sup_{0 \le t \le T} | \mathcal{M}_{t \wedge \vartheta}^{\text{grad}}(q) | \le C(\delta, \widehat{\gamma}, \SSigma, \bb, T) (\|\nabla^2 q \| + \|\nabla q(0)\|_2) n^{-1/2 + \alpha + \varepsilon}\\
    \text{and} \quad \sup_{0 \le t \le T} | \mathcal{M}_{t \wedge \vartheta}^{\text{quad}}(q) | \le C(\delta, \widehat{\gamma}, \SSigma, \bb) T^{1/2} \| \nabla^2 q\| n^{- 1/2 + \alpha + \varepsilon},
\end{gathered} \end{equation}
where $C(\delta, \widehat{\gamma}, \SSigma, \bb, T)$ is a constant depending only on the norms of the inputs and independent of $n$ and $q$. The choice of $\varepsilon$ in the stopping time $\vartheta$ ensures that $-1/2 + \alpha + \varepsilon < 0$.
\end{proposition}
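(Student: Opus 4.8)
The plan is to control each martingale by bounding its quadratic variation on the stopped interval $[0, t \wedge \vartheta]$ and then invoking an exponential martingale inequality (Bernstein-type for continuous-time martingales with a.s.-bounded quadratic variation, of the form $\Pr(\sup_{0 \le t \le T}|\mathcal{M}_{t\wedge \vartheta}| > a) \le 2\exp(-a^2/(2b))$ whenever $[\mathcal{M}]_{T \wedge \vartheta} \le b$ a.s.). This is exactly the recipe already used in Proposition~\ref{prop:martingale_errors_hSGD}, so the structure is parallel; the extra work here is that the randomness is no longer just Brownian but comes from the Poisson sampling in SGD, so the quadratic variation will involve the matrix $\UU$ entrywise and must be controlled through the delocalization Assumption~\ref{ass: laundry_list}, not merely through operator norms.

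First I would compute the predictable quadratic variation of $\mathcal{M}_t^{\text{grad}}(q) = \int_0^t \nabla q(\nnu_{s-}) \cdot \dif \mathcal{M}_s$. Since $\mathcal{M}_s$ is the martingale part of the Doob decomposition of $\nnu$ driven by a rate-$n$ Poisson process, its jump at an arrival using row $i$ is (up to the $\mathcal{O}(\delta/n)$ regularization contribution) $-\gamma(s) \SSigma^T \UU^T \ee_i \ee_i^T(\UU\SSigma \nnu_{s-} - \bb)$, and each row is equally likely, so the quadratic variation rate is $n \cdot \tfrac1n \sum_{i=1}^n \gamma^2(s) \big(\nabla q(\nnu_{s-})^T \SSigma^T \UU^T \ee_i\big)^2 \big(\ee_i^T(\UU\SSigma\nnu_{s-}-\bb)\big)^2$. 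On the stopped interval, the factor $\big(\ee_i^T(\UU\SSigma\nnu_{s-}-\bb)\big)^2$ is at most $n^{-1+2\alpha}$ by Proposition~\ref{prop: entry_bound_function_values} (equivalently, on $[0,\hbar]$ with $\hbar$ as in \eqref{eq:stopping_time_hbar}, and $\hbar = \vartheta$ w.o.p.), and $\sum_i \big(\nabla q(\nnu_{s-})^T \SSigma^T \UU^T \ee_i\big)^2 = \|\SSigma \nabla q(\nnu_{s-})\|^2 \le \|\SSigma\|^2 \|\nabla q(\nnu_{s-})\|^2 \le C\|\SSigma\|^2(\|\nabla^2 q\|^2 \|\nnu_{s-}\|^2 + \|\nabla q(0)\|^2)$, which on $[0,\vartheta]$ is $\le C(\|\nabla^2 q\|^2 + \|\nabla q(0)\|^2)n^\varepsilon$. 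Multiplying, integrating over $s \le T$, and absorbing $\widehat\gamma^2$ and the regularization terms (which are lower order, carrying extra $1/n$ factors), the quadratic variation is bounded a.s.\ by $C(\delta,\widehat\gamma,\SSigma,\bb,T)(\|\nabla^2 q\|^2 + \|\nabla q(0)\|_2^2) n^{2\alpha + \varepsilon - 1}$. Taking $a = C'(\|\nabla^2 q\| + \|\nabla q(0)\|_2)n^{-1/2+\alpha+\varepsilon}$ with $C'$ large, the exponential inequality gives the claimed bound with overwhelming probability (the exponent $-(\log)$-type decay; and since $\alpha < 1/2$ and $\varepsilon < \alpha$ the bound is nontrivial, while the constraint $\alpha + \varepsilon < 1/2$ makes the error vanish).

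The argument for $\mathcal{M}_t^{\text{quad}}(q)$ is analogous but one order ``noisier'' in the jump size, so I expect its quadratic variation to be the more delicate of the two. Here $\mathcal{M}_t^{\text{quad}}(q) = \sum_{0<s\le t}(\Delta\nnu_s)^T \nabla^2 q(\nnu_{s-})\Delta\nnu_s - \int_0^t \mathcal{B}_s\,\dif s$ compensates the \emph{quartic-in-increment} term, so a single jump contributes $\gamma^2(s)(\ee_i^T \UU\SSigma(\nabla^2 q)\SSigma^T\UU^T\ee_i)(\ee_i^T(\UU\SSigma\nnu_{s-}-\bb))^2$, and the quadratic variation rate is $n$ times the average of the squares of these. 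The diagonal term $\ee_i^T \UU\SSigma(\nabla^2 q)\SSigma^T\UU^T\ee_i$ is bounded by $\|\SSigma\|^2\|\nabla^2 q\|$ (a crude operator bound suffices here; we do not even need its self-averaging, only boundedness), while each $(\ee_i^T(\UU\SSigma\nnu_{s-}-\bb))^2 \le n^{-1+2\alpha}$ again by Proposition~\ref{prop: entry_bound_function_values}; so each summand is $\le C\|\nabla^2 q\| n^{-1+2\alpha}$ and there are $n$ of them weighted by $\tfrac1n$ each, giving a quadratic variation rate $\le C n \cdot \tfrac1n \cdot (\|\nabla^2 q\| n^{-1+2\alpha})^2 \cdot \gamma^4 \le C\widehat\gamma^4 \|\nabla^2 q\|^2 n^{-1+4\alpha}\cdot n^{-1}$… I would be careful to track the powers precisely: summing $n$ terms each of size $(n^{-1+2\alpha})^2$ with weight $\tfrac1n$ and multiplying by the Poisson rate $n$ gives $n \cdot \tfrac1n \cdot \sum_{i=1}^n \cdots = \sum_i (\cdots)^2 \le n (n^{-1+2\alpha})^2 C\|\nabla^2 q\|^2 = C\|\nabla^2 q\|^2 n^{-1+4\alpha}$, which after integrating over $[0,T]$ and the exponential inequality with $a = C' T^{1/2}\|\nabla^2 q\| n^{-1/2+\alpha+\varepsilon}$ yields the stated bound provided $2\alpha < 1/2$; I would double-check whether the paper's convention forces $\alpha < 1/4$ here or whether the $\varepsilon$-slack and the $(\ee_i^T(\ldots))^2$ estimate were designed so $n^{-1+2\alpha}$ appears only linearly in the variation — rereading the compensator $\mathcal{B}_s$ in \eqref{eq:conditional_expectation_difference} shows the relevant bound is linear in $(\ee_i^T(\ldots))^2$ for the \emph{increment}-squared term inside the variation, so the correct exponent is $n^{-1+2\alpha+\varepsilon}$ matching $a^2 = T\|\nabla^2 q\|^2 n^{-1+2\alpha+2\varepsilon}$ — and adjust accordingly. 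The main obstacle is thus not the martingale inequality itself (routine) but correctly bookkeeping the powers of $n$ in the quadratic variation of $\mathcal{M}^{\text{quad}}$, and ensuring Proposition~\ref{prop: entry_bound_function_values} is invoked on the stopped interval where both $\vartheta$ and $\hbar$ are in force, so that the a.s.\ bound on $[\mathcal{M}]$ genuinely holds pathwise up to $T \wedge \vartheta$.

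Finally I would note that all the estimates above hold conditionally on the $\sigma$-algebra $\mathcal{G}$ generated by $(\AA,\bb,\xx_0)$ and the stopping-time events; a union bound over these w.o.p.\ events (Lemma~\ref{lem:boundedness_loss_norm_SGD} for the $\vartheta$-control of $\|\nnu_t\|^2$, Proposition~\ref{prop: entry_bound_function_values} for the coordinate control) preserves the overwhelming-probability conclusion, and the constants depend only on $\delta, \widehat\gamma, \|\SSigma\|, \|\bb\|$ and $T$, not on $n$ or on the particular quadratic $q$, which is what is needed for the subsequent net argument over $\bar Q$ in the proof of Theorem~\ref{thm:homogenized_SGD_SGD}.
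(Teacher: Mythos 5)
Your proposal tracks the paper's proof closely: both approaches reduce to bounding the predictable compensator of the quadratic variation on the stopped interval $[0, T\wedge\vartheta\wedge\hbar]$ using the entry-wise estimate $\max_i (\ee_i^T(\UU\SSigma\nnu_{s-}-\bb))^2 \le n^{-1+2\alpha}$ from Proposition~\ref{prop: entry_bound_function_values}, peeling off exactly one factor of the max and summing the rest as $2\widehat{\mathscr{L}}(\nnu_{s-}) \le C n^\varepsilon$. You also correctly catch yourself on the quad-martingale bookkeeping, ending up with the compensator rate $\sim \|\nabla^2 q\|^2 n^{2\alpha-1+\varepsilon}$ rather than the over-crude $n^{4\alpha-1}$, which matches the paper's bound on $\lim_{\varepsilon\downarrow 0}\varepsilon^{-1}\mathbb{E}[\text{(a)}^2 \,|\, \mathcal{F}_t]$.

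There is one genuine gap. You invoke an exponential inequality of the form $\Pr(\sup|\mathcal{M}| > a) \le 2\exp(-a^2/(2b))$ conditional only on $[\mathcal{M}]_{T\wedge\vartheta} \le b$ a.s., ``exactly the recipe used in Proposition~\ref{prop:martingale_errors_hSGD}.'' But that recipe is valid for the \emph{continuous} Brownian martingale $\mathcal{M}^{\text{hSGD}}$ and fails in general for pure-jump martingales: a bounded quadratic variation alone does not give sub-Gaussian tails for compensated Poisson integrals without a bound on the jump magnitudes. The paper instead applies Lemma~\ref{lem:general_martingale_bound} Part~(ii), a Freedman-type inequality, whose hypotheses are a compensator bound \emph{and} a pathwise bound $\max_{0\le t\le T}|Y_t - Y_{t-}|\bm{1}_{\mathcal{G}} \le 1$ after normalization. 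Your writeup never estimates $|\Delta Y_t|$ for either $\mathcal{M}^{\text{grad}}$ or $\mathcal{M}^{\text{quad}}$, so the concentration step is not actually justified as written. The fix is routine but necessary: the same delocalization estimate gives $|\Delta \mathcal{M}^{\text{quad}}| \lesssim \|\nabla^2 q\| n^{2\alpha-1}$ and $|\Delta \mathcal{M}^{\text{grad}}| \lesssim (\|\nabla^2 q\| + \|\nabla q(0)\|_2) n^{2\alpha-1+\varepsilon/2}$, which after dividing by the square root of the compensator bound are both $o(1)$ precisely because $\alpha < 1/2$, and only then does Freedman's inequality deliver the stated $n^{-1/2+\alpha+\varepsilon}$ bound with overwhelming probability.
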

The proofs of Propositions~\ref{prop: entry_bound_function_values}  and \ref{prop:martingale_errors_SGD} are deferred to Sections~\ref{sec:proof_prop_bootstrap} and \ref{sec:proof_prop_martingale_sgd} respectively.

% We recall that the martingale errors $\mathcal{M}_t^{\text{grad}}$ and $\mathcal{M}_t^{\text{quad}}$ are defined in \eqref{eq: statistics_nu} and \eqref{eq:quadratic} respectively.

\subsubsection{General martingale results}
We formulate some general concencetration lemmas for c\`adl\`ag, finite variation martingales $Y_t$ with jumps given exactly by $\{\tau_k \, : \, k \ge 0 \}$ (c.f. \cite[Appendix D]{paquetteSGD2021}). For such a process, the jumps entirely determine its fluctuations. Such general results will be applied to show that the martingales $\mathcal{M}_t^{\text{grad}}$ and $\mathcal{M}_t^{\text{quad}}$ go to $0$ as $n \to \infty$. We define for any c\`adl\`ag process $Y$, 
\begin{align*}
    \Delta Y_t \defas Y_t - Y_{t-}
\end{align*}
which is $0$ for all $t$ except $\{\tau_k, k \ge 0\}$. For reference throughout this section, we record the jumps of $\mathcal{M}_t^{\text{grad}}$ and $\mathcal{M}_t^{\text{quad}}$ are given by
\begin{equation}
    \begin{aligned} \label{eq:martingale_delta}
        \Delta \mathcal{M}_t^{\text{grad}} &= - \gamma(\tau_k-) \nabla q(\nnu_{\tau_k-})^T (\SSigma^T \UU^T \PP_k ( \UU \SSigma \nnu_{\tau_k-} - \bb) + \tfrac{\delta}{n} \nnu_{\tau_k-} )\\
        \Delta \mathcal{M}_{\tau_k}^{\text{quad}} &= \gamma^2(\tau_k-) (\SSigma^T \UU^T \PP_k (\UU \SSigma \nnu_{\tau_k-} - \bb) - \tfrac{\delta}{n} \nnu_{\tau_{k}-})^T (\nabla^2 q ) (\SSigma^T \UU^T \PP_k (\UU \SSigma \nnu_{\tau_k-} - \bb) - \tfrac{\delta}{n} \nnu_{\tau_{k}-}).
    \end{aligned}
\end{equation}
To control the fluctuations of these martingales, we need to control their quadratic variations. The \textit{quadratic variation} $[Y_t]$ is the sum of squares of all jumps of the process, that is, 
\begin{align} 
    [Y_t] \defas \sum_{k = 1}^{N_t} (\Delta Y_{\tau_k})^2.
\end{align}
We can perform a Doob's decomposition on $[Y_t]$ and thus we define the \textit{compensator} for the quadratic variation, denoted by $\langle Y_t \rangle$, 
\begin{align*}
    \langle Y_t \rangle &\defas \int_0^t \mathcal{C}_s \, \dif s \quad \text{where} \quad [Y_t] = [Y_0] + \int_0^t \mathcal{C}_s \, \dif s + \mathcal{M}^{(1)}_t.
\end{align*}
Here $(\mathcal{M}^{(1)}_t \, : \, t\ge 0)$ are $\mathcal{F}_t$-adapted martingales.

Moreover, for some of the martingales we consider here, it is possible to find good events on which the quadratic variation or the compensator variations are in control. Then it is a relatively standard fact that the fluctuations of these process are in control:

\begin{lemma}[Lemma D.1 in \cite{paquetteSGD2021}] \label{lem:general_martingale_bound} Suppose $(Y_t \, : \, t \ge 0)$ is a c\`adl\`ag finite variation martingale. Suppose there is an event $\mathcal{G}$ which is measurable with respect to $\mathcal{F}_0$ that holds with overwhelming probability, and so that for some $T > 0$
\begin{equation*}
    (i). \, \, [Y_T] \bm{1}_{\mathcal{G}} \le \tfrac{\beta}{nT} N_T; \quad \text{or} \quad (ii). \, \, \langle Y_T \rangle \bm{1}_{\mathcal{G}} \le 1 \quad \text{and} \quad \max_{0 \le t \le T} |Y_t - Y_{t-}| \bm{1}_{\mathcal{G}} \le 1.
\end{equation*}
Then for any $\widehat{\varepsilon} > 0$ with overwhelming probability, 
\[ \sup_{0 \le t \le T} |Y_t| \le n^{\widehat{\varepsilon}}. \]
\end{lemma}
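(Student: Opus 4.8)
The plan is to derive the bound from an exponential (Bernstein/Freedman-type) tail inequality for c\`adl\`ag finite-variation jump martingales, after reducing in each of the two cases to a martingale whose jump sizes and quadratic variation are bounded by constants on a suitable w.o.p.\ event. Since $\mathcal{G}$ is $\mathcal{F}_0$-measurable and $\Pr(\mathcal{G}^c)$ is w.o.p.\ negligible, and since $N_T$ is $\mathrm{Poisson}(nT)$ so that $\Pr(N_T>2nT)$ is exponentially small, it suffices to prove the bound on the event $\mathcal{G}\cap\{N_T\le 2nT\}$; the two residual events are absorbed into the final union bound.

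On $\mathcal{G}\cap\{N_T\le 2nT\}$, case (i) gives $[Y_T]=\sum_{k\le N_T}(\Delta Y_{\tau_k})^2\le \tfrac{\beta}{nT}N_T\le 2\beta$, which yields simultaneously $[Y_T]\le 2\beta$ and a pathwise jump bound $\max_{k}|\Delta Y_{\tau_k}|\le\sqrt{2\beta}=O(1)$; case (ii) gives directly $\langle Y_T\rangle\le 1$ and $\max_{0\le t\le T}|\Delta Y_t|\le 1$. In both cases I would then localize: let $\rho$ be the first time the relevant quadratic variation (optional $[Y]$ in case (i), predictable $\langle Y\rangle$ in case (ii)) exceeds its stated bound or a jump first exceeds its stated size, so that $\rho\ge T$ on $\mathcal{G}\cap\{N_T\le 2nT\}$ while, globally, the stopped martingale $Y_{\cdot\wedge\rho}$ has pathwise-bounded jumps and quadratic variation, up to a harmless $O(1)$ additive correction from the single terminal jump. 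This puts us in a setting where $Y^{\rho}$ is a globally-defined c\`adl\`ag finite-variation martingale with jumps bounded by some $b=O(1)$ and (optional, resp.\ predictable) quadratic variation bounded by some $v=O(\beta)$, resp.\ $v=O(1)$.

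For such $Y^{\rho}$, with $\lambda>0$ satisfying $\lambda b\le 1$, the exponential supermartingale $Z^{\lambda}_t=\exp(\lambda Y^{\rho}_t-C^{\lambda}_t)$ — with $C^{\lambda}$ the predictable compensator of $\sum_{s\le t}\bigl(e^{\lambda\Delta Y^{\rho}_s}-1-\lambda\Delta Y^{\rho}_s\bigr)$ — is a positive supermartingale with $Z^{\lambda}_0=1$; using $e^{x}-1-x\le x^2$ for $|x|\le 1$ gives $C^{\lambda}_T\le\lambda^2\langle Y^{\rho}\rangle_T=O(\lambda^2)$ in case (ii) (and the analogous $O(\lambda^2\beta)$ bound in case (i), obtained either by controlling $\langle Y^{\rho}\rangle_T$ via the jump bound or, more cleanly, by a Burkholder--Davis--Gundy high-moment estimate using $[Y^{\rho}]_T=O(\beta)$). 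Hence $\mathbb{E}\,e^{\lambda Y^{\rho}_T}\le e^{O(\lambda^2)}$, and Doob's maximal inequality applied to the submartingale $e^{\lambda Y^{\rho}_t}$ gives $\Pr\bigl(\sup_{0\le t\le T}Y^{\rho}_t>x\bigr)\le e^{-\lambda x+O(\lambda^2)}$; optimizing over admissible $\lambda$ — which, because $\lambda$ is capped at $O(1/b)$, produces an exponential tail $e^{-cx}$ for $x\gg1$ — and repeating for $-Y^{\rho}$ yields $\Pr\bigl(\sup_{0\le t\le T}|Y^{\rho}_t|>n^{\widehat\varepsilon}\bigr)\le 2e^{-cn^{\widehat\varepsilon}}$, which is w.o.p.\ since $e^{-cn^{\widehat\varepsilon}}$ decays faster than any power of $n$. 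Intersecting with $\mathcal{G}$, $\{N_T\le 2nT\}$ and $\{\rho\ge T\}$ finishes the argument. The main thing requiring care is the localization bookkeeping — arranging that the a priori estimates, which hold only on $\mathcal{G}$ (and on the Poisson-count event in case (i)), feed into the globally-defined stopped process $Y^{\rho}$ so that the exponential inequality applies unconditionally — together with the standard but slightly fussy point that in case (i) the available control is on the \emph{optional} quadratic variation $[Y]$ rather than the predictable $\langle Y\rangle$, which is why the Burkholder--Davis--Gundy route is the more convenient one there. This is the argument of \citep[Appendix~D]{paquetteSGD2021}.
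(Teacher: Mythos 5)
This statement is quoted from \citep[Lemma~D.1]{paquetteSGD2021} and the paper supplies no proof of its own, so there is nothing to compare against; the assessment is therefore of your argument on its own merits. Your overall structure---a Freedman-type exponential supermartingale for case~(ii), and BDG or a self-normalized exponential inequality for case~(i)---is the right shape and is consistent with the approach one would expect from that reference. There is, however, a real gap in the localization step that you flag as ``fussy'' but then wave past. Writing ``up to a harmless $O(1)$ additive correction from the single terminal jump'' is not correct as stated: the stopping time $\rho$ is triggered \emph{by} the first bad jump or the first time the quadratic variation exceeds its threshold, and off the good event there is no a priori control on the size of that triggering jump. Consequently $Y^{\rho}$ does \emph{not}, unconditionally, have jumps bounded by $1$ or optional quadratic variation bounded by $O(\beta)$, and the exponential supermartingale $Z^{\lambda}$ you build would then fail to satisfy $C^{\lambda}_T=O(\lambda^2)$ globally.

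The clean way around this is to exploit that $\mathcal{G}\in\mathcal{F}_0$, which you mention but do not actually use. Conditioning on $\mathcal{G}$ leaves $Y$ a martingale (the Radon--Nikodym derivative $\bm{1}_{\mathcal{G}}/\Pr(\mathcal{G})$ is $\mathcal{F}_0$-measurable, so the martingale property and the compensator $\langle Y\rangle$ are unchanged), and under $\Pr(\cdot\mid\mathcal{G})$ the bounds $\max_{0\le t\le T}|\Delta Y_t|\le 1$ and $\langle Y\rangle_T\le 1$ hold \emph{almost surely}, with no stopping time at all. The exponential inequality then gives $\Pr\bigl(\sup_{t\le T}|Y_t|>x\mid\mathcal{G}\bigr)\le 2e^{-cx}$ for $x\ge 1$, and one removes the conditioning by a union bound against $\Pr(\mathcal{G}^c)$. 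For case~(i), similarly condition on $\mathcal{G}$ first so that $[Y]_T\le (\beta/nT)N_T$ holds a.s.\ under $\Pr(\cdot\mid\mathcal{G})$; then the BDG route is genuinely the right one, since you can write $\E\bigl[[Y]_T^{p/2}\mid\mathcal{G}\bigr]\le(\beta/nT)^{p/2}\,\E N_T^{p/2}$, use the Poisson moment bound $\E N_T^{p/2}\le(C(nT+p))^{p/2}$, and optimize $p$ (any $p=o(nT)$ growing with $n$, e.g.\ $p\sim n^{\widehat\varepsilon/2}$, gives super-polynomial decay). This way the random terminal-jump contribution is absorbed into the Poisson tail and never needs to be bounded pathwise, which is exactly the correct place to pay for the fact that case~(i) only controls the optional rather than the predictable quadratic variation.
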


% The martingale errors are due to the randomness in the algorithm itself. They are small in part because the singular vector matrix $\UU$ is delocalized, in that its off-diagonal entries in any fixed orthogonal basis are $n^{\varepsilon -1}$ with overwhelming probability. There are two martingale errors that require controlling
% \begin{align}
%     \mathcal{M}_t^{\text{grad}}(q) &= \int_0^t \nabla q(\nnu_{s-}) \cdot \dif \mathcal{M}_s \\
%     \mathcal{M}_t^{\text{quad}}(q) &= \sum_{0\le s \le t} (\nabla \nnu_s)^T (\nabla^2 q) (\nabla \nnu_s) - \int_0^t \mathcal{B}_s \, \dif s,
% \end{align}
% where the function $q$ is any quadratic and the martingales have jumps given by \eqref{eq:martingale_delta}. Estimating this error requires substantial build-up. The most important technical input, which we will use in multiple places, is that the function values do not concentrate too heavily in any coordinate direction. 

% \begin{proposition} \label{prop: entry_bound_function_values} For any $T \ge 0$, any $\varepsilon > 0$,
% \begin{equation}
%     \sup_{0 \le t \le T} \sup_{1 \le i \le n} \big (\ee_i^T (\UU \SSigma \nnu_t^{\vartheta} - \bb) \big )^2 \le n^{\varepsilon -1}
%     \quad\text{and}\quad
%     \sup_{0 \le t \le T} \sup_{1 \le i \le n} \big (\ee_i^T \nnu_t^{\vartheta} \big )^2 \le n^{\varepsilon -1}
% \end{equation}
% with overwhelming probability.
% \end{proposition}

\subsubsection{Proof of Proposition~\ref{prop: entry_bound_function_values} with bootstrap argument} \label{sec:proof_prop_bootstrap}

Proposition~\ref{prop: entry_bound_function_values} makes substantial use of Assumptions~\ref{assumption:Target},~\ref{ass: laundry_list}, and \ref{assumption:init}. We organize these assumptions into a single list for convenience here.%and the resolvent of $\AA \AA^T$ for reference in this section.
% We also need some \emph{a priori} delocalization information about the matrix.  This is most easily encapsulated in a single resolvent bound.
%Recall that $R(z)$ is the resolvent of $\AA^T \AA$ is $R(z) =  (z-\AA^T\AA)^{-1}$ and Assumption~\ref{ass: laundry_list} states:
\paragraph{Assumption.} \textit{
  Let $\Omega$ be a positively oriented smooth contour enclosing $[-1,1]$ of length at most $100\pi$ and contained in the complex disk of radius $3$.
  Suppose spectral norm of $\AA^T\AA$ is bounded by $1$ with high probability.
  Suppose there is a $\theta \in (0,\tfrac 12)$ and an event $\mathcal{G}$ that holds with high
  probability on which
  \begin{enumerate}
    \item 
      \(
	\max_{z \in \Omega} \max_{1 \leq i \leq n} |\ee_i^T R(z; \AA^T\AA) \xx_0| \leq n^{\theta-1/2}.
      \)
    \item
      \(
	\max_{z \in \Omega} \max_{1 \leq i \leq n} |\ee_i^T R(z; \AA\AA^T) \bb| \leq n^{\theta-1/2}.
      \)
    \item
      \(
	\max_{z \in \Omega} \max_{1 \leq i \neq j \leq n} |\ee_i^T R(z; \AA\AA^T) \ee_j^T| \leq n^{\theta-1/2}.
      \)
    \item
      \(
      \max_{z \in \Omega} \max_{1 \leq i \leq n} |\ee_i^T R(z; \AA\AA^T) \ee_i - \tfrac 1n\tr R(z; \AA\AA^T)| \leq n^{\theta-1/2}.
      \)
  \end{enumerate}
}

We note that by contour integration, an analytic function $f$ of the matrix $\AA^T\AA$ recovers the same estimate, up to constants that depend on the function, e.g.\ 
\begin{equation}\label{eqE:rnext}
  \max_{z \in \Omega} \max_{1 \leq i \leq n} |\ee_i^T f(\AA^T\AA) \nnu_0| \leq 50 \max_{|z| \leq 3} |f(z)| n^{\theta-1/2}.
\end{equation}
Going forward, we shall suppose that the constant $\epsilon$ in the definition of $\vartheta$ is taken to be much smaller than $\theta.$

We also need the following consequence of Assumption \ref{ass: laundry_list}
\begin{lemma}\label{lem:weirdo}
  Let $\ZZ = (\SSigma^T\UU^T (\II_n - \ee_k\ee_k^T)\UU\SSigma)$ for any $1 \leq k \leq n$.
  Then $\ZZ$ is matrix of norm at most $\|\SSigma\|^2$ and 
  there is a constant $C=C(\Omega)$
  \[
    \max_{z \in \Omega} \max_{1 \leq i \leq n} |\ee_i^T \UU \SSigma R(z; \ZZ)\SSigma^T\UU^T\bb| \leq C n^{\theta-1/2}
  \]
  as well as
  \[
    \max_{z \in \Omega} \max_{1 \leq i \neq j \leq n} |\ee_i^T \UU \SSigma R(z; \ZZ)\SSigma^T\UU^T\ee_j^T| \leq C n^{\theta-1/2}.
  \]
\end{lemma}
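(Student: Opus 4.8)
Proof plan for Lemma~\ref{lem:weirdo}.

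The plan is to reduce the matrix $\UU\SSigma R(z;\ZZ)\SSigma^T\UU^T$ to an explicit rational expression in the resolvent $G(z)\defas R(z;\AA\AA^T)$ and the coordinate vector $\ee_k$, and then read the two bounds off from Assumption~\ref{ass: laundry_list}. The norm bound is immediate: $\II_n-\ee_k\ee_k^T$ is an orthogonal projection, so $0\preceq\ZZ\preceq\SSigma^T\UU^T\UU\SSigma=\SSigma^T\SSigma$ and hence $\|\ZZ\|\le\|\SSigma\|^2$; in particular $\mathrm{spec}(\ZZ)\subseteq[0,1+\|\AA\|^2]$ is enclosed by $\Omega$, so $R(z;\ZZ)$ is defined and analytic on $\Omega$.

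For the reduction I would write $W\defas\UU\SSigma$ and $S\defas\AA\AA^T$, so that $WW^T=S$ and $\ZZ=W^T(\II_n-\ee_k\ee_k^T)W$. Since the nonzero eigenvalues of $S(\II_n-\ee_k\ee_k^T)=W\bigl(W^T(\II_n-\ee_k\ee_k^T)\bigr)$ coincide with those of $\bigl(W^T(\II_n-\ee_k\ee_k^T)\bigr)W=\ZZ$, the matrix $z\II_n-S(\II_n-\ee_k\ee_k^T)$ is invertible for every $z\in\Omega$, and the push-through identity $W\bigl(z\II_d-W^T(\II_n-\ee_k\ee_k^T)W\bigr)^{-1}=\bigl(z\II_n-S(\II_n-\ee_k\ee_k^T)\bigr)^{-1}W$ gives $\UU\SSigma R(z;\ZZ)\SSigma^T\UU^T=\bigl(z\II_n-S+(S\ee_k)\ee_k^T\bigr)^{-1}S$. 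Applying Sherman--Morrison to this rank-one modification, together with $G(z)S=zG(z)-\II_n$, then yields
\[
\UU\SSigma R(z;\ZZ)\SSigma^T\UU^T
= H(z)-\frac{H(z)\,\ee_k\ee_k^T\,H(z)}{z\,\ee_k^T G(z)\ee_k},
\qquad H(z)\defas zG(z)-\II_n .
\]
Next I would bound the entries of $H(z)$: for $z\in\Omega$ one has $|z|\le C(\Omega)$, and on the event $\mathcal{G}$ of Assumption~\ref{ass: laundry_list}, $|\ee_i^T H(z)\bb|\le|z|\,|\ee_i^T G(z)\bb|+|b_i|$, where $|\ee_i^T G(z)\bb|\le n^{\theta-1/2}$ by part~1 and $|b_i|=\bigl|\tfrac1{2\pi i}\oint_\Omega\ee_i^T G(z)\bb\,\dif z\bigr|\le C(\Omega)n^{\theta-1/2}$ because $\Omega$ encloses $\mathrm{spec}(\AA\AA^T)$; similarly $|\ee_i^T H(z)\ee_j|=|z|\,|G(z)_{ij}|\le C(\Omega)n^{\theta-1/2}$ for $i\ne j$ by part~2, while the ``diagonal'' entries obey only $|\ee_i^T H(z)\ee_i|=|zG(z)_{ii}-1|\le C(\Omega)$. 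Inserting these into the rank-one term, I note that in each product $(\ee_i^T H(z)\ee_k)(\ee_k^T H(z)\vv)$ with $\vv\in\{\bb,\ee_j\}$ and $i\ne j$, at least one factor is an off-diagonal-type entry of size $n^{\theta-1/2}$ (if $i\ne k$ the first is; if $i=k$ then $\ee_k^T H(z)\bb$, respectively $\ee_k^T H(z)\ee_j$ with $j\ne k$, is), while the other factor is $O(1)$; dividing by the denominator (bounded below, next paragraph) makes the whole correction $O(n^{\theta-1/2})$, and together with the contribution of $H(z)$ itself this gives both displayed estimates.

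The hard part will be the lower bound $|z\,\ee_k^T G(z)\ee_k|\ge c(\Omega)>0$ uniformly on $\Omega$. Here I would use part~3 of Assumption~\ref{ass: laundry_list} to replace $\ee_k^T G(z)\ee_k$ by $m_n(z)\defas\tfrac1n\tr G(z)$ up to an error $n^{\theta-1/2}$, so it suffices to prove $|z\,m_n(z)|\ge 2c(\Omega)$ for $z\in\Omega$. This is an elementary estimate on the fixed contour: since $0$ lies in the support $[0,1]$ of the empirical spectral measure of $\AA\AA^T$ (on $\mathcal{G}$) and $\Omega$ is at distance $\ge\tfrac12$ from $[0,1]$, one has $|z|\ge\tfrac12$; and writing $m_n(z)=\tfrac1n\sum_j(z-\lambda_j)^{-1}$ with $\lambda_j\in[0,1]$, when $z\in\Omega$ is real (hence $z\le-\tfrac12$ or $z\ge\tfrac32$) all summands share a sign and have modulus $\ge(|z|+1)^{-1}$, so $|m_n(z)|$ is bounded below; when $\Im z\ne 0$ one has $|\Im m_n(z)|=|\Im z|\cdot\tfrac1n\sum_j|z-\lambda_j|^{-2}$, which is bounded below where $\Im z$ is, and on the part of $\Omega$ where $\Im z$ is small (there $\Re z$ is necessarily bounded away from $[0,1]$) the same argument applied to $\Re m_n(z)$ works. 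In all cases $|m_n(z)|\ge c(\Omega)$, hence $|z\,m_n(z)|\ge\tfrac12 c(\Omega)$ and, for $n$ large, $|z\,\ee_k^T G(z)\ee_k|\ge\tfrac12 c(\Omega)-C(\Omega)n^{\theta-1/2}\ge\tfrac14 c(\Omega)$. Apart from this denominator estimate the only thing to check is that the push-through and Sherman--Morrison manipulations are legitimate for every $z\in\Omega$, which holds because $\Omega$ avoids $[0,1+\|\AA\|^2]\supseteq\mathrm{spec}(\AA\AA^T)\cup\mathrm{spec}(\ZZ)\cup\mathrm{spec}\bigl(S(\II_n-\ee_k\ee_k^T)\bigr)$; the remainder is the routine case bookkeeping of the previous paragraph.
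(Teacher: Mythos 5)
Your proposal is correct and follows the same essential line of attack as the paper's proof: a Sherman--Morrison resolution of the rank-one perturbation, the identity $\AA\,R(z;\AA^T\AA)\AA^T = zR(z;\AA\AA^T)-\II_n$, and then the off-diagonal and right-hand-side bounds from Assumption~\ref{ass: laundry_list} together with an $\Omega$-uniform lower bound on the denominator $z\,\ee_k^T R(z;\AA\AA^T)\ee_k$. The differences are worth noting. Where the paper first invokes ``rotation invariance'' to replace $\UU\SSigma$ by $\AA$ and then applies Woodbury to $R(z;\AA^T(\II_n-\ee_k\ee_k^T)\AA)$, you instead use the push-through identity to write $\UU\SSigma R(z;\ZZ)\SSigma^T\UU^T$ directly as $\bigl(z\II_n-S(\II_n-\ee_k\ee_k^T)\bigr)^{-1}S$ with $S=\AA\AA^T$, which lands on the same two-term decomposition $H(z)-H(z)\ee_k\ee_k^T H(z)/(z\,G(z)_{kk})$; this is a cleaner bookkeeping but mathematically equivalent. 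You are also more careful than the paper at two points. First, you handle the boundary cases $i=k$ and $j=k$ explicitly, observing that in each bilinear factor at most one index can coincide with $k$, so at least one factor is always of off-diagonal (or right-hand-side) type and hence $O(n^{\theta-1/2})$, with the other bounded by a constant; the paper glosses over this. Second, and more substantively, your lower bound $|z\,\ee_k^T R(z;\AA\AA^T)\ee_k|\ge c(\Omega)$ is argued properly: you use Assumption~\ref{ass: laundry_list}(3) to pass to $m_n(z)=\tfrac1n\tr R(z;\AA\AA^T)$ and then do the elementary real/imaginary-part case analysis on the fixed contour. The paper's displayed bound $|z\,\ee_k^T R(z;\AA\AA^T)\ee_k|^{-1}\le\bigl(|z|\cdot\min_{x\in[0,\|\AA\|^2]}|z-x|\bigr)^{-1}$ does not hold as stated (take $z$ real and large: the left side is $\Theta(1)$ but the claimed upper bound tends to zero), so there the intent must have been exactly the kind of contour-dependent constant you derive; one small remark is that your argument via $m_n$ applies verbatim to $\ee_k^T G(z)\ee_k$ directly, since the same positivity of the spectral weights is all that is used, so you could dispense with Assumption~\ref{ass: laundry_list}(3) and the ``for $n$ large'' caveat here if you wished. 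In summary, your route is valid, a little tidier algebraically, and fills in two places the paper leaves sketchy.
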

\begin{proof}
The norm bound on $\ZZ$ follows simply from bounding the projection matrix $\UU^T(\II_n-\ee_k\ee_k^T)\UU$ in norm by $1$.  The two displayed bounds will follow from Assumption \ref{ass: laundry_list} with the same constant $C$ and from the same argument, and so we show just the first bound.

First observe that by rotation invariance, it suffices to show
  \[
    \max_{z \in \Omega} \max_{1 \leq i \leq n} |\ee_i^T \AA R(z; \AA^T(\II_n - \ee_k\ee_k^T)\AA)\AA^T\bb| \leq C n^{\theta-1/2}.
  \]
From the Sherman--Morrison--Woodbury formula,
\begin{equation}\label{eqE:SMW}
R(z; \AA^T(\II_n - \ee_k\ee_k^T)\AA)
=
R(z; \AA^T\AA) 
- 
\frac{R(z; \AA^T\AA)(\AA^T\ee_k\ee_k^T\AA)R(z; \AA^T\AA)}{1+\ee_k^T\AA R(z; \AA^T\AA) \AA^T \ee_k}.
\end{equation}
Furthermore, the meromorphic matrix curve $\AA R(z; \AA^T\AA) \AA^T$ can be checked (when expanding around $z=\infty$) to be
\[
\AA R(z; \AA^T\AA) \AA^T
=-\II_n+z R(z; \AA\AA^T).
\]
Hence from Assumption \ref{ass: laundry_list}, there is a constant $C$ so that
  \[
    \max_{z \in \Omega} \max_{1 \leq i \leq n} |\ee_i^T \AA R(z; \AA^T\AA)\AA^T\bb| \leq C n^{\theta-1/2}.
  \]
  as well as
  \[
    \max_{z \in \Omega} \max_{1 \leq i \neq j \leq n} |\ee_i^T \AA R(z; \AA^T\AA)\AA^T\ee_j| \leq C n^{\theta-1/2}.
  \]
Likewise, for the on--diagonal case, we have for some other $C$
  \[
    \max_{z \in \Omega} \max_{1 \leq i \leq n} |\ee_i^T \AA R(z; \AA^T\AA)\AA^T\ee_i| \leq \max_{z \in \Omega}
    \|R(z; \AA\AA^T)\|\|\SSigma\|^2
    \leq C.
  \]
  We also need the upper bound:
  \[
  |1+\ee_k^T \AA R(z; \AA^T\AA)\AA^T\ee_k|^{-1}
  =
  |z\ee_k^T R(z; \AA\AA^T)\ee_k|^{-1}
  \leq
  \bigl(|z| \times \min_{x\in [0,\|\AA\|_{\text{op}}^2]}|z-x|\bigr)^{-1},
  \]
  which holds on the event $\mathcal{G}.$
  Combining these estimates with \eqref{eqE:SMW}, the desired bounds follow.
  
\end{proof}

% \begin{proposition} \label{prop: entry_bound_function_values} Under Assumption \ref{ass: laundry_list}, for any $T \ge 0$, any $\alpha > \theta$,
% \begin{equation}
%     \sup_{0 \le t \le T} \sup_{1 \le i \le n} \big (\ee_i^T (\UU \SSigma \nnu_t^{\vartheta} - \bb) \big )^2 \le n^{2\alpha -1}
% %    \quad\text{and}\quad
% %    \sup_{0 \le t \le T} \sup_{1 \le i \le n} \big (\ee_i^T \nnu_t^{\vartheta} \big )^2 \le n^{2\alpha -1}
% \end{equation}
% with overwhelming probability (conditioned on $\mathcal{G}$).
% \end{proposition}

We turn to the proof of Proposition \ref{prop: entry_bound_function_values},
which we recall stated that for any $T \ge 0$, any $\alpha > \theta$,
\begin{equation*}
    \sup_{0 \le t \le T} \sup_{1 \le i \le n} \big (\ee_i^T (\UU \SSigma \nnu_t^{\vartheta} - \bb) \big )^2 \le n^{2\alpha -1}
%    \quad\text{and}\quad
%    \sup_{0 \le t \le T} \sup_{1 \le i \le n} \big (\ee_i^T \nnu_t^{\vartheta} \big )^2 \le n^{\alpha -1}
\end{equation*}
with overwhelming probability.

\begin{proof}[Proof of Proposition \ref{prop: entry_bound_function_values}]
The following proof is an extension of \cite[Proposition 15]{paquetteSGD2021} beyond the orthogonally invariant case and to the case of nonzero regularization parameter.
Let $\hbar=\hbar_\alpha$ be the stopping time (c.f.\ \eqref{eq:stopping_time_hbar})
\begin{equation*} 
  \hbar \defas \inf \big \{ t \le \vartheta \, : \, \max_{1 \le i \le n} ( \ee_i^T (\UU \SSigma \nnu_t - \bb))^2 > n^{-1 + 2\alpha}\}.% \quad \text{or} \quad \max_{1 \le i \le n} (\ee_i^T\nnu_t)^2 > n^{-1+2\alpha} \big \}.
\end{equation*}
%The proof of the second case from the first 

The strategy here is a \emph{bootstrap} argument.  We show inductively that if $\hbar_{\alpha'}$ does not occur with overwhelming probability, then  $\hbar_{\alpha''}$ does not occur with overwhelming probability for some smaller $\alpha'' > \theta+\varepsilon.$  To begin the induction we note that by taking $\alpha=1+\epsilon$ for some small $\epsilon>0,$ the claim is vacuous, as under norm on $\nnu_t$ (and $\SSigma$ and $\bb$), controlling these entries follows deterministically and trivially. To simplify the notation, we introduce $\gamma_t^{\hbar} \defas \gamma(t \wedge \hbar)$ and $\nnu_t^{\hbar} \defas \nnu_{t \wedge \hbar}$.

We divide the jumps $\left\{ \tau_k \right\}$ of the underlying Poisson process into two types $\left\{\tau_{k,1}, \tau_{k,2} \right\}$, those for which a coordinate not equal to $i$ is chosen and those for which coordinate $i$ is chosen.  These are independent Poisson processes, and we let $N_{t,1}$ and $N_{t,2}$ be the counting functions of the number of jumps from either type.

Define $\ZZ = (\SSigma^T\UU^T (\Id_n - \ee_i\ee_i^T)\UU\SSigma +\delta (\tfrac{n-1}{n}) \Id_n)$ which we observe is positive semi-definite and let 
\[
  V_k \defas 
  \max_{0 \leq t \leq \tau_{k,2}} 
  \max_{ a \geq 0} |\ee_i^T (\UU \SSigma e^{-a \ZZ }\nnu_t^{\hbar} - \bb) |
  \quad
  \text{for all } k \in \N_0,
\]
where for $k=0$ we take $\tau_{0,2}=0$ by convention.
We shall show that with overwhelming probability that $V_{N_{T,2}+1} < n^{\beta-1/2}$ for $\beta =\tfrac{\varepsilon+\theta + 3\alpha}{4}$.
By a union bound over $i,$ we may then repeat the argument, having replaced in the definition of $\hbar$, $\alpha \to \tfrac{\varepsilon+\theta + 3\alpha}{4}$.  By iterating this argument finitely many times, we can approach any desired error above $\theta.$ 

We shall need a concentration bound for martingales, which is the continuous version of the Freedman inequality.
\begin{lemma}\label{lem:raremartingales}
  Suppose that $(Y_t : t \geq 0)$ is a c\`adl\`ag pure jump martingale with $Y_0 = 0$ adapted to filtration $(\mathcal{F}_t: t \geq 0)$.  Let $N_t$ be the counting function of the jumps of $Y_t$, and suppose for some bounded stopping time $T > 0$
\[
\int_0^T \mathbb{E} [ \bigl(\Delta Y_{t}\bigr)^2~\vert~\Delta N_{t}=1, \mathcal{F}_{t}-] \leq 1
\quad
\text{and}
\quad
|\Delta Y_{t}||\Delta N_{t}| \leq 1.
\]
Then for any $\epsilon,T > 0$ with overwhelming probability
\[
\sup_{0 \leq t \leq T} |Y_t| \leq n^{\epsilon}.
\]
\end{lemma}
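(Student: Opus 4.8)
Lemma~\ref{lem:raremartingales} is a continuous-time analogue of Freedman's martingale concentration inequality, and the plan is to reduce it to the discrete Freedman inequality applied to the embedded jump chain. First I would recall that since $Y_t$ is a pure jump martingale with $Y_0=0$, we may write $Y_t = \sum_{k=1}^{N_t} \Delta Y_{\tau_k}$, where $\tau_k$ are the jump times. The two hypotheses say that the predictable quadratic variation is controlled, $\sum_{k : \tau_k \leq T} \Exp[(\Delta Y_{\tau_k})^2 \mid \mathcal{F}_{\tau_k-}] \leq 1$, and the individual jumps are bounded by $1$. These are exactly the hypotheses of the (one-sided, iterated) Freedman inequality for discrete-time martingale difference sequences: for a martingale $Y$ with increments bounded by $R$ and predictable quadratic variation bounded by $\sigma^2$, one has $\Pr(\sup_t |Y_t| \geq \lambda) \leq 2\exp\bigl(-\tfrac{\lambda^2}{2(\sigma^2 + R\lambda/3)}\bigr)$.

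The one subtlety is that $N_T$ is itself random (a stopping time with respect to the jump filtration), so I would apply Freedman's inequality to the stopped martingale $Y_{t \wedge \tau_{N_T + 1}}$, or equivalently stop the discrete chain at the $\mathcal{F}$-stopping time $N_T$. Since $T$ is assumed bounded, $N_T < \infty$ almost surely (indeed $N_T$ is stochastically dominated by a Poisson variable of bounded mean in our applications, so $N_T \leq n^{1+o(1)}$ with overwhelming probability), and the optional stopping / Freedman bound applies on the event $\{N_T \leq n^2\}$, say, which holds with overwhelming probability. On that event, taking $\lambda = n^{\epsilon}$ in Freedman's inequality with $\sigma^2 \leq 1$ and $R \leq 1$ gives
\[
\Pr\Bigl(\sup_{0 \leq t \leq T} |Y_t| \geq n^{\epsilon}\Bigr) \leq 2\exp\Bigl(-\frac{n^{2\epsilon}}{2(1 + n^{\epsilon}/3)}\Bigr) \leq 2\exp\bigl(-c\, n^{\epsilon}\bigr),
\]
which decays faster than any polynomial in $d$ (using Assumption~\ref{ass:poly} to convert between $n$ and $d$), i.e.\ the bound holds with overwhelming probability as claimed.

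I do not expect any real obstacle here; the lemma is essentially a packaging statement. The only care needed is (i) making sure the ``predictable quadratic variation'' in the hypothesis is exactly $\sum_k \Exp[(\Delta Y_{\tau_k})^2 \mid \mathcal{F}_{\tau_k-}]$ and that this is what Freedman's inequality requires, and (ii) handling the randomness of $N_T$ cleanly, either by a truncation argument as above or by directly invoking a version of Freedman's inequality stated for continuous-time local martingales (e.g.\ via the exponential supermartingale $\exp(\theta Y_t - \psi(\theta)\langle Y \rangle_t)$ for the compensated jump process, which is where the sub-exponential tail structure $\lambda^2/(\sigma^2 + R\lambda)$ comes from). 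Since the paper elsewhere works with the Poisson embedding and Doob decompositions, invoking the exponential supermartingale directly and applying the optional stopping theorem at time $T$ is probably the cleanest route, and I would present it that way, deferring the elementary estimate of $\psi(\theta)$ for bounded jumps to a one-line computation.
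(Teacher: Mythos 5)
The paper does not actually prove Lemma~\ref{lem:raremartingales}: it is stated inline during the proof of Proposition~\ref{prop: entry_bound_function_values}, and is essentially a restatement of the cited result, Lemma~\ref{lem:general_martingale_bound} (part~(ii)) from \citep[Lemma~D.1]{paquetteSGD2021}. So there is no in-paper proof to compare against, but your proposed argument is the correct and standard one: the hypotheses (predictable quadratic variation $\langle Y\rangle_T\leq 1$, jumps bounded by $1$) are exactly the input to the Freedman/Bernstein exponential supermartingale bound, and optional stopping at time $T$ gives $\Pr(\sup_{t\leq T}|Y_t|\geq n^\epsilon)\leq 2\exp(-cn^\epsilon)$, which is superpolynomially small. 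One small point: the truncation to the event $\{N_T\leq n^2\}$ you mention is not needed. The exponential supermartingale $\exp\bigl(\theta Y_t-\psi(\theta)\langle Y\rangle_t\bigr)$ for jumps bounded by $1$, combined with optional stopping, already handles an a.s.\ finite but otherwise unbounded number of jumps on $[0,T]$; the Freedman bound for discrete martingales similarly makes no assumption on the number of summands. You correctly identify the exponential supermartingale route as the cleaner presentation, and I would drop the truncation remark and present it that way. It is also worth noting that the paper's hypothesis is written as $\int_0^T \mathbb{E}[(\Delta Y_t)^2\mid \Delta N_t=1,\mathcal{F}_{t-}]\leq 1$ with neither a $\dif t$ nor a jump-rate factor; your reading of it as $\langle Y\rangle_T\leq 1$ (equivalently $\sum_{k:\tau_k\leq T}\mathbb{E}[(\Delta Y_{\tau_k})^2\mid\mathcal{F}_{\tau_k-}]\leq 1$) is the intended one, as can be confirmed by checking how the lemma is applied after rescaling later in the same proof.
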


We remove the jumps of the second kind from $\nnu_t^{\hbar} - \nnu_{\tau_{k-1,2}}^{\hbar}$, by setting
\[
\omega_t^{(k)} \defas 
\int\limits_{ (\tau_{k-1,2}, t]}
\Delta \nnu_s^{\hbar} dN_{s,1},
\]
i.e.\ it simply disregards those increments of $\nnu$ in which the jump is made in the $i$-th coordinate.
Then we have $\omega_{\tau_{k,2}}^{(k)} = \nnu_{\tau_{k,2}-}^{\hbar} - \nnu_{\tau_{k-1,2}}^{\hbar}$.
The problem of controlling $\nnu_t^{\hbar}$ on $[\tau_{k-1,2},\tau_{k,2}]$ can be reduced to control of $\omega,$
as at the endpoint,
\[
  \nnu_{\tau_{k,2}}^{\hbar}
  =
  \nnu_{\tau_{k,2}-}^{\hbar}
  %\omega_{\tau_{k,2}}^{(k)}
  -\gamma_{\tau_{k,2}-}^{\hbar}
  \bigl(
  \SSigma^T \UU^T \ee_i \ee_i^T ( \UU \SSigma \nnu^{\hbar}_{\tau_{k,2}-} - \bb) + \tfrac{\delta}{n} \nnu^{\hbar}_{\tau_{k,2}-} 
  \bigr).
\]
Thus we conclude, after left multiplying through by $\ee_i^T \UU \SSigma$ and subtracting $\bb$,
\[
  \ee_i^T
  ( \UU \SSigma \nnu^{\hbar}_{\tau_{k,2}} - \bb)
  =
  \bigl(1-
  \gamma_{\tau_{k,2}-}^{\hbar}
  \ee_i^T
  \UU \SSigma 
  \SSigma^T \UU^T \ee_i
  \bigr)
  \ee_i^T
  ( \UU \SSigma \nnu^{\hbar}_{\tau_{k,2}-} - \bb)
  -\gamma_{\tau_{k,2}-}^{\hbar}
  \ee_i^T
  \UU \SSigma 
  \bigl(
  \tfrac{\delta}{n} \nnu^{\hbar}_{\tau_{k,2}-} 
  \bigr),
\]
and it follows that we can give a representation
\begin{equation}\label{eqE:Vk1}
  V_k
  \leq \widehat{\gamma} \delta n^{\theta-1} \|\SSigma\| 
  +
  \max_{\tau_{k-1,2} \leq t \leq \tau_{k,2}} 
  \max_{ a \geq 0} |\ee_i^T (\UU \SSigma e^{-a \ZZ} (\omega_t^{(k)} +\nnu^{\hbar}_{\tau_{k-1,2}})|.
\end{equation}

We give a martingale decomposition of $\omega_t^{(k)}$ (c.f.\ \eqref{eq: statistics_nu} with linear $q$) 
\[
\omega_t^{(k)}
=
- \int\limits^t_{\tau_{k-1,2}} 
\gamma(s) \bigl( 
\SSigma^T \UU^T (\Id_n - \ee_i \ee_i^T) ( \UU \SSigma \nnu^{\hbar}_{s} - \bb)
+ \delta (\tfrac{n-1}{n})\nnu^{\hbar}_{s} \bigr) \dif s
+\mathcal{M}_t,
\quad
\text{ for } t \in [\tau_{k-1,2},\hbar].
\]
In particular, using $\nnu_s^{\hbar} = \omega_s^{(k)} + \nnu_{\tau_{k-1,2}}^{\hbar}$ for all $s < \tau_{k,2}$ and introducing an integrating factor, we conclude for any $t \in [\tau_{k-1,2},\hbar]$,
\begin{equation}\label{eqE:integrated}
  \begin{aligned}
  \omega_t^{(k)}
    =
    &-\bigl(1- e^{- \ZZ (\Gamma(t)-\Gamma(\tau_{k-1,2}))}\bigr)\nnu^{\hbar}_{\tau_{k-1,2}} 
    +\int\limits_{\tau_{k-1,2}}^t \gamma(s) e^{-\ZZ (\Gamma(t)-\Gamma(s))}\SSigma^T\UU^T (\Id_n - \ee_i \ee_i^T) \bb \dif s 
    +e^{- \ZZ \Gamma(t)} {\XX}_{t}, %\\
%    &+\gamma\cdot\!\!\!\int\limits_{\tau_{k-1,2}}^t 
%    e^{-\gamma(\SSigma^T\SSigma +\delta)(t-s)}
%    \biggl(
%    \SSigma^T \UU^T \ee_i \ee_i^T ( \UU \SSigma \nnu^{\hbar}_{s} - \bb)
%    +\tfrac{\delta}{n} \nnu^{\hbar}_{s}
%    \biggr)
%    \dif s 
  \end{aligned}
\end{equation}
where 
\[
   {\XX}_{t}
   =
   \int\limits_{\tau_{k-1,2}}^t
   e^{\ZZ \Gamma(s)}\cdot
   \dif \mathcal{M}_{s}%^{\text{grad}}.
\]

%\[
%  \begin{aligned}
%  (\ee_i^T (\UU \SSigma \nnu_t^{\hbar} - \bb))^2 
%  \leq
%  &3(\ee_i^T (\UU \SSigma e^{-\gamma(\SSigma^T\SSigma + \delta)t \wedge \hbar}\nnu_0))^2 \\
%  +
%  &3\biggl(
%  \ee_i^T 
%  \biggl[\int_0^{t \wedge \hbar} \gamma \UU \SSigma e^{-\gamma (\SSigma^T\SSigma +\delta)(t\wedge \hbar-s)}\SSigma^T\UU^T\dif s - \Id \biggr]\bb
%  \biggr)^2 \\
%  +&3\biggl(
%  \ee_i^T 
%  (\UU \SSigma e^{-\gamma(\SSigma^T\SSigma + \delta)t\wedge \hbar} \XX_t^{\hbar})
%  \biggr)^2.
%  \end{aligned}
%\]
We now substitute \eqref{eqE:integrated} into the expressions we wish to control, namely
 \eqref{eqE:Vk1}.  Before doing so, we observe that the first integral can be simplified.
Evaluating the integral produces
\[
  \biggl[\int_{\tau_{k-1,2}}^t \gamma(s) e^{- \ZZ (\Gamma(t)-\Gamma(s))}\SSigma^T\UU^T(\Id_n - \ee_i \ee_i^T)\dif s\biggr]\bb
 =
 \biggl[( \ZZ^{-1} )(\Id_d - e^{- \ZZ(\Gamma(t)-\Gamma(\tau_{k-1,2}))})\SSigma^T\UU^T (\Id_n - \ee_i \ee_i^T)\biggr]\bb.
\]
%\textcolor{red}{All entries of this need to be uniformly controlled using Assumption \ref{ass: laundry_list}} and using \eqref{eqE:rnext}, 
From Lemma \ref{lem:weirdo} and contour integration (as in \eqref{eqE:rnext}),
there is a constant $C(\delta)$ and an event of high probability $\mathcal{G} \in \mathcal{F}_0$ such that for $t \in [\tau_{k-1,2}, \hbar]$
\begin{equation}\label{eqE:Vk2}
  \begin{aligned}
  \max_{t \in [\tau_{k-1,2},\tau_{k,2}]}
  \max_{ a \geq 0} |\ee_i^T (\UU \SSigma e^{-a \ZZ} (\omega_t^{(k)} +\nnu^{\hbar}_{\tau_{k-1,2}})|
  \leq
  &V_{k-1}
  +
  C(\delta) n^{\theta-1/2} \\
  +
  &
  \max_{t \in [\tau_{k-1,2},\tau_{k,2}]}
  \max_{ a \geq 0}
  |\ee_i^T(\UU \SSigma e^{-\ZZ (a+\Gamma(t))} \XX_t)|.
\end{aligned}
\end{equation}

Using \eqref{eqE:integrated}, and the intervening argument we have that for $t \leq \hbar$
\begin{equation}\label{eqE:XXbnd}
  \|e^{-\ZZ \Gamma(t)} \XX_t\|
  \leq
  C(\delta) n^{\theta}.
\end{equation}
Hence if $|\Gamma(t)-\Gamma(s)| \leq \frac{1}{n}$ and $s > t$
\begin{equation}
\begin{aligned}
  \max_{ a \geq 0}
  |\ee_i^T(\UU \SSigma e^{-\ZZ (a+\Gamma(t))} \XX_t)
  -
  \ee_i^T(\UU \SSigma e^{- \ZZ (a+\Gamma(s))} \XX_t)|
  &\leq
  C(\delta)n^{\theta} \|\SSigma\| \| e^{-(\Gamma(s)-\Gamma(t)) \ZZ} - \Id_n \|\\
  &\leq
  \widehat{\gamma} C(\delta)n^{\theta-1} \|\SSigma\| \|\ZZ\|.
\end{aligned}
\end{equation}
It follows that if we let $t_j = (j/n) \cdot1/{\widehat{\gamma}} + \tau_{k-1,2}$ for all $j \in \N,$ then
\begin{equation}\label{eqE:Vk3}
  \begin{aligned}
    V_{k}
    \leq V_{k-1} + C(\delta,\Sigma) n^{\theta-1/2}
    +\max_{j : t_j < \tau_{k,2}+1/n}
    \max_{t \in [\tau_{k-1,2},t_j]}
    \max_{a \geq 0}
    |\ee_i^T(\UU \SSigma e^{- \ZZ (a+\Gamma(t_j))} \XX_t^{\hbar})|.
\end{aligned}
\end{equation}

We may further relax the set using \eqref{eqE:XXbnd} to take $a$ in mesh of the curve $e^{\ZZ \log(1-t)}$ for $t \in [0,1]$ which has $\|\cdot\|$--norm spacing $(1/n).$  This can be done with $n^2$ points: for each eigenvalue $\lambda > 0$ of $\ZZ$, the graph of $(1-t)^\lambda$ can be discretized using $n$ points; doing this for each eigenvalue, and then taking the union of all these mesh points gives the desired set.  Let $\mathcal{A}$ be this mesh set.  Then we conclude it suffices to show that for any $a > 0$, any $t > \tau_{k-1,2},$ with overwhelming probability
\begin{equation}\label{eqE:EE}
    \max_{s \in [\tau_{k-1,2},t]}
    |\ee_i^T(\UU \SSigma e^{- \ZZ (a+\Gamma(t))} \XX_s^{\hbar})|
    \leq n^{\beta-1/2}/(\log n)
  \end{equation}
with overwhelming probability, as then it follows from \eqref{eqE:Vk3} that on the event $N_{T,2} \leq \log n$ (which has overwhelming probability) 
\[
  \max_{0 \leq t \leq T} 
   |\ee_i^T (\UU \SSigma \nnu_t^{\hbar} - \bb) |
    \leq
    V_{N_{T,2}+1}
    \leq
    (\log n+1)
    \bigl( C(\delta,\SSigma) + n^{\beta-1/2}/\log n).
\]
Hence for all $n$ sufficiently large (depending on $\beta$ and $\theta$), the proof would be complete.

So turning to the needed martingale inequality, with $Y_s \defas \ee_i^T(\UU \SSigma e^{-\ZZ (a+\Gamma(t))} \XX_s^{\hbar})$, we observe that
\[
\langle Y_t \rangle 
\leq
n
   \int\limits_{\tau_{k-1,2}}^t
   \mathbb{E}_\PP
   \biggl( \ee_i^T \UU \SSigma e^{-\ZZ (a+\Gamma(t)-\Gamma(s))}  
   \SSigma^T \UU^T \PP ( \UU \SSigma \nnu_{s} - \bb) %+ \tfrac{\delta}{n} \nnu_{s} 
   \biggr)^2 \dif s,
\]
here we take the expectation with respect to $\PP$, which is chosen uniformly over all entries not equal to $i$.  We have also subtracted a non-degenerate constant from the inside of the square (recall that the true predictable quadratic variation would have the conditional variance, which corresponds to subtracting the optimal constant the conditional mean).
Hence
\[
\langle Y_t \rangle 
\leq
   \int\limits_{\tau_{k-1,2}}^{t\wedge \hbar}
   \sum_{j \neq i}
   \biggl( \ee_i^T \UU \SSigma e^{-\ZZ (a+\Gamma(t)-\Gamma(s))}  
   \SSigma^T \UU^T \ee_j\biggr)^2 \biggl(\ee_j^T ( \UU \SSigma \nnu_{s}^{\hbar} - \bb) %+ \tfrac{\delta}{n} \nnu_{s} 
   \biggr)^2 \dif s.
\]
From Lemma \ref{lem:weirdo}
\[
\langle Y_t \rangle 
\leq 
%\|\SSigma\|_{op}^2
(\tau_{k,2}-\tau_{k-1,2})\|\UU \SSigma \nnu_{s}^{\hbar} - \bb\|^2 n^{2\theta-1}
\leq 
(\tau_{k,2}-\tau_{k-1,2})
n^{\varepsilon+2\theta-1}.
\]
As with overwhelming probability $(\tau_{k,2}-\tau_{k-1,2}) \leq n^{\alpha-\theta},$ we conclude that with overwhelming probability 
\[
\langle Y_t \rangle 
\leq n^{\varepsilon + \theta + \alpha - 1}.
\]
We also need a bound on the largest jumps of the martingale.  Indeed we have at a jump $s$
\[
|\Delta Y_s| =
\biggl| \ee_i^T(\UU \SSigma e^{-\ZZ (a+\Gamma(t)-\Gamma(s))}  
   \SSigma^T \UU^T \PP ( \UU \SSigma \nnu_{s}^{\hbar} - \bb) %+ \tfrac{\delta}{n} \nnu_{s} 
   \biggr|
\leq n^{-1/2+\alpha}
\max_{j \neq i}
\biggl|
\ee_i^T \UU \SSigma e^{- \ZZ (a+\Gamma(t)-\Gamma(s))}  
\SSigma^T \UU^T \ee_j
\biggr|.
\]
Thus by Lemma \ref{lem:weirdo}, we conclude
\[
|\Delta Y_s| \leq
C(\SSigma)n^{\alpha+\theta-1}.
\]
Hence we apply
Lemma \ref{lem:raremartingales} to the martingale $Y_s
\cdot n^{1/2-(\varepsilon + \theta+\alpha)}$, we conclude that with $\beta = (\theta+\epsilon + 3\alpha)/4$,
\[
\max_{0 \leq s \leq t} |Y_t| \leq n^{-1/2+\beta}.
\]
Hence
\eqref{eqE:EE} holds as desired.
\end{proof}

\subsubsection{Proof of Proposition~\ref{prop:martingale_errors_SGD}} \label{sec:proof_prop_martingale_sgd}
Recall, Proposition~\ref{prop: entry_bound_function_values} allows us to adopt a stopping time $\hbar$, defined as 
\begin{equation}
    \hbar \defas \inf \big \{ t \le \vartheta \, : \, \max_{1 \le i \le n} ( \ee_i^T (\UU \SSigma \nnu_t - \bb))^2 > n^{-1 + 2\alpha}  \big \}. 
\end{equation}
Under this stopping time, we can show that Proposition~\ref{prop:martingale_errors_SGD} holds:

% With this proposition in hand, we can bound the martingale errors.

% \begin{proposition}[Martingale error bounds] \label{prop:martingale_errors_SGD} Suppose $0 < \alpha < 2$ and $ \varepsilon \le \alpha$. Let $q$ be a quadratic with $\|\nabla^2 q(\xx)\|$ and $\|\nabla q(0)\|_2$ bounded independent of $n$. For any $T > 0$, with overwhelming probability, 
% \begin{equation} \begin{gathered}
%     \sup_{0 \le t \le T} | \mathcal{M}_{t \wedge \vartheta}^{\text{grad}}(q) | \le C(\delta, \gamma, \SSigma, \UU^T \bb, T) (\|\nabla^2 q \| + \|\nabla q(0)\|_2) n^{2 \alpha - 1/2}\\
%     \text{and} \quad \sup_{0 \le t \le T} | \mathcal{M}_{t \wedge \vartheta}^{\text{quad}}(q) | \le \gamma^2 (\delta +1)^2 T^{1/2} \| \nabla^2 q\| n^{2\alpha - 1/2},
% \end{gathered} \end{equation}
% where $C(\delta, \gamma, \SSigma, \UU^T \bb, T)$ is a constant depending only on the norms of the inputs and independent of $n$ and $q$.
% \end{proposition}

\begin{proof}[Proof of Proposition~\ref{prop:martingale_errors_SGD}] Assume that $\alpha \ge \varepsilon$ in the stopping times $\hbar$ and $\vartheta$ We begin by proving the result for the martingale that arises due to the Hessian of $q$. We will use Part (ii) of Lemma~\ref{lem:general_martingale_bound}.  As with overwhelming probability this does not occur, it suffices to show a bound for the stopped process $\nnu_{t}^{\hbar} \defas \nnu_{t \wedge \hbar}$, that is, $\hbar = \vartheta$ with overwhelming probability.

To simplify notation, we let $Y_t^{\hbar}(q) \defas \mathcal{M}_{t \wedge \hbar}^{\text{quad}}(q)$ and $\gamma_t^{\hbar} \defas \gamma(t \wedge \hbar)$. The jumps of this martingale are given by (see \eqref{eq:martingale_delta})
\begin{equation} \begin{aligned} \label{eq:stopping_time_quad}
    &\Delta Y_t^\hbar (q) = (\gamma_{\tau_k-}^\hbar)^2 (\SSigma^T \UU^T \PP_k(\UU \SSigma \nnu_{\tau_k-}^{\hbar} - \bb) - \tfrac{\delta}{n} \nnu_{\tau_k-}^{\hbar} )^T (\nabla^2 q) \big (\SSigma^T \UU^T \PP_k(\UU \SSigma \nnu_{\tau_k-}^{\hbar} - \bb) - \tfrac{\delta}{n} \nnu_{\tau_k-}^{\hbar} \big )\\
    &= \underbrace{(\gamma_{\tau_k-}^\hbar)^2 (\SSigma^T \UU^T \PP_k(\UU \SSigma \nnu_{\tau_k-}^{\hbar}-\bb) )^T (\nabla^2 q) (\SSigma^T \UU^T \PP_k(\UU \SSigma \nnu_{\tau_k-}^{\hbar} - \bb))}_{\text{(a)}} + \underbrace{(\gamma_{\tau_k-}^\hbar)^2 \tfrac{\delta^2}{n^2} (\nnu_{\tau_k-}^{\hbar})^T (\nabla^2 q) ) \nnu_{\tau_k-}^{\hbar}}_{\text{(b)}} \\
    & - \underbrace{2 (\gamma_{\tau_k-}^\hbar)^2 \tfrac{ \delta}{n} (\SSigma^T \UU^T \PP_k(\UU \SSigma \nnu_{\tau_k-}^{\hbar} - \bb))^T (\nabla^2 q) \nnu_{\tau_k-}^{\hbar}}_{\text{(c)}}.
\end{aligned}
\end{equation}
For the compensator of $[\YY_t]$, we compute $\displaystyle \mathcal{C}_t = \lim_{\varepsilon \downarrow 0} \varepsilon^{-1} \mathbb{E}[|\Delta Y^{\hbar}_{t + \varepsilon}|^2| \mathcal{F}_t]$. To avoid the unnecessarily long expressions, we bound each term that appears in the expected square of \eqref{eq:stopping_time_quad} separately. Note that $\widehat{\mathscr{L}}(\nnu_{\tau_k}^{\hbar}) \le C(\SSigma, \bb) \|\nnu_{\tau_k-}^{\hbar}\|^2 \le C(\SSigma, \bb) n^{\varepsilon}$, where $C(\SSigma, \bb)$ is a constant. We begin with $\text{(a)}^2$:
\begin{equation} \begin{aligned} \label{eq:a^2}
    \lim_{\varepsilon \downarrow 0} \varepsilon^{-1} & \mathbb{E}[\text{(a)}^2 | \mathcal{F}_t] = (\gamma_{\tau_k-}^{\hbar})^4 \sum_{i=1}^n \big [ (\UU \SSigma \nnu_{\tau_k-}^{\hbar} - \bb)^T \ee_i \ee_i^T \UU \SSigma (\nabla^2 q) \SSigma^T \UU^T \ee_i (\ee_i^T (\UU \SSigma \nnu_{\tau_k-}^{\hbar} - \bb)) \big ]^2\\
    &= (\gamma_{\tau_k-}^{\hbar})^4 \sum_{i=1}^n (\ee_i^T (\UU \SSigma \nnu_{\tau_k-}^{\hbar}-\bb))^4 (\ee_i^T \UU \SSigma (\nabla^2 q) \SSigma^T \UU^T \ee_i)^2\\
    &\le \widehat{\gamma}^4 \|\nabla^2 q\|^2 \max_{1 \le i \le n} \|\SSigma^T \UU^T \ee_i\|_2^4 \max_{1 \le i \le n} |\ee_i^T(\UU \SSigma \nnu_{\tau_k-}^{\hbar}-\bb)|^2 \sum_{i=1}^n |\ee_i^T(\UU \SSigma \nnu_{\tau_k-}^{\hbar}-\bb)|^2\\
    &\le \widehat{\gamma}^4 \|\nabla^2 q\|^2 n^{2\alpha-1} \widehat{\mathscr{L}}(\nnu_{\tau_k-}^{\hbar})\\
    &\le \widehat{\gamma}^4 C(\SSigma, \bb) \|\nabla^2 q\|^2 n^{2\alpha -1 + \varepsilon} 
\end{aligned}
\end{equation}
% where $\Delta \nnu_{\tau_k}^{\hbar} = \gamma ( \SSigma^T \UU^T \PP_k(\UU \SSigma \nnu_{\tau_k-}^{\hbar} - \bb) - \tfrac{\delta}{n} \nnu_{\tau_k-}^{\hbar})$. 
% A simple computation yields $|\Delta Y_{\tau_k}^{\hbar}(q)| \le \|\nabla^2 q\| \|\Delta \nnu_{\tau_k}^{\hbar}\|^2$ where we have assumed that $\|\nabla^2 q\|$ is bounded independent of $n$. Consequently, it suffices to bound $\|\Delta \nnu_{\tau_k}^{\hbar}\|^2_2$, using the stopping time $\hbar$ in \eqref{eq:stopping_time_quad}
% \begin{align*}
%     \|\Delta \nnu_{\tau_k}^{\hbar} \|^2 &\le \gamma^2 \|\SSigma^T \UU^T \ee_{i_k} \ee_{i_k}^T (\UU \SSigma \nnu_{\tau_k-}^{\hbar} - \bb) \|^2 + \frac{\gamma^2 \delta^2}{n^2} \|\nnu_{\tau_k-}^{\hbar}\|^2_2\\
%     & \qquad + \frac{2\gamma^2 \delta}{n} \|\nnu_{\tau_k-}^{\hbar}\|_2 \|\SSigma^T \UU^T \ee_{i_k} \ee_{i_k}^T (\UU \SSigma \nnu_{\tau_k-}^{\hbar} - \bb)\|_2 \\
%     &\le \gamma^2 \max_{i = 1, \hdots, n} \|\SSigma^T \UU^T \ee_i\|^2_2 \, \cdot \, \max_{i = 1, \hdots, n} |\ee_i^T (\UU \SSigma \nnu_{\tau_k}^{\hbar} - \bb)|^2 + \frac{\gamma^2 \delta^2}{n^2} \|\nnu_{\tau_k}^{\hbar}\|^2_2 \\
%     &\qquad + \frac{2 \gamma^2 \delta}{n} \|\nnu_{\tau_k-}^{\hbar}\|_2 \max_{i = 1, \hdots, n} \|\SSigma^T \UU^T \ee_i\|_2  \, \cdot \, \max_{i=1, \hdots, n} |\ee_i^T (\UU \SSigma \nnu_{\tau_k}^{\hbar} - \bb)|\\
%     &\le \gamma^2 n^{\alpha-1} + \gamma^2 \delta^2 n^{\varepsilon - 2} + 2\gamma^2 \delta n^{\alpha + \varepsilon/2 - 2} \le \gamma^2 (\delta + 1)^2 n^{\alpha-1}.
% \end{align*}
Here we used that $\|\SSigma^T \UU^T \ee_i\|_2$ is the $i$-th row of $\AA$ which we have normalized to be $1$ and the stopping time $\hbar$ on the entries of $\UU \SSigma \nnu_{\tau_k}-\bb$. 

We can do the same for the other terms in $\displaystyle \lim_{\varepsilon \downarrow 0} \varepsilon^{-1} \mathbb{E}[ |\Delta Y_{t + \varepsilon}^{\hbar}|^2 | \mathcal{F}_t]$ which we state below:

\begin{align*}
    \lim_{\varepsilon \downarrow 0} \varepsilon^{-1} & \mathbb{E}[\text{(a)} \cdot \text{(b)} | \mathcal{F}_t] \\
     &= (\gamma_{\tau_k-}^\hbar)^4 n^{-2}\delta^2 \sum_{i=1}^n (\ee_i^T (\UU \SSigma \nnu_{\tau_k-}^{\hbar} - \bb) )^2 \ee_i^T \UU \SSigma (\nabla^2 q) \SSigma^T \UU^T \ee_i (\nnu_{\tau_k-}^{\hbar})^T (\nabla^2 q) \nnu_{\tau_k-}^{\hbar}\\
    &\le \widehat{\gamma}^4 n^{-2} \delta^2 \|\nabla^2 q\|^2 \max_{1 \le i \le n} \|\SSigma^T \UU^T \ee_i\|^2_2 \|\nnu_{\tau_k-}^{\hbar}\|^2 \widehat{\mathscr{L}}(\nnu_{\tau_k-}^{\hbar})\\
    &\le \widehat{\gamma}^4 \delta^2 C(\SSigma, \bb) \|\nabla^2 q\|^2 n^{2\varepsilon - 2},\\
    \lim_{\varepsilon \downarrow 0} \varepsilon^{-1} & \mathbb{E}[ \text{(a)} \cdot \text{(c)} | \mathcal{F}_t]\\
    &= -2 (\gamma_{\tau_k-}^\hbar)^4 \delta n^{-1} \sum_{i=1}^n (\ee_i^T(\UU \SSigma \nnu_{\tau_k-}^{\hbar}- \bb))^ 3 (\ee_i^T \UU \SSigma (\nabla^2 q) \SSigma^T \UU^T \ee_i ) (\ee_i^T \UU \SSigma (\nabla^2 q) \nnu_{\tau_k-}^{\hbar})\\
    &\le 2 \widehat{\gamma}^4 \delta \|\nabla^2 q\|^2 \widehat{\mathscr{L}}(\nnu_{\tau_k-}^\hbar) \|\nnu_{\tau_k-}^{\hbar}\|_2 \max_{1 \le i \le n} \|\SSigma^T \UU^T \ee_i\|_2^3 n^{\alpha-3/2}\\
    &\le 2 \widehat{\gamma}^4 \delta C(\SSigma, \bb) \|\nabla^2 q\|^2 n^{\alpha + 3\varepsilon/2 - 3/2},
\end{align*}
\begin{align*}
    \lim_{\varepsilon \downarrow 0} \varepsilon^{-1} \mathbb{E}[\text{(b)} \cdot \text{(c)} | \mathcal{F}_t] &= -2(\gamma_{\tau_k-}^\hbar)^4 \delta^3 n^{-3} \sum_{i=1}^n (\ee_i^T (\UU \SSigma \nnu_{\tau_k-}^{\hbar}-\bb) ) (\ee_i^T \UU \SSigma (\nabla^2 q) \nnu_{\tau_k-}^{\hbar}) (\nnu_{\tau_k-}^{\hbar} (\nabla^2 q) \nnu_{\tau_k-}^{\hbar})\\
    &\le 2 \widehat{\gamma}^4 \delta^3 \|\nabla^2 q\|^2 \|\nnu_{\tau_k-}^{\hbar}\|_2^3 \max_{1 \le i \le n} \|\UU^T \SSigma^T \ee_i\|_2 n^{\alpha-5/2}\\
    &\le 2 \widehat{\gamma}^4 \delta^3 C(\SSigma, \bb)  \|\nabla^2 q\|^2 n^{\alpha + 3/2 \varepsilon - 5/2},\\
    \lim_{\varepsilon \downarrow 0} \varepsilon^{-1} \mathbb{E}[\text{(b)}^2 | \mathcal{F}_t] &= (\gamma_{\tau_k-}^\hbar)^4 \delta^4 n^{-3} \big ( (\nnu_{\tau_k-}^{\hbar})^T (\nabla^2 q) \nnu_{\tau_k-}^{\hbar} \big )^2\\
    &\le \widehat{\gamma}^4 \delta^4 n^{-3} \|\nnu_{\tau_k-}^{\hbar}\|_2^4 \|\nabla^2 q\|^2 \\
    &\le \widehat{\gamma}^4 \delta^4 \|\nabla^2 q\|^2 n^{2 \varepsilon - 3},\\
    \lim_{\varepsilon \downarrow 0} \varepsilon^{-1} \mathbb{E}[(c)^2 | \mathcal{F}_t] &= 4 (\gamma_{\tau_k-}^\hbar)^4 \delta^2 n^{-2} \sum_{i=1}^n (\ee_i^T (\UU \SSigma \nnu_{\tau_k-}^{\hbar}-\bb) )^2 (\ee_i^T \UU \SSigma (\nabla^2 q) \nnu_{\tau_k-}^{\hbar})^2\\
    &\le 4 \widehat{\gamma}^4 \delta^2 n^{-2} \max_{1 \le i \le n} \{ \|\UU^T \SSigma^T \ee_i\|^2 \} \|\nabla^2 q\|^2 \|\nnu_{\tau_k-}^{\hbar}\|^2 \widehat{\mathscr{L}}(\nnu_{\tau_k-}^\hbar)\\
    &\le 4 \widehat{\gamma}^4 \delta^2 \|\nabla^2 q\|^2 C(\SSigma, \bb) n^{2\varepsilon - 2}.
\end{align*}
It immediately follows from these bounds that 
\begin{equation} \begin{aligned}
    &\mathcal{C}_t \defas \lim_{\varepsilon \downarrow 0} \varepsilon^{-1} \mathbb{E}[|\Delta Y_{t + \varepsilon}^{\hbar}|^2 | \mathcal{F}_t] \le C(\delta, \widehat{\gamma}, \SSigma, \bb) \|\nabla^2 q\|^2 n^{2 \alpha - 1 + \varepsilon}\\
    \Rightarrow \quad &\sup_{0 \le t \le T} \int_0^t \mathcal{C}_s \, \dif s \le C(\delta, \widehat{\gamma}, \SSigma, \bb) \|\nabla^2 q\|^2 n^{2\alpha -1 + \varepsilon}. 
\end{aligned} \end{equation}
In order to apply Part (ii) of Lemma~\ref{lem:general_martingale_bound}, we also need to bound $\displaystyle \max_{0 \le t \le T} |\Delta Y_t^{\hbar}|$. A simple computation using \eqref{eq:stopping_time_quad} yields the following bounds:
\begin{align*}
    |\text{(a)}| &\le \widehat{\gamma}^2 \max_{1 \le i \le n} |\ee_i^T (\UU \SSigma \nnu_{\tau_k-}^{\hbar} - \bb) |^2 \|\nabla^2 q\| \max_{1 \le i \le n} \|\SSigma^T \UU^T \ee_i\|^2 \le \widehat{\gamma}^2 \|\nabla^2 q\|^2 n^{2\alpha-1}\\
    |\text{(b)}| &\le \widehat{\gamma}^2 \delta^2 n^{-2} \|\nabla^2 q\| \|\nnu_{\tau_k-}^{\hbar}\|^2 \le \widehat{\gamma}^2 \delta^2 \|\nabla^2 q\| n^{\varepsilon -2}\\
    |\text{(c)}| &\le 2 \widehat{\gamma}^2 \delta n^{-1} \max_{1 \le i \le n} |\ee_i^T (\UU \SSigma \nnu_{\tau_k-}^{\hbar}-\bb)| \|\nabla^2 q\| \|\nnu_{\tau_k-}^{\hbar}\|_2 \max_{1 \le i \le n} \|\SSigma^T \UU^T \ee_i\|_2 \le 2 \widehat{\gamma}^2 \delta n^{\alpha -3/2 + \varepsilon /2}.
\end{align*}
It follows that $\sup_{0 \le t \le T} |\Delta Y_t^{\hbar}| \le C(\delta, \widehat{\gamma}) \|\nabla^2 q\| n^{2 \alpha -1}$. Using Part (ii) of Lemma~\ref{lem:general_martingale_bound} with $\widehat{\varepsilon} = \varepsilon/2$, we get that
\[
    \sup_{0 \le t \le T} |Y_t^{\hbar}(q)| \le C(\delta, \widehat{\gamma}, \SSigma, \bb) T^{1/2} \|\nabla^2 q\| n^{\alpha - 1/2 + \varepsilon}
\]
with overwhelming probability.
% It follows that $|\Delta Y_{\tau_k}^{\hbar}(q)|^2 \le  \gamma^4 (\delta+1)^4 \|\nabla^2 q\|^2 n^{2\alpha - 2}$.  Consequently, the quadratic variation $[Y_t]$ is 
% \[ 
%     [Y_T(q)] = \sum_{k=1}^{N_T} (\Delta Y_{\tau_k}^{\hbar}(q))^2 \le \gamma^4 (\delta+1)^4 \|\nabla^2 q\|^2 n^{2\alpha-1} T \tfrac{N_T}{nT}. 
% \]
% Using Part (i) of Lemma~\ref{lem:general_martingale_bound} with $\varepsilon = \alpha$, we have that
% \[
%     \sup_{0\le t \le T} |Y_t^{\hbar}(q)| \le \gamma^2 (\delta +1)^2 \| \nabla^2 q\| T^{1/2} n^{2\alpha - 1/2}
% \]
% with overwhelming probability. 

Next, we turn to proving the martingale that arises due to the gradient where we will use Part (ii) of Lemma~\ref{lem:general_martingale_bound}. As before, we adopt a stopping time $\hbar$ defined (for some $\alpha > 0$) as \eqref{eq:stopping_time_hbar} and we show a bound for the stopped process $\nnu_t^{\hbar} \defas \nnu_{t \wedge \hbar}$.

To simplify the notation, we let $Y_t(q) \defas \mathcal{M}_t^{\text{grad}}(q)$. The jumps of this martingale are given by (see \eqref{eq:martingale_delta})
\begin{align*}
    \Delta Y_{\tau_k}(q) &= \Delta \mathcal{M}_{\tau_k}^{\text{grad}}(q) = \nabla q(\nnu_{\tau_k-})^T \Delta \nnu_{\tau_k}\\
    &= -\gamma_{\tau_k-}^\hbar ( (\nabla^2 q) \nnu_{\tau_k-} + \nabla q(0))^T \big [ \SSigma^T \UU^T \PP_k (\UU \SSigma \nnu_{\tau_k-} - \bb) + \tfrac{\delta}{n} \nnu_{\tau_k-} \big ].
\end{align*}
We perform the Doob decomposition and compute the compensator for the quadratic variation $[Y_t^{\hbar}(q)]$, that is $\sum_{k=1}^{N_t} (\Delta Y_{\tau_k}^{\hbar}(q))^2$. For the compensator of $[Y_t]$, we compute 
\begin{align}
    \mathcal{C}_t &= \lim_{\varepsilon \downarrow 0} \varepsilon^{-1} \mathbb{E}[|\Delta Y_{t + \varepsilon}^{\hbar}|^2 | \mathcal{F}_t] \nonumber \\
    &\le \frac{\widehat{\gamma}^2 \delta^2}{n} \big | ( (\nabla^2 q) \nnu_t^{\hbar} + \nabla q(0) )^T \nnu_t^{\hbar} \big |^2 + n \widehat{\gamma}^2 \mathbb{E}\big [ \big | ( (\nabla^2 q) \nnu_t^{\hbar} + \nabla q(0) )^T \SSigma^T \UU^T \PP_k(\UU \SSigma \nnu_t^{\hbar} - \bb) \big |^2  | \mathcal{F}_t\big ] \nonumber \\
    & \qquad + 2 \widehat{\gamma}^2 \delta \big | ( (\nabla^2 q) \nnu_t^{\hbar} + \nabla q(0) )^T \nnu_t^{\hbar} ( (\nabla^2 q) \nnu_t^{\hbar} + \nabla q(0) )^T \mathbb{E} [ \SSigma^T \UU^T \PP_k ( \UU \SSigma \nnu_t^{\hbar} - \bb) | \mathcal{F}_t] \big | \nonumber \\
    &\le \frac{2\widehat{\gamma}^2 \delta^2}{n} (\|\nabla^2 q\|^2 \|\nnu_t^{\hbar}\|^4_2 + \|\nabla q(0) \|^2 \|\nnu_t^{\hbar}\|^2 ) + \widehat{\gamma}^2 \sum_{i=1}^n |((\nabla^2 q) \nnu_t^{\hbar} + \nabla q(0))^T \SSigma^T \UU^T \ee_i|^2 | \ee_i^T(\UU \SSigma^T \nnu_t^{\hbar} - \bb)|^2 \nonumber \\
    & + \frac{2\widehat{\gamma}^2 \delta}{n} (\|\nabla^2 q\| \|\nnu_{t}^{\hbar}\|^2_2 + \|\nabla q(0)\|_2 \|\nnu_t^{\hbar}\|_2) | ( (\nabla^2 q) \nnu_t^{\hbar} + \nabla q(0) )^T \SSigma^T (\SSigma \nnu_t^{\hbar} - \UU^T \bb) | \nonumber \\
    &\le  \frac{2\widehat{\gamma}^2 \delta^2}{n} (\|\nabla^2 q\|^2 \|\nnu_t^{\hbar}\|^4_2 + \|\nabla q(0)\|^2 \|\nnu_t^{\hbar}\|^2 ) \label{eq:term1} \\
    & \qquad  + \widehat{\gamma}^2 \max_{i =1, \hdots, n} |\ee_i^T(\UU \SSigma^T \nnu_t^{\hbar} - \bb)|^2  \|\UU \SSigma ( (\nabla^2 q) \nnu_t^{\hbar} + \nabla q(0) )\|^2_2 \label{eq:term2} \\
    & \qquad + \frac{2 \widehat{\gamma
    }^2 \delta}{n} \bigg ( \|\nabla^2 q\| \|\nnu_t^{\hbar}\|^2 + \|\nabla q(0)\|_2 \|\nnu_t^{\hbar}\|_2 \bigg ) \label{eq:term3} \\
    &\qquad \, \,  \cdot \bigg ( \|(\nabla^2 q) \SSigma^T \SSigma\| \|\nnu_t^{\hbar}\|^2 + \|\SSigma^T \SSigma (\nabla q(0))\|_2 \|\nnu_t^{\hbar}\|_2 + \|\SSigma^T \UU^T\bb\|_2 \|\nabla q(0)\|_2 + \| (\nabla^2 q) \SSigma^T \UU^T \bb\|_2 \|\nnu_t^{\hbar}\|_2 \bigg ) \nonumber .
\end{align}
We will bound each term in the final inequality independently. For \eqref{eq:term1}, we use the definition of the stopping time 
\begin{align*}
    \frac{2\widehat{\gamma}^2 \delta^2}{n} (\|\nabla^2 q\|^2 \|\nnu_t^{\hbar}\|^4_2 + \|\nabla q(0)\|^2 \|\nnu_t^{\hbar}\|^2 ) &\le 2\widehat{\gamma}^2 \delta^2 ( \|\nabla^2 q\|^2 n^{-1 + 2\varepsilon} + \|\nabla q(0)\|^2 n^{-1 +  \varepsilon})\\
    &\le 2\widehat{\gamma}^2 \delta^2 (\|\nabla^2 q\|^2 + \|\nabla q(0)\|^2) n^{-1 + 2 \varepsilon}.
\end{align*}
For the second term \eqref{eq:term2}, we have that 
\begin{align*}
    \widehat{\gamma}^2 \max_{i =1, \hdots, n} |\ee_i^T(\UU \SSigma^T \nnu_t^{\hbar} - \bb)|^2  \|\UU \SSigma ( (\nabla^2 q) \nnu_t^{\hbar} + \nabla q(0))\|^2_2 &\le 2\widehat{\gamma}^2 n^{-1 + 2\alpha} (\|\SSigma (\nabla^2 q)\|^2 \|\nnu_t^{\hbar}\|^2_2 + \|\nabla q(0) \|^2)\\
    &\le 2\widehat{\gamma}^2 n^{-1 + 2 \alpha + \varepsilon} (\|\SSigma (\nabla^2 q)\|^2 + \|\nabla q(0)\|^2).
\end{align*}
For the third term \eqref{eq:term3}, we have that
\begin{align*}
    \frac{2 \widehat{\gamma}^2 \delta}{n} \bigg ( \|\nabla^2 q\| \|\nnu_t^{\hbar}\|^2 & + \|\nabla q(0)\|_2 \|\nnu_t^{\hbar}\|_2 \bigg ) \le 2 \widehat{\gamma}^2 \delta ( \| \nabla^2 q\| n^{-1 + \varepsilon} + \|\nabla q(0) \|_2 n^{-1 + 1/2 \varepsilon})\\
   \text{and} \, \,  \|(\nabla^2 q) \SSigma^T \SSigma\| &\|\nnu_t^{\hbar}\|^2 + \|\SSigma^T \SSigma (\nabla q(0))\|_2 \|\nnu_t^{\hbar}\|_2 + \|\SSigma^T \UU^T\bb\|_2 \|\nabla q(0)\|_2 + \| (\nabla^2 q) \SSigma^T \UU^T \bb\|_2 \|\nnu_t^{\hbar}\|_2\\
    &\le C(\delta, \SSigma, \bb) (n^{\varepsilon} + n^{\varepsilon/2} + 1) (\|\nabla^2 q\| + \|\nabla q(0)\|_2),
\end{align*}
where $C( \delta, \SSigma, \UU^T \bb)$ is constant depending on the norms of the input. Putting this together, we deduce that 
\begin{equation} \begin{gathered}\label{eq:compensator_grad}
    \mathcal{C}_t \le C(\delta, \widehat{\gamma}, \SSigma,  \bb) (\|\nabla^2 q\|^2 + \|\nabla q(0)\|^2_2) n^{-1 + 2 \alpha + \varepsilon}\\
    \Rightarrow \quad \int_0^T \mathcal{C}_s \, \dif s \le C(\delta, \widehat{\gamma}, \SSigma, \UU^T \bb)  (\|\nabla^2 q\|^2 + \|\nabla q(0)\|^2_2) T n^{-1 + 2 \alpha + \varepsilon}.
\end{gathered} \end{equation}
This equation \eqref{eq:compensator_grad} satisfies the condition that the compensator for the quadratic variation $\langle Y_t \rangle$ in Part (ii) of Lemma~\ref{lem:general_martingale_bound}. It remains to show that $\displaystyle \sup_{0 \le t \le T} |\Delta Y_t|$ is bounded. For this, we compute as followed
\begin{align*}
    \widehat{\gamma}^{-1} |\Delta Y_t^{\hbar}| &\le \frac{\delta}{n} |((\nabla^2 q) \nnu_{t-}^{\hbar} + \nabla q(0))^T \nnu_{t-}^{\hbar}| + |((\nabla^2 q) \nnu_{t-}^{\hbar} + \nabla q(0))^T \SSigma^T \UU^T \ee_{i_k} \ee_{i_k} (\UU \SSigma \nnu_{t-}^{\hbar} - \bb) |\\
    & \le \frac{\delta}{n} \big ( \|\nabla^2 q\| \|\nnu_{t-}^{\hbar}\|^2 + \|\nabla q(0)\|_2 \|\nnu_{t-}^{\hbar}\|_2  \big )\\
    & \quad + \max_{1 \le i \le n} |((\nabla^2 q) \nnu_{t-}^\hbar + \nabla q(0) )^T \SSigma^T \UU^T \ee_i| \, \, \max_{1 \le i \le n} |\ee_i^T (\UU \SSigma \nnu_{t-}^{\hbar} - \bb)| \\
    &\le  \frac{\delta}{n} \big ( \|\nabla^2 q\| \|\nnu_{t-}^{\hbar}\|^2 + \|\nabla q(0)\|_2 \|\nnu_{t-}^{\hbar}\|_2  \big )\\
    & \quad +  \|(\nabla^2 q) \nnu_{t-}^{\hbar} + \nabla q(0) \|_2 \max_{1 \le i \le n} \big \{ \|\SSigma^T \UU^T \ee_i\|_2 \big \} \, \, \max_{1 \le i \le n} \big \{ |\ee_i^T (\UU \SSigma \nnu_{t-}^{\hbar} - \bb)| \big \}.
\end{align*}
We bound each of the terms independently. For the first term, we use the stopping time $\hbar$ to deduce that 
\begin{align*}
    \frac{\delta}{n} \big ( \|\nabla^2 q\| \|\nnu_{t-}^{\hbar}\|^2 + \|\nabla q(0)\|_2 \|\nnu_{t-}^{\hbar}\|_2  \big ) \le \delta (\|\nabla^2 q\| + \|\nabla q(0)\|_2) n^{-1 + \varepsilon}.
\end{align*}
For the second term, we first observe that $\|\SSigma^T \UU^T \ee_i\|_2$ is the $i$th row sum of $\AA$ which we have normalized to be $1$. We simply bound the other quantities using the stopping time $\hbar$, that is,
\begin{align*}
    \|(\nabla^2 q) \nnu_{t-}^{\hbar} + \nabla q(0) \|_2 \max_{1 \le i \le n} \big \{ \|\SSigma^T \UU^T \ee_i\|_2 \big \} & \, \, \max_{1 \le i \le n} \big \{ |\ee_i^T (\UU \SSigma \nnu_{t-}^{\hbar} - \bb)| \big \}\\
    &\le (\|\nabla^2 q\| + \|\nabla q(0)\|_2) n^{-1 + 2\alpha + \varepsilon/2}.
\end{align*}
Therefore, we conclude that 
\begin{align*}
    \sup_{0 \le t \le T} \widehat{\gamma}^{-1} |\Delta Y_t^{\hbar}| \le (\delta + 1)(\|\nabla^2 q\| + \|\nabla q(0)\|_2) (n^{-1 + \varepsilon} + n^{-1 + 2\alpha + \varepsilon/2}).
\end{align*}
We apply Part (ii) of Lemma~\ref{lem:general_martingale_bound} with $\widehat{\varepsilon} = \varepsilon/2$ to conclude the result. 
\end{proof}

\section{The random features model}
\label{sec:random_features}

In this section we prove the claims made for the random features model. Recall the definitions in Sec.~\ref{sec:random_features_intro}, in particular
\eq{
    \AA \deq \sigma(\XX \WW / \sqrt{n_0}) / \sqrt{n} = \sigma(\ZZ \Sigma^{1/2} \WW / \sqrt{n_0}) / \sqrt{n}.
}
There are two complementary resolvent matrices:
\eq{\label{eq_two_resolvents}
    \gcal\deq (\AA^\top \AA  - zI)^{-1} 
    \quad\text{and} \quad \gcaltil\deq (\AA\AA^\top  - zI)^{-1}.
}
These resolvents can be controlled for certain spectral arguments $z$. We now define such an allowable set that also suffices for the contour integrals we considered earlier.
\begin{definition}
    We define an allowable set for the argument $z$:
    \eq{
        \mathfrak{C} \deq \ha{z = E +i\eta: |E| \leq C n^\delta \text{ and } c n^{-\delta} \leq \eta \leq C n^\delta },
    }
    for real $E$ and $\eta$ and some $n$-independent positive constants $C$ and $c$.
\end{definition}

We show that with high probability, Assum.~\ref{ass: laundry_list} holds:
\begin{proposition}\label{prop_rf_resolvent}
    Suppose $z\in \mathfrak{C}$. Then with high probability,
    \al{
        \max_{1\leq\alpha<\beta\leq m} |\gcal_{\alpha\beta} | &\leq C n^{\delta-1/2} , \\
        \max_{1\leq a<b\leq n_1} |\gcaltil_{ab} | &\leq C n^{\delta-1/2} , \\
        \max_{1\leq\alpha\leq m} |\gcal_{\alpha\alpha} - \ntr(\gcal) | &\leq C n^{\delta-1/2} , \\
        \text{and}\quad\max_{1\leq a\leq n_1} |\gcaltil_{aa} - \ntr(\gcaltil) | &\leq C n^{\delta-1/2}.
    }
\end{proposition}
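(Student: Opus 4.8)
The plan is to exploit the \emph{conditional independence} built into the random features matrix, reducing the four estimates to an entrywise local law for a sample-covariance matrix with sufficiently well-behaved rows — the route advertised for the cited sample-covariance literature, which sidesteps any appeal to a Gaussian-equivalence linearization of $\sigma$. Conditionally on $\WW$, the rows of $\sqrt n\,\AA = \sigma(\ZZ\SSigma_f^{1/2}\WW/\sqrt{n_0})$ are i.i.d.: the $i$-th row is $y_i = \sigma(\gg_i)$ with $\gg_i := \ZZ_i\SSigma_f^{1/2}\WW/\sqrt{n_0}$ a centered Gaussian in $\R^d$ of covariance $K_\WW := \tfrac1{n_0}\WW^\top\SSigma_f\WW$, $\ZZ_i$ being the $i$-th row of $\ZZ$. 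Symmetrically, conditionally on $\XX$ the \emph{columns} of $\sqrt n\,\AA$ are i.i.d. (the $k$-th column is $\sigma(\XX w_k/\sqrt{n_0})$, and the $w_k$ are i.i.d.). Hence, after conditioning on $\WW$, the matrix $\AA\AA^\top$ is a sample covariance matrix on the sample side, and after conditioning on $\XX$, $\AA^\top\AA$ is one on the feature side; all four displayed bounds follow from the same argument run under the two conditionings, and I describe the sample side (the resolvent $\gcaltil$, conditioning on $\WW$).

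First I would establish the row-level inputs. On a high-probability event one has $\max_i\|\ZZ_i\|_2\le C\sqrt{n_0}$, $\max_{i,k}|(\gg_i)_k|\le C\sqrt{\log n}$, and $\|\SSigma_f^{1/2}\WW\|\le C\sqrt{\max(n_0,d)}$; there, Assumption~\ref{ass:RF_sigma} (the bound $|\sigma'(x)|\le C_0e^{C_1|x|}$) makes every functional $\ZZ_i\mapsto v^\top y_i$, resp.\ $\ZZ_i\mapsto y_i^\top M y_i$, Lipschitz with constant $n^{o(1)}\|v\|$, resp.\ $n^{o(1)}\|M\|\|y_i\|$, after truncating $\sigma$ to the relevant window (the truncation bias is controlled by the Gaussian tail and is negligible at scale $n^{\delta-1/2}$). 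Gaussian Lipschitz concentration for $\ZZ_i$ then yields, with overwhelming probability and uniformly over deterministic $v$ and $M$ with $\|M\|=1$: $|v^\top y_i - \Exp[v^\top y_i\mid\WW]|\le\|v\|n^{o(1)}$, $|y_i^\top M y_i-\Exp[y_i^\top M y_i\mid\WW]|\le\sqrt d\,n^{o(1)}$, and, using independence of $y_i$ and $y_j$, $|y_i^\top M y_j|\le\sqrt d\,n^{o(1)}$ for $i\ne j$ — the quadratic and bilinear concentration one needs, noting $\tr\SSigma_\sigma(\WW)=\Theta(d)$ and $\Exp[y_i\mid\WW]$ has entries of size $O(n^{o(1)-1/2})$. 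I also need $\|\SSigma_\sigma(\WW)\|\le C$ and $\tfrac1d\tr\SSigma_\sigma(\WW)\le C$ with high probability over $\WW$, where $\SSigma_\sigma(\WW)=\Exp[y_iy_i^\top\mid\WW]$: expanding the $(k,l)$ entry via the Hermite series of $\Exp[\sigma(g)\sigma(g')]$ for the jointly Gaussian pair of covariance $(K_\WW)_{kl}$, the $r=0$ term vanishes ($\Exp\sigma=0$), the $r=1$ term is $\mu_1^2 K_\WW$ with $\|K_\WW\|=O(1)$, and the remaining terms are summable since $\|K_\WW\|=O(1)$ and the off-diagonal entries of $K_\WW$ are $O(n^{o(1)-1/2})$; the operator-norm bound $\|\AA\|\le C$ then follows from a standard sample-covariance operator-norm estimate for rows with the above concentration.

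With these inputs the bounds are produced by the Schur-complement / leave-one-out computation exactly as in the proof of Lemma~\ref{lem:stdrows}. Let $\widehat\AA^{(i)}$ be $\AA$ with its $i$-th row zeroed and $\gcaltil^{(i)}:=((\widehat\AA^{(i)})(\widehat\AA^{(i)})^\top-z)^{-1}$. Sherman--Morrison--Woodbury gives
\[
\gcaltil_{ii}=\Bigl(\tfrac1n y_i^\top y_i - z - \tfrac1n y_i^\top B^{(i)} y_i\Bigr)^{-1},
\qquad
B^{(i)}:=\tfrac1n(\widehat\AA^{(i)})^\top\gcaltil^{(i)}\widehat\AA^{(i)},
\]
with $B^{(i)}$ independent of $y_i$ given $\WW$ and $\|B^{(i)}\|=O(n^\delta)$ on the good event. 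Step two of the plan above gives $\tfrac1n y_i^\top y_i=\tfrac1n\tr\SSigma_\sigma(\WW)+O(n^{o(1)-1/2})$ and $\tfrac1n y_i^\top B^{(i)} y_i=\tfrac1n\tr(\SSigma_\sigma(\WW)B^{(i)})+O(n^{\delta+o(1)-1/2})$; since the identical self-consistent quantities appear, up to $O(n^{\delta-1/2})$, in $\tfrac1n\tr\gcaltil=\tfrac1n\sum_j\gcaltil_{jj}$ and across the $B^{(j)}$ (a one-row perturbation moves $\tfrac1n\tr(\SSigma_\sigma B^{(j)})$ by $O(n^{2\delta-1})$), one gets $\max_i|\gcaltil_{ii}-\tfrac1n\tr\gcaltil|\le Cn^{\delta-1/2}$. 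For $i\ne j$ the analogous identity writes $\gcaltil_{ij}$ as a product of $O(n^\delta)$-bounded resolvent scalars times a bilinear form $\tfrac1n y_i^\top M^{(ij)} y_j$ with $M^{(ij)}$ independent of $(y_i,y_j)$ and $\|M^{(ij)}\|=O(n^\delta)$, and the bilinear bound gives $|\gcaltil_{ij}|\le Cn^{\delta-1/2}$. The feature-side statements for $\gcal$ follow verbatim after conditioning on $\XX$ and using column independence. The same expansion also yields the first condition of Assumption~\ref{ass: laundry_list}, $\max_i|\ee_i^T\gcaltil\bb|\le n^{\delta-1/2}$: one has $(\gcaltil\bb)_i=\gcaltil_{ii}(\bb_i-y_i^\top u^{(i)})$ with $u^{(i)}$ independent of $y_i$, $\|u^{(i)}\|=O(n^{\delta-1})$, while $|\bb_i|=O(n^{o(1)-1/2})$ by the moment hypotheses on $\bbeta$ and $\ww$, so the linear-form concentration controls $y_i^\top u^{(i)}$.

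Finally, uniformity over $z\in\mathfrak C$ is routine: $\gcal$ and $\gcaltil$ are Lipschitz in $z$ with constant $(cn^{-\delta})^{-2}$, so it suffices to prove the estimates on an $n^{-100}$-net of $\mathfrak C$ (which has $n^{O(1)}$ points) and union-bound over the net and the $O(n^2)$ index pairs, the polynomial loss being absorbed into ``with high probability'' — the mechanism already used in Lemma~\ref{lem:fb}. I expect the main obstacle to be keeping the Schur-complement denominators bounded away from $0$ all the way down to $\Im z\sim n^{-\delta}$: this is the local-law regime, and it must be handled either by bootstrapping the self-consistent equation (an a priori weak bound on $\max_i|\gcaltil_{ii}-\tfrac1n\tr\gcaltil|$ that improves itself) or by invoking the entrywise local law for sample-covariance ensembles with the concentration properties established above. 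A secondary bookkeeping point is ensuring every $n^{o(1)}$ loss coming from the truncation of the non-Lipschitz $\sigma$ stays below $n^{\delta/2}$, so that the final error is genuinely $O(n^{\delta-1/2})$ for the prescribed $\delta$.
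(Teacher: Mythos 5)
Your overall strategy is the one the paper uses: truncate $\sigma$ on the Gaussian bulk to restore a Lipschitz constant of order $n^{o(1)}$, apply Gaussian (convex) concentration to obtain quadratic and bilinear form estimates for the rows of $\AA$ conditionally on $\WW$ (and, symmetrically, for columns conditionally on $\XX$), and then run the Schur-complement / leave-one-out expansion to pass from these quadratic-form estimates to the four entrywise resolvent bounds. The paper packages the row-level concentration as Prop.~\ref{prop:RF_qf}, using Adamczak's convex-concentration Hanson--Wright in place of your direct Gaussian Lipschitz argument, but this is the same mechanism. The way the mean term is simplified (Hermite-type expansion giving a combination of $\operatorname{tr}(\GG)$ and $\operatorname{tr}(\GG\WW^\top\SSigma\WW/n_0)$) also matches your sketch for $\SSigma_\sigma(\WW)$.

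The genuine gap is exactly the step you flag at the end and defer. You write that keeping the Schur-complement denominators $1+\theta^{(a)}$ and $1+z^{-1}\tilde\theta^{(\alpha)}$ away from $0$ down to $\Im z\sim n^{-\delta}$ is ``the main obstacle'' and must be resolved by either bootstrapping the self-consistent equation or importing an entrywise local law. Neither of these is needed, and as written your proof is incomplete at precisely this point. The paper's Lemma~\ref{lem_im_part} gives a clean \emph{deterministic} lower bound of the form $|1+\operatorname{tr}(M G)|\geq Cn^{-\delta}$ valid for any resolvent $G$ of a nonnegative definite matrix with $\|H\|_{\text{op}}\le Cn^\delta$ and any PSD $M$ with $cn^{-\delta}\le\operatorname{tr} M\le Cn^\delta$, obtained by splitting into the three cases $\Re z\le 0$ (real part is $\ge 1$), $\Re z\ge 2\operatorname{tr}(M)+\max_i\lambda_i$ (real part is $\ge 1/2$), and the remaining compact window (imaginary part is $\ge Cn^{-\delta}$). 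Applying this with $M = \AA_{a:}^\top\AA_{a:}$ (whose trace is $\|\AA_{a:}\|^2\approx 1$ on $\mathcal T$) and $H=(\AA^{(a)})^\top\AA^{(a)}$ gives the needed denominator bound without any self-improving scheme. You should supply this lemma (or an equivalent deterministic argument) rather than deferring to a bootstrap or to an external local law; it is the one place where the structure of the allowable region $\mathfrak C$ is actually used, and it is what makes the proof self-contained. Once that is in place, the rest of your outline — leave-one-out stability of $\bar{\operatorname{tr}}(\SSigma_\sigma B^{(j)})$ via Woodbury and H\"older, bilinear bounds for the off-diagonals, and a union bound over an $n^{-100}$-net of $\mathfrak C$ and the $O(n^2)$ index pairs — matches the paper's argument.
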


To prove Prop.~\ref{prop_rf_resolvent}, we will need several algebraic identities involving the resolvents of Eq.~\eqref{eq_two_resolvents}. As in \cite{schnelli}, these can be studied simultaneously by linearizing the problem using the block the matrix
\eq{\label{eq_rf_linearization}
    H \deq  \begin{bmatrix}
        -z I        &    \AA^\top \\
        \AA   &   -I 
    \end{bmatrix}.
}

\begin{remark}
    We follow the convention that Greek letters $\alpha,\beta,...$ are used to index the columns of $\AA$ (and range over $\h{1,\ldots,d}$) and Latin characters $a,b,...$ are used to index the rows of $\AA$ (and range over $\h{1,\ldots,n}$). This convention also applies to $H$, which is an $(d+n)$-dimensional square matrix, in the follow sense: the first $d$ dimensions will use $\alpha$ and $\beta$ and the last $n$ dimensions will use $a$ and $b$. This means that for these block matrices $a$ and $b$ range over $\h{d+1,\ldots,d+n}$. Another convention is to use a colon, ``:'', to indicate all indices. That is, for a matrix $M$, we would denote the $i$th row as $M_{i:}$ and the $i$th column as $M_{:i}$.
\end{remark}

We define $R$ as the inverse of $H$. By the Schur complement formula, one easily finds a connection between $R$ and the resolvents of Eq.~\eqref{eq_two_resolvents}:
\eq{\label{eq_gcal_from_h}
    \gcal = [ R_{\alpha\beta} ]_{1\leq\alpha,\beta\leq d} \quad\text{and}\quad \gcaltil = z^{-1}[ R_{a b} ]_{d+1\leq a,b\leq d + n}.
}
In particular, the Stieltjes transforms can be recovered from $R$, since
\eq{\label{eq_stieltjes}
    s_d(z) \deq   \ntr \gcal = \frac{1}{d}\sum_{\alpha=1}^{d} R_{\alpha\alpha}(z)
     \quad \text{and} \quad  \tilde{s}_d(z) \deq  \ntr  \gcaltil = \frac{z^{-1}}{n}\sum_{a=d+1}^{d+n} R_{a a}(z) .
}

To proceed, we introduce some quadratic forms in rows and columns of $\AA$. Shortly, we will see how they appear via the Schur complement formula. For $i\in\h{1,\ldots, d+n}$ define $R^{(i)}$ as the inverse of the minor $H^{(i)}$, i.e. both the $i$th row and column of $H$ is removed. For $i,j\in\h{1,\ldots, d+n}$ and $i\neq j$, we define $R^{(ij)}$ similarly by removing both the $i$th and $j$th rows and columns. In contrast, for $\alpha\in\{1,\ldots,d\}$ we define $\AA^{(\alpha)}$ as $\AA$ with only the $\alpha$th column removed and for $a\in\{1,\ldots,n\}$ we define $\AA^{(a)}$ as $\AA$ with only the $a$th row removed. Whether a row or column is removed should be clear from the use of a Latin or Greek index. Again these can be extended to multiple indices. The resolvents of Eq.~\eqref{eq_two_resolvents} can be naturally defined for these minors of $\AA$:
\eq{
    \gcalm{i}\deq ((\AA^{(i)})^\top \AA^{(i)}  - zI)^{-1} \quad \text{and}\quad \gcaltilm{k}\deq (\AA^{(k)}(\AA^{(k)})^\top  - zI)^{-1},
}
where $i=a,(ab)$ and $k=\alpha,(\alpha\beta)$. Analogous equations to Eq.~\eqref{eq_gcal_from_h} are easily derived. 

We define the quadratic forms:
\al{\label{def_theta_phi}
    \tilde{\theta}^{(\alpha)} &\deq  \sum_{a,b=1}^{n} \AA_{a \alpha}\AA_{b \alpha}  R^{(\alpha)}_{d+a, d+b} = z\sum_{a,b=1}^{n} \AA_{a \alpha}\AA_{b \alpha}  \gcaltilm{\alpha}_{a b}, \\
    \theta^{(a)} &\deq  \sum_{\alpha,\beta=1}^{d} \AA_{a\alpha}\AA_{a\beta} R^{(a)}_{\alpha\beta} = \sum_{\alpha,\beta=1}^{d} \AA_{a\alpha}\AA_{a\beta} \gcalm{a}_{\alpha\beta}, \\
     \tilde{\phi}^{(\alpha \beta)} &\deq   \sum_{a,b=1}^{n} \AA_{a\alpha}  \AA_{b \beta } R_{d+a,d+b}^{(\alpha \beta)} = z \sum_{a,b=1}^{n} \AA_{a\alpha}  \AA_{b \beta } \gcaltilm{\alpha \beta}_{ab} \\
    \text{and} \quad \phi^{(ab)} &\deq   \sum_{\alpha,\beta=1}^{d} \AA_{a\alpha}  \AA_{b \beta } R^{(ab)}_{\alpha\beta} =  \sum_{\alpha,\beta=1}^{d} \AA_{a\alpha}  \AA_{b \beta } \gcalm{ab}_{\alpha\beta}.
}
Then by the Schur complement formula, we have for the diagonal entries of $R$
\eq{\label{eq_schur_diag}
    \frac{1}{R_{\alpha\alpha}}  = -z -  \tilde{\theta}^{(\alpha)} \quad
    \text{and}\quad\frac{1}{R_{aa}}  =-1 -  \theta^{(a)}.
}
Similarly, for the off-diagonal entries of $R$, we have
\al{\label{eq_schur_offdiag}
    R_{\alpha\beta} &= R_{\alpha\alpha} R_{\beta\beta}^{(\alpha)} \tilde{\phi}^{(\alpha\beta)} =  \gcal_{\alpha\alpha}\gcalm{\alpha}_{\beta\beta} \tilde{\phi}^{(\alpha\beta)} \\
    \text{and}\quad R_{ab} &= R_{aa} R_{bb}^{(a)} \phi^{(ab)} = z^2\gcaltil_{aa}\gcaltilm{a}_{bb} \phi^{(ab)} .
}
Note $R^{(a)}$ is independent of the $a$th row of $\ZZ$ and $R^{(\alpha)}$ is independent of the $\alpha$th column of $\WW$. This motivates studying the concentration of these quadratic forms about deterministic quantities. To do this we first need to bound the norms of the minors of $\gcal$ and $\gcaltil$.

\begin{lemma}\label{lem_g_norm_bounds}
    For the operator norm, we have
    \eq{\label{eq_op_bound}
        \max_{a,b,\alpha,\beta}\max\h{ \norm{\gcalm{a}}_{\text{op}}, \norm{\gcaltilm{\alpha} }_{\text{op}},\norm{\gcalm{ab} }_{\text{op}},  \norm{\gcaltilm{\alpha\beta} }_{\text{op}} } \leq \frac{1}{\Im(z)}
    }
    and, for the Frobenius norm, we have
    \eq{\label{eq_fro_bound}
        \max_{a,b,\alpha,\beta}\max \h{ \norm{\gcalm{a}}^2, \norm{\gcaltilm{\alpha}}^2,  \norm{\gcalm{ab}}^2, \norm{\gcaltilm{\alpha\beta}}^2 } \leq \frac{Cn}{\Im(z)^2}.
    }
\end{lemma}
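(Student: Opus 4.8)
## Proof plan for Lemma~\ref{lem_g_norm_bounds}

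The plan is to exploit the fact that each minor $\AA^{(i)}$ is obtained from $\AA$ by deleting a single row or column, so its singular values interlace with (or are a subset of) those of $\AA$, combined with the elementary spectral bound on resolvents of symmetrized products. First I would record the deterministic observation that for any matrix $\BB$, the resolvent $(\BB^\top\BB - z\Id)^{-1}$ has operator norm at most $1/|\Im(z)|$, because the eigenvalues of $\BB^\top\BB$ are real and nonnegative (indeed $\|(\MM - z\Id)^{-1}\|_{\mathrm{op}} = \max_j |\lambda_j(\MM) - z|^{-1} \le |\Im(z)|^{-1}$ whenever $\MM$ is Hermitian). Applying this with $\BB = \AA^{(a)}$, $\BB^\top = (\AA^{(\alpha)})^\top$, $\BB = \AA^{(ab)}$, etc., immediately yields \eqref{eq_op_bound}; note crucially that this step is purely deterministic and requires no randomness or allowable-set restriction beyond $\Im(z) \ne 0$, so it is essentially free once the resolvent identity \eqref{eqr:diag} is invoked.

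For the Frobenius bound \eqref{eq_fro_bound}, the key step is to write $\|\gcalm{a}\|^2 = \sum_{j} |\lambda_j((\AA^{(a)})^\top\AA^{(a)}) - z|^{-2}$ and bound each summand by $\Im(z)^{-2}$; since $(\AA^{(a)})^\top\AA^{(a)}$ is $d\times d$ there are at most $d$ eigenvalues, hence $\|\gcalm{a}\|^2 \le d/\Im(z)^2$. The same reasoning applied to $\gcaltilm{\alpha}$ (an $n\times n$ matrix, so at most $n$ eigenvalues) and to the two-index minors gives a bound of the form $\max\{d,n\}/\Im(z)^2$. Under the proportional scaling $n,d,n_0$ all comparable (Assumption in Section~\ref{sec:random_features_intro}), or more weakly under Assumption~\ref{ass:poly} together with the normalization $\AA \deq \sigma(\cdots)/\sqrt n$, one has $d \le Cn$ for an $n$-independent constant $C$, so the bound can be written uniformly as $Cn/\Im(z)^2$, which is \eqref{eq_fro_bound}. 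The maximum over the $O(n^2)$ choices of deleted indices is harmless since every individual bound is already uniform in the index.

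I do not expect any genuine obstacle here: this lemma is a deterministic spectral estimate and the ``hard part'' is merely bookkeeping — making sure the Frobenius count uses the correct ambient dimension ($d$ for $\gcal$-type, $n$ for $\gcaltil$-type minors) and that the constant $C$ absorbing $d/n$ is indeed $n$-independent, which it is under the standing proportionality assumption. One small point of care is that the minors $\AA^{(\alpha\beta)}$ and $\AA^{(ab)}$ have dimensions $n\times(d-2)$ and $(n-2)\times d$ respectively, so their symmetrized products have at most $d-2 \le d$ and $n-2 \le n$ nonzero-plus-zero eigenvalues; the zero eigenvalues of the larger product contribute $|z|^{-2} \le \Im(z)^{-2}$ each and there are at most $n$ of them, so again the total is $\le Cn/\Im(z)^2$. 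With these observations in place the proof is a few lines.
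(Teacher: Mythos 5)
Your proof is correct and essentially the same as the paper's. The paper's one-line proof invokes the Ward identity $\sum_j |G_{ij}|^2 = \Im G_{ii}/\Im z$, but summing that over $i$ gives $\|G\|_F^2 = \Im(\tr G)/\Im z = \sum_k |\lambda_k - z|^{-2}$, which is exactly the spectral-decomposition sum you write down directly; the two routes differ only in packaging. One small bookkeeping correction in your last paragraph: the resolvents $\gcalm{ab}$ and $\gcaltilm{\alpha\beta}$ are built from $(\AA^{(ab)})^\top\AA^{(ab)}$ and $\AA^{(\alpha\beta)}(\AA^{(\alpha\beta)})^\top$, which are still $d\times d$ and $n\times n$ respectively (removing rows or columns of $\AA$ reduces the rank, not the ambient dimension of the symmetrized product), so the eigenvalue counts are $d$ and $n$, not $d-2$ and $n-2$. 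Since each eigenvalue — zero or not — contributes at most $\Im(z)^{-2}$ to the sum, the bound $\max\{d,n\}/\Im(z)^2\le Cn/\Im(z)^2$ survives unchanged under the standing proportionality assumption, so your conclusion is intact.
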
 
\begin{proof}
    The bound in Eq.~\eqref{eq_op_bound} is trivial, since the spectrum of $(\AA^{(i)})^\top \AA^{(i)}$ is real. For Eq.~\eqref{eq_fro_bound}, we use the Ward identity (see Sec.~7.1.3 of \cite{yau2017dynamical} or Eq.~(5.21) of \cite{schnelli}).
\end{proof}

Define the matrices
\eq{
    \Sigma^\WW_{ab} \deq n\E_\WW A_{a\alpha} A_{b\alpha} \text{ and } \Sigma^\ZZ_{\alpha\beta} \deq n\E_\ZZ \AA_{a\alpha} \AA_{a\beta},
}
and note that $\Sigma^\WW_{ab}$ does not depend on the choice of $\alpha$ nor does $\Sigma^\ZZ$ depend on $a$. Note
\eq{\label{eq_alpha_tilde_mean}
    \E_{\WW_{:\alpha}} \tilde{\theta}^{(\alpha)} = \frac{1}{n}\sum_{a,b=1}^{n} \Sigma^\WW_{ab}  R^{(\alpha)}_{d+a, d+b} = z\bar{\tr}\pa{ \Sigma^\WW  \gcaltilm{\alpha} }
}
and
\eq{
    \E_{\ZZ_{a:}} {\theta}^{(a)} = \frac{1}{n}\sum_{\alpha,\beta=1}^{d} \Sigma^\ZZ_{\alpha\beta}  R^{(a)}_{\alpha, \beta} = \frac{d}{n}\bar{\tr}\pa{ \Sigma^\ZZ  \gcalm{a} }.
}

With these definitions in hand, we are able to define an event $\mathcal{T}$ on which the resolvent can be controlled. The goal of Sec.~\ref{sec_quadratic_forms_rf} is to show that this event occurs with high probability (Prop.~\ref{prop:RF_goodT}), and Sec.~\ref{sec_completed_proof_rf} completes the argument by showing Prop.~\ref{prop_rf_resolvent} holds on $\mathcal{T}$. We define this typical event as
\al{\label{def_good_event}
    \mathcal{T}\deq &\ha{\norm{\Sigma^\ZZ}_{\text{op}} \leq Cn^\delta} \cap \ha{\norm{\Sigma^\WW}_{\text{op}} \leq Cn^\delta} \\
    &\cap \ha{ \max_{a}\norm{\AA_{a:}} \leq Cn^\delta } \cap \ha{ \max_{\alpha}\norm{\AA_{:\alpha}} \leq Cn^\delta }\cap\ha{ \norm{\AA}_{\text{op}}\leq Cn^\delta } \\
    &\cap \ha{\max_{a} \absa{{\theta}^{(a)} - \E_{\ZZ_{a:}} {\theta}^{(a)} } \leq Cn^{\delta-1s/2} } \cap \ha{\max_{\alpha} \absa{\tilde{\theta}^{(\alpha)} - \E_{\WW_{:\alpha}} \tilde{\theta}^{(\alpha)} } \leq Cn^{\delta-1/2} } \label{def_good_event2}\\
    &\cap \ha{\max_{a\neq b} \absa{{\phi}^{(ab)}} \leq Cn^{\delta-1/2} } \cap \ha{\max_{\alpha\neq\beta} \absa{\tilde{\phi}^{(\alpha\beta)} } \leq Cn^{\delta-1/2} }.\label{def_good_event3}
}
Recall, the $\theta$ and $\phi$ terms are quadratic forms in $\AA$ that are defined in Eq.~\eqref{def_theta_phi}.

\begin{remark}
    Throughout this section we use $C$ to denote an arbitrarily large, $n_0$-independent, positive constant, which can increase from line to line. For example, if $X\leq C$ and $Y\leq C$, we will simply write $XY\leq C$, since $C^2$ is still some $n_0$-independent constant. This approach is valid as such replacements only occur a finite number of times. Similarly, $c$ and $\delta$ denote some arbitrarily small, $n_0$-independent, positive constants that can decrease from line to line. 
\end{remark}

\subsection{$\mathcal{T}$ occurs with high probability}

We start with some general properties of the random features model and concentration properties for some good events.
\begin{lemma}
There are constants $\mathfrak{a},\mathfrak{b},\mathfrak{c},\mathfrak{d} \in \R$ such that
\[
\begin{aligned}
&\int_\R
\sigma(x\sqrt{1+\epsilon}) e^{-x^2/2}\frac{\dif x}{\sqrt{2\pi}} = \mathfrak{a}\epsilon +  \mathcal{O}_{\epsilon \to 0}(\epsilon^2), \\
&\int_\R
\sigma^2(x\sqrt{1+\epsilon}) e^{-x^2/2}\frac{\dif x}{\sqrt{2\pi}} = \mathfrak{b} +
\mathfrak{c}\epsilon
+\mathcal{O}_{\epsilon \to 0}(\epsilon^2),\\
&\int_\R
x\sigma(x\sqrt{1+\epsilon}) e^{-x^2/2}\frac{\dif x}{\sqrt{2\pi}} = \mathfrak{d} +  \mathcal{O}_{\epsilon \to 0}(\epsilon). \\
\end{aligned}
\]
\end{lemma}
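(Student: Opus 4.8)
The plan is to recognize the three Gaussian integrals as values of smooth functions of $\epsilon$ and then apply Taylor's theorem. Since $e^{-x^2/2}\tfrac{\dif x}{\sqrt{2\pi}}$ is the standard normal density, writing $Z$ for a standard normal the three integrals are exactly
\[
F(\epsilon)\defas \Exp[\sigma(Z\sqrt{1+\epsilon})],\qquad G(\epsilon)\defas \Exp[\sigma^2(Z\sqrt{1+\epsilon})],\qquad H(\epsilon)\defas \Exp[Z\,\sigma(Z\sqrt{1+\epsilon})].
\]
First I would make the change of variables $y = x\sqrt{1+\epsilon}$, valid for $\epsilon>-1$, which removes all $\epsilon$-dependence from $\sigma$ and puts it into the Gaussian weight; e.g.
\[
F(\epsilon) = \int_{\R}\sigma(y)\,w(\epsilon,y)\,\dif y,\qquad w(\epsilon,y)\defas \frac{1}{\sqrt{2\pi(1+\epsilon)}}\,e^{-y^2/(2(1+\epsilon))},
\]
and similarly for $G$ and $H$ with $\sigma(y)$ replaced by $\sigma^2(y)$ and $y\sigma(y)$. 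This step matters because Assumption~\ref{ass:RF_sigma} only controls $\sigma'$, not $\sigma''$, so differentiating the original integrand twice in $\epsilon$ is not obviously licit; after the substitution every $\epsilon$-derivative falls on $w$, which is real-analytic in $\epsilon$ on $(-1,\infty)$.

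Next I would justify differentiation under the integral sign. From $|\sigma'(x)|\le C_0 e^{C_1|x|}$ one obtains $|\sigma(x)|\le C e^{C_1|x|}$ (as already noted in the text), so $|\sigma(y)|$, $|\sigma^2(y)|$, $|y\sigma(y)|$ all grow at most exponentially. Restricting $\epsilon$ to a fixed compact neighbourhood of $0$, say $[-\tfrac12,\tfrac12]$, each derivative $\partial_\epsilon^k w(\epsilon,y)$ is a polynomial in $y$ and in $(1+\epsilon)^{-1}$ times $w(\epsilon,y)$, hence bounded by $P_k(|y|)\,e^{-y^2/3}$ for some polynomial $P_k$, uniformly in $\epsilon\in[-\tfrac12,\tfrac12]$. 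Multiplying by the at-most-exponential growth of $\sigma(y)$, $\sigma^2(y)$, $y\sigma(y)$ still gives a function of $y$ that is integrable, uniformly in $\epsilon$. By the standard dominated-convergence criterion for differentiating parameter integrals, $F$, $G$, $H$ are therefore $C^\infty$ — in particular $C^2$ — on a neighbourhood of $\epsilon=0$.

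Finally I would read off the expansions from Taylor's theorem with remainder:
\[
F(\epsilon)=F(0)+F'(0)\epsilon+\mathcal{O}(\epsilon^2),\qquad G(\epsilon)=G(0)+G'(0)\epsilon+\mathcal{O}(\epsilon^2),\qquad H(\epsilon)=H(0)+\mathcal{O}(\epsilon),
\]
the implied constants being bounds for the second derivatives over $[-\tfrac12,\tfrac12]$, which are finite by the previous paragraph. By the centring hypothesis $\Exp\sigma(Z)=0$ in Assumption~\ref{ass:RF_sigma}, $F(0)=0$, so the constant term of the first expansion vanishes. Setting $\mathfrak{a}\defas F'(0)$, $\mathfrak{b}\defas G(0)$, $\mathfrak{c}\defas G'(0)$, and $\mathfrak{d}\defas H(0)$ — all finite reals — yields precisely the stated identities. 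The only point needing genuine (but entirely routine) care is the uniform domination in the second paragraph; there is no substantive obstacle beyond that.
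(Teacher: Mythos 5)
Your proof is correct, and it is considerably more complete than what the paper offers: the paper's ``proof'' is a single sentence fragment that only establishes finiteness of $\Exp\sigma^2(aZ)$ from the exponential-growth bound on $\sigma'$, and never actually derives the Taylor expansions. Your argument supplies the missing content. The genuinely useful observation — and the one a naive attempt would trip over — is the change of variables $y = x\sqrt{1+\epsilon}$: without it, a second $\epsilon$-derivative of $\sigma(x\sqrt{1+\epsilon})$ produces $\sigma''$, which Assumption~\ref{ass:RF_sigma} does not control, so the $\mathcal{O}(\epsilon^2)$ remainders would not be licitly obtained. After the substitution all $\epsilon$-dependence sits in the analytic Gaussian weight $w(\epsilon,y)$, and your uniform domination of $\partial_\epsilon^k w$ by polynomial-times-Gaussian against the at-most-exponential growth of $\sigma$, $\sigma^2$, $y\sigma$ is exactly what is needed. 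One small slip in the exposition: for $H$ the substitution actually produces the integrand $\tfrac{y}{\sqrt{1+\epsilon}}\,\sigma(y)\,w(\epsilon,y)$, not $y\sigma(y)\,w(\epsilon,y)$, but the extra factor $(1+\epsilon)^{-1/2}$ is itself smooth near $0$, so the conclusion is unaffected. The identification $\mathfrak{a}=F'(0)$, $\mathfrak{b}=G(0)$, $\mathfrak{c}=G'(0)$, $\mathfrak{d}=H(0)$, together with $F(0)=\Exp\sigma(Z)=0$ from the centering hypothesis, is all correct.
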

\begin{proof}
    From the exponential growth of the derivative of $\sigma$ we conclude that for any $\sigma$ such that $\Exp \sigma^2( a Z) < \infty$ for any $a \in \R$.
\end{proof}

For this section, we will require that the norm of $\SSigma^{1/2}\WW/\sqrt{n_0}$ is in control, specifically that 
\[
\begin{aligned}
\mathcal{N} =
&\biggl\{
\|\SSigma^{1/2}\WW/\sqrt{n_0}\|_{\text{op}} \leq C,
\,\,
\max_{ij} |\bigl(\ZZ \SSigma^{1/2} \WW/\sqrt{n_0}\bigr)_{ij}| \leq (\log n_0)^{1-\epsilon}
\biggr\} \\
&\cap 
\biggl\{
\| \Id_d - \tfrac{1}{n_0}\WW^T \SSigma \WW\|_\infty \leq n^{c-1/2}, \, 
|\tr \bigl(\Id_d - \tfrac{1}{n_0}\WW^T \SSigma \WW\bigr)| \leq \log n_0
\biggr\}
\end{aligned}
\]
for an unimportant large constant $C>0$ and any $\epsilon \in (0,\tfrac 12)$.  
\begin{lemma}\label{lem:RF_goodN}
Suppose $\overline{tr}(\Sigma)=1$.
\[
\Pr( \mathcal{N}) \geq 1 - e^{-\omega(\log n_0)}.
\]
\end{lemma}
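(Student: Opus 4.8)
\textbf{Proof proposal for Lemma \ref{lem:RF_goodN}.}

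The event $\mathcal{N}$ is an intersection of finitely many events, so it suffices to show that each of them fails with probability $e^{-\omega(\log n_0)}$ and then take a union bound. The plan is to handle the $\WW$-only events first, and then condition on them to control the entries of $\ZZ \SSigma^{1/2}\WW/\sqrt{n_0}$.

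\emph{Step 1: the operator-norm bound.} Since $\|\SSigma^{1/2}\WW/\sqrt{n_0}\|_{\text{op}} \leq \|\SSigma\|^{1/2}\,\|\WW\|_{\text{op}}/\sqrt{n_0}$ and $\|\SSigma\| = \mathcal{O}(1)$, this reduces to the standard Gaussian operator-norm bound: with $\WW$ an $n_0 \times d$ matrix of iid standard normals and $d \asymp n_0$, Davidson--Szarek (or Gordon's inequality) gives $\|\WW\|_{\text{op}} \leq \sqrt{n_0} + \sqrt{d} + t$ except with probability $e^{-t^2/2}$; taking $t = \sqrt{n_0}$ yields a bound $\|\SSigma^{1/2}\WW/\sqrt{n_0}\|_{\text{op}} \leq C$ with probability $1 - e^{-\Omega(n_0)}$, which is more than enough. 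The operator-norm event on $\AA$ inside $\mathcal{T}$ (if needed here) follows the same way after also using Assumption \ref{ass:RF_sigma}; but for $\mathcal{N}$ itself only the displayed four events matter.

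\emph{Step 2: the events on $\Id_d - \tfrac1{n_0}\WW^T\SSigma\WW$.} Write $M \deq \Id_d - \tfrac1{n_0}\WW^T\SSigma\WW$. Using $\overline{\tr}(\SSigma)=1$, i.e.\ $\tr\SSigma = n_0$, each entry $M_{\alpha\beta}$ has mean $0$: $\E[\tfrac1{n_0}\WW_{:\alpha}^T\SSigma\WW_{:\beta}] = \delta_{\alpha\beta}\tfrac1{n_0}\tr\SSigma = \delta_{\alpha\beta}$. For $\alpha\neq\beta$ this is a Gaussian bilinear form and for $\alpha=\beta$ a centered quadratic form; in both cases Hanson--Wright gives, for $s>0$,
\[
\Pr\bigl(|\WW_{:\alpha}^T\SSigma\WW_{:\beta} - \delta_{\alpha\beta}\tr\SSigma| > s\bigr)
\leq 2\exp\!\Bigl(-c\min\bigl(s^2/\|\SSigma\|_F^2,\ s/\|\SSigma\|_{\text{op}}\bigr)\Bigr).
\]
Since $\|\SSigma\|_F^2 \leq n_0\|\SSigma\|^2 = \mathcal{O}(n_0)$, choosing $s = n_0^{1/2+c}$ makes the exponent $\gtrsim n_0^{2c}$, so $|M_{\alpha\beta}| \leq n_0^{c-1/2} \leq n^{c-1/2}$; a union bound over the $d^2 = \mathcal{O}(n_0^2)$ entries gives $\|M\|_\infty \leq n^{c-1/2}$ except with probability $e^{-\Omega(n_0^{2c})} = e^{-\omega(\log n_0)}$. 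For the trace, $\tr(\tfrac1{n_0}\WW^T\SSigma\WW) = \tfrac1{n_0}\mathrm{vec}(\WW)^T(\Id_d\otimes\SSigma)\,\mathrm{vec}(\WW)$ has mean $\tfrac d{n_0}\tr\SSigma = d$, and Hanson--Wright with $\|\Id_d\otimes\SSigma\|_F^2 = d\|\SSigma\|_F^2 = \mathcal{O}(d n_0)$, $\|\Id_d\otimes\SSigma\|_{\text{op}} = \mathcal{O}(1)$, and deviation level $n_0\log n_0$ yields an exponent $\gtrsim \min((\log n_0)^2, n_0\log n_0) = (\log n_0)^2$, hence $|\tr M| \leq \log n_0$ except with probability $e^{-\omega(\log n_0)}$.

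\emph{Step 3: the max-entry bound on $\ZZ\SSigma^{1/2}\WW/\sqrt{n_0}$.} Work on the intersection of the $\WW$-events from Step 2, which in particular forces $\|(\SSigma^{1/2}\WW/\sqrt{n_0})_{:j}\|^2 = \tfrac1{n_0}\WW_{:j}^T\SSigma\WW_{:j} = 1 + M_{jj} \leq 1 + n^{c-1/2} \leq 2$ for every $j$. Conditionally on $\WW$, the entry $(\ZZ\SSigma^{1/2}\WW/\sqrt{n_0})_{ij}$ is a centered Gaussian with variance equal to that column-norm-squared, hence $\leq 2$; thus $\Pr(|(\ZZ\SSigma^{1/2}\WW/\sqrt{n_0})_{ij}| > t \mid \WW) \leq 2e^{-t^2/4}$. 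Taking $t = (\log n_0)^{1-\epsilon}$ and union-bounding over the $nd = \mathcal{O}(n_0^2)$ entries gives failure probability $\lesssim n_0^2 e^{-(\log n_0)^{2-2\epsilon}/4}$, which is $e^{-\omega(\log n_0)}$ precisely because $\epsilon < \tfrac12$ makes $2-2\epsilon > 1$. Combining Steps 1--3 by a union bound completes the proof.

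\emph{Expected main obstacle.} None of the estimates is deep; the only real care needed is the conditional structure in Step 3 — one must first restrict to the high-probability $\WW$-event of Step 2 so that the conditional Gaussian variances are uniformly $\mathcal{O}(1)$, and then match the sub-Gaussian rate $(\log n_0)^{2-2\epsilon}$ against the $\log(nd) \asymp \log n_0$ union-bound cost, which works exactly because $\epsilon \in (0,\tfrac12)$.
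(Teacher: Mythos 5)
Your proof is correct and follows essentially the paper's own plan: decompose $\mathcal{N}$ into its four constituent events and show each fails with probability $e^{-\omega(\log n_0)}$ via standard Gaussian concentration (operator-norm tail for the norm event, Hanson--Wright plus a union bound for the entrywise $\Id_d - \tfrac1{n_0}\WW^T\SSigma\WW$ event, and a quadratic-form tail for the trace and max-entry events). The paper's sketch names Bernstein for the trace and max-entry events where you instead use Hanson--Wright via vectorization (for the trace) and a conditional-Gaussian tail after restricting to the $\WW$-events (for the max entry), but these are interchangeable: both routes deliver the decisive $(\log n_0)^{2-2\epsilon}$ and $(\log n_0)^2$ exponents once $\overline{\tr}\,\SSigma = 1$ and $\|\SSigma\|_{\text{op}} \leq C$ force $\|\SSigma\|_F^2 = \mathcal{O}(n_0)$.
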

\begin{proof}
There are 4 events to control.  Write $(I),(II),(III),(IV)$ for the four events considered in $\mathcal{N}$
\paragraph{(I).} Norm bounds and boundedness in norm of $\SSigma$.
\paragraph{(II).} Bernstein's inequality for subexponential random variables.
\paragraph{(III).} Hanson Wright + union bound.
\paragraph{(IV).} Bernstein + trace cyclicity.
\end{proof}

\subsubsection{Concentration for quadratic forms in random features}\label{sec_quadratic_forms_rf}

The random features matrix $\FF \coloneqq \sigma(\XX\WW/\sqrt{n_0})$ has independent rows \emph{conditionally} on $\WW$.
We show the following concentration inequality on quadratic forms.
\begin{proposition}\label{prop:RF_qf}
Suppose $\overline{tr} \SSigma =1$.
Suppose that $\GG$, which may measurably depend on $\WW$, but is otherwise deterministic and that $u_1,u_2$ are any two rows from the random features matrix $\FF$.  Let $\mathfrak{u}$ be the $2 \times n$ matrix with rows given by $u_1$ and $u_2$.  Then for any $\epsilon>0$
\[
\Pr
\biggl(
\biggl\{
\big\|\mathfrak{u} \GG \mathfrak{u}^T - \Exp [\mathfrak{u} \GG \mathfrak{u}^T \, | \, \WW]|\big\|
\geq t
\biggr\}
\cap
\mathcal{N}
~\bigg\vert~ \WW
\biggl)
\leq 
2
\exp\biggl(
-D_n\min\biggl\{
\frac{t^2}{\|\GG\|^2_{HS} \|\SSigma\|},
\frac{t}{\|\GG\|}
\biggr\}
\biggr),
\]
where $D_n = C e^{-4C_1(\log n_0)^{1-\epsilon}}$ with $C_1$ the constant from Assumption \ref{ass:RF_sigma}. In particular if $C_1 = 0$ (i.e. $\sigma$ is Lipschitz), then we may take $D_n = C$.  
The same statement holds if we instead replace the conditional expectation $\Exp [\mathfrak{u} \GG \mathfrak{u}^T | \WW]$ by 
%(\textcolor{red}{needs verification})
 \[
    \biggl(
        (\mathfrak{b}-\mathfrak{d})\tr(\GG)
    +\mathfrak{d}\tr\bigl( \tfrac{1}{n_0} \GG \WW^T \SSigma \WW\bigr)
    \biggr)
    \begin{bmatrix}
    1 & 0 \\
    0 & 1
    \end{bmatrix},
    \]
    after increasing the constants.
%\(
%\mathfrak{a}^2 v^T A v
%\,\text{where}
%\,
%v = \operatorname{diag}(\Id - \tfrac{1}{n_0}W^T\Sigma W).
%\)
%The same bound holds unconditionally on $W$ as well, taking expectation on both sides.  
%We may also replace the row $u$ by the same, though with the replacement $\sigma \to \varrho$
\end{proposition}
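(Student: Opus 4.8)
The plan is to prove Proposition~\ref{prop:RF_qf} by conditioning on $\WW$ and exploiting the fact that, given $\WW$, the rows $u_1, u_2$ of $\FF = \sigma(\XX\WW/\sqrt{n_0})$ are independent and each is a nonlinear function of a single Gaussian row of $\ZZ$. First I would set up the concentration for a single quadratic form $u^T \GG u$ where $u = \sigma(\ZZ_{a:}\SSigma^{1/2}\WW/\sqrt{n_0})$. Writing $g = \ZZ_{a:}$ for a standard Gaussian vector in $\R^{n_0}$ and $\WW_{\mathrm{eff}} = \SSigma^{1/2}\WW/\sqrt{n_0}$ (which has operator norm $\le C$ on $\mathcal{N}$), the map $g \mapsto \sigma(g^T \WW_{\mathrm{eff}})$ is, on the good event $\mathcal{N}$ where the entries of $\ZZ\SSigma^{1/2}\WW/\sqrt{n_0}$ are bounded by $(\log n_0)^{1-\epsilon}$, Lipschitz with constant $C_0 e^{C_1(\log n_0)^{1-\epsilon}}\|\WW_{\mathrm{eff}}\|_{\mathrm{op}}$ — this is where the exponential-growth bound on $\sigma'$ from Assumption~\ref{ass:RF_sigma} and the entrywise control from $\mathcal{N}$ combine to produce the factor $D_n = Ce^{-4C_1(\log n_0)^{1-\epsilon}}$. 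The composed map $g \mapsto u^T \GG u$ is then locally Lipschitz with a gradient that splits into a ``linear in $\GG u$'' part controlled by $\|\GG\|_{\mathrm{op}}$ and the quadratic form structure; applying Gaussian concentration (or more precisely a Hanson–Wright-type bound for Lipschitz-nonlinear images of Gaussians, e.g.\ via the entrywise-bounded truncation and a net argument over the $2$-dimensional ``directions'' of $\mathfrak{u}$) yields the stated subexponential tail with the $\min\{t^2/(\|\GG\|_{HS}^2\|\SSigma\|), t/\|\GG\|\}$ exponent. The passage from the single form to the full $2\times 2$ matrix $\mathfrak{u}\GG\mathfrak{u}^T$ is handled by noting all four entries are quadratic/bilinear forms in $(u_1, u_2)$ jointly, writing $\|\mathfrak{u}\GG\mathfrak{u}^T - \Exp[\cdot]\| \le \sum_{i,j}|(\mathfrak{u}\GG\mathfrak{u}^T)_{ij} - \Exp[\cdot]_{ij}|$, and applying the scalar bound to each, absorbing the loss of constants.

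Next I would address the second part of the statement, namely replacing the conditional expectation $\Exp[\mathfrak{u}\GG\mathfrak{u}^T \mid \WW]$ by the explicit deterministic-looking matrix $\big((\mathfrak{b}-\mathfrak{d})\tr(\GG) + \mathfrak{d}\tr(\tfrac{1}{n_0}\GG\WW^T\SSigma\WW)\big)\II_2$. This is a Gaussian-moment computation: for a single row, $\Exp[u^T\GG u \mid \WW] = \sum_{k,\ell}\GG_{k\ell}\Exp[u_k u_\ell \mid \WW]$, and since $u_k = \sigma((g^T\WW_{\mathrm{eff}})_k)$ with $(g^T\WW_{\mathrm{eff}})_k$ jointly Gaussian of variance $(\WW_{\mathrm{eff}}^T\WW_{\mathrm{eff}})_{kk} = \tfrac{1}{n_0}(\WW^T\SSigma\WW)_{kk}$ and covariance $\tfrac{1}{n_0}(\WW^T\SSigma\WW)_{k\ell}$, one uses the linearization/Gaussian-equivalence of the activation: $\Exp[\sigma(X)\sigma(Y)] \approx \mathfrak{b}\delta_{k\ell} + \mathfrak{d}^2 \cdot (\text{covariance})$ plus higher-order corrections (this is exactly what the preceding Lemma on $\mathfrak{a},\mathfrak{b},\mathfrak{c},\mathfrak{d}$ is set up to provide via its expansion in $\epsilon$, where $\epsilon$ plays the role of the deviation of $(\WW^T\SSigma\WW)_{kk}/n_0$ from $1$). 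On the event $\mathcal{N}$ where $\|\II_d - \tfrac{1}{n_0}\WW^T\SSigma\WW\|_\infty \le n^{c-1/2}$ and the trace deviation is $\le \log n_0$, the diagonal ``$\epsilon$'' is uniformly small, so the Taylor remainders from the Lemma are negligible, and summing against $\GG$ gives exactly $(\mathfrak{b}-\mathfrak{d})\tr(\GG) + \mathfrak{d}\tr(\tfrac{1}{n_0}\GG\WW^T\SSigma\WW)$ up to an error that can be absorbed into the tail by enlarging constants (or shifting $t$ by a lower-order amount). The off-diagonal entries $\Exp[u_{1,k}u_{2,\ell}\mid\WW] = \Exp[\sigma(X)]\Exp[\sigma(Y)] = \mathfrak{a}^2\cdot(\text{small})^2$ vanish to leading order because $\Exp\sigma(Z) = 0$ and the deviations are $O(n^{c-1/2})$, giving the diagonal form $(\cdots)\II_2$.

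The main obstacle I anticipate is the rigorous control of the nonlinear concentration, i.e.\ turning ``$g \mapsto u^T\GG u$ is locally Lipschitz on $\mathcal{N}$'' into a genuine tail bound. The difficulty is twofold: (i) the Lipschitz bound only holds on the truncated event $\mathcal{N}$, so one cannot directly apply the standard Gaussian isoperimetric inequality — the clean fix is to modify $\sigma$ to a globally Lipschitz function $\tilde\sigma$ agreeing with $\sigma$ on $[-(\log n_0)^{1-\epsilon}, (\log n_0)^{1-\epsilon}]$, prove the bound for $\tilde\sigma$, and observe that on $\mathcal{N}$ the two agree, so the intersection with $\mathcal{N}$ in the proposition statement is precisely what makes this switch harmless; (ii) the quadratic (rather than linear) dependence means one needs a Hanson–Wright inequality for Lipschitz functions of Gaussians — this can be obtained by conditioning/decoupling or by viewing $u^T \GG u$ via its Hessian and gradient and combining a Gaussian-concentration bound for the ``Lipschitz part'' (scale $\|\GG\|_{HS}\sqrt{\|\SSigma\|}$ after accounting for the $\WW_{\mathrm{eff}}$-induced covariance $\|\SSigma\|$) with a separate bound for the diagonal/trace fluctuation (scale $\|\GG\|_{\mathrm{op}}$), which is where the two-regime $\min\{t^2/\cdots, t/\cdots\}$ form comes from. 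I would structure this as: (a) reduce to $\tilde\sigma$ globally Lipschitz with constant $L_n := C_0 e^{C_1(\log n_0)^{1-\epsilon}}C$; (b) cite/adapt a standard Hanson–Wright for Lipschitz images of Gaussians (e.g.\ Adamczak's inequality, referenced as Theorem~\ref{thm:adamczak} elsewhere, after the Lipschitz reduction absorbs the nonlinearity); (c) track how $L_n$ and $\|\SSigma\|$ enter the variance proxy to match the stated exponent with $D_n = CL_n^{-4}$; (d) do the two-row upgrade and the moment computation for the mean replacement. Steps (c) and the precise matching of constants are the fiddly parts but are routine once the structure is in place.
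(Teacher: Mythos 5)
Your plan follows essentially the same route as the paper: truncate $\sigma$ to a globally Lipschitz $\varrho$ agreeing on $\mathcal{N}$, verify a concentration property for the (truncated) row vectors conditionally on $\WW$, apply Adamczak's Hanson--Wright inequality (Theorem~\ref{thm:adamczak}), upgrade to the $2\times 2$ matrix, and compute the conditional mean via Gaussian moment expansions. Your two-row upgrade (bounding the matrix norm entrywise and treating the cross entry as a bilinear form) is equivalent in effect to the paper's device of stacking $\ww_1,\ww_2$ into a $2n_0$-vector and feeding a block matrix $\begin{bmatrix}0 & \GG \\ 0 & 0\end{bmatrix}$ to Theorem~\ref{thm:adamczak}.

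There is one step you do not flag that the paper treats as a step of its own and that your outline silently skips: centering the truncated vector before invoking Adamczak. Theorem~\ref{thm:adamczak} is stated for mean-zero vectors, but conditionally on $\WW$ the truncated row $\vv = \varrho(\ZZ_{a:}\SSigma^{1/2}\WW/\sqrt{n_0})$ is not mean zero: since the diagonal of $\tfrac{1}{n_0}\WW^T\SSigma\WW$ deviates from $1$ by $\mathcal{O}(n_0^{c-1/2})$ on $\mathcal{N}$, each entry has a small conditional mean $\Delta_j \approx \mathfrak{a}\eta_j$ even though $\Exp\,\sigma(Z)=0$. You must therefore write $\vv = \ww + \DDelta$ with $\ww$ centered, apply Adamczak to $\ww$, and separately bound the cross-terms $\DDelta\GG\ww^T$, $\ww\GG\DDelta^T$, $\DDelta\GG\DDelta^T$ (linear/constant in $\ww$, small because $\|\DDelta\|$ is small on $\mathcal{N}$). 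The fix is routine but it is a genuine additional decomposition, not a constant to be absorbed, and this is where the replacement of $\Exp[\mathfrak{u}\GG\mathfrak{u}^T\,|\,\WW]$ by the explicit $(\mathfrak{b}-\mathfrak{d})\tr(\GG)+\mathfrak{d}\tr(\tfrac{1}{n_0}\GG\WW^T\SSigma\WW)$ naturally falls out.

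One further point in your favour: in your Hermite computation you write the off-diagonal covariance contribution as $\mathfrak{d}^2\cdot(\text{covariance})$, whereas the paper's Step~4 and the stated formula use $\mathfrak{d}$. With the paper's own definition $\mathfrak{d} = \Exp[Z\sigma(Z)]$ and $\Exp\,\sigma(Z)=0$, the leading Hermite term is indeed $c_1^2\rho = \mathfrak{d}^2\rho$, so your coefficient appears to be the correct one and the paper's $\mathfrak{d}$ is most plausibly a notational slip. If you follow your calculation through you will land on $(\mathfrak{b}-\mathfrak{d}^2)\tr(\GG)+\mathfrak{d}^2\tr(\tfrac{1}{n_0}\GG\WW^T\SSigma\WW)$; keep that in mind when matching the statement.
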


To prove this, we make use of a result of \cite{Adamczak}. Following \cite{Adamczak}, say that a random vector $\XX \in \R^n$ has the \emph{convex concentration property with constant $K$} if for every convex $1$--Lipschitz function $\varphi : \R^n \to \R$, the value $\varphi(\XX)$ is subgaussian with constant $K$.  Then from \cite{Adamczak}, the following theorem holds.
\begin{theorem}\label{thm:adamczak}
Let $\XX$ be a mean zero random vector in $\R^n$ with the convex concentration property with constant $K$.  There is a universal constant $C >0$ so that for any $n\times n$ matrix $\GG$ and every $t > 0$
\[
\Pr
\biggl(
\biggl\{
|\XX \GG \XX^T - \Exp [\XX \GG \XX^T]|
\geq t
\biggr\}
\biggl)
\leq 
2\exp\biggl(
-C\min\biggl\{
\frac{t^2}{K^4 \|\GG\|^2_{HS} \|\operatorname{Cov}(\XX)\|},
\frac{t}{K^2 \|\GG\|}
\biggr\}
\biggr).
\]
\end{theorem}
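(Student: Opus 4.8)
The plan is to reproduce the argument of \cite{Adamczak}; I sketch its structure. \textbf{Reductions.} Convex concentration with constant $K$ also holds with any larger constant, so assume $K\ge 1$. Writing $\XX\GG\XX^T=(\XX-\Exp\XX)\GG(\XX-\Exp\XX)^T+2(\Exp\XX)\GG(\XX-\Exp\XX)^T+(\Exp\XX)\GG(\Exp\XX)^T$, the cross term is a linear functional of the centred vector with Lipschitz constant $\|\GG\,\Exp\XX\|$, handled directly by convex concentration, so assume $\Exp\XX=0$. Passing from $\GG$ to $\tfrac12(\GG+\GG^T)$ leaves the quadratic form unchanged and does not increase $\|\GG\|_{HS}$ or $\|\GG\|$, so $\GG$ may be taken symmetric; conjugating $\XX$ by the orthogonal eigenbasis of $\GG$ preserves convex concentration (a convex $1$-Lipschitz function composed with a linear isometry is again convex $1$-Lipschitz) and preserves $\|\operatorname{Cov}(\XX)\|$, so I may assume $\GG=\operatorname{diag}(\lambda_1,\dots,\lambda_n)$, whence $\sum_i\lambda_i^2=\|\GG\|_{HS}^2$, $\max_i|\lambda_i|=\|\GG\|$, and $\XX\GG\XX^T-\Exp[\XX\GG\XX^T]=\sum_i\lambda_i(\XX_i^2-\Exp\XX_i^2)$.

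\textbf{Dyadic blocks and a convex-truncation estimate.} Group the eigenvalues by sign and by dyadic magnitude, $\GG=\sum_k\GG_k$, where $\GG_k$ keeps the eigenvalues with $|\lambda_i|\in[2^k,2^{k+1})$ of one fixed sign; then $\|\GG_k\|\asymp 2^k$, $\sum_k\|\GG_k\|_{HS}^2=\|\GG\|_{HS}^2$ exactly, and $\sum_k\|\GG_k\|^2$ is a geometric series dominated up to a constant---\emph{without} any logarithmic factor---by $\|\GG\|^2$. For a single positive block, $\langle x,\GG_k x\rangle=\|\GG_k^{1/2}x\|^2$, and for a threshold $R>0$ the truncated variational representation
\[
F_R(x)\;=\;\sup_{\|u\|\le R}\bigl(2\langle\GG_k^{1/2}u,x\rangle-\|u\|^2\bigr)
\]
is a convex, $2R\|\GG_k\|^{1/2}$-Lipschitz function of $x$ that coincides with $\langle x,\GG_k x\rangle$ on $\{\|\GG_k^{1/2}x\|\le R\}$. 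Applying convex concentration to $F_R$ (Gaussian regime) and to the convex $\|\GG_k\|^{1/2}$-Lipschitz map $x\mapsto\|\GG_k^{1/2}x\|$ (to calibrate $R$ near its mean and pay for the rare event $\{\|\GG_k^{1/2}\XX\|>R\}$), together with the block identity $\Exp\|\GG_k^{1/2}\XX\|^2=\tr(\GG_k\operatorname{Cov}\XX)\lesssim 2^{-k}\|\GG_k\|_{HS}^2\|\operatorname{Cov}\XX\|$ (valid because on a block the trace and the operator norm are comparable), gives
\[
\Pr\bigl(|\langle\XX,\GG_k\XX\rangle-\Exp\langle\XX,\GG_k\XX\rangle|\ge t\bigr)\;\le\;2\exp\Bigl(-c\min\bigl\{\tfrac{t^2}{K^2\|\operatorname{Cov}\XX\|\,\|\GG_k\|_{HS}^2+K^4\|\GG_k\|^2},\ \tfrac{t}{K^2\|\GG_k\|}\bigr\}\Bigr),
\]
and likewise for negative blocks.

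\textbf{Assembly and the main obstacle.} A union bound over blocks, with deviations $t_k$ summing to $t$ and chosen proportional to each block's variance parameter, combines these into one mixed tail with Gaussian parameter $K^2\|\operatorname{Cov}\XX\|\|\GG\|_{HS}^2+K^4\|\GG\|^2$ and exponential parameter $K^2\|\GG\|$; since $K\ge1$ the first summand is at most $K^4\|\operatorname{Cov}\XX\|\|\GG\|_{HS}^2$, and the residue $K^4\|\GG\|^2$ is absorbed because whenever $\|\GG\|^2\not\lesssim\|\operatorname{Cov}\XX\|\|\GG\|_{HS}^2$ the target bound is already of order one throughout its Gaussian regime and is thus implied trivially; this yields the stated inequality. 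The main obstacle is the per-block step: a naive convex-Lipschitz estimate applied to $\langle x,\GG_+x\rangle$ as a whole produces $\|\GG_+\|\,\tr(\GG_+)$ in place of $\|\GG_+\|_{HS}^2$---generically far larger---so it is exactly the dyadic regrouping (making the operator norm and trace of each block comparable) together with the truncation-level bookkeeping that recovers the correct $\|\GG\|_{HS}^2$-scaling \emph{and} keeps the summation free of logarithmic losses; executing this cleanly, matching Adamczak's constants and checking the degenerate regimes, is the technical heart. An alternative I would keep in reserve is to compare $\sum_{i\ne j}\GG_{ij}\XX_i\XX_j$ with its decoupled companion $\langle\GG\XX',\XX\rangle$ for an independent copy $\XX'$: conditionally on $\XX'$ this is a linear functional of $\XX$, hence $K^2\|\GG\XX'\|^2$-sub-Gaussian, and $\|\GG\XX'\|$ is controlled by convex concentration around $\Exp\|\GG\XX'\|\le\|\GG\|_{HS}\|\operatorname{Cov}\XX\|^{1/2}$; but this route must first justify a decoupling comparison valid for i.i.d.\ copies of a dependent vector (the classical coordinate-swap decoupling being unavailable), which is a difficulty of its own.
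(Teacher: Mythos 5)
You should first be aware that the paper contains no proof of Theorem \ref{thm:adamczak}: it is quoted verbatim from \cite{Adamczak} and used as a black box, so the only meaningful benchmark is Adamczak's published argument, which does not proceed by a spectral dyadic decomposition. Your reductions (symmetrization, orthogonal change of basis preserving convex concentration and $\|\operatorname{Cov}(\XX)\|$) are fine, and you have correctly located the central obstacle: the naive convex--Lipschitz/truncated-supremum bound for a positive semidefinite block produces $\|\GG\|\,\tr(\GG)$ where $\|\GG\|_{HS}^2$ is required, and flattening the spectrum blockwise via $\tr(\GG_k)\le 2^{-k}\|\GG_k\|_{HS}^2$ repairs this at the level of a single block.

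The genuine gap is in the assembly step. A union bound over $N$ dyadic blocks with deviations $t_k$ summing to $t$ gives, in the Gaussian regime, $\sum_k\exp(-t_k^2/\sigma_k^2)$ with $\sigma_k^2\asymp K^4\|\operatorname{Cov}(\XX)\|\,\|\GG_k\|_{HS}^2+\cdots$, and the optimal allocation $t_k\propto\sigma_k$ yields the exponent $t^2/(\sum_k\sigma_k)^2$, not $t^2/\sum_k\sigma_k^2$. Your ``no logarithmic loss'' claim is justified only for $\sum_k\|\GG_k\|^2$, which is indeed a geometric series; the quantity that actually governs the Gaussian regime is $\sum_k\|\GG_k\|_{HS}$, and the blockwise Hilbert--Schmidt norms need not decay at all: if block $k$ consists of $4^k$ eigenvalues equal to $2^{-k}$, then $\|\GG_k\|_{HS}^2=1$ for every $k$, so $(\sum_k\|\GG_k\|_{HS})^2=N\|\GG\|_{HS}^2$ with $N\asymp\log n$ (and lumping the eigenvalues below $\|\GG\|_{HS}/\sqrt{n}$ into a single residual block, as one must, still leaves $O(\log n)$ scales). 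As written your argument therefore proves the inequality only with an extra logarithmic factor multiplying $\|\GG\|_{HS}^2\|\operatorname{Cov}(\XX)\|$ --- which would in fact suffice for every use this paper makes of the theorem, but is not the stated universal-constant bound. Closing the gap requires the Hilbert--Schmidt norm to enter in one shot rather than blockwise, e.g.\ through the bound $\Exp\|\GG\XX\|\le\|\operatorname{Cov}(\XX)\|^{1/2}\|\GG\|_{HS}$ for the linear image $\GG\XX$, which is the mechanism behind both Adamczak's proof and your reserve decoupling idea; the difficulty you flag there (no coordinate-swap decoupling for dependent vectors) is real, and resolving it is precisely the content of the cited reference.
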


We also will rely on Gaussian concentration, namely:
\begin{lemma}\label{lem:GC}
For a Lipschitz function $F: \R^n \to \R$, and an iid standard normal $Z$ in $\R^n$, and for all $t \geq 0$
\[
\Pr ( |F(Z) - \Exp F(Z)| \geq t) \leq 2\exp\biggl( -\frac{t^2}{2\|\nabla F\|^2_\infty}\biggr).
\]
\end{lemma}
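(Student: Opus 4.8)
\emph{Proof proposal.} The plan is to obtain the sharp sub-Gaussian bound on the moment generating function of $F(Z) - \Exp F(Z)$ via the Gaussian logarithmic Sobolev inequality together with the Herbst argument, and then conclude with a Chernoff bound and a union bound over $\pm(F-\Exp F)$. First I would reduce to the one-sided estimate: it suffices to prove $\Pr(F(Z) - \Exp F(Z) \ge t) \le \exp(-t^2/(2\|\nabla F\|_\infty^2))$ for all $t \ge 0$, since applying this to $-F$ (same Lipschitz constant $L = \|\nabla F\|_\infty$, interpreted via Rademacher's theorem as the essential supremum of $\|\nabla F\|$) and adding the two bounds gives the stated two-sided inequality with the factor $2$.

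Next I would record the a priori integrability that makes the argument legitimate. Since $F$ is $L$-Lipschitz, $|F(Z)| \le |F(0)| + L\|Z\|$, and $\|Z\|$ has Gaussian tails, so $\Exp e^{\lambda F(Z)} < \infty$ for every $\lambda \in \R$; hence $\psi(\lambda) \defas \log \Exp e^{\lambda(F(Z) - \Exp F(Z))}$ is finite, smooth, and convex on $\R$, with $\psi(0) = 0$ and $\psi'(0) = 0$. To apply the log-Sobolev inequality I would first assume $F$ is smooth, replacing $F$ by its mollification $F_\varepsilon = F * \gamma_\varepsilon$ (convolution with a centered Gaussian density of variance $\varepsilon$): $F_\varepsilon$ is $C^\infty$, still $L$-Lipschitz, and $F_\varepsilon \to F$ locally uniformly, so the final tail bound passes to the limit $\varepsilon \to 0^+$ by dominated convergence.

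Then comes the Herbst step. The Gaussian log-Sobolev inequality states that for the standard Gaussian measure $\gamma_n$ on $\R^n$ and any smooth $g$, $\mathrm{Ent}_{\gamma_n}(g^2) \le 2\int \|\nabla g\|^2\,d\gamma_n$. Applying it with $g^2 = e^{\lambda F}$, using $\nabla g = \tfrac{\lambda}{2}(\nabla F)e^{\lambda F/2}$ and $\|\nabla F\| \le L$, and simplifying, yields the differential inequality $\lambda\psi'(\lambda) - \psi(\lambda) \le \tfrac{\lambda^2}{2}L^2$, i.e.\ $\big(\psi(\lambda)/\lambda\big)' \le \tfrac12 L^2$. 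Since $\psi(\lambda)/\lambda \to \psi'(0) = 0$ as $\lambda \to 0^+$, integrating from $0$ to $\lambda$ gives $\psi(\lambda) \le \tfrac{\lambda^2}{2}L^2$ for all $\lambda > 0$. A Chernoff bound then gives $\Pr(F(Z) - \Exp F(Z) \ge t) \le \exp(-\lambda t + \tfrac{\lambda^2}{2}L^2)$, and optimizing at $\lambda = t/L^2$ produces $\exp(-t^2/(2L^2))$; the union bound from the first step completes the proof.

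\textbf{Main obstacle.} There is no genuine obstacle here — this is the classical Borell--Tsirelson--Ibragimov--Sudakov inequality. The only points needing a little care are the mollification reduction to smooth $F$ and the justification that $\psi(\lambda)/\lambda \to 0$ as $\lambda \to 0^+$, which is exactly where the a priori sub-Gaussianity of $F(Z)$ (from Lipschitzness plus Gaussian concentration of $\|Z\|$) enters. One could instead give a fully self-contained proof by the Gaussian interpolation (smart-path) argument, writing $Z_\theta = Z\sin\theta + Z'\cos\theta$ with $Z'$ an independent copy and integrating $\tfrac{d}{d\theta}F(Z_\theta) = \nabla F(Z_\theta)\cdot \dot Z_\theta$ over $\theta \in [0,\pi/2]$ (using that $Z_\theta$ and $\dot Z_\theta$ are independent standard Gaussians for each fixed $\theta$); however, that route yields the exponent constant $2/\pi^2$ rather than the sharp $1/2$, so the log-Sobolev/Herbst approach is the one to use for the bound as stated.
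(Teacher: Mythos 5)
Your proof is correct and is the standard log-Sobolev/Herbst argument for the Borell--Tsirelson--Ibragimov--Sudakov Gaussian concentration inequality, including the right reduction to the one-sided bound and the mollification step. The paper does not actually supply a proof of this lemma---it is cited as a classical fact---so there is nothing in the paper to compare against; your argument is a perfectly standard and complete derivation of the stated bound with the sharp constant.
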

\noindent For convenience we will also use the subgaussian norm $\|\cdot\|_{\psi_2}$ which is equivalent up to universal constants to the optimal variance proxy in a Gaussian tail bound for a random variable i.e.
\[
\| X \|_{\psi_2} 
\asymp
\inf \{ V  > 0 : \forall~t > 0~\Pr( |X| > t) \leq 2 e^{-t^2/V^2}\}
\]

The following is the proof of Proposition \ref{prop:RF_qf}.
\begin{proof} We proceed in steps.\paragraph{Step 1: truncation.} 
    The activation function $\sigma$ is not Lipschitz.  With $x_0$ given by $|(\log n_0)^{1-\epsilon}|,$ define a new activation function $\varrho$ by
    \[
        \varrho(x) = \begin{cases}
        \sigma(x_0), & \text{if } x > x_0, \\
        \sigma(-x_0), & \text{if } x < -x_0 \\
        \sigma(x) & \text{otherwise}.
        \end{cases}.
    \]
    This activation function is Lipschitz with constant at most $C_0 e^{C_1(\log n_0)^{1-\epsilon}}$.  
    
    Without loss of generality we may represent $\uu_j = \sigma(\XX_j \WW/\sqrt{n_0})$, with $\XX_j=\ZZ_j \SSigma^{1/2}$ the first row of $\XX$.  Let $\vv_j = \varrho(\XX_j \WW/\sqrt{n_0})$ and $\mathfrak{v}$ have rows $\vv_1$ and $\vv_2$.  Then on $\mathcal{N},$ $\mathfrak{u}=\mathfrak{v}$.  Moreover using Cauchy--Schwarz
    \[
    \big\|\Exp[\mathfrak{u} \GG \mathfrak{u}^T - \Exp \mathfrak{v} \GG \mathfrak{v}^T ~|~ \WW]\big\|
    =
    \big\|\Exp\bigl( \mathfrak{u} \GG \mathfrak{u}^T (1-\mathbf{1}_{\mathcal{N}}) ~|~ \WW \bigr)\big\|
    \leq \|\GG\| \sqrt{\Exp \|\mathfrak{u}\|_{\text{HS}}^4 \times (1-\Pr(\mathcal{N}| \WW))}.
    \]
    The conditional probability of $\mathcal{N}^c$ decays faster than any power of $n_0,$ and hence we have that the coefficient of $\|\GG\|$ decays faster than any power of $n_0$.  In particular the difference is (deterministic) subgaussian with constant that vanishes faster than $\|\GG\|$ and $\|\GG\|_{\text{HS}}$ (and hence also $\|\GG\|_{\text{HS}} \|\SSigma\|$, which is only larger by Assumption \ref{ass:RF_cov}).  It follow that it suffices to prove 
    \[
    \Pr
    \biggl(
    \biggl\{
    \big\|\mathfrak{v} \GG \mathfrak{v}^T - \Exp [\mathfrak{v} \GG \mathfrak{v}^T | \WW]|\big\|
    \geq t
    \biggr\}
    \cap
    \mathcal{N}
    ~\bigg\vert~ \WW
    \biggl)
    \leq 
    2
    \exp\biggl(
    -D_n\min\biggl\{
    \frac{t^2}{\|\GG\|^2_{\text{HS}} \|\SSigma\|},
    \frac{t}{\|\GG\|}
    \biggr\}
    \biggr),
    \]
    as by adjusting constants the desired claim follows after adjusting the constant $D_n$.
    
    \paragraph{Step 2: removing the mean.}
    We do one further conditioning step and remove the mean of $\mathfrak{v}$.  Thus we define a row vector $\mathfrak{w}$ by
    \[
    \mathfrak{w} = \mathfrak{v} - \Exp[ \mathfrak{v} \, | \,  \WW ] = \mathfrak{v} - 
    \begin{bmatrix}
    \Exp[ \varrho(\XX_1 \WW/\sqrt{n_0}) | \WW ]\\
    \Exp[ \varrho(\XX_2 \WW/\sqrt{n_0}) | \WW ]\\
    \end{bmatrix}
    \eqqcolon
    \mathfrak{v} - \begin{bmatrix} 1 \\ 1 \end{bmatrix} \Delta .
    \]
    The law of $(\ZZ_j \SSigma^{1/2} \WW/\sqrt{n_0} : j=1,2)$ conditionally on $\WW$ is multivariate Gaussian.  
    We observe that for a fixed vector $\xx \in \R^{n_0}$, the function $F:(\ZZ_i) \mapsto \ww_i \xx$ is Lipschitz.  To determine its constant, we observe that (taking WLOG $i=1$)
    \[
    \nabla_{Z} (F(\ZZ_1)) 
    = \sum_{j=1}^{n_0} \nabla_{Z}\varrho( {\textstyle \sum \ZZ_1 \SSigma^{1/2} \cdot \WW^T_j})\xx_j
    = \sum_{j=1}^{n_0} \WW^T_j\SSigma^{1/2}\nabla_{Z}\varrho'( {\textstyle \sum \ZZ_1 \SSigma^{1/2} \cdot \WW^T_j})\xx_j.
    \]
    Thus from Lemma \ref{lem:GC} we have the simple bound for subgaussian norm $\|\cdot\|_{\psi_2}$
    \begin{equation}\label{eq:RFxw}
    \|\ww_i\xx\|_{\psi_2}
    \leq
        \|\WW\|_{\text{op}} \times \|\SSigma^{1/2}\|_{\text{op}} \times Ce^{C_1(\log n_0)^{1-\epsilon}} \times \|\xx\|.
    \end{equation}
    
    The entries of $\ZZ_1 \SSigma^{1/2} \WW/\sqrt{n_0}$ have variances which are uniformly, on the event $\mathcal{N}$, close to $1+\mathcal{O}(n_0^{c-1/2})$.  Thus with $\eta_j = 1- (\WW^T \SSigma \WW)_{jj}/n_0$ and for a standard normal $z$
    \[
    \Delta_{j}
    =
    \Exp( \varrho( z\sqrt{1-\eta_j}) \, | \, \WW)
    =
    \mathfrak{a}\eta_j + \mathcal{O}(n_0^{2c-1}).
    \]
    Note that the second order terms $\mathcal{O}(n_0^{2c-1})$ are so small that the vector 
    \[
        \|(\DDelta - \mathfrak{a}\eeta) \GG\| = \|\GG\|_{\text{op}}\mathcal{O}(n_0^{c-1/2}).
    \]

    In particular, taking the inner product of this vector with $\ww_1$ or $\ww_2$, conditionally on $\WW$ gives a subgaussian random variable with constant given by (see \eqref{eq:RFxw}) $\|\GG\|_{\text{op}} \mathcal{O}(\|\varrho\|_{\text{lip}}n_0^{c-1/2})$ for all $n_0$ sufficiently large.  Likewise the quadratic form
    \[
    |(\DDelta - \mathfrak{a}\eeta) \GG (\DDelta - \mathfrak{a}\eeta)^T|
    \leq \|\GG\|_{\text{op}}\mathcal{O}(n_0^{2c-1/2}),
    \]
    is similarly small. 
    As we may expand,
    \[
        \vv \GG \vv^T 
        =
        \ww \GG \ww^T
        + 2\mathfrak{a}\eeta \GG \ww^T
        + \mathfrak{a}^2\eeta \GG \eeta^T
        + 2(\DDelta - \mathfrak{a}\eeta) \GG \ww^T
        + 2\mathfrak{a}(\DDelta - \mathfrak{a}\eeta) \GG \eeta^T
        +(\DDelta - \mathfrak{a}\eeta) \GG (\DDelta - \mathfrak{a}\eeta)^T,
    \]
    and all terms with $(\DDelta - \mathfrak{a}\eeta)$ can be dispensed,
    it follows that it suffices to prove
    \[
    \Pr
    \biggl(
    \biggl\{
    \big\|\mathfrak{w} \GG \mathfrak{w}^T - \Exp [\mathfrak{w} \GG \mathfrak{w}^T | \WW]|\big\|
    \geq t
    \biggr\}
    \cap
    \mathcal{N}
    ~\bigg\vert~ \WW
    \biggl)
    \leq 
    2
    \exp\biggl(
    -D_n\min\biggl\{
    \frac{t^2}{\|\GG\|^2_{\text{HS}} \|\SSigma\|},
    \frac{t}{\|\GG\|}
    \biggr\}
    \biggr),
    \]
    and a subgaussian bound on $\mathfrak{a}\eeta\GG \ww^T_i$.  
    For this last part, from \eqref{eq:RFxw}, using that on $\mathcal{N}$ the norm of $\|\eeta\|$ is bounded, we have that 
    \(
    \|\mathfrak{a}\eeta \GG \ww_i^T\|_{\psi_2} \leq Ce^{C_1(\log n_0)^{1-\epsilon}}.
    \)
    This also shows that we may use $\Exp \big [ \mathfrak{w} \GG \mathfrak{w}^T \, | \, \WW \big ] + \mathfrak{a}^2\eeta \GG \eeta^T \JJ$ with $J_{ij}=1$ for all $i,j$ in place of the expectation $\Exp \big [ \mathfrak{v} \GG \mathfrak{v}^T \, | \, \WW \big ]$.
    
    \paragraph{Step 3: concentration of the quadratic form.}
    
    We just need to establish the convex concentration property for $w_1$ and $w_2$.  Taking a convex $1$-Lipschitz function $g : \R^{n_0} \to \R,$ we have
    \[
    \nabla_Z g(\ww_1) 
    = \sum_{j=1}^{n_0} \nabla_{Z}\varrho( {\textstyle \sum \ZZ_1 \SSigma^{1/2} \cdot \WW^T_j}) \partial_j g(w)
    = \sum_{j=1}^{n_0} \WW^T_j \SSigma^{1/2}\nabla_{Z}\varrho'( {\textstyle \sum \ZZ_1 \SSigma^{1/2} \cdot \WW^T_j})\partial_j g(w).
    \]
    Bounding in norm, we have
    \[
    \|\nabla_Z g(\ww_1) \| \leq \|\WW\|_{\text{op}} \times \|\SSigma^{1/2}\|_{\text{op}} \times Ce^{C_1(\log n_0)^{1-\epsilon}} \times \| \nabla g \|_\infty,
    \]
    which as $g$ is $1$-Lipschitz, gives the convex concentration property with $Ce^{C_1(\log n_0)^{1-\epsilon}}$ for some possibly larger $C$.  
    
    This property further extends to the concatenation $\ww_3$ of the vectors $\ww_1$ and $\ww_2$ as a random vector in $\R^{2n_0}$.
    Then entry-by-entry, we bound can produce tail bounds for the random $2\times 2$ matrix
    \(
    \mathfrak{w} \GG \mathfrak{w}^T - \Exp [\mathfrak{w} \GG \mathfrak{w}^T \, | \, \WW].
    \)
    For the on-diagonal entries, we may directly use the concentration of the quadratic form Theorem \ref{thm:adamczak}.
    For the off-diagonal entries, we can use the representation
    \[
    \bigl(\mathfrak{w} \GG \mathfrak{w}^T\bigr)_{12}
    =
    \ww_3 \begin{bmatrix}
    0 & \GG \\
    0 & 0
    \end{bmatrix}
    \ww_3^T,
    \]
    to which we may again apply Theorem \ref{thm:adamczak}.
    
    \paragraph{Step 4: Simplifying the mean.}
    
    Combining the arguments in the previous steps show that we may use $\Exp \big [ \mathfrak{w} \GG \mathfrak{w}^T \, | \, \WW \big ] + \mathfrak{a}^2\eeta \GG \eeta^T \JJ$ in place of the expectation $\Exp \big [ \mathfrak{u} \GG \mathfrak{u}^T \, | \, \WW \big ]$.  This additional term can be bounded by $|\eeta \GG \eeta^T| \leq \|\GG\|_{\text{op}} \|\eeta\|^2$, which is negligible. The diagonal contributions of $\ww_1 \GG \ww_1^T$ we simplify using 
    \[
     \sum_{j=1}^{n_0} G_{jj} \Exp \big [ w_{1j}^2 | \WW \big ]
     =
     \sum_{j=1}^{n_0} G_{jj} (\mathfrak{b} + \mathfrak{c} \eta_j + \mathcal{O}(\eta_j^2)).
    \]
    As the norm of $\eeta$ is at most logarithmic, we bound the correction term by $\|\GG\|_{\text{HS}} \mathcal{O}(\|\eeta\|)$, which is therefore negligible in subgaussian norm in comparison to the quadratic form fluctuations. For off-diagonal contributions, we use that for $j \neq k$ %(\textcolor{red}{needs verification})
    \[
        \Exp \big [ w_{1j}w_{1k} | \WW \big ] = \mathfrak{d} \frac{(\WW^T \SSigma \WW)_{jk}}{n_0} + \mathcal{O}\biggl( n_0^{c-1/2} 
        \times
        \frac{(\WW^T \SSigma \WW)_{jk}}{n_0}
        \biggr).
    \]
    Thus summing all off diagonal terms, 
    \[
    \Exp \big [ \ww_1 \GG \ww_1^T | \WW \big]
    =
    (\mathfrak{b}-\mathfrak{d})\tr(\GG)
    +\mathfrak{d}\tr\biggl( \tfrac{1}{n_0} \GG \WW^T \SSigma \WW \biggr)
    +\mathcal{O}\biggl( \log(n_0)\biggr) \|\GG\|_{\text{HS}}.
    \]
    The same holds for the other $\Exp[ \ww_2 \GG \ww_2^T | \WW]$ term.  The cross term $\Exp[ \ww_1 \GG \ww_2^T | \WW]$ vanishes by conditional independence.  We conclude that
    \[
    \Exp\bigl[ \mathfrak{w} \GG \mathfrak{w}^T | \WW\bigr ]
    =
    \biggl(
        (\mathfrak{b}-\mathfrak{d})\tr(\GG)
    +\mathfrak{d}\tr\bigl( \tfrac{1}{n_0} \GG \WW^T \SSigma \WW\bigr)
    \biggr)
    \begin{bmatrix}
    1 & 0 \\
    0 & 1
    \end{bmatrix}
    +\mathcal{O}(\log n_0) \|\GG\|_{\text{HS}}.
    \]
    
\end{proof}

Finally, we can show that $\mathcal{T}$ occurs with high probability. 
\begin{proposition}\label{prop:RF_goodT}
\eq{
    \Pr( \mathcal{T}) \geq 1 - e^{-\omega(\log n_0)}.
}
\end{proposition}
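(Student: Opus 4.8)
## Proof plan for Proposition~\ref{prop:RF_goodT}

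\textbf{Overall strategy.} The event $\mathcal{T}$ in \eqref{def_good_event}--\eqref{def_good_event3} is an intersection of finitely many (in fact $\mathcal{O}(n^2)$ after the union bounds) events, and the plan is to show that each individual event fails with probability $e^{-\omega(\log n_0)}$ and then apply a union bound; since $n, d, n_0$ are proportionally related and $n^2 e^{-\omega(\log n_0)} = e^{-\omega(\log n_0)}$ still, the union bound is harmless. Throughout I would work conditionally on $\WW$ and on the good event $\mathcal{N}$ of Lemma~\ref{lem:RF_goodN}, which itself holds with probability $1 - e^{-\omega(\log n_0)}$; this is legitimate because $\mathcal{T}$ is only required to hold \emph{intersected with} the truncation estimates, and Proposition~\ref{prop:RF_qf} is stated precisely as a tail bound on the event $\mathcal{N}$.

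\textbf{Step-by-step.} First, the three "norm" lines \eqref{def_good_event}: the bounds $\|\Sigma^\ZZ\|_{\text{op}} \le Cn^\delta$ and $\|\Sigma^\WW\|_{\text{op}} \le Cn^\delta$ follow from Assumptions~\ref{ass:RF_cov} and \ref{ass:RF_sigma} together with the observation that $\Sigma^\ZZ = n\,\Exp_\ZZ[\AA_{a:}^T \AA_{a:}]$ is a deterministic matrix whose entries are bounded Gaussian integrals of $\sigma$ (using $\overline{\tr}\SSigma = 1$ and boundedness of $\|\SSigma\|$), so in fact its norm is $\mathcal{O}(1)$; similarly $\Sigma^\WW$ conditionally on $\WW$ is controlled on $\mathcal{N}$ since its entries are determined by the Gram matrix $\tfrac{1}{n_0}\WW^T\SSigma\WW$, which is close to $\Id_d$ on $\mathcal{N}$. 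The row- and column-norm bounds $\max_a \|\AA_{a:}\| \le Cn^\delta$, $\max_\alpha \|\AA_{:\alpha}\| \le Cn^\delta$, and the operator norm bound $\|\AA\|_{\text{op}} \le Cn^\delta$ follow from the truncation in $\mathcal{N}$ (entrywise bound $(\log n_0)^{1-\epsilon}$ on $\ZZ\SSigma^{1/2}\WW/\sqrt{n_0}$), Gaussian concentration (Lemma~\ref{lem:GC}) applied to the Lipschitz-after-truncation map $Z \mapsto \|\varrho(\ZZ_i\SSigma^{1/2}\WW/\sqrt{n_0})\|$, and a standard $\epsilon$-net argument for the operator norm, exactly as in the proof of Lemma~\ref{lem:sc}.

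Second, the quadratic-form lines \eqref{def_good_event2}--\eqref{def_good_event3}: each of $\theta^{(a)}$, $\tilde\theta^{(\alpha)}$, $\phi^{(ab)}$, $\tilde\phi^{(\alpha\beta)}$ is a quadratic form $\mathfrak{u}\GG\mathfrak{u}^T$ in one or two rows of the random features matrix $\FF = \sigma(\XX\WW/\sqrt{n_0})$, with the relevant matrix $\GG$ being one of $\gcalm{a}$, $\gcaltilm{\alpha}$, $\gcalm{ab}$, $\gcaltilm{\alpha\beta}$ (up to the factor $z$), which is \emph{independent} of the rows in question and has operator norm $\le 1/\Im(z) \le Cn^\delta$ and Hilbert--Schmidt norm$^2$ $\le Cn/\Im(z)^2 \le Cn^{1+2\delta}$ by Lemma~\ref{lem_g_norm_bounds}. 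Applying Proposition~\ref{prop:RF_qf} with $t = n^{\delta - 1/2}$ (absorbing the various $C$, $\|\SSigma\| = \mathcal{O}(1)$, and the $D_n = Ce^{-4C_1(\log n_0)^{1-\epsilon}}$ factor into the exponent) gives a tail of order $\exp(-D_n \min\{ t^2/(\|\GG\|_{HS}^2\|\SSigma\|), t/\|\GG\|\}) = \exp(-n^{-C\delta} \cdot e^{-C(\log n_0)^{1-\epsilon}})$, and choosing $\delta$ small enough that this exponent dominates $\log n_0$ (possible since $e^{-C(\log n_0)^{1-\epsilon}} \gg n_0^{-\delta'}$ for any $\delta' > 0$) yields $e^{-\omega(\log n_0)}$. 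For the off-diagonal terms $\phi^{(ab)}$, $\tilde\phi^{(\alpha\beta)}$ one uses the off-diagonal block trick from Step~3 of the proof of Proposition~\ref{prop:RF_qf} to realize the bilinear form as a quadratic form in the concatenated vector, and crucially here the conditional expectation vanishes by conditional independence of distinct rows, so there is no mean to subtract. For the diagonal terms $\theta^{(a)}$, $\tilde\theta^{(\alpha)}$ one centers around the stated deterministic surrogate $\Exp_{\ZZ_{a:}}\theta^{(a)} = \tfrac{d}{n}\overline{\tr}(\Sigma^\ZZ\gcalm{a})$, resp. \eqref{eq_alpha_tilde_mean}; Proposition~\ref{prop:RF_qf} also supplies the simplified-mean version $(\mathfrak{b}-\mathfrak{d})\tr(\GG) + \mathfrak{d}\tr(\tfrac{1}{n_0}\GG\WW^T\SSigma\WW)$ with an $\mathcal{O}(\log n_0)\|\GG\|_{HS}$ error, which after dividing by $n$ is $\mathcal{O}(n^{\delta-1/2}\log n_0)$, still within the claimed bound. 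Then take a union bound over all $a \in \{1,\dots,n\}$, $\alpha \in \{1,\dots,d\}$, and pairs $(a,b)$, $(\alpha,\beta)$.

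\textbf{Main obstacle.} The delicate point is the interplay between the sub-Gaussian constant $D_n = Ce^{-4C_1(\log n_0)^{1-\epsilon}}$ coming from the truncation of the (merely exponentially-growing-derivative) activation $\sigma$ and the polynomially small target $t = n^{\delta-1/2}$: one must verify that for a suitable choice of the small constants $\delta, \epsilon$ the product $D_n \cdot \min\{t^2/\|\GG\|_{HS}^2\|\SSigma\|, t/\|\GG\|\}$ still tends to infinity faster than $\log n_0$, so that the tail is genuinely super-polynomially small and survives the $\mathcal{O}(n^2)$ union bound. This works because $e^{-C(\log n_0)^{1-\epsilon}}$ decays slower than any negative power of $n_0$, so it can be absorbed by spending an arbitrarily small amount of the polynomial gain $n^{2\delta-1} \cdot n^{1-C\delta} = n^{(2-C)\delta}$; the bookkeeping of which $\delta$'s must be chosen small relative to which constants (and relative to the exponents $1/2 - \theta$, $\theta$ appearing downstream) is the part requiring care, but it is routine once set up. A secondary bookkeeping issue is tracking that $\|\GG\|_{HS}^2 \le Cn/\Im(z)^2$ with $\Im(z) \ge cn^{-\delta}$ gives $\|\GG\|_{HS}^2 \le Cn^{1+2\delta}$, so $t^2/\|\GG\|_{HS}^2 \ge n^{-2\delta}/(Cn^{2\delta}) = n^{-4\delta}/C$, confirming the above.
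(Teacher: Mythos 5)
Your overall plan --- split $\mathcal{T}$ into its defining events, show each fails with probability $e^{-\omega(\log n_0)}$ via Proposition~\ref{prop:RF_qf} and Lemma~\ref{lem_g_norm_bounds}, then union bound over the $\mathcal{O}(n^2)$ events --- is exactly what the paper's proof does, and you rightly emphasize the part the paper glosses over, namely that the sub-Gaussian constant $D_n = Ce^{-4C_1(\log n_0)^{1-\epsilon}}$ decays slower than any power of $n$ and can therefore be absorbed. However, there are two concrete bugs in your bookkeeping.

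First, a normalization error that, as written, makes your tail bound trivial. Proposition~\ref{prop:RF_qf} controls $\mathfrak{u}\GG\mathfrak{u}^T$ where $\mathfrak{u}$ consists of rows of $\FF = \sigma(\XX\WW/\sqrt{n_0})$, whereas $\AA = \FF/\sqrt{n}$, so for instance $\theta^{(a)} = \tfrac{1}{n}\,\uu\,\gcalm{a}\uu^T$ with $\uu = \FF_{a:}$. To get $|\theta^{(a)} - \Exp_{\ZZ_{a:}}\theta^{(a)}| \le n^{\delta-1/2}$ you must therefore apply Proposition~\ref{prop:RF_qf} with $t = n^{1/2+\delta}$, not $t = n^{\delta-1/2}$. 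With your choice, $t^2/\|\GG\|_{HS}^2 = n^{2\delta-1}/(Cn^{1+2\delta'}) = n^{-2-2(\delta'-\delta)}/C$ (where $\delta'$ is the exponent in $\Im z \ge cn^{-\delta'}$ from $\mathfrak{C}$), which decays, so $D_n \cdot \min\{\cdots\} \to 0$ and the tail bound is $1-o(1)$, not $e^{-\omega(\log n_0)}$. Your own final line ``$t^2/\|\GG\|_{HS}^2 \ge n^{-4\delta}/C$'' is both arithmetically off and, regardless of the precise exponent, a decaying quantity. The fix: with $t = n^{1/2+\delta}$ one gets $t^2/(\|\GG\|_{HS}^2\|\SSigma\|) \ge n^{2(\delta-\delta')}/C$ and $t/\|\GG\| \ge n^{1/2+\delta-\delta'}/C$, both of which grow as positive powers of $n$ provided the $\delta$ in the target is chosen strictly larger than the $\delta'$ governing $\Im z$; this then dominates $D_n$ and the argument closes. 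This separation of the two $\delta$'s is the bookkeeping you need to make explicit.

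Second, a smaller but genuine confusion in the norm-bound step: $\Sigma^\ZZ$ is not deterministic --- it is the $\Exp_\ZZ$-average, hence a $\WW$-measurable matrix whose entries are functions of the Gram matrix $\tfrac{1}{n_0}\WW^T\SSigma\WW$; and $\Sigma^\WW$ is $\ZZ$-measurable, with entries functions of $\tfrac{1}{n_0}\ZZ\SSigma\ZZ^T$, not of $\tfrac{1}{n_0}\WW^T\SSigma\WW$. You have the two superscripts swapped. The paper's proof controls $\|\Sigma^\WW\|_{\text{op}}$ via the Step-4 decomposition $\Sigma^\WW = (\mathfrak{b}-\mathfrak{d})\mathrm{diag}(\tfrac{1}{n_0}\ZZ\SSigma\ZZ^T) + \mathfrak{d}\tfrac{1}{n_0}\ZZ\SSigma\ZZ^T + E$ and the standard operator-norm estimate $\|\ZZ\|_{\text{op}} \le C\sqrt{n}$ (not from the entrywise bound in $\mathcal{N}$, which does not directly control operator norm), and analogously for $\Sigma^\ZZ$. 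Once these two corrections are made, your argument coincides with the paper's.
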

\begin{proof}
    Using the argument in Step 4 above, we observe
    \eq{
        \Sigma^\WW_{ab} = (\mathfrak{b}-\mathfrak{d})\text{diag}\pa{\frac{\ZZ\Sigma \ZZ^\top}{n_0}} + \mathfrak{d}\frac{\ZZ\Sigma \ZZ^\top}{n_0} + E,
    }
    where $\norm{E}_{\text{op}}\to0$. We assume $\norm{\Sigma}_{\text{op}}\leq C$ and standard arguments show $\norm{\ZZ}_{\text{op}}\leq Cn^\delta$ with high probability, so we can conclude that $\norm{\Sigma^\WW}_{\text{op}}\leq Cn^\delta$ with high probability also. The same argument works for $\Sigma^\ZZ$.
    
    Bounding $\norm{\AA_{a:}}$ can be achieved by applying Prop.~\ref{prop:RF_qf} with $\GG=\Id$. Then since there are only $n$ such events, the union bound can be used to control the maximum. Similarly, $\norm{\AA_{:\alpha}}$ can be controlled. Prop.~\ref{prop:RF_qf} is also used in a standard $\epsilon$-net argument to bound $\norm{\AA}_{\text{op}}$. 
    
    When we consider $\tilde{\theta}^{(\alpha)}$, we condition on everything except the $\alpha$th column of $\WW$, in which case $\gcaltilm{\alpha}$ is deterministic. We can then apply Prop.~\ref{prop:RF_qf} and control $\gcaltilm{\alpha}$ deterministically with Lem.~\ref{lem_g_norm_bounds} to obtain the bound stated in $\mathcal{T}$. Similarly, $\theta^{(a)}$, $\phi^{(ab)}$, and $\tilde{\phi}^{(\alpha\beta)}$ can be controlled. There are only $\mathcal{O}(n^2)$ events in lines \eqref{def_good_event2} and \eqref{def_good_event3}, so the union bound shows their intersection also occurs with high probability.

\end{proof}

\subsection{Completing the proof of Prop.~\ref{prop_rf_resolvent} conditional on $\mathcal{T}$}\label{sec_completed_proof_rf}

We start with a short lemma we will need later.
% \begin{lemma}\label{lem_im_part}
%     Let $R(z)$ be the resolvent of some matrix $H$ that has real eigenvalues, and let $\Sigma$ be a nonnegative definite matrix. Then
%     \eq{
%         \Im \pa{\tr\pa{ \Sigma R(z) } } \geq0
%     }
% \end{lemma}
% \begin{proof}
%     Let $\lambda_1,\ldots,\lambda_n$ and $v_1,\ldots, v_n$ be the eigenvalues of $H$. Then
%     \eq{
%         \tr\pa{ \Sigma R(z) } = \sum_i \frac{1}{\lambda_i - z} \tr\pa{ \Sigma v_i v_i^\top } = \sum_i \frac{1}{\lambda_i - z}  v_i^\top \Sigma  v_i .
%     }
%     Note that $\Im\pa{ \frac{1}{\lambda_i-z} } \geq0 $ and $v_i^\top \Sigma  v_i\geq0$ to complete the proof.
% \end{proof}
\begin{lemma}\label{lem_im_part}
For $z\in\mathfrak{C}$, $G$ the resolvent of a nonnegative definite matrix $H$ such that $\norm{H}_{\text{op}}\leq Cn^{\delta}$, and nonnegative definite matrix $M$ such that $cn^{-\delta}\leq\tr(M)\leq Cn^{\delta}$, 
\eq{
    \absa{1 + \tr(M G)} \geq Cn^{-\delta}.
}
In particular, on the event $\mathcal{T}$, we have
\eq{\label{eq_thetas_bounded}
    \min\ha{\min_{a}\absa{1+\theta^{(a)}}, \min_{\alpha}\absa{1+z^{-1}\tilde{\theta}^{(\alpha)}} } \geq Cn^{-\delta}.
}
\end{lemma}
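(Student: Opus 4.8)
The plan is to prove the general statement first: for a nonnegative definite matrix $H$ with $\|H\|_{\text{op}}\le Cn^\delta$, its resolvent $G = (H - zI)^{-1}$ at $z = E + i\eta \in \mathfrak{C}$, and a nonnegative definite $M$ with $cn^{-\delta}\le \tr(M)\le Cn^\delta$, we want a lower bound on $|1 + \tr(MG)|$. The key observation is that $|1 + \tr(MG)| \ge |\Im(1 + \tr(MG))| = |\Im \tr(MG)|$, so it suffices to lower bound the imaginary part of $\tr(MG)$. Writing $H = \sum_j \lambda_j P_j$ in its spectral decomposition (with $\lambda_j \ge 0$ the eigenvalues and $P_j$ the eigenprojections), we have
\[
\tr(MG) = \sum_j \frac{\tr(M P_j)}{\lambda_j - E - i\eta},
\quad\text{so}\quad
\Im \tr(MG) = \sum_j \frac{\eta\,\tr(M P_j)}{(\lambda_j - E)^2 + \eta^2}.
\]
Since $M$ is nonnegative definite, each $\tr(M P_j) \ge 0$, so every term in the sum is nonnegative, and hence
\[
|\Im \tr(MG)| \ge \frac{\eta}{\max_j\bigl((\lambda_j - E)^2 + \eta^2\bigr)}\sum_j \tr(M P_j)
= \frac{\eta\,\tr(M)}{\max_j\bigl((\lambda_j - E)^2 + \eta^2\bigr)}.
\]
Now on $\mathfrak{C}$ we have $|E| \le Cn^\delta$ and $0\le\lambda_j\le Cn^\delta$, so $(\lambda_j - E)^2 \le Cn^{2\delta}$ and $\eta^2\le Cn^{2\delta}$, giving a denominator bounded above by $Cn^{2\delta}$; and $\eta \ge cn^{-\delta}$, $\tr(M)\ge cn^{-\delta}$. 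Combining, $|1+\tr(MG)| \ge cn^{-\delta}\cdot cn^{-\delta} / (Cn^{2\delta}) \ge Cn^{-\delta}$ after relabeling the (finitely many) constants and enlarging $\delta$ — exactly the claimed form, since $\delta$ is allowed to decrease from line to line but the statement only asks for \emph{some} such $\delta$.

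For the second part, \eqref{eq_thetas_bounded}, I would apply the general bound with appropriate choices of $H$ and $M$. Recall from \eqref{eq_schur_diag} that $1/R_{aa} = -1 - \theta^{(a)}$ with $\theta^{(a)} = \sum_{\alpha,\beta}\AA_{a\alpha}\AA_{a\beta}\gcalm{a}_{\alpha\beta}$, and from the definition $\gcalm{a} = ((\AA^{(a)})^\top\AA^{(a)} - zI)^{-1}$ is the resolvent of the nonnegative definite matrix $H = (\AA^{(a)})^\top\AA^{(a)}$. We can write $\theta^{(a)} = \tr(M_a \gcalm{a})$ where $M_a = \AA_{a:}^\top\AA_{a:}$ is rank-one, nonnegative definite, with $\tr(M_a) = \|\AA_{a:}\|^2$. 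On the event $\mathcal{T}$ we have $\|\AA\|_{\text{op}}\le Cn^\delta$ (hence $\|H\|_{\text{op}}\le Cn^{2\delta}$, still fine), and $\|\AA_{a:}\|\le Cn^\delta$ gives the upper bound $\tr(M_a)\le Cn^{2\delta}$. The lower bound $\tr(M_a) = \|\AA_{a:}\|^2 \ge cn^{-\delta}$ needs to be checked: this should follow on $\mathcal{T}$ (or after intersecting with a further high-probability event) since $\E_{\ZZ_{a:}}\|\AA_{a:}\|^2 = \frac1n\tr(\Sigma^\WW)$, which concentrates by Prop.~\ref{prop:RF_qf} and is bounded below using $\Sigma^\WW \approx (\mathfrak{b}-\mathfrak{d})\mathrm{diag}(\cdot) + \mathfrak{d}(\cdot)$ from Step 4 of the proof of Prop.~\ref{prop:RF_qf} together with $\overline{\tr}(\Sigma)=1$. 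Granting this, the general lemma applied to $H$, $M_a$ gives $|1+\theta^{(a)}| \ge Cn^{-\delta}$, and a union bound over the $n$ values of $a$ preserves the high-probability conclusion. The bound for $|1 + z^{-1}\tilde\theta^{(\alpha)}|$ is entirely analogous: from \eqref{eq_schur_diag}, $1/R_{\alpha\alpha} = -z - \tilde\theta^{(\alpha)}$, and $z^{-1}\tilde\theta^{(\alpha)} = \tr(M_\alpha \gcaltilm{\alpha})$ with $M_\alpha = \AA_{:\alpha}\AA_{:\alpha}^\top$ rank-one nonnegative definite, $\tr(M_\alpha) = \|\AA_{:\alpha}\|^2$, controlled on $\mathcal{T}$ by the $\max_\alpha\|\AA_{:\alpha}\|$ bound above and a corresponding lower bound; and $\gcaltilm{\alpha}$ is the resolvent of the nonnegative definite $\AA^{(\alpha)}(\AA^{(\alpha)})^\top$.

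The main obstacle I anticipate is not the algebra of the spectral lower bound — that is clean once one uses nonnegativity of $M$ to make all terms of the same sign — but rather verifying the \emph{lower} bound $\tr(M) \ge cn^{-\delta}$ in the applications, i.e. that $\|\AA_{a:}\|^2$ and $\|\AA_{:\alpha}\|^2$ are bounded below by a negative power of $n$ uniformly over $a$ and $\alpha$ on a high-probability event. This requires pinning down that $\tfrac1n\tr(\Sigma^\WW)$ and $\tfrac{d}{n}\overline{\tr}(\Sigma^\ZZ)$ are bounded away from $0$, which in turn relies on the structure of $\Sigma^\WW,\Sigma^\ZZ$ established in the proof of Prop.~\ref{prop:RF_qf} (and on the activation not being degenerate — e.g. $\mathfrak{b}>0$, which holds since $\sigma$ is not a.s.\ zero). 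If $\mathcal{T}$ as defined does not already encode these lower bounds, I would add them to $\mathcal{T}$ (they hold with the same $1 - e^{-\omega(\log n_0)}$ probability by Prop.~\ref{prop:RF_qf} applied with $\GG = \Id$), or simply state the lemma on the intersection of $\mathcal{T}$ with this additional event. Everything else — the union bounds, the manipulation of constants $C$ and $\delta$ — is routine given the conventions already in force in this section.
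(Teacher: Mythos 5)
Your approach to the main bound is correct and in fact more economical than the paper's.  The paper splits into three cases on $\Re(z)$ (negative, very large, intermediate), using the real part of $1+\tr(MG)$ in the first two cases to get an $O(1)$ lower bound and falling back on the imaginary part only in the third; you observe that the imaginary part alone, $\Im\tr(MG)=\sum_j \eta\,\tr(MP_j)/((\lambda_j-E)^2+\eta^2)$, is already a sum of nonnegative terms and therefore gives a uniform lower bound $\eta\,\tr(M)/\max_j((\lambda_j-E)^2+\eta^2)$ valid for \emph{every} $z\in\mathfrak{C}$.  Since the final answer in the lemma is governed by the worst case (the intermediate regime), where both proofs produce roughly $n^{-4\delta}$, nothing is lost by collapsing the case analysis — the three-case split in the paper buys sharper constants in the peripheral regimes but not a better exponent overall, so your version is cleaner for the same conclusion.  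One small presentational wrinkle you inherit from the paper: the displayed conclusion $|1+\tr(MG)|\ge Cn^{-\delta}$ should really be $\ge Cn^{-4\delta}$ if $\delta$ is held fixed across hypotheses and conclusion; both you and the paper implicitly relabel, which is consistent with the section's stated convention that $\delta$ floats, but is worth saying explicitly.

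On the application to $\theta^{(a)}$ and $z^{-1}\tilde\theta^{(\alpha)}$: your identification of $H$ and the rank-one $M$ is exactly what the paper does, and you are right to flag the lower bound $\tr(M_a)=\|\AA_{a:}\|^2\ge cn^{-\delta}$ as the one ingredient the event $\mathcal{T}$ as written does not supply — it only records an upper bound $\max_a\|\AA_{a:}\|\le Cn^\delta$.  The paper papers over this with the phrase ``which is approximately 1 on $\mathcal{T}$,'' which is true but not a consequence of $\mathcal{T}$'s definition.  Your proposed fix — intersect with the event that $\min_a\|\AA_{a:}\|^2$ and $\min_\alpha\|\AA_{:\alpha}\|^2$ are bounded below, which holds with the same overwhelming probability by Prop.~\ref{prop:RF_qf} with $\GG=\Id$ and the fact that the conditional means are bounded away from zero (since $\mathfrak{b}=\E\sigma(Z)^2>0$ for a nondegenerate activation and $d/n$, $n_0/n$ are bounded) — is the right patch.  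Aside from that missing event in $\mathcal{T}$, which is a gap in the paper rather than in your argument, your proposal is complete.
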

\begin{proof}
    Let $\lambda_1,\ldots,\lambda_n$ and $v_1,\ldots, v_n$ be the eigenvalues of $H$, then
    \eq{
        1 + \tr(M G) = 1 + \sum_{i=1}^d \frac{1}{\lambda_i-z} v_i^* M v_i.
    }
    Note $v_i^* M v_i\geq0$ and $\sum_{i=1}^d v_i^* M v_i = \tr(M)$.

    We now consider three cases for the value of the real part of $z$. 
    \paragraph{(i).}  Assume $\Re(z)\leq0$: We have,
    \eq{
       \absa{1 + \tr(M G)} \geq \Re\pa{1 + \tr(M G)}
        \geq 1 + \sum_{i=1}^d \frac{\lambda_i-\Re(z)}{\pa{\lambda_i-\Re(z)}^2+\Im(z)^2} v_i^* M v_i
      \geq 1,
    }
    since $\min_i\lambda_i\geq 0$.
    
    \paragraph{(ii).} Assume $\Re(z)\geq 2\tr(M)+\max_i\lambda_i$: Then 
    \eq{
        \sum_{i=1}^d \frac{\lambda_i-\Re(z)}{\pa{\lambda_i-\Re(z)}^2+\Im(z)^2} v_i^* M v_i \geq -\sum_{i=1}^d \frac{1}{\Re(z)-\lambda_i} v_i^* M v_i \geq -\frac{1}{2 \tr(M) }\sum_{i=1}^d  v_i^* M v_i=-\frac{1}{2}.
    }
    Thus, $\absa{ 1 + \tr(M G) } \geq 1/2$.

    \paragraph{(iii).} Assume $0\leq\Re(z)\leq 2\tr(M)+\max_i\lambda_i$: Finally,
    \al{
       \absa{1 + \tr(M G)} &\geq \Im\pa{\tr(M G)}\\ 
        &=  \sum_{i=1}^d \frac{\Im(z)}{\pa{\lambda_i-\Re(z)}^2+\Im(z)^2} v_i^* M v_i\\
        & \geq \sum_{i=1}^d \frac{\Im(z)}{4\max\ha{\max_i\lambda_i,\tr(M)}^2+\Im(z)^2} v_i^* M v_i \\
        &\geq \frac{C}{\max\ha{\max_i\lambda_i,\tr(M)}^2} \min\h{\Im(z), 1/\Im(z)} \tr(M)\\
        &\geq Cn^{-\delta}
    }
    by our assumptions on $H$, $M$, and $z$.
        
    For Eq.~\eqref{eq_thetas_bounded}, set $M_{\alpha\beta}\deq A_{a\alpha} A_{a\beta}$ and $H\deq (\AA^{(a)})^\top \AA^{(a)}$. Then note $\tr(M)=\norm{A_{a:}}^2$, which is approximately 1 on $\mathcal{T}$. Similarly, the spectral norm of $(\AA^{(a)})^\top \AA^{(a)}$ is bounded on $\mathcal{T}$ as it is a minor of $\AA^\top \AA$.
    The proof for $\absa{1+z^{-1}\tilde{\theta}^{(\alpha)}}$ follows in identical fashion except we use $M_{ab}\deq A_{a\alpha} A_{b\alpha}$ and $H\deq \AA^{(\alpha)} (\AA^{(\alpha)})^{\top}$. 
\end{proof}

We now return to the proof of the section's main lemma.
\begin{proof}[Proof of Prop.~\ref{prop_rf_resolvent}]

\textbf{Diagonal entries of the resolvents.} We must first show
\eq{
    \absa{ \tilde{\theta}^{(\alpha)} -z \bar{\tr}( \Sigma^\WW \gcaltil ) } \leq C n^{\delta-1/2}.
}

Since we are on the event $\mathcal{T}$,
\al{
    \absa{ \sum_{a,b=1}^{n} z(\AA_{a \alpha}\AA_{b \alpha} -\Sigma_{ab}^{\WW} /n ) \gcaltilm{\alpha}_{a b} } &\leq  Cn^{\delta-1/2}
}
by Eq.~\eqref{eq_alpha_tilde_mean}. Similarly,
\al{
    \absa{ \sum_{\alpha,\beta=1}^{d} (\AA_{a \alpha}\AA_{a \beta} -\Sigma_{\alpha\beta}^{\ZZ}/n ) \gcalm{a}_{\alpha \beta} } &\leq  Cn^{\delta-1/2}.
}

Next, Holder's inequality implies
\eq{
    \left|\tr\left(  \Sigma^{\WW} (  \gcaltil -  \gcaltilm{\alpha} )  \right) \right| \leq || \Sigma^{\WW} ||_{\text{op}} ||  \gcaltil -  \gcaltilm{\alpha} ||_1.
}
Note this is just the trace, not the normalized trace. Moreover, $|| \Sigma^{\WW} ||_{\text{op}}  \leq C n_1^{\delta}$ on $\mathcal{T}$. Using Woodbury's identity and Lem.~\ref{lem_im_part}, we see
\al{
    \norma{  \gcaltil -  \gcaltilm{\alpha} }_1 &= \norma{ \gcaltilm{\alpha}\AA_{:\alpha}  \frac{1}{1+z^{-1}\tilde{\theta}^{(\alpha)}} \AA_{:\alpha}^\top \gcaltilm{\alpha} }_1 \\
    & \leq \norma{ \gcaltilm{\alpha} }_{\text{op}}^2 \norma{ \AA_{:\alpha} }^2 \absa{ \frac{1}{1+z^{-1}\tilde{\theta}^{(\alpha)}}} \\
    &\leq C n^{\delta},
}
where $\AA_{:\alpha}$ is the $\alpha$th column of $\AA$. 
An identical argument works for $\gcal -  \gcalm{a}$:
\al{
    \norma{  \gcal -  \gcalm{a} }_1 &= \norma{ \gcalm{a}\AA_{a:}^\top  \frac{1}{1+\theta^{(a)}} \AA_{a:} \gcalm{a} }_1 \\
    & \leq \norma{ \gcalm{a} }_{\text{op}}^2 \norma{ \AA_{a:} }^2 \absa{ \frac{1}{1+\theta^{(a)}}} \\
    &\leq C n^{\delta}.
}

Define $\tilde{\epsilon}_\alpha\deq z\ntr(\Sigma^\WW \gcaltil) - \tilde{\theta}^{(\alpha)}$ and $\epsilon_a\deq (d/n)\ntr(\Sigma^\ZZ \gcal) - \theta^{(a)}$. Using Eq.~\eqref{eq_schur_diag}, for any $\beta\in\{1,\ldots, m\}$ we see
\al{
    \gcal_{\beta\beta} = \frac{1}{-z-z\ntr(\Sigma^\WW \gcaltil ) + \epsilon_\beta}.
}
Then expanding in $\tilde{\epsilon}_\beta$, we have
\eq{
    \gcal_{\beta\beta} - \frac{1}{m}\sum_{\alpha=1}^{m} \gcal_{\alpha\alpha}(z) \leq \frac{1}{\absa{z}^2} \frac{\max_{\beta} \abs{\tilde{\epsilon}^{(\beta)} } }{ \absa{ 1 +\ntr\pa{ \Sigma^\WW \gcaltil } }^2 } 
    \leq Cn^{\delta-1/2},
}
since we have bounded $\abs{\tilde{\epsilon}^{(\beta)} } $ above and by using Lem.~\ref{lem_im_part}.

Similarly, for any $b\in\{1,\ldots, n_1\}$ 
\eq{
    \gcaltil_{bb} - \frac{1}{n_1}\sum_{a=1}^{n_1} \gcaltil_{a a}(z) \leq Cn^{\delta-1/2}
}
by expanding Eq.~\eqref{eq_schur_diag} in $\epsilon_b$.

\textbf{Off-diagonal entries of the resolvents.} Note that we may assume $\E_\WW[ \tilde{\phi}^{(\alpha\beta)} ] =\E_\ZZ[ \phi^{(ab)} ] = 0$ for $\alpha\neq\beta$ and $a\neq b$ (see Step 2 in the proof of Prop.~\ref{prop:RF_qf}). Then using Eq.~\eqref{eq_schur_offdiag} and the fact we are conditioning on the event $\mathcal{T}$,
\eq{
    \absa{\gcal_{\alpha\beta} }=  \absa{\gcal_{\alpha\alpha}}\absa{\gcalm{\alpha}_{\beta\beta}}  \absa{\tilde{\phi}^{(\alpha\beta)}} \leq Cn^{\delta-1/2} .
}
We can then bound $\absa{\gcal_{ab} }$ similarly.

\end{proof}

\bibliographystyle{plainnat}
\bibliography{references}

\end{document}